\documentclass[a4paper,leqno,11pt]{amsart}

\usepackage{amsfonts,amssymb,verbatim,amsmath,amsthm,latexsym,textcomp,amscd}
\usepackage{latexsym,amsfonts,amssymb,epsfig,verbatim}
\usepackage{amsmath,amsthm,amssymb,latexsym,graphics,textcomp}
\usepackage{mathtools}
\usepackage{paralist}
\usepackage{quiver}
\usepackage{graphicx}
\usepackage{color}
\usepackage{url}
\usepackage{enumerate}
\usepackage[mathscr]{euscript}
\usepackage{tikz-cd}
\usetikzlibrary{matrix,arrows}
\usetikzlibrary{decorations.pathmorphing}
\usetikzlibrary{shapes.geometric}
\usepackage{dsfont}
\usetikzlibrary{matrix}
\usepackage{hyperref}
\usepackage{multirow}
\usepackage{comment}
\usepackage{enumitem}
\usepackage{adjustbox}

\input xy

\xyoption{all}

\theoremstyle{definition}
\newtheorem{theorem}{Theorem}[section]
\newtheorem{prop}[theorem]{Proposition}
\newtheorem{lem}[theorem]{Lemma}

\newtheorem{defn}[theorem]{Definition}

\newtheorem{rmk}[theorem]{Remark}

\newtheorem{ex}[theorem]{Example}

\newtheorem{thm}[theorem]{Theorem}
\newtheorem{notation}[theorem]{Notation}
\newtheorem{subsec}[theorem]{}
\newtheorem{cons}[theorem]{Construction}

\theoremstyle{plain}
\newtheorem*{thma}{Theorem A}
\newtheorem*{thmb}{Theorem B}
\newtheorem*{thmc}{Theorem C}
\newtheorem*{thmd}{Theorem D}

\theoremstyle{remark}
{\swapnumbers
   \newtheorem{ack}[theorem]{Acknowledgements} }
   
\newenvironment{myeq}[1][]
{\stepcounter{theorem}\begin{equation}\tag{\thetheorem}{#1}}
{\end{equation}}

\newenvironment{mysubsection}[2][]
{\begin{subsec}\begin{upshape}\begin{bfseries}{#2.}
			\end{bfseries}{#1}}
		{\end{upshape}\end{subsec}}

\newcommand{\C}{{\mathbb C}}

\newcommand{\Hyp}{{\mathbb H}}
\newcommand{\D}{{\mathbb D}}
\newcommand{\Z}{{\mathbb{Z}}}
\newcommand{\R}{{\mathbb R}}

\newcommand{\E}{{\mathbb E}}

\newcommand{\upi}{\underline{\pi}}

\newcommand{\Fib}{\mbox{Fib}}
\newcommand{\uF}{\underline{\F_2}}
\newcommand{\Cut}{\mbox{Cut}}
\newcommand{\Ass}{\mbox{Ass}}
\newcommand{\pt}{\mbox{pt}}
\newcommand{\THH}{\mbox{THH}}

\newcommand{\Top}{{\mbox{Top}}}

\newcommand{\Sph}{\mathbb{S}}

\newcommand{\F}{\mathbb F}

\newcommand\DD{{\mathcal D}}

\newcommand\FF{{\mathcal F}}
\newcommand\GG{{\mathcal G}}

\newcommand\LL{{\mathcal L}}
\newcommand\MM{{\mathcal M}}

\newcommand\PP{{\mathcal P}}

\newcommand{\THR}{\mbox{THR}}
\newcommand{\Set}{\mbox{Set}}

\newcommand\PMF{{\PP\kern-2pt\MM\FF}}

\newcommand\PML{{\PP\kern-2pt\MM\LL}}

\newcommand\tr{\operatorname{tr}}

\newcommand{\Esgmop}{\mathbb{E}_{\sigma}^{\otimes}}
\newcommand{\Esgm}{\mathbb{E}_{\sigma}}
\newcommand{\Topc}{\underline{\text{Top}} ^{C_2}}
\newcommand{\Spc}{\underline{\text{Sp}} ^{C_2}}

\newcommand{\fsubd}{\mathrel{{\scriptstyle\searrow}\kern-1ex^d\kern0.5ex}}
\newcommand{\bsubd}{\mathrel{{\scriptstyle\swarrow}\kern-1.6ex^d\kern0.8ex}}
\newcommand{\fsubeq}{\mathrel{\raise-.7ex\hbox{$\overset{\searrow}{=}$}}}
\newcommand{\bsubeq}{\mathrel{\raise-.7ex\hbox{$\overset{\swarrow}{=}$}}}

\newcommand{\tsh}[1]{\left\{\kern-.9ex\left\{#1\right\}\kern-.9ex\right\}}

\newcommand{\uM}{\underline{M}}

\newcommand{\Map}{\mathit{Map}}

\newcommand{\hofib}{\mbox{hofib}}
\newcommand{\colim}{\mbox{colim}}

\newcommand{\Th}{\mathit{Th}}

\newcommand{\res}{\mbox{res}}

\newcommand{\uZ}{\underline{\Z}}
\newcommand{\KR}{K\mathbb{R}}
\newcommand{\kR}{k\mathbb{R}}
\newcommand{\KRmod}{K\mathbb{R} \text{-mod}}
\newcommand{\HZ}{H\underline{\mathbb{Z}}}
\newcommand{\FII}{\underline{\mathbb{F}_2}}
\newcommand{\HFII}{H\underline{\mathbb{F}_2}}
\newcommand{\CII}{C_{2}}
\newcommand{\dsv}{DSV_\mathbb{C}^{C_2}}
\newcommand{\z}{\mathcal{Z}}
\newcommand{\gran}{Gr_{n} (\mathbb{C}^{\infty})_{\tau}}
\newcommand{\fbeta}{f_{\beta}}
\newcommand{\HP}{\mathbb{H} P}

\newcommand{\Obcat}{\mathcal{O} _{C_2} ^{op}}
\newcommand{\finC}{\underline{\mbox{Fin}}_{*} ^{C_2}}
\newcommand{\PicR}{\underline{\Pic}(R)}

\newcommand{\Res}{\mbox{Res}}
\newcommand{\Pic}{\mbox{Pic}}
\newcommand{\Psh}{\mbox{Psh}}
\newcommand{\Alg}{\mbox{Alg}}

\makeatletter
\@namedef{subjclassname@2020}{%
  \textup{2020} Mathematics Subject Classification}
\makeatother

\title[Twisted $R$-algebras]{Equivariant twisted $R$-algebras via Thom spectra}

\author[S. Basu]{Samik Basu}
\address{Stat Math Unit, Indian Statistical Institute Kolkata 700108, India}
\email{samik.basu2@gmail.com; samikbasu@isical.ac.in}

\author[A.Das]{Abhinandan Das}
\address{Stat Math Unit, Indian Statistical Institute Kolkata 700108, India}
\email{abhinandandas1111@gmail.com}

\hypersetup{
	colorlinks,
	citecolor=red,
	filecolor=black,
	linkcolor=blue,
	urlcolor=black
}

\subjclass[2020]{Primary: 19D55,  55N20, 55P91; Secondary: 16E40, 55P43, 55P48.}
\keywords{Topological Hochschild homology, Thom spectra, structured ring spectra, free loop spaces.}

\begin{document}

\begin{abstract}
	For a $C_2$-commutative ring spectrum $R$, a twisted $R$-algebra is an $R$-module with a multiplication whose order is switched by the $C_2$-action. In this paper, we construct various quotients of $R$ as twisted $R$-algebras, when $R$ is an even real commutative ring spectrum. These are constructed as Thom spectra of maps out of  suitable $C_2$-actions on $S^1$ and $U(n)$. One such example is given by $\KR/2$ which is endowed with a twisted $\KR$-algebra structure. Other examples include quotients such as $M\R/(2,x_1,\dots, x_{n-1})$ over the real bordism spectrum $M\R$, and the real $2$-periodic Morava $K$-theories as modules over the real Morava $E$-theory spectra. In the context of twisted $R$-algebras, one may consider the real topological Hochschild homology, and for Thom spectra, one has a nice formula again as a Thom spectrum. We use this to obtain computations for the real topological Hochschild homology of $\KR/2$ as a twisted $\KR$-algebra. The computation also involves a splitting of the units spectrum $gl_1\KR$, which is an analogue of the classical splitting of the units of $K$-theory.  
\end{abstract}
\maketitle%
\thispagestyle{empty}%
\tableofcontents

\section{Introduction}
Real algebraic $K$-theory was defined in \cite{HM15} for categories with duality, and in this context, the trace map lands in real topological Hochschild homology, which is a genuine $C_2$-spectrum. The real topological Hochschild homology ($\THR$) was defined in \cite{DMP21} for {\it rings with anti-involution}, and the homotopy type of $\THR(\F_p)$ and $\THR(\Z)$ was determined. In the latter case, one needs to localize away from $2$.  Subsequently, there have been many computations of $\THR$ in a wide range of contexts (see \cite{HP25}, \cite{DMP24}, \cite{LRZ25}, for example). 

We recall from \cite{DMP21} that a ring spectrum with anti-involution is a $C_2$-spectrum with an underlying ring spectrum $A$ such that the involution is a map of ring spectra $A^{op} \to A$. Such a structure is described as an algebra over the $\E_\sigma$-operad, the operad of little $1$-disks thought of as residing inside the sign representation $\sigma$ of $C_2$. An example of a ring spectrum with anti-involution is a $C_2$-commutative ring spectrum, which is a $\E_\sigma$-algebra via the usual map of operads from $\E_\sigma$ to the $C_2$-$\E_\infty$-operad. An alternate example \cite{DMP21} comes from a commutative algebra possessing anti-involutions, such as matrix rings (with transpose serving as the involution). In this paper, we are interested in the $R$-module case, over a $C_2$-commutative ring spectrum $R$, constructing quotients of $R$  as $\E_\sigma$-$R$-algebras.

\begin{mysubsection}{Thom spectra and algebra structures} 
The Thom spectrum functor has traditionally been very useful for constructing algebra objects in the category of spectra. For an $n$-fold loop map $f$ from a space $X$ to $BG$, the classifying space of spherical fibrations, the Thom spectrum $\Th(f)$, constructed from the associated spherical fibration, has an $\E_n$-algebra structure \cite{LMS86}. This theorem implies that the spectra which arise out of cobordism theories via infinite loop maps to $BG$ are commutative ring spectra. Examples include $MU$, the spectrum for complex cobordism, $MO$, the spectrum for unoriented cobordism, and $MSO$, the spectrum for oriented cobordism. 

For a commutative ring spectrum $R$, there is an $R$-module Thom spectrum functor
\[\begin{tikzcd}[cramped]
\Top_{/ BGL_1 R} &	{\text{Top} _{ / {\tiny \Pic(R \text{-mod})}}}  && {R \text{-mod}}
	\arrow[from=1-1, to=1-2]
	\arrow["{{\Th^{R}(-)}}"', curve={height=18pt}, from=1-1, to=1-4]
	\arrow[dashed, from=1-2, to=1-4]
\end{tikzcd}\]
The construction of this functor first appeared in \cite{ABG14, ABGHR14}. The analogue in the equivariant case has been constructed in \cite{nonabelianpoincare}. The monoidality of these Thom spectra are also discussed therein (in fact, \cite{BSS20} in the non-equivariant case). The space $GL_1R$, which classifies the units in $0^{th}$ $R$-cohomology (that is, $R^0(-)^\times$), is an infinite loop space for a commutative ring spectrum $R$, and in this case, there is a spectrum $gl_1R$ such that $\Omega^\infty gl_1 R \simeq GL_1 R$. This implies the monoidality of the category $\Top_{/ BGL_1R}$. From the perspective of twisted $R$-algebra structures, we note the following corollary from this analysis (see Proposition \ref{twalgth}). 
\begin{prop}
For a commutative $C_2$-ring spectrum $R$, the $C_2$-Thom spectrum functor carries an $\mathbb{E}_{\sigma}$-map $f: X \rightarrow BGL_1R$ to an $\mathbb{E}_{\sigma}$-$R$-algebra. 
\end{prop}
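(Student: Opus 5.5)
The plan is to realize the $C_2$-Thom spectrum functor as a symmetric monoidal functor of $C_2$-$\infty$-categories (parametrized over $\Obcat$) and then use the general principle that $C_2$-symmetric monoidal functors carry algebras over any $C_2$-operad to algebras over the same operad. Concretely, the functor factors as
\[
\Topc_{/BGL_1R} \longrightarrow \Topc_{/\underline{\Pic}(R\text{-mod})} \longrightarrow R\text{-mod}^{C_2},
\]
where the second arrow is the $C_2$-colimit (left Kan extension along the map to a point) of the tautological functor $\underline{\Pic}(R\text{-mod}) \to R\text{-mod}^{C_2}$, as in \cite{nonabelianpoincare}.

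\textbf{Step 1: the source is $C_2$-symmetric monoidal.} Since $R$ is $C_2$-commutative, $gl_1R$ is a genuine $C_2$-spectrum with $\Omega^\infty gl_1R\simeq GL_1R$. Hence $BGL_1R$ is a $C_2$-$\E_\infty$-space (a $C_2$-infinite loop space). Therefore $\Topc_{/BGL_1R}$ carries a $C_2$-symmetric monoidal structure: the product of $X\to BGL_1R$ and $Y\to BGL_1R$ is $X\times Y\to BGL_1R\times BGL_1R\to BGL_1R$, and the norms come from the $C_2$-$\E_\infty$ norm maps. Restricting along the map of operads $\Esgm\to \E_\infty^{C_2}$, an $\Esgm$-algebra in $\Topc_{/BGL_1R}$ is exactly an $\Esgm$-space $X$ together with an $\Esgm$-map $f\colon X\to BGL_1R$. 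Making this identification precise amounts to the slice-over-an-algebra statement $\Alg_{\mathcal O}(\mathcal C_{/A})\simeq \Alg_{\mathcal O}(\mathcal C)_{/A}$ for the cartesian structure, and I would verify it in the parametrized setting.

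\textbf{Step 2: the Thom functor is $C_2$-symmetric monoidal.} The tautological inclusion $\underline{\Pic}(R\text{-mod})\to R\text{-mod}^{C_2}$ is $C_2$-symmetric monoidal, and $\otimes_R$ commutes with $C_2$-colimits separately in each variable, including compatibility with the Hill--Hopkins--Ravenel norms $N_e^{C_2}$. Under these conditions, the parametrized left Kan extension (Day convolution style) is again $C_2$-symmetric monoidal. In particular, $\Th^R(X\times Y)\simeq \Th^R(X)\wedge_R \Th^R(Y)$, and the norm of an underlying map is sent to the norm $N^{C_2}_e$ of its Thom spectrum. I plan to cite \cite{nonabelianpoincare} (and \cite{BSS20} for the underlying case) for this monoidality; the point to check is that it holds in the genuine parametrized sense and not merely levelwise.

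\textbf{Step 3: conclusion.} Applying $\Alg_{\Esgm}(-)$ to the symmetric monoidal functor of Step 2 turns the $\Esgm$-algebra $f$ from Step 1 into an $\Esgm$-algebra in $R\text{-mod}^{C_2}$, that is, an $\Esgm$-$R$-algebra. Its underlying $R$-module is $\Th^R(f)$.

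The main obstacle is Step 2: checking that the parametrized colimit is compatible with the norms. This is the genuinely equivariant input, since $\Esgm$ involves the $C_2$-set $C_2$ acting by flipping the order of the factors. My first approach would be to use the distributivity of norms over indexed colimits in $R\text{-mod}^{C_2}$ together with the universal property of parametrized Day convolution.
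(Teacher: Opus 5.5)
Your proposal is correct and is essentially the paper's argument: the paper likewise gives the parametrized slice $\underline{\Top}^{C_2}_{/\PicR}$ (into which $BGL_1R$ maps by the $C_2$-$\E_\infty$-map $BGL_1R\to\PicR$) the $C_2$-symmetric monoidal structure of \cite[Appendix A.5]{nonabelianpoincare}, regards the $\E_\sigma$-map $f$ as an $\E_\sigma$-algebra there, and composes with the $C_2$-symmetric monoidal refinement $\Th^{\otimes}$, noting that coCartesian lifts of inert morphisms are preserved. The compatibility with norms that you single out as the main obstacle is not reproved in the paper; it is taken directly from \cite{nonabelianpoincare}, exactly as you propose to do.
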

\end{mysubsection}

\vspace*{0.2cm}

\begin{mysubsection}{Quotients as twisted algebras} 
In the study of commutative ring spectra and the category of modules over them, a natural question is whether one can perform the usual constructions which are familiar for commutative algebras. However, such questions are involved in general. The quotient $R/x$ defined for $x \in \pi_k R$, does not generally possess a ring structure even if $R$ is a commutative ring spectrum. The mod $2$ Moore spectrum $\Sph/2$ is such an example. The quotient $R/(x_1,\dots, x_k)$ may be proved to be a ring spectrum under some hypothesis on $\pi_\ast R$ and on the degree of the $x_i$ \cite{EKMM97, Ang08, BSS20}. One of the methods of proving such a result involves the $R$-module Thom spectrum functor $\Th^R$. We adapt this approach in this paper to the $C_2$-equivariant case. 

The first example of a quotient as a twisted algebra that we provide is $\KR/2$, the spectrum of mod $2$ real $K$-theory, in the category of modules over the real $K$-theory spectrum $\KR$ \cite{Ati66} (see Proposition \ref{kuR/2 as kuR-module}). This may be viewed as a $C_2$-analogue of the construction in \cite{Bas17}. We construct a $\Omega^\sigma$-map $S^1 \to BGL_1 \KR$ whose Thom spectrum is $\KR/2$. The space $S^1$ has trivial $C_2$-action, and one identifies $S^1 \simeq \Omega^\sigma \C P^\infty_\tau$, where $C_2$ acts on $\C P^\infty_\tau$ by complex conjugation. We also consider the connective cover $\kR$, and show that there is a $\Omega^\sigma$-map $S^{\rho+1} \to BGL_1 \kR$ whose Thom spectrum is $H\uZ$ (see Proposition \ref{hzmodkr}). Here, $\rho$ stands for the regular representation of $C_2$. The space $S^{\rho+1} \simeq \Omega^\sigma \Hyp P^\infty_\tau$ with the $C_2$-action on $\Hyp P^\infty_\tau$ as conjugation by $i\in \C$.   

We also discuss examples of quotients by a sequence of elements. In this case, as in \cite{BSS20}, we use the unitary group $U(n)$. The twisted monoid structure on $U(n)$ is given by the $C_2$-action $A\mapsto A^T$, the transpose of $A$. We prove that $U(n) \simeq \Omega^\sigma Gr_n(\C^\infty)_\tau$ (Proposition \ref{untw}), where $Gr_n(\C^\infty)\simeq \underset{m}{\varinjlim}~ Gr_n(\C^m)$ is the usual model for $BU(n)$. The $C_2$-action is by complex conjugation that sends an $n$-dimensional subspace $W$ to $\bar{W}$, its image under complex conjugation. 

As in the non-equivariant case, finite quotients by more than one element are proved to be ring spectra primarily in the even case. In the $C_2$-equivariant case, the evenness that is useful in the context of the paper is defined in \cite[Definition 3.1]{hillmeier} : A $C_2$-spectrum $X$ is said to be {\it even} if the $RO(C_2)$-graded homotopy groups satisfy $\upi_{m\rho - 1}(X)=0$ for all integers $m$. An example of an even spectrum is the spectrum for real $K$-theory \cite{Ati66}. From \cite{hukriz} one notes that the real complex cobordism spectrum $M\R$, and the real Brown Peterson spectrum $BP\R$ are even. The real Morava $E$-theory spectra $E\R_n$ are also examples of even spectra \cite{HS20}. An even ring spectrum $E\R$ is real oriented in the sense of \cite{hukriz} and one has an isomorphism 
\[E\R^\bigstar(\C P^\infty_\tau) \cong E\R^\bigstar(\pt)[[x]], \quad |x| = \rho. \]  
A $C_2$-spectrum $X$ is said to be {\it cofree } if the map $X \to F({EC_2}_+, X)$ is an equivalence. The spectra $M\R$, $BP\R$, and $E\R_n$ satisfy this property \cite[Theorem 4.1]{hukriz}. We prove the construction of quotients as twisted algebras in this setting. (see Theorem \ref{quottwralg})
\begin{thma}
Suppose $R$ is an even $C_2$-commutative ring spectrum which is cofree. Let $u_i \in \pi_{i\rho}^{C_2}(R)$ for $i=0,\dots, n-1$, such that $1+u_0$ is a unit in $\pi_0R$. Then, there is an $\Omega^\sigma$-map $U(n)_\tau \xrightarrow{f} BGL_1 R$, such that $\Th^R(f) \simeq R/(u_0,\dots, u_{n-1})$ as $R$-modules. As $f$ is an $\Omega^\sigma$-map, the Thom spectrum $R/(u_0,\cdots, u_{n-1})$ has the structure of a twisted $R$-algebra.   
\end{thma}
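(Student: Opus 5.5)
The plan is to mimic the non-equivariant argument of \cite{BSS20}, using the identification $U(n)_\tau \simeq \Omega^\sigma Gr_n(\C^\infty)_\tau$ of Proposition \ref{untw}. An $\Omega^\sigma$-map $U(n)_\tau \to BGL_1R$ is the same as $\Omega^\sigma$ applied to a pointed $C_2$-map
\[ g\colon Gr_n(\C^\infty)_\tau \to B^\sigma BGL_1 R, \]
where $B^\sigma BGL_1R$ denotes the $\sigma$-delooping, which exists because $gl_1R$ is a genuine $C_2$-spectrum. Such maps are classified by elements of $(gl_1R)^{\sigma+1}$-cohomology of $Gr_n(\C^\infty)_\tau$.

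\emph{Constructing $g$.} I would first build a map $\C P^\infty_\tau \to B^\sigma BGL_1R$ and then extend it, or pass through the splitting principle. On $\C P^\infty_\tau$, the real orientation gives $R^\bigstar(\C P^\infty_\tau)\cong R^\bigstar[[x]]$ with $|x|=\rho$. The class $1+\sum_i u_i x^{i+1}$, suitably interpreted, is a unit because $1+u_0$ is a unit. Its reduced cohomology in degree $\rho$, namely the logarithm-free description, gives a map $\Sigma^{-\rho}\Sigma^\infty \C P^\infty_\tau \to gl_1R$. I would exploit the standard identification of units in $R^0$ of a suspension with additive classes: for $\Sigma^{\rho}$-type cells the map $(1+\tilde R^0) \cong GL_1$ restricts to an additive identification on cells up to product terms. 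For $Gr_n$, I would use the Chern classes $c_1,\dots,c_n$ with $|c_i| = i\rho$, which are available by real orientability, and the evenness of $R$. Evenness ensures that the relevant cohomology is concentrated in $\rho$-multiples, so the Atiyah--Hirzebruch obstructions vanish. Cofreeness lets me compute $C_2$-equivariant cohomology via Borel cohomology, where evenness gives clean answers. The class I want is the one restricting to $u_{i-1}$ on the cell dual to $c_i$, that is, on the bottom cells of $\Sigma^{(i-1)\rho}$ in $U(n)_\tau$ after looping.

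\emph{Identifying the Thom spectrum.} Next, filter $U(n)_\tau$ by the standard cell structure. Its cells are products of cells $S^{(i-1)\rho+1}$-type from the $C_2$-equivariant analogue of the $\Sigma\C P^{n-1}$-filtration, with $U(1)=S^1$ carrying trivial action. Non-equivariantly, $\Th^R$ of the restriction to $S^{2i-1}$ is $R/u_{i-1}$. Equivariantly, the Thom spectrum of a map $S^{k\rho+1}\to BGL_1R$ classified by $u\in\pi_{k\rho}R$ is the cofiber of $1\pm u$ or $u$ acting on $\Sigma^{k\rho}R$. The twist $1+u_0$ in the $S^1$ case gives $R/u_0$ as the cofiber of $(1+u_0)-1$. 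The paper's hypothesis is there precisely to make the $S^1$ class a unit.

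Then use the Thom isomorphism to compare with the product $R/u_0\wedge_R\cdots\wedge_R R/u_{n-1}$. I would argue by induction on $n$ via the fibration $U(n-1)\to U(n)\to S^{(n-1)\rho+1}$, which is $C_2$-equivariant for the transpose action after choosing compatible models. This gives $\Th^R(f|_{U(n)})$ as a cofiber of multiplication by $u_{n-1}$ on $\Th^R(f|_{U(n-1)})$, using a relative Thom spectrum argument. Evenness again kills the extension ambiguities, since the attaching maps land in odd $\rho$-degrees, where the homotopy vanishes.

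The twisted algebra structure then follows from the corollary Proposition \ref{twalgth}. The main obstacle is the equivariant cell/fibration analysis of $U(n)_\tau$ with the transpose action, and checking that the Thom spectrum of each attaching stage is a genuine $C_2$-cofiber of $u_i$ rather than merely one underlying. For this I would check the statement on both underlying and geometric fixed points ($O(n)$ with the transpose action, via cofreeness) or on Borel homotopy.
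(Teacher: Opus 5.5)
Your overall architecture agrees with the paper: deloop via $U(n)_\tau\simeq\Omega^\sigma Gr_n(\C^\infty)_\tau$, solve an extension problem using evenness, identify the Thom spectrum inductively, and conclude with Proposition \ref{twalgth}. However, the step you yourself single out as the main obstacle does not work as proposed. There is no $C_2$-equivariant fibration $U(n-1)_\tau\to U(n)_\tau\to S^{(n-1)\rho+1}$ for the transpose action, in any model. The fixed points of $U(n)_\tau$ are the symmetric unitary matrices, which form $U(n)/O(n)$ via $A\mapsto AA^T$. So for $n=2$, passing to fixed points would give a fibration $S^1\to U(2)/O(2)\to S^2$. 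But $U(2)/O(2)$ is a non-orientable closed $3$-manifold (the mapping torus of a reflection of $S^2$), whereas the Serre spectral sequence gives $H_3\cong\Z$ for the total space of any circle fibration over $S^2$. So your relative Thom spectrum induction has nothing to run on.

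Your fallback also fails as stated. It misidentifies the fixed points: they are not $O(n)$, which would be the fixed points of complex conjugation. More importantly, it presupposes an equivariant $R$-module map $\mathrm{cof}(u_{n-1})\to\Th^R(U(n)_\tau)$, which you never produce; without such a map, cofreeness cannot be invoked at all. The paper supplies exactly this map. It uses the equivariant map $\Sigma\C P^{m}_\tau\to U(m+1)_\tau$ of \eqref{cpformun}, with trivial action on the suspension coordinate, together with the top cell $S^{m\rho}$ of $\C P^m_\tau$. This shows that $u_m$ becomes null in $\Th^R(U(m+1)_\tau)$, giving an equivariant map $\mathrm{cof}(u_m)\to\Th^R(U(m+1)_\tau)$, which is an underlying equivalence by \cite{BSS20}. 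Both sides are compact modules over the cofree ring $R$, hence cofree by Proposition \ref{modcofree}, so the underlying equivalence is an equivariant one. This is where the cofreeness hypothesis actually enters.

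The construction step also needs repair. The class $1+\sum_i u_i x^{i+1}$ is not homogeneous: each $u_ix^{i+1}$ lies in $R^{\rho}$, not $R^0$. Moreover, the hypothesis on $1+u_0$ plays no role in it. The correct input is the degree-zero class $1+u_0+u_1x+\cdots+u_{n-1}x^{n-1}\in R^0_{C_2}(\C P^{n-1}_\tau)^\times$, which is a unit precisely because $1+u_0$ is. It defines a map $\Sigma\C P^{n-1}_\tau\to BGL_1R$, and the task is to extend the resulting map $\Sigma^\rho\C P^{n-1}_\tau\to B^\rho GL_1R$ along $\Sigma^\rho\C P^{n-1}_\tau\to\Sigma^\sigma U(n)_\tau\to Gr_n(\C^\infty)_\tau$.

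Cofreeness does not help here. The target $B^\rho GL_1R$ is connective, and neither $gl_1R$ nor its slices are Borel complete in general, so you cannot compute the obstruction groups by passing to Borel cohomology. The paper instead runs the obstruction theory along the slice tower of $\Sigma^\rho gl_1R$, whose slices are $\Sigma^{k\rho}HP^0\underline{\pi}_{k\rho}R$ by evenness. It shows that the obstruction groups $H^{k\rho+1}_{C_2}(Gr_n(\C^\infty)_\tau,\Sigma^\rho\C P^{n-1}_\tau;P^0\underline{\pi}_{(k-1)\rho}R)$ vanish (Lemma \ref{cohzero}). The reason is that, for a Mackey functor $\underline{M}$ with injective restriction, these $\rho$-cell complexes split after smashing with $H\underline{M}$ into wedges of $S^{m\rho}\wedge H\underline{M}$, and the relevant groups $\pi^{C_2}_{j\rho-1}H\underline{M}$ vanish.
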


\vspace*{0.2cm}

\begin{ex}
The real bordism spectrum $M\R$ is a $C_2$-commutative ring spectrum which is both even and cofree. From \cite[Theorem 2.28]{hukriz} we have 
\[\pi^{C_2}_{\ast\rho} (M\R) \cong \Z[x_1,x_2,\dots ], \quad |x_i|=i\rho.\]
Using Theorem A, we now conclude that there are twisted $M\R$-algebra structures on the $M\R$-module $M\R/(2,x_1,\dots, x_{n-1})$. The real Morava $E$-theories $E\R_n$ are also even and cofree \cite[Theorem 1.9]{HS20},  and they satisfy 
\[\pi^{C_2}_{\ast\rho}(E\R_n) \cong W\F_{2^n}[[u_1,\dots,u_{n-1}]][u_n^{\pm}], \quad |u_i|=0, \quad |u_n|=2.\]
The real Morava $E$-theories are also commutative ring spectra, and so Theorem A applies in this situation. Thus, the ($2$-periodic) real Morava $K$-theories, which may be expressed as the $E\R_n$ modules $E\R_n/(2,u_1,\dots, u_{n-1})$, possess twisted $E\R_n$-algebra structures.  
\end{ex}

\end{mysubsection} 

\vspace*{0.2cm}

\begin{mysubsection}{$\THR$ of Thom spectra}
As the Thom spectrum functor  
\[\begin{tikzcd}[cramped]
\Top_{/ BGL_1 R} &	{\text{Top} _{ / {\tiny \Pic(R \text{-mod})}}}  && {R \text{-mod}}
	\arrow[from=1-1, to=1-2]
	\arrow["{{\Th^{R}(-)}}"', curve={height=18pt}, from=1-1, to=1-4]
	\arrow[dashed, from=1-2, to=1-4]
\end{tikzcd}\]
is symmetric monoidal, it is expected that there would be a formula for the topological Hochschild homology of $R$-module Thom spectra along the lines of \cite{BCS10}. This idea has been explained in the general context of equivariant factorization homology of equivariant Thom spectra  in \cite{nonabelianpoincare}, where the authors consider this formulation for the usual Thom spectrum functor 
\[ \Th: \Top_{/{\tiny \Pic(Sp^G)}} \to Sp^G.\]
Let $\Omega^V f : \Omega^V X \to \Pic(Sp^G)$ be a $V$-fold loop map, in which case $\Th(f)$ is a $\E_V$-algebra. Let $M$ be a $V$-framed $G$-manifold, which means that there is an equivariant isomorphism of tangent bundles $TM\cong M \times V$. In this situation, one has the formula \cite{nonabelianpoincare}
\[\int_M \Th(f) \simeq \Th\Big( \int_M \Omega^V X \to \int_M \Omega^V \Pic(Sp^G) \to \Pic(Sp^G)\Big).\]
Further, under suitable connectivity hypotheses on $X$, one has $\int_M \Omega^V X \simeq \Map_\ast(M_+, X)$. From \cite{horev}, we know that $\int_{S^\sigma}$ is equivalent to $\THR$, and for connected $X$, we also have 
\[\int_{S^\sigma} \Omega^\sigma X \simeq \Map_\ast(S^\sigma_+, X) \simeq L^\sigma X,\]
the $\sigma$-fold free loop space of $X$. In summary, the $\THR$ of the Thom spectrum of a $\Omega^\sigma$-map, is the Thom spectrum of a certain map out of the $\sigma$-fold free loop space.  In the $R$-module situation, we provide the following analogous result (see Theorem \ref{thrthom}). The key factor here is the explicit identification of the map $L^\sigma B^\sigma X \to BGL_1 R$ as $L^{\tilde{\eta}}(f)$, which is useful in our later computations.

\begin{thmb}
Let $f: X \rightarrow BGL_1(R)$ be a $\sigma$-fold loop map, that is, $f\simeq \Omega^\sigma B^\sigma f$, for a map $B^{\sigma}f: B^{\sigma}X \rightarrow B^{\rho}GL_1(R)$. In this situation, the real topological Hochschild homology of the Thom spectrum $\Th^R (f)$ admits the following equivalence as $R$-module
		$$\THR^{R} (\Th (f)) \simeq \Th (L^{\tilde{\eta}}f: L^{\sigma}B^{\sigma}(X) \rightarrow BGL_1 (R) ),$$
		where the map $L^{\tilde{\eta}}f$ is the following composition.
		$$\resizebox{\linewidth}{!}{$ L^{\sigma}B^{\sigma}X \xrightarrow{L^{\sigma}B^{\sigma}f} L^{\sigma}B^{\rho}GL_1(R) \xleftarrow{\simeq} BGL_1(R) \times B^{\rho}GL_1(R) \xrightarrow{id \times (-\tilde{\eta})^*} BGL_1(R) \times BGL_1(R) \xrightarrow{m} BGL_1(R) $}$$
\end{thmb}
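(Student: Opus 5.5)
We prove Theorem B by adapting the strategy of \cite{BCS10} (in the form used in \cite{nonabelianpoincare}) to $R$-modules, in four steps.

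\textbf{Step 1: $\THR$ as a colimit.} We first identify $\THR^R$ of a twisted $R$-algebra with factorization homology over $S^\sigma$, following \cite{horev}:
\[\THR^R(A)\simeq \int_{S^\sigma} A.\]
This holds in $R$-modules because the $C_2$-Thom spectrum functor $\Th^R$ is symmetric monoidal. By Proposition \ref{twalgth}, it carries the $\Omega^\sigma$-map $f$ to an $\E_\sigma$-$R$-algebra.

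\textbf{Step 2: Commuting $\Th^R$ with $\int_{S^\sigma}$.} Since $\Th^R$ is a symmetric monoidal left adjoint, it commutes with factorization homology. Hence $\int_{S^\sigma}\Th^R(f)$ is the Thom spectrum of the composite
\[\int_{S^\sigma}X \to \int_{S^\sigma} BGL_1R \to BGL_1R,\]
where the second map uses the $\E_\infty$-structure of $BGL_1R$, i.e.\ the counit of the $\E_\sigma$-structure on the target.

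Because $B^\sigma X$ is $\sigma$-connected, nonabelian Poincaré duality gives $\int_{S^\sigma}\Omega^\sigma B^\sigma X\simeq \Map(S^\sigma,B^\sigma X)=L^\sigma B^\sigma X$. Applying the same identification to the target, $\int_{S^\sigma}\Omega^\sigma B^\rho GL_1R \simeq L^\sigma B^\rho GL_1R$. Naturality in the $\Omega^\sigma$-map $f=\Omega^\sigma B^\sigma f$ then identifies the first map with $L^\sigma B^\sigma f$.

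\textbf{Step 3: Identifying the augmentation.} It remains to compute the map $L^\sigma B^\rho GL_1 R\to BGL_1 R$. Since $B^\rho GL_1R$ is an infinite loop $C_2$-space, the free loop space splits via the section of constant loops:
\[L^\sigma B^\rho GL_1R\simeq \Omega^\sigma B^\rho GL_1R\times B^\rho GL_1R \simeq BGL_1R\times B^\rho GL_1R.\]
On the first factor, the map to $BGL_1R$ is the identity, since the based part is the $\E_\sigma$ unit. The second factor is the constant loops, so here we must determine a map $B^\rho GL_1R\to BGL_1R$, i.e.\ a map of spectra $\Sigma^\rho gl_1R\to\Sigma gl_1R$. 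Equivalently, this is a map $\Sigma^{\sigma}gl_1 R\to gl_1R$, which is induced by a stable class $S^\sigma\to S^0$ acting through $gl_1R$ as a module over the sphere. Concretely, the map comes from the collapse $S^\sigma_+\to S^\sigma$ together with the framing of $S^\sigma$. The stable class of the twisted framing of the circle $S^\sigma$ is the equivariant Hopf map $\tilde\eta\in\pi^{C_2}_{\sigma}\Sph$, so the second component is $(-\tilde\eta)^*$. The sign arises from the orientation convention that compares the framed circle with the suspension coordinate.

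\textbf{Step 4: Assembling.} Adding the two components via the multiplication $m$ on $BGL_1R$ yields exactly the composite $L^{\tilde\eta}f$ in the statement.

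The main obstacle is Step 3: pinning down the precise equivariant class, including its sign. To do this, we would first check the underlying non-equivariant case, where the class must reduce to $\eta$ on the Lie-framed circle, matching \cite{BCS10}. Then we check the $C_2$-fixed points, where $S^\sigma$ has two fixed points and $\tilde\eta$ restricts to the element $2$ or $0$ appropriately. This pins down the class in $\pi^{C_2}_\sigma\Sph$ up to the indeterminacy we must rule out.
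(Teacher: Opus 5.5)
Your Steps 1 and 2 are a valid alternative to the first half of the paper's proof. The paper commutes $\Th^R$ past the dihedral bar construction and then invokes Theorem \ref{dihedral free loop}; you use $\THR^R\simeq\int_{S^\sigma}$ and nonabelian Poincar\'e duality instead. Either way, the problem reduces to identifying the augmentation $L^\sigma B^\rho GL_1R\to BGL_1R$.

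The genuine gap is Step 3, which is the actual content of Theorem B: you state the answer but do not derive it. Saying that the constant-loop component ``comes from the collapse together with the framing'', and that the sign ``arises from the orientation convention'', does not single out an element of $\pi^{C_2}_\sigma\Sph\cong\Z\{\tilde\eta\}$. Your proposed check cannot do so either. Restriction to the underlying group $\pi_1\Sph\cong\Z/2$ only detects the parity of the coefficient of $\tilde\eta$. Geometric fixed points $\Phi^{C_2}\colon\pi^{C_2}_\sigma\Sph\to\pi_0\Sph$ are injective on this group (with $\tilde\eta\mapsto\pm 2$), so they could in principle decide it. That would require an honest computation of $\Phi^{C_2}$ of the augmentation class, with signs matched to those of $\Phi^{C_2}(\tilde\eta)$. ``Restricts to $2$ or $0$ appropriately'' is not that computation, and you yourself leave an indeterminacy unresolved.

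What is missing is the universal, sphere-level description of the class.
\begin{itemize}
\item Since all maps in sight are $C_2$-$\E_\infty$, the comparison
$\int_{S^\sigma}BGL_1R\simeq\Omega^\infty(\Sigma gl_1R\wedge S^\sigma_+)\to\Omega^\infty F(S^\sigma_+,\Sigma gl_1R\wedge S^\sigma)\simeq L^\sigma B^\rho GL_1R$
is induced by a single map of $C_2$-spectra $\gamma\colon S^\sigma_+\to F(S^\sigma_+,S^\sigma)$. This map is adjoint to $S^\sigma_+\wedge S^\sigma_+\xrightarrow{\mu}S^\sigma_+\to S^\sigma$, where $\mu$ is the group multiplication of $S^\sigma$.
\item The augmentation is induced by $r\colon S^\sigma_+\to S^0$, not by the collapse $S^\sigma_+\to S^\sigma$. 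The constant loops are induced by $r^*$. So the class you want is $r_*\circ\gamma^{-1}\circ r^*\in\pi^{C_2}_\sigma\Sph$.
\item Use the stable splitting $S^\sigma_+\simeq S^0\vee S^\sigma$, built from a stable section $s$ with $rs\simeq 0$, and the dual splitting of $F(S^\sigma_+,S^\sigma)$. In these coordinates $\gamma$ is unipotent, with off-diagonal entry $s^*\gamma s$.
\item The adjoint $S^\sigma\wedge S^\sigma\to S^\sigma$ of $s^*\gamma s$ is the Hopf construction on $\mu\colon S^\sigma\times S^\sigma\to S^\sigma$. Via $S^\sigma*S^\sigma\cong S^{2\sigma+1}$, this is $\tilde\eta$.
\item Inverting the unipotent matrix gives $r_*\gamma^{-1}r^*\simeq-\tilde\eta$.
\end{itemize}
Thus the sign is forced by the incompatibility between the constant-loop splitting and the augmentation splitting. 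It is not a matter of orientation convention once $\tilde\eta$ is fixed as the Hopf construction. This is the argument your Step 3 needs to supply.
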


\vspace*{0.2cm}

Stronger multiplicative properties on $f$ provide further simplifications for the formula in Theorem B. If $f$ is  a $\rho +1$-fold map, then \cite[Corollary 7.1.3]{nonabelianpoincare} adapted to this case implies that 
\[\THR^R(\Th^R(f)) \simeq \int_{S^\sigma} \Th^R(f) \simeq \int_{S^\sigma \times \R} \Th^R(f) \simeq \Th^R(f) \wedge \Sigma^\infty B^\sigma X_+.\]

\end{mysubsection}

\vspace*{0.2cm}

\begin{mysubsection}{$\THR^{K\R}(K\R/2)$} 
We compute the real topological Hochschild homology $\THR^{K\R}(K\R/2)$ for the mod $2$ real $K$-theory spectrum $K\R/2$ as a module over $K\R$. The computations are made using Theorem B for the twisted algebra structures that arise as Thom spectra. We then have 
\[\THR^{K\R}(K\R/2) \simeq \Th^{K\R}\Big(L^{\tilde{\eta}}f : L^\sigma \C P^\infty_\tau \to BGL_1 K\R\Big).\]  
We now describe a brief outline of the steps involved.  

\vspace*{0.2cm}

\noindent {\bf Step I : } The map $\tilde{\eta}^\ast$ in the formula for $\THR^{K\R}(K(\R/2)$ may be replaced by the projection. As a consequence, we have the following reduction (see Proposition \ref{THR from loop space})
\[\resizebox{\linewidth}{!}{$ \THR^{K\R}(K\R/2) \simeq \Th^{K\R}\Big(L^\sigma \C P^\infty_\tau \xrightarrow{L^\sigma B^\sigma f} L^\sigma B^\rho GL_1 K\R \simeq BGL_1 K\R \times B^\rho GL_1 K\R \xrightarrow{\pi_1} BGL_1 K\R \Big).$}\]
The proof relies on the fact that $\C P^\infty_\tau$ has a cellular filtration with cells of the type $\DD(n\rho)$ for $n\geq 0$. 

\vspace*{0.2cm} 

\noindent {\bf Step II : } As $\C P^\infty_\tau$ is a $C_2$-infinite loop space, we have a decomposition $L^\sigma \C P^\infty_\tau \simeq S^1 \times \C P^\infty_\tau$. We may thus express $L^\sigma \C P^\infty_\tau$ as a homotopy pushout of $\C P^\infty_\tau \leftarrow S^0 \times \C P^\infty_\tau \rightarrow \C P^\infty_\tau$. This implies that there is a cofiber sequence (see Proposition \ref{LES of THR}) 
\[K\R \wedge {\C P^\infty_\tau}_+ \xrightarrow{u-1} K\R \wedge {\C P^\infty_\tau}_+ \to \THR^{K\R}(K\R/2),\]
where $u\in K\R^0(\C P^\infty_\tau)^\times$. 

\vspace*{0.2cm} 

\noindent {\bf Step III : } The relation of the unit $u$ with the map $B^\sigma f : \C P^\infty_\tau \to B^\rho GL_1R$. We obtain a formula similar to \cite{Bas17}. (see Proposition \ref{identify u})

\vspace*{0.2cm} 

\noindent {\bf Step IV : } Provide an explicit form for $u$ in order to compute $\THR^{K\R}(K\R/2)$. For this purpose, we use a decomposition result for the units spectrum for real $K$-theory. 

\vspace*{0.2cm}

The steps I-IV allow us to complete the calculation of $\THR^{K\R}(K\R/2)$ for twisted algebra structures arising as Thom spectra. (see Theorem \ref{pi* calculation})
\begin{thmc}
The $RO(C_2)$-graded homotopy groups $\THR^{\KR}(\KR/2)$ are given by		
$$\pi _{k\rho +l} ^{C_2} (\THR^{\KR} (\KR /2)) \cong 
		\begin{cases}
			\Z /(2 ^{\infty}) \; & \text{ when } l \equiv 0,4 \, (\text{mod }8)\\
			\Z / {2} \;  \; & \text{ when } l \equiv 2,3 \, (\text{mod }8)\\
			0 \;  \; & \text{ when } l \equiv 1,5,6,7 \,(\text{mod }8).
		\end{cases}$$
			In fact, as a $\KR$-module  $\THR^{\KR} (\KR /2) \simeq \KR / (2^{\infty})$, where $\KR / (2^{\infty})$ is defined by the cofiber sequence
		$$\KR \rightarrow \KR [2^{-1}] \rightarrow \KR / (2^{\infty}).$$
\end{thmc}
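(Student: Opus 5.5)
Using Steps I--III, I reduce to the cofiber sequence of Proposition \ref{LES of THR},
\[\KR \wedge {\C P^\infty_\tau}_+ \xrightarrow{u-1} \KR \wedge {\C P^\infty_\tau}_+ \to \THR^{\KR}(\KR/2),\]
with $u \in \KR^0(\C P^\infty_\tau)^\times$ determined by $B^\sigma f$ as in Proposition \ref{identify u}. The plan is to make $u$ explicit, compute the map $u-1$ on $\KR$-homology, and identify its cofiber with $\KR/(2^\infty)$.

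First, Step IV. I use the splitting of $gl_1\KR$ to lift $B^\sigma f$ through the multiplicative part analogous to $BU_\otimes$. Here I expect $u$ to be the class of the tautological Real line bundle $L$ on $\C P^\infty_\tau$, up to sign or inversion. With the Real orientation $x = L - 1$, $\KR^\bigstar(\C P^\infty_\tau) \cong \KR^\bigstar[[x]]$ has $|x|=\rho$, after Bott-periodic regrading. Then $u - 1 = x$, possibly times a unit, so on homotopy $u-1$ acts by cap product with $x$. The constraint that $\Th(f) = \KR/2$ should fix the normalization. Restricting to the bottom cell, the degree of the map on the $\DD(\rho)$-cell must be $2$ or a unit multiple of it, and I check that this is consistent.

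Second, the homology. Using the cellular filtration by cells $\DD(n\rho)$, $\KR \wedge {\C P^\infty_\tau}_+$ is a free $\KR$-module on dual classes $\beta_n$ in degree $n\rho$; the filtration splits because $\KR$ is even and real oriented. By Bott periodicity $\Sigma^{\rho}\KR \simeq \KR$, so this module is $\bigvee_{n \geq 0} \KR$. In these terms, capping with $L$ gives
\[\beta_n \mapsto \sum_k a_{n,k} \beta_k,\]
coming from the $\KR$-analogue of the formal group law. The multiplicative law $x+y+vxy$ applies with $v$ the Bott class in degree $\rho$, which is invertible here. After rescaling, $u - 1$ becomes essentially the shift $\beta_n \mapsto \beta_{n-1}$ plus lower-order multiples, much as in \cite{Bas17}. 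The cofiber of a shift on $\bigvee_{n\ge0}\KR$ is trivial, so an extra factor must enter. I suspect the correct $u$ is $L^{2}$ or $\psi^{-1}$-twisted, producing $(1+x)^2 - 1 = 2x + x^2$. Taking this at face value, the map is injective with cokernel a colimit
\[\colim\big(\KR \xrightarrow{2} \KR \xrightarrow{2} \cdots\big)/\KR = \KR[2^{-1}]/\KR,\]
which is $\KR/(2^\infty)$.

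Third, I make this a $\KR$-module equivalence. The plan is to produce the map explicitly via the filtration quotients: each stage $\KR\wedge (\C P^n_\tau)_+$ gives cofiber $\KR/2^{n+1}$ compatibly. Passing to the colimit then yields $\KR/(2^\infty)$ as the cofiber of $\KR \to \KR[2^{-1}]$. The homotopy groups then follow from $\pi^{C_2}_{k\rho+l}\KR = \pi_l KO$ and the long exact sequence. Groups $\Z$ in $\pi_l KO$ give $\Z/2^\infty$ for $l \equiv 0,4$. Groups $\Z/2$ at $l\equiv 1,2$ die under inversion of $2$ and shift up by one, giving $\Z/2$ for $l\equiv 2,3$. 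All other degrees vanish, matching the table.

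The main obstacle is Step IV, pinning down $u$ precisely via the splitting of $gl_1\KR$, and in particular showing the resulting map on each filtration stage has cofiber $\KR/2^{n+1}$ rather than something degenerate. I would first test the bottom two cells against the known $\Th(f)=\KR/2$ to calibrate.
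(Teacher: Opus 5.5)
There is a genuine gap in Step IV, and the guesses you use to fill it produce the wrong map.

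\textbf{The normalization of $u$.} Your $u$ must restrict to $-1$ at the basepoint. By Proposition \ref{identify u}, $u=\zeta\circ\mu$. Here $\mu$ restricted to $\Sigma^\rho(\mathrm{pt}_+)=S^\rho$ is the bottom-cell inclusion $i$, and $\zeta\circ i=-1$ is exactly the condition that the Thom spectrum over $S^1$ is $\KR/2$. So $u-1$ sends $\beta_0\mapsto-2\beta_0$: the factor $2$ sits on the $0$-cell, not on the $\DD(\rho)$-cell.

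Both of your candidates, $u-1=x$ and $u-1=(1+x)^2-1=2x+x^2$, have zero constant term and kill $\beta_0$. So neither map is injective, and each cofiber picks up an extra $\Sigma\KR$; already the cofiber of the shift $\beta_n\mapsto\beta_{n-1}$ is $\Sigma\KR$, not $0$. The cokernel $\KR[2^{-1}]/\KR$ you read off therefore does not follow. Ironically, the option you set aside, $u=-L$, gives $u-1=-(2+x)$ and the right answer.

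\textbf{$u$ cannot simply be prescribed.} The extension $\zeta$ is not unique. On the $2\rho$-cell it can be modified by $\pi^{C_2}_\rho gl_1\KR\cong\Z$, which changes the linear coefficient of $u$ by even amounts, and the theorem is asserted for every Thomified structure. What must be proved is that for \emph{every} $\zeta$ one has $u=-1+(\text{odd})\,t+\cdots$. The parity is essential: with an even linear coefficient the answer changes (for $u=-1$ one gets $\bigoplus\Z/2$ rather than $\Z/(2^\infty)$).

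This is the real content of the paper's Step IV:
\begin{itemize}
\item Theorem D replaces $gl_1\KR$ by $E=gl_1P^2\kR$, whose $k$-invariant is $\beta_{C_2}\circ Sq^2_{C_2}$.
\item One gets $[\C P^2_\tau,\Sigma^\rho E]^{C_2}\cong\Z$, with $H\uZ^{2\rho}(\C P^2_\tau)\to[\C P^2_\tau,\Sigma^\rho E]^{C_2}$ equal to $\times 2$. This holds because $Sq^2_{C_2}\colon H\uF^{\rho}(\C P^2_\tau)\to H\uF^{2\rho}(\C P^2_\tau)$ is nonzero.
\item Since $\zeta$ restricts to the nontrivial class on $S^\rho$, it is odd (Lemma \ref{identify group 1}).
\item $\chi^*$ is an isomorphism because $q\circ\chi$ has degree $2$ (Proposition \ref{identify tau*}).
\end{itemize}

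Once you have $\beta_k\mapsto-2\beta_k+(\text{odd})\beta_{k-1}+\cdots$, the rest of your plan is sound. The stage-$n$ matrices have Smith form $\mathrm{diag}(1,\dots,1,2^{n+1})$, giving $\KR/2^{n+1}$; passing to the colimit still requires identifying the transition maps. Your derivation of the table from $\pi_l KO$ via $\KR\to\KR[2^{-1}]\to\KR/(2^\infty)$ is correct, and is a mild variant of the paper's comparison of cofiber sequences.
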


\vspace*{0.2cm} 

The formulas in Theorem C show that for all the algebra structures that arise as Thom spectra, the $\THR$ computation is the same. One may compare this with $\THH^{KU}(KU/2)$ from \cite{BL04}, which is also constant over the moduli space of $A_\infty$-algebra structures. In comparison for odd primes $p$, $\THH^{KU}(KU/p)$ is not constant over the moduli space and has $p-1$ possible values \cite{Ang08}. The same is true for $\THH^{K_p^\wedge}(K/p)$ for algebra structures arising out of Thom spectra \cite{Bas17}. 

\end{mysubsection}

\vspace*{0.2cm} 

\begin{mysubsection}{A splitting of $gl_1 K\R$} 
In the computation of $\THR^{\KR}(\KR/2)$, a key ingredient is the structure of the units spectrum $gl_1\KR$. In the non-equivariant situation, the units spectrum for $KU$ splits as a wedge 
\[gl_1 KU \simeq \PP^2 gl_1 KU \vee \tau_{\geq 3} gl_1 KU.\]
The first summand is the second Postnikov section, and the second factor is the $2$-connective cover that may be identified as $bsu$. This splitting has been known classically though a proof in the literature is hard to find, for this formulation one may refer to \cite{BLM23}. The second Postnikov section has $\pi_0=\F_2$ and $\pi_2 = \Z$. The map 
\[ \C P^\infty \simeq \Omega^\infty \Sigma^2 H\Z \to \Omega^\infty \PP^2 gl_1 KU \to \Omega^\infty gl_1 KU \simeq GL_1 KU,\]
classifies the inclusion of line bundles as units in $K(X)$ ($=KU^0(X)$).  The spectrum $\PP^2gl_1KU$ is determined by a single $k$-invariant, which is computed as 
\[H\F_2 \xrightarrow{\beta \circ Sq^2} \Sigma^3 H\Z.\]
In the $C_2$-equivariant setting for real spectra, the slice tower plays a role analogous to the Postnikov tower. The result generalizes as the following theorem. (see Proposition \ref{ses of gl1P2kR} and Theorem \ref{splitglkr})
\begin{thmd}
The second slice section of $gl_1(K\R)$ splits off as a wedge summand, that is, 
\[gl_1(K\R) \simeq P^2(gl_1K\R) \vee \hat{K}.\]
Moreover, the slice section $P^2(gl_1K\R)$ is $\simeq \Fib(H\underline{\F_2} \xrightarrow{\beta_{C_2} \circ Sq^2_{C_2}} \Sigma^{2+\sigma} H\uZ)$. 
\end{thmd}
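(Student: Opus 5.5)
Mirroring the classical $KU$ argument, I would first identify $P^2(gl_1\KR)$ through its slices and then build the splitting map explicitly.

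\textbf{Slices of $gl_1\KR$ in low degrees.} The connective cover of $gl_1\KR$ agrees with that of $\Sigma^{-1}$-shifted units in the sense that $\Omega^\infty gl_1\KR = GL_1\KR$, and $\Omega^\infty \KR$ has components $\Z$. The homotopy Mackey functors of $gl_1\KR$ agree with those of $\kR$ in positive degrees. The slices of $\kR$ are $\Sigma^{n\rho}H\uZ$. The unit component $GL_1\KR$ has $\upi_0 = \{\pm1\} = \uF$, since the units of $\uZ$ form the constant Mackey functor $\F_2$. Hence the slice tower of $gl_1\KR$ has bottom slices $P^0_0 = H\uF$ and $P^2_2 = \Sigma^{\rho}H\uZ$, with $P^1_1 = 0$ because $\kR$ has no odd slices. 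Thus $P^2(gl_1\KR)$ sits in a fiber sequence
\[\Sigma^\rho H\uZ \to P^2(gl_1\KR) \to H\uF,\]
classified by a $k$-invariant $H\uF \to \Sigma^{\rho+1}H\uZ = \Sigma^{2+\sigma}H\uZ$. This gives the stated fiber description once the $k$-invariant is identified.

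\textbf{Identifying the $k$-invariant.} The group $[H\uF,\Sigma^{2+\sigma}H\uZ]^{C_2}$ is computed from the $C_2$-Steenrod algebra (Hu--Kriz). The Bockstein $\beta_{C_2}$ lowers $\Sigma^{\rho}$-weight appropriately, so the only candidates are $0$ and $\beta_{C_2}\circ Sq^2_{C_2}$. The key step is to show the $k$-invariant is nonzero. Applying underlying restriction recovers $P^2 gl_1 KU$, whose $k$-invariant is the classical $\beta Sq^2$. Restriction sends $\beta_{C_2} Sq^2_{C_2}$ to $\beta Sq^2$, so the equivariant $k$-invariant is nonzero and hence equals $\beta_{C_2}\circ Sq^2_{C_2}$.

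\textbf{Splitting.} Let $\hat K$ be the slice $3$-connective cover, i.e.\ the fiber of $gl_1\KR \to P^2 gl_1\KR$. It suffices to produce a section $s\colon P^2(gl_1\KR)\to gl_1\KR$. Mirroring the classical case, I would use the map $\C P^\infty_\tau \simeq \Omega^\infty\Sigma^\rho H\uZ \to GL_1\KR$ classifying line bundles, together with $\pm1$. These give infinite loop maps because tensor product of Real line bundles is additive under $\rho$-deloopings. Together they assemble a map of $C_2$-spectra $P^2 gl_1\KR \to gl_1\KR$. This uses the equivariant analogue of the fact that $\Sigma^\infty_+\C P^\infty_\tau$-type determinant/line-bundle maps extend to spectrum maps; concretely, the $\mathbb{E}_\infty$ map $\C P^\infty_\tau \to GL_1\KR$ yields a spectrum map $\Sigma^\rho H\uZ\to gl_1\KR$, and $\{\pm1\}$ yields $H\uF$-part via the sign.

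The compatibility needed is that these two pieces glue along the $k$-invariant. Equivalently, the obstruction to lifting $P^2\to gl_1\KR$ lies in $[P^2, \Sigma\hat K]$, and I would show this group vanishes. Since $\hat K$ is slice $\geq 3$ and its slices are $\Sigma^{n\rho}H\uZ$ for $n\ge 2$, this reduces to cohomology computations $[H\uF,\Sigma^{n\rho+1}H\uZ]$ and $[\Sigma^\rho H\uZ,\Sigma^{n\rho+1}H\uZ]$. These are Hu--Kriz Steenrod algebra groups in the relevant degrees, and are torsion. Handling these is the main obstacle. If they do not vanish, I would instead follow \cite{BLM23}: use the determinant map $\KR \to$ units, i.e.\ the real determinant $\det\colon gl_1\KR\to P^2 gl_1\KR$ induced by $BU_\tau\to \C P^\infty_\tau$, which retracts the inclusion and gives the splitting directly.
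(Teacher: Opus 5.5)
\textbf{The gap is in the splitting.} The splitting is where the content of the theorem lies, and none of your three routes actually produces a section $P^2(gl_1\KR)\to gl_1\KR$.

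The ``assembly'' route cannot work as described. Suppose you had a map of spectra $H\uF\to gl_1\KR$ realizing $\pm1$ on $\upi_0$. Composing it with the truncation $gl_1\KR\to P^2(gl_1\KR)$ would split $\Sigma^\rho H\uZ\to P^2(gl_1\KR)\to H\uF$, because a self-map of $H\uF$ that is the identity on $\upi_0$ is the identity. That would force the $k$-invariant to vanish, contradicting your own second step. So the section has to be built on the nontrivial extension as a whole, and ``gluing along the $k$-invariant'' is exactly the construction that is missing. You also do not show that the Real line bundle map $\C P^\infty_\tau\to GL_1\KR$ is compatible with norms, which you would need even to get a genuine map $\Sigma^\rho H\uZ\to gl_1\KR$.

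The obstruction-theoretic route needs the connecting map $P^2(gl_1\KR)\to\Sigma\hat K$ to vanish. The groups you reduce to do not vanish: $\beta_{C_2}\circ Sq^4_{C_2}$ is a nonzero element of $[H\uF,\Sigma^{2\rho+1}H\uZ]^{C_2}$, since it restricts to $\beta Sq^4\neq 0$ in $[H\F_2,\Sigma^5 H\Z]$.

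The determinant route points the wrong way. A retraction $gl_1\KR\to P^2(gl_1\KR)$ already exists (the slice truncation); what you need is a section of it.

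\textbf{The missing idea.} The paper avoids genuine-equivariant constructions at this step.
\begin{itemize}
\item It uses the symmetric monoidal groupoid $\z$ of $\Z/2$-graded complex lines. The Koszul sign in its symmetry is what makes the extension nonsplit, so $\z$ models the whole two-stage extension at once.
\item With $C_2$ acting by complex conjugation, the map of \cite{BLM23} becomes a map $Spt(\z)\to gl_1(KU_{C_2})$ in $Sp^{BC_2}$, where only a naive action is required. One then checks that $Spt(\z)\simeq i\big(gl_1(P^2\kR)\big)$.
\item Genuineness is recovered from the cofreeness $\KR\simeq B_{C_2}KU_{C_2}$. Precompose $B_{C_2}$ of the Borel map with the unit $gl_1(P^2\kR)\to B_{C_2}i\big(gl_1(P^2\kR)\big)$, and follow it with the lax monoidal comparison $B_{C_2}(gl_1R)\to gl_1(B_{C_2}R)$.
\item The composite $gl_1(P^2\kR)\to gl_1\KR\to gl_1(P^2\kR)$ is an underlying equivalence. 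The five lemma on $\Sigma^\rho H\uZ\to gl_1(P^2\kR)\to H\uF$ then makes it a $C_2$-equivalence, because self-maps of $H\uZ$ and $H\uF$ that are underlying isomorphisms are equivalences. Your proposal omits this final check as well.
\end{itemize}

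\textbf{The $k$-invariant step.} Your identification of the $k$-invariant is essentially the paper's. However, the claim that $[H\uF,\Sigma^{2+\sigma}H\uZ]^{C_2}$ contains only $0$ and $\beta_{C_2}\circ Sq^2_{C_2}$ needs an argument. What is required is that restriction to the underlying spectrum is injective on this group. The paper gets this from the cofiber sequence $C_2/e_+\to S^0\to S^\sigma$ together with $[H\uF,\Sigma^2 H\uZ]^{C_2}=0$, which follows from $\Sigma^{1-\sigma}H\uZ\simeq H\underline{\Z}_{-}$ and $\MM_{C_2}(\uF,\underline{\Z}_{-})=0$.
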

\end{mysubsection}

\vspace*{0.2cm}

\begin{notation} Throughout this paper, we use the following notations. 
\begin{itemize}
\item	$Sp$ refers to the $\infty$-category of orthogonal spectra. $Sp^G$ refers to the $\infty$-category of genuine $G$-spectra modelled via orthogonal spectra. $Sp^{BG}$ is the $\infty$-category of $G$-objects in $Sp$, called ``Borel-equivariant spectra" or ``naive $G$-spectra". 

\item For a $G$-representation $V$, $\DD(V)=$ the disc in $V$, $S(V)=$ the sphere in $V$, and $S^V=$ the $1$-point compactification of $V$. 

\item The notation $B^{cyc}(-)$ refers to the cyclic bar construction and $B^{di}(-)$ refers to the dihedral bar construction. 

\item $\E_n$ stands for the little $n$-cubes operad for $1\leq n \leq \infty$. If $V$ is a $G$-representation, the operad of little cubes inside $V$ is denoted by $\E_V$.  
	
\item Let $\mathbb{C}^{n} (\tau)$ be the complex $n$-plane $\mathbb{C}^{n}$ with $C_2$-action by complex conjugation. We denote $\C P^{n}_{\tau}$ as the $C_2$-space of complex lines in $\mathbb{C}^{n+1} (\tau)$ such that the underlying space in $\C P^{n}$ with $C_2$-action is given by complex conjugation. Also, we denote $\C P^{\infty}_{\tau}$ as the $C_2$-space whose underlying space is $\C P^{\infty}$ and $C_2$-action is given by complex conjugation.

\item The Thom spectrum for a spherical fibration classified by $X\xrightarrow{f} \Pic(Sp)$ is denoted by $\Th(f)$. In the $G$-equivariant case, the Thom spectrum of $X\xrightarrow{f} \Pic(Sp^G)$ is denoted by $\Th^G(f)$. In the $R$-module case, the $R$-module Thom spectrum is usually denoted as $\Th^R(f)$. 	It is sometimes convenient to use $X^{f}$ instead of $Th^{R}(f)$ for the Thom spectrum corresponding to a map $X \xrightarrow{f} \Pic(R)$.

\item The notation $P^n(-)$ refers to the $n^{th}$-slice section functor on $Sp^{C_2}$, and $\PP^n(-)$ refers to the $n^{th}$-Postnikov section functor on $Sp$. 
	\end{itemize}
\end{notation}

\vspace*{0.2cm}

\begin{mysubsection}{Organization}
In \S \ref{realcyc}, we discuss some preliminaries on real cyclic objects and the dihedral bar construction for twisted monoid objects. The preliminaries on equivariant stable homotopy theory are discussed in \S \ref{eqsthtpy}. We also recall the formulation of equivariant Thom spectra and their monoidal properties. 
The construction of quotients as twisted algebra Thom spectra is carried out in \S \ref{twralgconst}. We recall the formulation of real topological Hochschild homology in \S \ref{thrthL}, and prove a formula for the real topological Hochschild homology of $R$-module Thom spectra. In \S \ref{splitting of units}, we prove a splitting of the units of $K\R$. In \S \ref{thrcalc}, we compute the real topological Hochschild homology of $K\R/2$. 
\end{mysubsection}

\vspace*{0.2cm}

\begin{ack}
The research of the first author was supported by ANRF grant ARG/2025/000434/MS.
\end{ack}

\vspace*{0.2cm}

\section{Real cyclic objects and the dihedral bar construction} \label{realcyc} 

The definition of real topological Hochschild homology involves real analogues of cyclic sets and the cyclic bar construction. In this section, we recall their formulation. The primary reference for this is \cite{DMP21}. 
Our exposition follows the language of $\infty$-categories, in particular Joyal's quasi-categories, developed in \cite{HTT}, \cite{HA}, \cite{landbook21}, \cite{groth10}. This allows us to employ a formalism different from that of \cite{DMP21} and \cite{HM15}, without recourse to model-categorical language. 
	
	Moreover, we rely significantly on the theory of parametrized higher categories, developed by Barwick–Dotto–Glasman–Nardin–Shah, whose references include \cite{shah23}, \cite{nardin17}, \cite{nardin22}, \cite{barwick2016parametrized}, \cite{horev}. This framework generalizes the usual theory of higher categories by considering higher categories fibered over a base $\infty$-category. Our main interest lies in the situation where the base $\infty$-category is $\mathcal{O}^{op} _{G}$, the opposite category of $G$-orbits for a finite group $G$; providing us with a formalism for genuine equivariant homotopy theory for all subgroups of $G$, incorporating the appropriate restriction maps and Hill–Hopkins–Ravenel norms.

\begin{mysubsection}{Real simplicial and Real cyclic objects}
	Let $\Delta$ be the standard simplex category, i.e. the skeleton of the category of non-empty finite totally ordered sets and order preserving set maps. The involution  $\omega: \Delta \rightarrow \Delta$ is defined as $\omega([n]) = [n]$ and for a morphism $\alpha : [n] \rightarrow [k]$, 
	\begin{myeq}  \label{RSRC} 
\omega(\alpha)(i) = k - \alpha(n - i).
\end{myeq}
	This also induces an involution on the opposite category $\Delta^{op}$. 
	
	\begin{defn} \label{ex: Real simplicial object}
		A \textit{real simplicial object} in a category  $\mathscr{C}$ is a simplicial object $X: \Delta^{op} \rightarrow \mathscr{C}$ together with a natural transformation $\tau: X \rightarrow X\circ \omega$ such that the composition
		$X \xrightarrow{\tau} X \circ \omega \xrightarrow{\tau} X \circ {\omega ^2} = X$ is the identity natural transformation. Thus, this consists of a simplicial object $X$ in $\mathscr{C}$ with an involution $\tau_n : X_n \rightarrow X_n$, for every $n$, such that for every map $\alpha : [n] \rightarrow [k]$, 
		$$\alpha^* \circ \tau_k = \tau_n \circ (\omega(\alpha))^*.$$
	\end{defn}	

\vspace*{0.2cm}
	
		Connes' cyclic category $\Lambda$ has objects of the form $[n]$, same as that of $\Delta$, with morphisms generated by face maps $\delta_i : [n-1] \rightarrow [n], \; i= 0,1,...,n$, degeneracy maps $\sigma_j: [n+1] \rightarrow [n], \; j= 0,1,...,n$ and cyclic operators $\tau_n: [n] \rightarrow [n]$; satisfying the identities laid out in \cite[Definition 6.1.1]{LodayBook}. The cyclic category $\Lambda$ can also be endowed with an involution, similar in formula to $\Delta$.
		\begin{defn}
			A \textit{real cyclic object} in a category $\mathscr{C}$ comprises a cyclic object $X: \Lambda ^{op} \rightarrow \mathscr{C}$, involutions $\tau_n: X_n \rightarrow X_n$, such that for every map $\alpha : [n] \rightarrow [k]$, $\alpha^* \circ \tau_k = \tau_n \circ (\omega(\alpha))^*$.
		\end{defn}

\vspace*{0.2cm}

	The cosimplicial object $|\Delta^{\bullet}| : \Delta \rightarrow Top$, sending $[n]$ to the topological $n$-simplex $|\Delta^n|$ has an additional involutive structure given by $\tau : |\Delta^n| \rightarrow |\Delta^n|$ as follows
	\begin{myeq}\label{rmk: cosimplicial structure}
		\tau(t_0, t_1, . . . , t_n) = (t_n, t_{n-1}, ... , t_0).
	\end{myeq} 
	Therefore, for a category $\mathscr{C}$, which is tensored over $Top$, we can define geometric realization of a real simplicial object $X$ to be the $C_2$-object of $\mathscr{C}$ given by coend 
	$$|X| = X \otimes_{\Delta} |\Delta^{\bullet}|,$$
	with diagonal $C_2$-action. 
		The inclusion $\Delta \rightarrow \Lambda$ induces a simplicial structure on a cyclic object. Thus, we can talk about realization of a cyclic object as the realization of the underlying simplicial object, in a category which is tensored over $\Top$.

\begin{ex} \label{Lem: S-sigma as cyclic}
	The assignment $X_n = Aut_{\Lambda ^{op}}([n])$ with involutions $\sigma_n (t_n ^k) = t_n ^{n+1-k}$ for $k = 1,2,..,n$ and $\sigma_n (id) = id$, defines a real cyclic set. The underlying simplicial set has two non-degenerate simplices $t_0$ and $t_1$ such that the realization is homeomorphic to $S^1$ \cite[\S 6.1.10]{LodayBook} and the $C_2$-action, following \eqref{rmk: cosimplicial structure}, is via reflection, thus as a $C_2$-space its realization is homeomorphic to $S^{\sigma}$.
\end{ex}
\end{mysubsection}

\vspace*{0.2cm}

\begin{mysubsection}{$S^\sigma$ action on Real cyclic objects}
		One observes that the classical result of \cite[Theorem 7.1.4, Theorem 7.3.11]{LodayBook} refines to this case, and results in an $S^{\sigma}$-action on realizations of real cyclic spaces. Note that the multiplication on $S^1$, the underlying space of $S^{\sigma}$, commutes with the reflection (or equivalently, complex conjugation) action on it as $\overline{z_1. z_2} = \overline{z_1} \, . \, \overline{z_2}$; making $S^\sigma$ a monoid in $Top^{C_2}$. Therefore, by $S^{\sigma}$ action on a $C_2$-space we mean the existence of a $C_2$-map $m_X:S^{\sigma} \times X \rightarrow X$ such that the following diagram commutes in $C_2$-spaces:
	\[\begin{tikzcd}[cramped]
		{S^{\sigma} \times S^{\sigma} \times X} && {S^{\sigma} \times X} \\
		{S^{\sigma} \times X} && X.
		\arrow["{m_{S^{\sigma}} \times id}", from=1-1, to=1-3]
		\arrow["{id \times m_X}"', from=1-1, to=2-1]
		\arrow["{m_X}", from=1-3, to=2-3]
		\arrow["{m_X}"', from=2-1, to=2-3]
	\end{tikzcd}\]

The $S^\sigma$-action on the realization of real cyclic objects follows readily from the way the involution interacts with the cyclic structure.	The unique factorization of morphisms of Connes' category, $\Lambda$, \cite[Theorem 6.1.4]{LodayBook} is well-behaved with respect to the involution on $\Lambda$. Explicitly, if we write the unique factorization of $\alpha \in  Hom_{\Lambda}([m],[n])$ as $\alpha= \beta \circ \gamma$ for $\gamma \in Hom_{\Lambda}([m],[m])$ and $\beta \in Hom_{\Delta}([m],[n])$; then $\omega(\alpha)=\omega(\beta) \circ \omega(\gamma) $ is the unique factorization of $\omega(\alpha)$. As a consequence, the functorial assignment of factorization of $g \circ f$, as the diagram depicts:
	\[\begin{tikzcd}[cramped]
		{[m]} & {[n]} \\
		{[m]} & {[n]}
		\arrow["f", from=1-1, to=1-2]
		\arrow["{f^{*}(g)}"', from=1-1, to=2-1]
		\arrow["g", from=1-2, to=2-2]
		\arrow["{g_{*}(f)}"', from=2-1, to=2-2]
	\end{tikzcd}\]
	results in the factorizations 
$$\omega(g \circ f) = \omega(f^{*}(g)) \circ \omega(g_{*}(f)) = {\omega (f)}^{*}(\omega (g)) \circ {\omega (g)}_{*}(\omega (f)),$$
 forcing the following identities due to uniqueness:
		\begin{myeq} \label{factorization eq}
			\omega(f^{*}(g)) = {\omega (f)}^{*}(\omega (g)) , \; \omega(g_{*}(f)) = {\omega (g)}_{*}(\omega (f)).
		\end{myeq}
One uses \eqref{factorization eq} to define a functor $F$ from real simplicial spaces to real cyclic spaces as follows:
	\begin{align*}
	&	F \, : \, \text{\{Real simplicial spaces\}} \longrightarrow  \,\text{\{Real cyclic spaces\}}  \\
	&	F(Y)_{n} = Aut_{\Lambda ^{op}}([n]) \times Y_n  \text{; for $Y$ a real simplicial space}\\
	&	f^{*}(g,y)  = (f^{*}(g), (g_{*}(f))^{*}(y))  \text{; for } f \in Hom_{\Delta}([m],[n]) \\
	&	h^{*}(g,y) = (gh,y)  \text{; for } h \in Aut_{\Lambda}([n])
	\end{align*}
	
	Adopting the arguments of \cite[Section 7.1]{LodayBook} using the fuctoriality of involution and the uniqueness of factorization, we attain the following results:
	
	\begin{itemize}
		\item The functor $F$ is left adjoint to the forgetful functor.
		\item For a real cyclic space $X$, we have a morphism of real cyclic spaces $ev: F(X) \rightarrow X$ given by evaluation $(g,y) \mapsto g_{*}(y)$, inducing a $C_2$-equivariant map on the realization.
		\item For any real simplicial space $X$, the map $(p_1,p_2): |F(X)| \rightarrow S^{\sigma} \times |X|$ induced by $p_1$, a map of real cyclic spaces given levelwise $(g,y) \mapsto g$ and $p_2: |F(X)| \rightarrow |X|$ given by $(g,y;u) \mapsto (y, g^{*}(u))$ for $(g,y;u) \in F(X)_n \times \Delta^n$; is a $C_2$-equivariant homeomorphism.
	\end{itemize}

	\begin{defn}({$S^{\sigma}$ action}) \label{s sigma action}
		For a real cyclic space $X$, we define an $S^{\sigma}$-action as the following composition:
		$$S^{\sigma} \times |X| \xrightarrow{(p_1,p_2)^{-1}} |F(X)| \xrightarrow{|ev|} |X|$$
		which is $C_2$-equivariant as a consequence of the above discussion.
	\end{defn}
	
\vspace*{0.2cm}

	\begin{ex}
		For the real cyclic set of Example \ref{Lem: S-sigma as cyclic}, the above map results in the usual $C_2$-monoid structure on $S^{\sigma}$. Moreover, the fact that map of Definition \ref{s sigma action} actually defines an action, can be realised using the map of real cyclic spaces $FF(X) \rightarrow F(X)$ given as $(g,h,x) \mapsto (gh,x)$. See \cite[\S 7.1.10, \S 7.1.11]{LodayBook} for the nonequivariant case. 
	\end{ex}

\vspace*{0.2cm}

\end{mysubsection}

\begin{mysubsection}{Dihedral bar construction and twisted free loop space}
The cyclic bar construction of topological monoids generalizes to twisted monoids as dihedral bar construction.
	
	\begin{defn} \label{twistedmonoid}
		A $C_2$-space $M$ is called a \textit{twisted monoid} if it is non-equivariantly a monoid such that $\tau (x y)= \tau(y) \tau(x)$ for all $x,y \in M$. A morphism between two twisted monoids is a $C_2$-map which is also a non-equivariant monoid map. 
	\end{defn}
	
\vspace*{0.2cm}

	\begin{rmk} \label{E sigma algebra in top}
		Twisted monoids can be identified as grouplike $\mathbb{E}_{\sigma}$-algebras in the category of $C_2$-spaces. Explicitly, they are algebras, in the category $\text{Top}^{C_2}$, over the $C_2$-operad $\Ass^{\sigma}$ \cite[Remark 2.3]{DMP21}, given by $\Ass^{\sigma}_{n} = \Sigma_n$ with $C_2$-action given by $\alpha \mapsto \tau_{n} \circ \alpha$ where $\tau_n$ is the permutation of $\{1,2,...,n\}$ which reverses the order. Moreover, one can observe that it is equivalent to the little $\sigma$-disks operad, whose genuine operadic nerve \cite{Bon19} is $\mathbb{E}_{\sigma}$, as a consequence of the fact that the mapping spaces of the $C_2$-$\infty$ operad of $\sigma$-framed representations are homotopically discrete.
	\end{rmk}

\vspace*{0.2cm}
	
	\begin{cons} \label{cons twisted bar}
		Given a twisted monoid $M$, the twisted bar construction is given as the following composition:
		$$B^{\sigma} (-) : \{\text{Twisted monoids} \} \xlongrightarrow{B^{\sigma} _{\bullet} (-)} \{\text{Real simplicial space} \} \xlongrightarrow{|-|} \text{Top}^{C_2}$$
		whose underlying simplicial space is same as the bar construction, explicitly $B^{\sigma} _{n} (M)= M^{\times n}$ with $C_2$-action as $\tau(m_1, m_2, ..., m_n) = (\tau(m_n), \tau(m_{n-1}) , ..., \tau(m_1))$. The face and degeneracy maps are the same as the non-equivariant case.
	\end{cons}
	
\vspace*{0.2cm}
\end{mysubsection}
	
	\begin{mysubsection}{$C_2$-fixed points of a $\sigma$-fold loop space} \label{fixed pts of loop}
		Let $(X,x_0)$ be a based $C_2$-space. We denote $\Omega^{\sigma}X$ as the $C_2$-space of all (non-equivariant) based maps from $S^{\sigma}$ to $X$ with $C_2$-action by conjugation, i.e. $\Omega^{\sigma}X = \text{Map}_{*}(S^{\sigma},X)$. The $C_2$-fixed points as a subspace of $\Omega^\sigma X$ is given by
		\begin{equation*}
			\begin{aligned}
				(\Omega^{\sigma}X)^{C_2}  & = \text{Map}_{*} ^{C_2}(S^{\sigma},X) \\
										& =  \{ \gamma : [0,1] \rightarrow X | \gamma(0)=x_0=\gamma(1) \text{ and } \gamma(1-t)= \tau \gamma(t)\}  \\
										& =  \{ \gamma : [0,1] \rightarrow X | \gamma(0)=x_0, \gamma(1) \in X^{C_2} \}  \\
										& \simeq \mbox{hofib} \, (X^{C_2} \xhookrightarrow{i} X).
			\end{aligned}	
		\end{equation*}
		
	\end{mysubsection}
	
	This formula is used to show that the twisted bar construction gives a $\sigma$-fold delooping.
	
	\begin{thm} \label{sigma fold deloop}
		Given a connected twisted monoid $M$, there exist a natural $C_2$-map $M \rightarrow \Omega^{\sigma} B^{\sigma} M$ which is a $C_2$-weak equivalence.
	\end{thm}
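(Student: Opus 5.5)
The natural map comes from the $1$-skeleton of the twisted bar construction. The realization $B^\sigma M$ contains $\Sigma M$ as the image of $M\times|\Delta^1|$ with $M\times\partial\Delta^1$ collapsed. In $B^\sigma_1 M = M$ the involution is just $\tau$, and by \eqref{rmk: cosimplicial structure} on $|\Delta^1|$ it is $t\mapsto 1-t$. So the $C_2$-action on this piece is $(m,t)\mapsto(\tau m,1-t)$, which identifies it with $S^\sigma\wedge M$ carrying the diagonal action. The adjoint of $S^\sigma\wedge M\to B^\sigma M$ gives
\[
\eta: M\to \Omega^\sigma B^\sigma M, \qquad m\mapsto \bigl(t\mapsto [m,t]\bigr).
\]
This map is $C_2$-equivariant because $\Omega^\sigma$ carries the conjugation action: $\tau\circ\eta(m)\circ(t\mapsto 1-t)=\eta(\tau m)$. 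Naturality in twisted monoids is clear.

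To see that $\eta$ is a $C_2$-weak equivalence, I check the underlying map and the fixed-point map separately.

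On underlying spaces, $\eta$ is the classical map $M\to\Omega BM$. Since $M$ is connected, it is grouplike up to homotopy ($\pi_0M$ is trivial), so the group-completion theorem (or May's delooping theorem for connected monoids) shows it is an equivalence.

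On fixed points, \S\ref{fixed pts of loop} identifies $(\Omega^\sigma B^\sigma M)^{C_2}$ with $\hofib\bigl((B^\sigma M)^{C_2}\to B^\sigma M\bigr)$. Under this identification, $\eta^{C_2}$ sends $m\in M^{C_2}$ to the path $[m,t]$ for $t\in[0,1/2]$, which starts at the basepoint and ends at the fixed point $[m,1/2]$ (the barycentre of the $1$-simplex). So the task is to compute $(B^\sigma M)^{C_2}$ and show that $M^{C_2}$ is the homotopy fibre of the inclusion.

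The standard tool is Segal's edgewise subdivision. For a real simplicial space $X$, the realization $|X|$ is $C_2$-homeomorphic to $|sd_e X|$, where $(sd_e X)_n=X_{2n+1}$ carries a simplicial $C_2$-action. Hence $|X|^{C_2}\cong |[n]\mapsto X_{2n+1}^{C_2}|$. For $X=B^\sigma_\bullet M$, a fixed point of $M^{2n+1}$ under $(m_1,\dots,m_{2n+1})\mapsto(\tau m_{2n+1},\dots,\tau m_1)$ is a tuple $(m_1,\dots,m_n,\,c,\,\tau m_n,\dots,\tau m_1)$ with $c\in M^{C_2}$. So $(sd_e B^\sigma M)^{C_2}_n\cong M^{n}\times M^{C_2}$. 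Checking the face maps should identify this simplicial space with the two-sided bar construction $B_\bullet(M,M,M^{C_2})$, i.e. the bar construction for $M$ acting on $M^{C_2}$ by the twisted conjugation $m\cdot c = m\,c\,\tau(m)$.

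This gives a map $(B^\sigma M)^{C_2}\simeq B(*,M,M^{C_2})\to B(*,M,*)=BM$. Because $M$ acts on $M^{C_2}$ through homotopy equivalences (each $m$ is invertible up to homotopy), the standard quasifibration lemma for bar constructions shows this map is a quasifibration with fibre $M^{C_2}$. One must then verify that this map agrees with the inclusion $(B^\sigma M)^{C_2}\hookrightarrow B^\sigma M$ followed by the subdivision homeomorphism, and that the resulting equivalence $M^{C_2}\simeq\hofib$ is exactly $\eta^{C_2}$. This last point is a diagram chase on the $0$-simplices $c\in M^{C_2}\subset M_1$ of the subdivision.

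I expect the main obstacle to be the bookkeeping in the edgewise-subdivision step. Identifying the face maps of the fixed simplicial space with the twisted action $c\mapsto m c\,\tau(m)$ requires care with the order reversal. Checking the quasifibration hypothesis uses that $M$ is connected, so that $\pi_0M$ is a group and left multiplication by any $m$ is homotopic to the identity. If that comparison gets messy, an alternative is to compare both sides with the homotopy orbit description $M^{C_2}\simeq\hofib(M_{hM}\to BM)$ for the twisted-conjugation action.
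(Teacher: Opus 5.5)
Your proposal is correct and follows essentially the same route as the paper's proof. The shared steps are: the same map $m\mapsto(t\mapsto[m,t])$; the classical non-equivariant equivalence; the identification of $(\Omega^\sigma B^\sigma M)^{C_2}$ with $\hofib\bigl((B^\sigma M)^{C_2}\to B^\sigma M\bigr)$; the description of $(B^\sigma M)^{C_2}$, through the symmetric odd-dimensional simplices, as $B(*,M,M^{C_2})$ with the action $c\mapsto m\,c\,\tau(m)$ (your ``$B_\bullet(M,M,M^{C_2})$'' should read $B_\bullet(*,M,M^{C_2})$, as you use afterwards); and the comparison of the fixed-point inclusion with the projection to $BM$. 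The paper simply asserts that last comparison, and the step you flagged can be done as follows. The simplicial ``first half'' map $sd_e B^\sigma_\bullet M\to B^\sigma_\bullet M$, $(m_1,\dots,m_{2n+1})\mapsto(m_1,\dots,m_n)$, restricts on fixed points to exactly the projection $B(*,M,M^{C_2})\to BM$. Its realization is homotopic to the subdivision homeomorphism by a linear homotopy in barycentric coordinates. On a $0$-simplex $c\in M^{C_2}$ that homotopy runs along half of the loop $\eta(c)$, which ties the resulting fibre equivalence to $\eta^{C_2}$.
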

	
	We recall the main steps of the proof. The details are available in \cite{Liu20}.
	
	\begin{proof}
		Analogous to the non-equivariant case the map is given as follows:
		\begin{equation*}
			\begin{aligned}
				M \longrightarrow & \; \Omega^{\sigma} B^{\sigma} M \\
				m \longmapsto & \; (t \mapsto |m \times (t, 1-t)|) 
			\end{aligned}
		\end{equation*}
		which is evidently $C_2$-equivariant under conjugation action on the right. Now, the proof proceeds along the steps given below:
		
		\begin{itemize}
			\item The non-equivariant equivalence is a classical result.
			\item As per \ref{fixed pts of loop}, the $C_2$-fixed points on the right hand side is $\hofib \, ((B^{\sigma} M)^{C_2} \xhookrightarrow{i} B^{\sigma} M)$.
			\item The space $(B^{\sigma}M)^{C_2}$ is the quotient of the disjoint union of $(B^{\sigma} _{2k+1} M \times \Delta^{2k+1})^{C_2}$, consisting of points $\{(m_1,m_2,...,m_n, m, \tau m_n, \tau m_{n-1},..., \tau m_1) \times (v_0, v_1, ..., v_n, v_n, ..., v_0)\}$, where $a \in M^{C_2}$. Additionally, it is homeomorphic to $B(*, M, M^{C_2})$, with action of $m \in M$ on $M^{C_2}$ is given as: $m' \mapsto m m' \tau(m)$.
			\item The inclusion map $i: (B^{\sigma}M)^{C_2} \hookrightarrow B^{\sigma}M$ is  homotopic to the projection map: $B(*,M,M^{C_2}) \rightarrow B(*,M,*)$. Thus, the homotopy fiber is $\simeq M^{C_2}$.
		\end{itemize}	
	\end{proof}
	
		\begin{cons} \label{cons dihedral bar}
		Given a twisted monoid $A$, the dihedral bar construction is given by the following composition:
$$B^{di} (-) : \{\text{Twisted monoids} \} \xlongrightarrow{B^{di} _{\bullet} (-)} \{\text{Real cyclic space} \} \xlongrightarrow{|-|} \text{Top}^{C_2}.$$
		The underlying cyclic space is same as the cyclic bar construction, explicitly $B^{di} _{n} (A)= A \times A^{\times n}$ with $C_2$-action as $\tau(a_0,a_1, a_2, ..., a_n) = (\tau(a_0),\tau(a_n), \tau(a_{n-1}) , ..., \tau(a_1))$. The face and degeneracy maps are same as the non-equivariant case. As a consequence of \ref{s sigma action}, we get an $S^{\sigma}$-action on $B^{di}A$. Moreover, we have a projection map on the underlying real simplicial spaces $B^{di}_{\bullet} (A) \rightarrow B^{\sigma}_{\bullet} (A)$ given levelwise by $(a_0,a_1, a_2, ..., a_n) \mapsto (a_1, a_2, ..., a_n)$; inducing a $C_2$-map on realizations.
	\end{cons}

\vspace*{0.2cm}

	\begin{thm} \label{dihedral free loop}
		Given a (connected) twisted monoid $A$, there exists a canonical $C_2$-equivariant map $\gamma: B^{di} (A) \rightarrow L^{\sigma} B^{\sigma}A$, which is a $C_2$-weak equivalence.
	\end{thm}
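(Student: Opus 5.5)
Let $L^{\sigma}Y := \Map(S^{\sigma},Y)$ denote the space of unbased loops with conjugation action. The plan is to follow the classical proof that $B^{cyc}A \simeq LBA$ (Goodwillie, Loday \S 7.3), keeping track of the involutions throughout. We then check the equivalence on underlying spaces and on $C_2$-fixed points separately.

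\textbf{Construction of $\gamma$.} By Definition \ref{s sigma action}, $B^{di}A$ carries an $S^{\sigma}$-action. We also have the $C_2$-map $p: B^{di}A \to B^{\sigma}A$ of Construction \ref{cons dihedral bar}, induced by a map of real simplicial spaces. We define
\[
\gamma(x) = \bigl(z \mapsto p(z\cdot x)\bigr),
\]
the adjoint of $S^{\sigma}\times B^{di}A \to B^{di}A \to B^{\sigma}A$. Both the action and $p$ are $C_2$-equivariant, and $S^{\sigma}$ is a $C_2$-monoid. Hence $\gamma$ is equivariant for the conjugation action on $L^{\sigma}B^{\sigma}A$.

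\textbf{Fibration comparison.} Evaluation at the base point $1\in S^{\sigma}$ is a fixed point, so $ev_1: L^{\sigma}B^{\sigma}A\to B^{\sigma}A$ is a $C_2$-fibration. Its fibre is $\Omega^{\sigma}B^{\sigma}A$, which is $C_2$-equivalent to $A$ by Theorem \ref{sigma fold deloop}.

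On the other side, $p: B^{di}A \to B^{\sigma}A$ is the realization of the levelwise projection $A\times A^{n}\to A^{n}$. Since $A$ is grouplike (connected), the Bousfield–Friedlander/Puppe realization lemma applies. It shows that $p$ is a quasifibration with fibre $A$, and likewise $p^{C_2}$ is one on fixed points. For the fixed points we use the real simplicial identification of $(B^{di}A)^{C_2}$ as in the proof of Theorem \ref{sigma fold deloop}, via a two-sided construction, or equivalently an edgewise subdivision whose levels are $C_2$-fixed subspaces.

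We have $ev_1\circ\gamma = p$. The map induced by $\gamma$ on fibres, $A \to \Omega^{\sigma}B^{\sigma}A$, is the map of Theorem \ref{sigma fold deloop}. Comparing long exact sequences of the fibre sequences, for underlying spaces and for $C_2$-fixed points separately, then gives the result. The base is $B^{\sigma}A$ in both cases, and on fixed points it is $(B^{\sigma}A)^{C_2}$, where the fibre over a fixed point is the fixed points of the fibre.

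\textbf{Main obstacle.} The hardest step is the fixed-point statement: showing that $(B^{di}A)^{C_2}\to (B^{\sigma}A)^{C_2}$ is a quasifibration with fibre $A^{C_2}$. Here $A^{C_2}$ is not a monoid, so the classical argument does not apply directly. I would first apply Segal's edgewise subdivision $sd_e$. This turns a real simplicial space into a simplicial $C_2$-space with the same realization, and the fixed points then commute with realization levelwise.

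Level $n$ of $sd_e B^{di}A$ is $A^{2n+2}$ with a reversal involution. Its fixed points are tuples $(a_0,a_1,\dots,a_n,b,\tau a_n,\dots,\tau a_1)$ with $a_0,b\in A^{C_2}$. The projection to level $n$ of $sd_e B^{\sigma}A$ has fixed points $(a_1,\dots,a_n,b)$ with $b \in A^{C_2}$, and the fibre of this projection is $A^{C_2}$ (the coordinate $a_0$). The simplicial operators act on this fibre coordinate through homotopy equivalences: multiplication by elements of $A$ twisted as $m'\mapsto m\,m'\,\tau(m)$, which are equivalences because $A$ is grouplike. The realization lemma then gives the quasifibration on fixed points. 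Checking this is where most of the care goes.
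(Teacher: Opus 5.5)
Your proposal is correct and follows essentially the same route as the paper. There, too, $\gamma$ is the adjoint of the $S^{\sigma}$-action composed with $B^{di}A\to B^{\sigma}A$, and the fibre sequence $A\to B^{di}A\to B^{\sigma}A$ is compared with the free loop fibration on underlying spaces and on fixed points. Your edgewise-subdivision description of $(B^{di}A)^{C_2}\to(B^{\sigma}A)^{C_2}$, with fibre $A^{C_2}$ and the twisted conjugation action, is exactly the paper's $B(A^{C_2},A,A^{C_2})\to B(*,A,A^{C_2})$.

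One point that both your sketch and the paper leave implicit: the "five lemma" at the basepoint does not cover every component. The base $(B^{\sigma}A)^{C_2}\simeq B(*,A,A^{C_2})$ has $\pi_0\cong\pi_0(A^{C_2})$, which can be nontrivial (e.g.\ $A=S^1$ with complex conjugation). So the fibre comparison has to be made over the midpoint of the edge $[b]$ for $b$ in each component of $A^{C_2}$. This reduces to the basepoint case by concatenating with the half-edge from the vertex to that midpoint, which identifies the homotopy fibres compatibly with $\gamma$.
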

	The proof is entirely analogous to the non-equivariant case. We briefly mention the details here.
	\begin{proof}
		The $C_2$-equivariant map $\gamma$ is the adjoint of the following composition of $S^{\sigma}$-action map \ref{s sigma action} and projection map \ref{cons dihedral bar}:
		$$S^{\sigma} \times B^{di}A \longrightarrow B^{di}A \longrightarrow B^{\sigma}A.$$
		The map fits into the commutative diagram:
		\begin{myeq}\label{diagram free loop}
 \begin{tikzcd}[cramped]
			A && {B^{di}A} && {B^{\sigma}A} \\
			{\Omega^{\sigma} B^{\sigma}A} && {L^{\sigma} B^{\sigma}A} && {B^{\sigma}A}
			\arrow[from=1-1, to=1-3]
			\arrow["\simeq"', from=1-1, to=2-1]
			\arrow[from=1-3, to=1-5]
			\arrow["\gamma"', from=1-3, to=2-3]
			\arrow[equals, from=1-5, to=2-5]
			\arrow[from=2-1, to=2-3]
			\arrow["ev", from=2-3, to=2-5]
		\end{tikzcd}
\end{myeq}
		where, by Theorem \ref{sigma fold deloop}, the left vertical equivalence holds; and the bottom row forms a fiber sequence in $\text{Top}^{C_2}$. Therefore, by five lemma the proof is concluded, provided we establish that the top row is a fiber sequence. As the non-equivariant case is addressed by classical theory, enough to show it is a fiber sequence on fixed points, i.e. $A^{C_2} \rightarrow (B^{di}A)^{C_2} \rightarrow (B^{\sigma}A)^{C_2}$ is a fiber sequence of spaces. This is a consequence of the arguments that follow:
				\begin{itemize}
					\item From Theorem \ref{sigma fold deloop}, $(B^{\sigma}A)^{C_2} \cong B(*,A,A^{C_2})$ with action of $a \in A$ on $A^{C_2}$ is given as: $a' \mapsto a a' \tau(a)$.
					\item The space $(B^{di}A)^{C_2}$ is the quotient of the disjoint union of $(B^{di} _{2k+1} A \times \Delta^{2k+1})^{C_2}$, consisting of points $\{(a', a_1,a_2,...,a_n, a, \tau a_n, \tau a_{n-1},..., \tau a_1) \times (v_0, v_1, ..., v_n, v_n, ..., v_0)\}$, where $a',a \in A^{C_2}$. Additionally, it is homeomorphic to $B(A^{C_2}, A, A^{C_2})$, with left-action of $a^{L} \in A$ on $A^{C_2}$ is given as: $a \mapsto  a^{L} a \tau(a^{L})$ and right-action of $a^{R} \in A$ on $A^{C_2}$ is given as: $a \mapsto \tau(a^{R}) a a^{R}$.
					\item This results in the fiber sequence: $A^{C_2} \rightarrow B(A^{C_2}, A, A^{C_2}) \rightarrow B(*, A, A^{C_2})$.
				\end{itemize}
	\end{proof}

\vspace*{0.2cm}

\section{Equivariant Stable Homotopy Theory}\label{eqsthtpy}

	In this section, we recall some preliminaries from equivariant homotopy theory, whose classical references include \cite{LMS86}, \cite{MM02}, \cite{HHR21}, \cite{schwede2010lectures}. We refer to \cite{NS18} for the $\infty$-categorical point of view.

	\subsection{$G$-spectra}
	The $\infty$-category of $G$-spectra, we shall be considering, is the one with $\pi_{*}$-isomorphisms as equivalences. We begin with the definition of the homotopy groups of orthogonal spectra with $G$-action.
\begin{defn}
\begin{enumerate}
\item For $X \in Fun(BG, Sp^{O})$ and $V$, a $G$-representation, we define the following based spaces:
			$$X(V) := L(\R ^{n}, V)_{+} \wedge _{O(n)} X_n$$ 
			(\cite[Equation 2.2]{schwede2010lectures}), where $L(\R ^{n}, V)$ is the space of linear isometries from $\R ^n$ to $V$, admitting a natural $O(n)$-action, and the space $X(V)$ admits $G$-action via diagonal action.\\
\item The homotopy groups of $X$ can be defined as 
	$$\pi ^{H} _{n} X := \underset{{W \in G \text{-rep}}}{\colim } [S^{ W+n}, X(W)]^{H} _{*}.$$
\end{enumerate}
\end{defn}

\vspace*{0.2cm}

One notes that a map of $G$-spectra is a $\underline{\pi}_{*}$-isomorphism if and only if all the geometric fixed points are weakly equivalent. 
We note the relevant pieces in the following definition.
\begin{defn}
		\begin{enumerate}
			\item Let $X \in Fun(BG, Sp^{O})$. One defines the \textit{geometric fixed points} as an orthogonal spectrum $(\Phi^{G} X)_n := X(\R^n \otimes \rho_{G})^G $ \cite[\S 7.3]{schwede2010lectures}.\\
			\item The $RO(G)$-graded homotopy groups of $X$ can be defined as follows:
			$$\pi ^{H} _{V} X := \underset{W \in G \text{-rep}}{\colim} [S^{V \oplus W}, X(W)]^{H} _{*}$$
			for $V \in RO(G)$. \\
			\item A map $f: X \rightarrow Y$ in $Fun(BG, Sp^{O})$ is called an equivalence if for all $H \leqslant G$, the map $\Phi^{H}(f): \Phi^{H} X \rightarrow \Phi^{H}Y$ is a stable equivalence of orthogonal spectra. This is equivalent to \cite[Theorem 7.12]{schwede2010lectures} :  $\pi ^{H} _{n} (f): \pi ^{H} _{n} X \rightarrow \pi ^{H} _{n} Y$ is an isomorphism for all $n \in \Z$. \\
			\item The $\infty$-category of \textbf{genuine $G$-spectra} $Sp^G$ is obtained from $N(Fun(BG, Sp^{O}))$ by inverting above equivalences using Dwyer-Kan localization, This admits a symmetric monoidal structure such that the geometric fixed point functor $\Phi^{H} : Sp^{G} \rightarrow Sp$ is symmetric monoidal.
		\end{enumerate}
	\end{defn}
	
\vspace*{0.2cm} 

	The level-wise fixed point functor is homotopically well-defined on $\Omega$-$G$-spectra \cite[\S 7.1, Definition 3.18]{NS18}, and becomes a lax symmetric monoidal functor $(-)^H: Sp^G \rightarrow Sp$. The embedding of the trivial representation into the regular representation $\rho_G$ induces a natural transformation of lax symmetric monoidal functors $(-)^H \rightarrow \Phi^H (-)$. 
	
	We also have a functor from spectra with $G$-action to the category of spectra as follows $Sp^{BG} \longrightarrow Sp$ given by $X\mapsto F(EG_+,X)^G$, which extends to the \textit{homotopy fixed point} functor
	$$(-)^{hG}: Sp^G \xrightarrow{\text{(forgetful) }i} Sp^{BG} \rightarrow Sp.$$
The homotopy fixed points functor $(-)^{hG}$ is also lax symmetric monoidal.	The map $EG_+ \to S^0$ induces a natural transformation of these two functors: $(-)^G \rightarrow (-)^{hG}$. Following  \cite[Theorem II.2.7, Corollary II.2.8]{NS18} one may regard $Sp^{BG}$ as a full subcategory of $Sp^G$ of \textbf{Borel complete} genuine $G$-spectra, i.e. all such $X \in Sp^G$ for which the natural map $X^H \rightarrow X^{hH}$ is an equivalence of spectra $\forall H \leqslant G$.
	
	\begin{thm}
		The forgetful functor fits into an adjunction
		$$\text{(forgetful) }i: Sp^G \rightleftarrows Sp^{BG}: B_{G}$$
		such that the right adjoint $B_{G}$ is fully-faithful with essential image as the subcategory of Borel complete objects. Moreover, the forgetful functor is symmetric monoidal, while its right adjoint $B_{G}$ is lax symmetic monoidal.
	\end{thm}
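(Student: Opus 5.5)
The plan is to model Borel complete objects directly and build the right adjoint by hand. I will take $B_G(Y) := F(EG_+, \tilde Y)$, where $\tilde Y$ is any genuine $G$-spectrum whose underlying spectrum with $G$-action is $Y$; for instance the inflation obtained by regarding the orthogonal spectrum with $G$-action as an object of $Fun(BG,Sp^O)$. The first task is to check that this is well defined. A map $\tilde Y \to \tilde Y'$ that is an underlying equivalence induces an equivalence on $F(EG_+,-)$. The reason is that $EG_+$ is built from free cells $G/e_+\wedge D^n$, so $F(EG_+,-)$ only sees the underlying spectra with $G$-action. Concretely, $F(EG_+,X)^H \simeq F(EG_+,X)^{hH}$, and the latter is a limit over free cells of underlying data.

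Next I will establish the adjunction. For $X\in Sp^G$ and $Y\in Sp^{BG}$, there is a chain of equivalences
\[
\Map_{Sp^G}(X, F(EG_+,\tilde Y)) \simeq \Map_{Sp^G}(X\wedge EG_+, \tilde Y) \simeq \Map_{Sp^{BG}}(iX, Y).
\]
The last step holds because maps out of a free $G$-CW spectrum into $\tilde Y$ depend only on the underlying Borel data. I will prove this by induction over free cells, using $\Map_{Sp^G}(G_+\wedge A, Z)\simeq \Map_{Sp}(A, i_eZ)$ and passing to limits. This simultaneously shows that $i$ has right adjoint $B_G$.

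For full faithfulness, it suffices to show the counit $iB_G Y \to Y$ is an equivalence. This holds because $EG \to *$ is a non-equivariant equivalence, so the underlying spectrum of $F(EG_+,\tilde Y)$ is $Y$. For the essential image, the argument is as follows:
\begin{itemize}
\item An object $X$ lies in the image if and only if the unit $X\to F(EG_+,X)$ is an equivalence.
\item On $H$-fixed points, this unit is exactly the comparison $X^H\to X^{hH}$, since $EG$ restricted to $H$ is a model of $EH$.
\item A map of genuine $G$-spectra is an equivalence if and only if it induces equivalences on all categorical fixed points $(-)^H$, which is equivalent to the $\pi^H_*$-isomorphism criterion.
\end{itemize}
Together these identify the essential image with the Borel complete objects.

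For the monoidal claims, $i$ is symmetric monoidal because underlying spectra are computed levelwise and the smash product of orthogonal spectra is compatible with forgetting the $G$-action. Lax symmetric monoidality of $B_G$ then follows formally: a right adjoint of a symmetric monoidal functor inherits a canonical lax symmetric monoidal structure (doctrinal adjunction, \cite{HA}). Explicitly, the lax structure map is the composite
\[
F(EG_+,\tilde Y)\wedge F(EG_+,\tilde Y') \to F((EG\times EG)_+, \tilde Y\wedge\tilde Y') \simeq F(EG_+,\tilde Y\wedge \tilde Y').
\]

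I expect the main obstacle to be the careful $\infty$-categorical statement of the cell-induction step, namely that $\Map_{Sp^G}(EG_+\wedge X, -)$ only depends on Borel data, and making it natural. Rather than work this out from scratch, I would instead cite \cite[Theorem II.2.7]{NS18}, where exactly this is done.
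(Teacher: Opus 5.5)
Your proposal is correct and takes essentially the same approach as the paper. The paper gives no independent argument and simply invokes \cite[Theorem II.2.7, Corollary II.2.8]{NS18}, which is exactly where you land; your sketch is the standard argument behind that citation: the model $B_G(Y)=F(EG_+,\tilde Y)$, the counit check, the identification of the essential image via the comparison $X^H\to X^{hH}$, and doctrinal adjunction for the lax symmetric monoidal structure.
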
 
	
\vspace*{0.2cm}

	\begin{mysubsection}{Stable homotopy groups of spheres}
		There have been computations of $C_2$-equivariant stable homotopy groups of spheres due to Bredon \cite{Bre01}, \cite{Bre02}, Araki, Iriye \cite{AI82}, and recent advances like \cite{GI2024}. We mention some homotopy classes relevant to our computations.
		\begin{itemize}
			\item $\pi^{C_2} _0 (S^0) \cong \Z\{C_2 / e _{+}\} \oplus \Z\{C_2 / C_2\}$, where the generators corresponds to the maps 
\[S^0 \xrightarrow[\text{Thom}]{\text{Pontrjagin}} C_2 / e _{+} \rightarrow S^0, \quad \mbox{and } S^0 \xrightarrow{id} S^0 \mbox{ respectively.}\]
			\item $\pi^{C_2} _0 (S^{\rho}) \cong \Z\{i\}$, where the generator is the inclusion of fixed points $i: S^0 \hookrightarrow S^{\sigma}$.
			\item $\pi^{C_2} _{\sigma} (S^0) \cong \Z \{ \tilde{\eta}\}$ with the generator being the hopf map $\tilde{\eta} : S^{2\rho -1} \rightarrow S^{\rho}$ under the identifications $\mathbb{CP}^1 _{\tau} \simeq S^{\rho}$ and $S^{2\rho -1}$ as unit quaternions with the $C_2$-action as conjugation by $i$ : $(a+bi+cj+dk) \mapsto (a+bi-cj-dk)$.
		\end{itemize}
		
	\end{mysubsection}
	
\vspace*{0.2cm}

	\subsection{Mackey Functors}
	The homotopy groups of a spectrum $X$ in $Sp^G$ has the structure of a Mackey functor. We briefly recall its definition following \cite{LMM81}, \cite{Dress73}, \cite{GM95}.
		\begin{defn}
			The \textit{Lindner Category} $\mathcal{B}_G$ is the category consisting of:
			\begin{itemize}
				\item \textit{Objects}: Finite $G$-sets
				\item \textit{Morphisms}: Equivalent classes of $G$-spans, i.e. a morphism $f: X \rightarrow Y$ is given by an equivalence class of maps $X \leftarrow Z \rightarrow Y$ where two such maps $X \leftarrow Z_i \rightarrow Y$, for $i= 1,2$ are equivalent if there is a $G$-set isomorphism $Z_1 \rightarrow Z_2$ making the following diagram commute
				\[\scalebox{0.8}{$\xymatrix{
					& {Z_1}\ar[dd]^{\simeq} \ar[ld] \ar[rd] \\
					X && Y \\
					& {Z_2} \ar[lu] \ar[ru]}$}\]
				\item \textit{Compositions}: They are defined as pullbacks:
				\[\begin{tikzcd}[cramped]
					&& C \\
					& A & {\text{pullback}} & B \\
					X && Y && Z
					\arrow[from=1-3, to=2-2]
					\arrow[from=1-3, to=2-4]
					\arrow[from=2-2, to=3-1]
					\arrow[from=2-2, to=3-3]
					\arrow[from=2-4, to=3-3]
					\arrow[from=2-4, to=3-5]
				\end{tikzcd}\]
			\end{itemize}
\end{defn} 

\vspace*{0.2cm}

			Note that the mapping sets $\mathcal{B}_G (X,Y)$ are commutative monoids under disjoint union. Then the Burnside category is defined as the group completion of these mapping sets.
		\begin{defn}
			The Burnside category $Burn_G$ is defined by taking formal differences in the mapping sets of the Lindner category; explicitly the objects are same as that of Lindner category and the morphism sets are $Burn_G (X,Y) := (\mathcal{B}_G (X,Y), \amalg)^{gp}$. 
		\end{defn}
		
\vspace*{0.2cm}

		This enables us to define Mackey functors as enriched functors taking values in abelian groups.
				\begin{defn}
			A Mackey functor is a contravariant additive functor $\underline{M}: Burn_G ^{op} \rightarrow \mathsf{Ab}$. Note that the additivity ensures it takes disjoint unions ($\amalg$) to direct sums ($\oplus$).
\end{defn}

\vspace*{0.2cm}

 Unravelling the definition, one can think of a Mackey functor $\underline{M}$ as an assignment of an abelian group $\underline{M} (G/H)$ for each orbit $G/H$ along with morphisms between orbits giving structure maps. These are generated by the following maps.
			\begin{itemize}
				\item \textit{Restrictions}: For $K \leqslant H \leqslant G$ subgroups of $G$ the map $G/K \xleftarrow{id} G/K \twoheadrightarrow G/H$ induces restriction map $Res^{H} _{K}: \underline{M}(G/H) \rightarrow \underline{M}(G/K)$.
				\item \textit{Transfers}: For $K \leqslant H \leqslant G$ subgroups of $G$ the map $G/H \twoheadleftarrow G/K \xrightarrow{id} G/K$ induces transfer map $tr^{H} _{K}: \underline{M}(G/K) \rightarrow \underline{M}(G/H)$.
				\item \textit{Weyl group actions}: For $H \leqslant G$ and an element $\gamma \in W_{G}(H):= N_{G}(H)/H$, the Weyl group of $H$ in $G$, the map $G/H \xleftarrow{\gamma} G/H \xrightarrow{id} G/H$ induces an action of $W_{G}(H)$ on $\underline{M}(G/H)$.
		\end{itemize}		
				The category of $G$-Mackey functors, with natural transformations as morphisms, is denoted as $\MM_G$.
			
		\vspace*{0.2cm}


		\begin{mysubsection}{$C_2$-Mackey Functors}
			A common way of describing Mackey functors is in terms of the Lewis diagram. For $G=C_2$, the  diagram depicting a $C_2$-Mackey functor is given by
			\[\begin{tikzcd}[cramped]
				&&&{\underline{M}(C_2/C_2)} \\
\uM	:  \\
			&&& {\underline{M}(C_2/e)}
				\arrow["{Res^{C_2} _e}"', shift right, from=1-4, to=3-4]
				\arrow["{tr^{C_2} _e}"', shift right, from=3-4, to=1-4]
				\arrow["{W_{C_2}(e) \cong C_2}"', shift left, from=3-4, to=3-4, loop, in=300, out=240, distance=5mm]
			\end{tikzcd}\]
		\end{mysubsection}
		
\vspace*{0.2cm}

		\begin{ex} We now note down some common $C_2$-Mackey functors. 
			\begin{enumerate}
				\item \textit{Constant Mackey functors (for $G = C_2$)}:
				\[\begin{tikzcd}[cramped]
					 &  && {\underline{\Z}:} & {\underline{\Z}(C_2/C_2)=\Z} && {\FII:} & {\FII(C_2/C_2)=\mathbb{F}_2} \\
					&  &&& {\underline{\Z}(C_2/e)=\Z} &&& {\FII(C_2/e)=\mathbb{F}_2}
					\arrow["id"', shift right, from=1-5, to=2-5]
					\arrow["id"', shift right, from=1-8, to=2-8]
					\arrow["{\times 2}"', shift right, from=2-5, to=1-5]
				\end{tikzcd}\]
				\item \textit{Burnside ring Mackey functor $\underline{A}$}: For finite group $G$, this is defined as a corepresentable functor $\underline{A}:= Burn_{G} (-, *)$. Explicitly, $$\underline{A}(G/H)= Burn_G (G/H,*) = Burn_H(*,*)= A(H) \text{ ; the Burnside ring of } H $$
				$$\Res^{H} _{K}: A(H) \rightarrow A(K) \text{ ; restriction of }H \text{-sets to } K \text{-sets}$$
				$$tr ^{H} _{K}: A(K) \rightarrow A(H) \text{ , given as } S \mapsto H \times_{K} S$$
				In case of $G=C_2$, we have the following description:
				\[\begin{tikzcd}[cramped]
					 &  && {\underline{A}:} & {\underline{A}(C_2/C_2)=A(C_2) \cong \Z \oplus \Z} \\
					&  &&& {\underline{A}(C_2/e)=A(e) \cong \Z}
					\arrow[shift right, from=1-5, to=2-5]
					\arrow[shift right, from=2-5, to=1-5]
				\end{tikzcd}\]
				with $tr^{C_2} _{e}= \begin{pmatrix} 1 \\ 0 \end{pmatrix}$ and $\Res^{C_2} _e = \begin{pmatrix} 1 & 2 \end{pmatrix}$.
				\item $\underline{\Z}_{-}$ is given as $\underline{\Z}_{-} (C_2/C_2) = 0$ and $\underline{\Z}_{-} (C_2/e)=\Z$, with $C_2$-action by $\{\pm 1\}$. The restriction and transfer maps are $0$.
			\end{enumerate}
		\end{ex}
	
\vspace*{0.2cm} 

		The following theorem states the existence of Eilenberg-MacLane $G$-spectra, which is characterized by the fact that its integral homotopy groups being concentrated at $0$. This appears in \cite[Theorem 5.3]{GM95}, \cite{LMM81}.
		\begin{thm} \label{Mackey map}
			For every Mackey functor $\underline{M}$, there exists a unique, upto weak equivalence, $G$-spectrum $H\underline{M}$ such that
			$\underline{\pi}_i (H\underline{M}) \cong 
			\begin{cases}
				\underline{M} & \text{  for } i = 0 \\
				0 & \text{  otherwise}
			\end{cases}$
			
			Moreover, for two Mackey functors $\underline{M}, \underline{M}'$; we have $[H\underline{M}, H\underline{M}'] \cong \MM_G (M,M')$.
		\end{thm}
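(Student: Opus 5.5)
The plan follows the classical non-equivariant construction of Eilenberg--MacLane spectra, carried out in $Sp^G$ with the equivariant Postnikov ($t$-)structure. The homotopy $t$-structure on $Sp^G$ is cut out by $\underline{\pi}_*$. Its heart is equivalent to $\MM_G$, and the functor $\underline{\pi}_0$ realises this equivalence. Both existence and the identification of mapping groups are then formal once we know three things.

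\begin{enumerate}
\item The representable spectra $\Sigma^\infty_+ G/H$ are connective.
\item They corepresent $\underline{\pi}_0$, in the sense that $[\Sigma^\infty_+ G/H, X]\cong \underline{\pi}_0(X)(G/H)$ naturally in $X$.
\item The Mackey functor $\underline{\pi}_0(\Sigma^\infty_+ G/H)$ is the represented functor $Burn_G(-,G/H)$.
\end{enumerate}

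The third point is the tom Dieck/Segal computation $\pi_0^K(\Sigma^\infty_+ G/H)\cong Burn_G(G/K,G/H)$. It is the main input, and I would cite it rather than reprove it (cf.\ \cite{LMM81}, \cite{GM95}).

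\emph{Existence.} Take a free presentation
\[\bigoplus_j Burn_G(-,T_j)\xrightarrow{\phi}\bigoplus_i Burn_G(-,S_i)\to \underline{M}\to 0,\]
with $S_i,T_j$ finite $G$-sets. By Yoneda and the computation above, $\phi$ is realised by a map of wedges of suspension spectra $\bigvee_j\Sigma^\infty_+T_j\to\bigvee_i\Sigma^\infty_+S_i$. Let $C$ be its cofiber. This $C$ is connective and $\underline{\pi}_0(C)\cong\underline{M}$, by right exactness of $\underline{\pi}_0$ on cofiber sequences of connective spectra. Set $H\underline{M}:=\tau_{\le 0}C$, the Postnikov truncation. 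Equivalently, one can kill higher homotopy inductively by attaching cells $G/H_+\wedge D^{n+1}$, which has no effect on $\underline{\pi}_0$.

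\emph{Uniqueness and maps.} Let $X$ be connective and $Y$ satisfy $\underline{\pi}_i(Y)=0$ for $i\neq 0$. I will show that $\underline{\pi}_0$ induces a bijection $[X,Y]\cong \MM_G(\underline{\pi}_0X,\underline{\pi}_0Y)$.

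First, reduce along $X\to\tau_{\le0}X$. Since $Y$ is $0$-truncated, the truncation adjunction gives $[X,Y]=[\tau_{\le 0}X,Y]$. We may therefore assume $X=C$ is the cofiber built above.

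Next, apply $[-,Y]$ to the cofiber sequence defining $C$. For a finite $G$-set $S$, $[\Sigma^\infty_+S,Y]=\underline{\pi}_0(Y)(S)$, and
\[[\Sigma\,\Sigma^\infty_+T,Y]=\underline{\pi}_{-1}(Y)(T)=0.\]
This yields an exact sequence $0\to[C,Y]\to\bigoplus_i\underline{\pi}_0Y(S_i)\to\bigoplus_j\underline{\pi}_0Y(T_j)$.

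Finally, by Yoneda this exact sequence is $\MM_G(-,\underline{\pi}_0Y)$ applied to the presentation of $\underline{M}$. Hence $[H\underline{M},Y]\cong \MM_G(\underline{M},\underline{\pi}_0Y)$. Taking $Y=H\underline{M}'$ gives the stated formula.

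Uniqueness follows from the same bijection. If $E$ is another spectrum with $\underline{\pi}_*E$ concentrated in degree $0$ and equal to $\underline{M}$, then the identity of $\underline{M}$ lifts to a map $H\underline{M}\to E$. This map is a $\underline{\pi}_*$-isomorphism, hence an equivalence in $Sp^G$.

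The only non-formal step is item 3, the computation of $\underline{\pi}_0$ of suspension spectra of orbits as the Burnside category. Everything else is the standard $t$-structure argument.
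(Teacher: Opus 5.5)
The paper gives no proof of this statement. It recalls it from \cite[Theorem 5.3]{GM95} and \cite{LMM81} and uses it as a black box. Your proof is correct, and it is essentially the standard argument behind those references. You realise a presentation of $\underline{M}$ by representables as a map of wedges of orbit suspension spectra, truncate the cofiber, and compare $[-,Y]$ with $\MM_G(-,\underline{\pi}_0Y)$ via Yoneda. What it adds over the citation is that it isolates the single non-formal input: the identification of $\underline{\pi}_0$ of orbit spectra with the Burnside category, compatibly with composition.

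There are a few harmless slips to fix:
\begin{enumerate}
\item $[\Sigma\,\Sigma^\infty_+T,Y]$ is $\underline{\pi}_1(Y)(T)$, not $\underline{\pi}_{-1}(Y)(T)$. It still vanishes because $Y$ is concentrated in degree $0$.
\item A presentation of a general Mackey functor may need infinitely many summands. In that case $[\bigvee_i\Sigma^\infty_+S_i,Y]$ is $\prod_i\underline{\pi}_0Y(S_i)$, not a direct sum. This is exactly $\MM_G\bigl(\bigoplus_i Burn_G(-,S_i),\underline{\pi}_0Y\bigr)$, so the comparison goes through unchanged.
\item You only establish the bijection $[X,Y]\cong\MM_G(\underline{\pi}_0X,\underline{\pi}_0Y)$ for $X$ with $\tau_{\le 0}X\simeq\tau_{\le 0}C$, not for all connective $X$ as stated. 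This covers $X=H\underline{M}$, which is all you need for both the mapping-group formula and uniqueness.
\end{enumerate}
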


\vspace*{0.2cm}
		
	The following identification is useful for our computations in the forthcoming sections. 		
		\begin{prop} \label{Mackey identify}
			We have a weak equivalence of spectra $\Sigma ^{1-\sigma} H\underline{\Z} \simeq H \underline{\Z}_{-}$.
		\end{prop}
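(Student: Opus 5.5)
By Theorem \ref{Mackey map}, an Eilenberg--MacLane spectrum is characterized up to equivalence by its integer-graded homotopy Mackey functors. So the plan is to show that $\underline{\pi}_n(\Sigma^{1-\sigma}H\uZ)$ vanishes for $n\neq 0$ and is isomorphic to $\uZ_-$ for $n=0$. Equivalently, we compute $\underline{\pi}_{n-1+\sigma}(H\uZ)$ at the two levels $C_2/e$ and $C_2/C_2$, together with restriction, transfer and the Weyl action.

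\textbf{Underlying level.} Restricting to the trivial group, $S^\sigma$ becomes $S^1$. Hence $\pi^e_{n-1+\sigma}H\uZ=\pi_n H\Z$, which is $\Z$ for $n=0$ and zero otherwise. The Weyl action of $C_2$ on this group comes from the degree of the antipodal reflection on $S^\sigma$, which is $-1$. So $C_2$ acts by the sign, as in $\uZ_-$.

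\textbf{Fixed level.} We use the cofiber sequence $C_{2+}\to S^0\to S^\sigma$, smash it with $H\uZ$, and apply $[\Sigma^{k}(-),H\uZ]^{C_2}$. Since $[\Sigma^k C_{2+},H\uZ]^{C_2}=\pi_{k}H\Z$, the long exact sequence involves $\pi^{C_2}_k H\uZ$ (which is $\Z$ for $k=0$ and zero otherwise), the groups $\pi^e_kH\Z$, and the groups $\pi^{C_2}_{k+\sigma}H\uZ$. The key map is the one between the $\Z$'s in degree $0$. It is either the transfer $\Z\to\Z$, which is multiplication by $2$, or the restriction, which is the identity; which one appears depends on the direction. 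Taking the dual form $S^{-\sigma}$, the relevant sequence is
\[
\pi^{C_2}_{n-1}H\uZ\to \pi^{C_2}_{n-1+\sigma}H\uZ \to \pi^e_{n-1}H\Z\cdots
\]
More cleanly, one can use the cofiber sequence $S(\sigma)_+=C_{2+}\to S^0\to S^\sigma$ and compute $[S^{n}\wedge S^{1-\sigma}\ldots]$. Either way, the map in question is $\Z\xrightarrow{\mathrm{id}}\Z$, given by restriction $\uZ(C_2/C_2)\to\uZ(C_2/e)$. Since restriction for $\uZ$ is the identity, hence an isomorphism, the long exact sequence forces $\pi^{C_2}_{n-1+\sigma}H\uZ=0$ for all $n$. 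This is the step needing the most care: one must correctly identify the connecting map as the restriction rather than the transfer. With the transfer, the cokernel $\Z/2$ would appear instead, and that belongs to the computation for $\Sigma^{\sigma-1}H\uZ$.

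\textbf{Assembling.} The top group is zero, so the restriction and transfer maps are forced to be zero. Together with the sign action at the underlying level, this gives $\underline{\pi}_0(\Sigma^{1-\sigma}H\uZ)\cong\uZ_-$, and all other homotopy Mackey functors vanish. Theorem \ref{Mackey map} then yields the equivalence $\Sigma^{1-\sigma}H\uZ\simeq H\uZ_-$.
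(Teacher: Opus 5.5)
Your argument is essentially the paper's proof: map the cofiber sequence $C_{2+}\to S^0\to S^\sigma$ into $H\uZ$, note that the map between the two copies of $\Z$ is the restriction of $\uZ$ (an isomorphism), conclude $\pi^{C_2}_{n-1+\sigma}H\uZ=0$ for all $n$, and finish with Theorem \ref{Mackey map}. Your explicit check of the sign action at $C_2/e$ is a detail the paper leaves implicit. One correction: your displayed ``dual form'' sequence has its terms in the wrong order. The correct segment is $\pi^{C_2}_nH\uZ\xrightarrow{\mathrm{res}}\pi^e_nH\Z\to\pi^{C_2}_{n-1+\sigma}H\uZ\to\pi^{C_2}_{n-1}H\uZ\xrightarrow{\mathrm{res}}\pi^e_{n-1}H\Z$, and $S^{-\sigma}\to S^0\to C_{2+}$ yields the same sequence. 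Writing it this way shows directly that the relevant map is restriction, so there is no need to hedge between restriction and transfer.
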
 
		
		\begin{proof}
			Using the cofiber sequence $C_2/e _{+} \rightarrow S^0 \rightarrow S^{\sigma}$, we arrive at the following long exact sequence:
			$$... \rightarrow \pi_i ^{C_2} (H\underline{\Z}) \rightarrow \pi_i ^{e} (H\underline{\Z}) \rightarrow \pi_i ^{C_2} (\Sigma^{1-\sigma} H\underline{\Z}) \rightarrow \pi_i ^{C_2} (\Sigma H\underline{\Z}) \rightarrow \pi_{i-1} ^{e} (H\underline{\Z}) \rightarrow ...$$
			The cases where one of the groups on either sides of $\pi_i ^{C_2} (\Sigma^{1-\sigma} H\underline{\Z})$ is non-zero, are $i=0,1$.
			Thus we have the following two sequences:
				$$... \rightarrow \pi_0 ^{C_2} (H\underline{\Z}) \cong \Z \xrightarrow{\simeq} \pi_0 ^{e} (H\underline{\Z}) \cong \Z \rightarrow \pi_0 ^{C_2} (\Sigma^{1-\sigma} H\underline{\Z}) \rightarrow \pi_0 ^{C_2} (\Sigma H\underline{\Z}) \cong 0 \rightarrow ...$$
				$$... \rightarrow \pi_1 ^{e} (H\underline{\Z}) \cong 0 \rightarrow \pi_1 ^{C_2} (\Sigma^{1-\sigma} H\underline{\Z}) \rightarrow \pi_1 ^{C_2} (\Sigma H\underline{\Z}) \cong \Z \xrightarrow{\simeq} \pi_{0} ^{e} (H\underline{\Z}) \cong \Z \rightarrow ...$$
				Therefore, we arrive at 
\[\underline{\pi} _i (\Sigma^{1-\sigma} H \underline{\Z}) \cong 
				\begin{cases}
					\underline{\Z}_{-} & \text{ for } i=0 \\
					0 & \text{ otherwise.}
				\end{cases}\]
The proof is now complete from Theorem \ref{Mackey map}.
		\end{proof}

\vspace*{0.2cm}
	
	\subsection{$C_2$-equivariant Steenrod operations}
	The $C_2$-equivariant Steenrod Algebra has been computed in \cite{hukriz}(see also \cite{Voe03}). To fix our notation, let $\mathcal{A}^{\bigstar} _{C_2}:= H\underline{\F_2} ^{\bigstar} H\underline{\F_2}$ denote the $C_2$-equivariant Steenrod Algebra, and the coefficient ring is denoted as 
		$$\mathcal{H}^{\bigstar}:= H^{\bigstar} _{C_2}(S^0; \underline{\F_2}) \cong \mathbb{F}_2[a_{\sigma}, u_{\sigma}] \oplus \Sigma  \mathbb{F}_2 \{a_{\sigma}^{-k} u_{\sigma}^{-l}\} \text{ for } k,l > 0 \text{; with } |a_{\sigma}|= \sigma, |u_{\sigma}|=\sigma -1$$
		Note that elements here are written in cohomological grading. The Steenrod algebra $\mathcal{A}^{\bigstar} _{C_2}$ is generated by equivariant Steenrod squares. We note some of their properties:
		
		\begin{enumerate}
			\item $|Sq^i _{C_2}| = \begin{cases}
				 k(1+\sigma) & \text{ for } i= 2k \\
				 k(1+\sigma)+1 & \text{ for } i=2k+1
			\end{cases}$
			\item $Sq^0 _{C_2} = id$ and $Sq^1 _{C_2}= \beta_{C_2}$, Bockstein homomorphism
			\item $\beta_{C_2} \circ Sq^{2i} _{C_2} = Sq^{2i+1} _{C_2}$
		\end{enumerate}
	
		The short exact sequence of Mackey Functors $\underline{\Z} \xrightarrow{\times 2} \underline{\Z} \rightarrow \underline{\F_2}$ induces the connecting homomorphism Bockstein $\beta_{C_2} : H\underline{\F_2} \rightarrow \Sigma H\underline{\Z}$, whose restriction gives the Bockstein $Res^{C_2} _{e}(\beta_{C_2}) = \beta : H\F_2 \rightarrow \Sigma H\Z$. We now make the following observation which we shall use later.
		\begin{prop} \label{beta sq2}
			The restriction map 
\[\Res^{C_2}_e : [\HFII, \Sigma^{1+\sigma} \HZ]^{C_2} \to [H\F_2, \Sigma^3 H\Z]\cong \F_2\{\beta \circ Sq^2\}\]
 is injective.
		\end{prop}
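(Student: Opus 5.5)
The plan is to identify the kernel of $\Res^{C_2}_e$ through the cofiber sequence $C_2/e_+ \to S^0 \to S^\sigma$, the same sequence used in Proposition \ref{Mackey identify}. Smashing it with $\HFII$ and mapping into $Y=\Sigma^{1+\sigma}\HZ$ gives the exact sequence
\[ [\Sigma^{\sigma}\HFII, Y]^{C_2} \xrightarrow{a_\sigma^\ast} [\HFII, Y]^{C_2} \longrightarrow [\HFII\wedge {C_2/e}_+, Y]^{C_2}\cong [H\F_2, \Sigma^3 H\Z].\]
The last identification is the Wirthmüller/induction isomorphism. The right-hand map is exactly $\Res^{C_2}_e$. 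So injectivity amounts to showing that the image of multiplication by $a_\sigma$,
\[ a_\sigma\cdot(-)\colon [\HFII, \Sigma \HZ]^{C_2} \to [\HFII,\Sigma^{1+\sigma}\HZ]^{C_2},\]
is zero.

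\textbf{Step 1: compute the source group.} I would determine $[\HFII,\Sigma\HZ]^{C_2}$ from the Bockstein long exact sequence for $\uZ\xrightarrow{2}\uZ\to\FII$ together with $\HFII^{\bigstar}\HFII=\mathcal{A}^{\bigstar}_{C_2}$. In integer degrees $\le 1$ the only Steenrod classes available are $1=Sq^0_{C_2}$ and $Sq^1_{C_2}=\beta_{C_2}$, after multiplying by coefficients from $\mathcal{H}^\bigstar$ of total integer degree $0$. Among these, the classes $u_\sigma^{-l}a_\sigma^{-k}$ lie in the $\Sigma$-part. I expect the group to be generated by the integral Bockstein $\beta_{C_2}$, possibly together with $2$-torsion coming from $\pi^{C_2}_\ast\HZ$ in degrees $-\sigma+\ast$. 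Nonequivariantly the analogous group is $[H\F_2,\Sigma H\Z]=\F_2\{\beta\}$, which gives a useful check via restriction.

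\textbf{Step 2: show $a_\sigma$ kills each generator.} For the generator $\beta_{C_2}$, I would use naturality of the Bockstein with respect to the $\HZ$-module structure. Multiplication by $a_\sigma^{\uZ}\in\pi_{-\sigma}^{C_2}\HZ$ commutes with $\beta_{C_2}$ and reduces to $a_\sigma\in\mathcal{H}^\sigma$. This gives $a_\sigma\beta_{C_2}=\beta_{C_2}\circ(a_\sigma\cdot)$. I would then analyse $\beta_{C_2}\circ(a_\sigma\cdot)\colon \HFII\to\Sigma^{1+\sigma}\HZ$ using that $2a_\sigma^{\uZ}=0$ and that $a_\sigma$ is the reduction of an integral class. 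The aim is to show this composite is divisible by $2$, or factors through $\HZ\xrightarrow{2}\HZ$, and hence vanishes on the $2$-torsion spectrum $\HFII$.

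\textbf{Main obstacle.} Step 2 is the hard part. The identity $\beta_{C_2}(a_\sigma x)=a_\sigma\beta_{C_2}(x)$ does not by itself make the product vanish, so one genuinely has to compute $\HZ^{1+\sigma}_{C_2}(\HFII)$. If the direct argument fails, I would instead compute $[\HFII,\Sigma^{1+\sigma}\HZ]^{C_2}$ outright. The route would be the Bockstein sequence in the $\HFII$-cohomology of $\HFII$ in degrees $\sigma$ and $1+\sigma$, where the relevant classes are $u_\sigma Sq^1_{C_2}$, $a_\sigma Sq^1_{C_2}$, and $Sq^2_{C_2}$ in the appropriate twist. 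From that computation I would check directly that every nonzero class restricts nontrivially, landing on $\beta\circ Sq^2$.

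One caveat should be resolved first. For the application in Theorem D the relevant target is $\Sigma^{2+\sigma}\HZ$, since $|\beta_{C_2}\circ Sq^2_{C_2}|=2+\sigma$ and its restriction is $\beta\circ Sq^2$ in degree $3$. So I would verify which suspension is intended and run the same argument with $Y=\Sigma^{2+\sigma}\HZ$ if needed; the kernel is then $a_\sigma\cdot[\HFII,\Sigma^2\HZ]^{C_2}$. In that degree the nonequivariant group $[H\F_2,\Sigma^2 H\Z]=0$ already constrains the source heavily, and this restriction check is the control I would use throughout.
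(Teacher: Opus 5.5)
Your opening move matches the paper's first step: the cofiber sequence $C_2/e_+\to S^0\to S^\sigma$ identifies the kernel of restriction with the $a_\sigma$-multiples. You are also right that the intended target is $\Sigma^{2+\sigma}\HZ$. The paper's proof works with $[\HFII,\Sigma^{2+\sigma}\HZ]^{C_2}$, and $\Sigma^3 H\Z$ only makes sense there. However, the step that actually carries the proof is missing.

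In the literal degree your Step 2 aims at something false. We have $[\HFII,\Sigma\HZ]^{C_2}\cong\mathrm{Ext}^1_{\MM_{C_2}}(\FII,\uZ)\cong\Z/2\{\beta_{C_2}\}$, and restriction maps this isomorphically onto $[H\F_2,\Sigma H\Z]$. The term preceding it in your sequence, $[H\F_2,\Sigma H\Z]$, maps in by the transfer. That transfer is zero, because $\mathrm{tr}\circ\mathrm{res}=2$ on maps into an $\HZ$-module. Hence $a_\sigma$ is injective there: $a_\sigma\beta_{C_2}\neq 0$, and $[\HFII,\Sigma^{1+\sigma}\HZ]^{C_2}\cong\Z/2$ has zero restriction. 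So the literal statement is false, and your fallback (every class restricting to $\beta\circ Sq^2$) is impossible for degree reasons.

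The same example shows why, after you switch to $\Sigma^{2+\sigma}\HZ$, the remark that $[H\F_2,\Sigma^2H\Z]=0$ ``constrains the source heavily'' proves nothing. The kernel you identified, $a_\sigma\cdot[\HFII,\Sigma^2\HZ]^{C_2}$, consists exactly of classes invisible to restriction, and you never show it vanishes.

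The missing ingredient is $[\HFII,\Sigma^2\HZ]^{C_2}=0$. The paper gets it by running the same cofiber sequence a second time. The kernel of $[\HFII,\Sigma^2\HZ]^{C_2}\to[H\F_2,\Sigma^2 H\Z]=0$ is the image of $[\Sigma^\sigma\HFII,\Sigma^2\HZ]^{C_2}=[\HFII,\Sigma^{2-\sigma}\HZ]^{C_2}$. By Proposition~\ref{Mackey identify} this is $[\HFII,\Sigma H\uZ_-]^{C_2}$, which is $\mathrm{Ext}^1_{\MM_{C_2}}(\FII,\uZ_-)$, and that group is $0$. To see the vanishing, use $0\to\uZ\xrightarrow{2}\uZ\to\FII\to 0$. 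First, $\mathrm{Hom}(\uZ,\uZ_-)=0$, since an equivariant map from the trivial module $\Z$ to the sign module $\Z$ vanishes. Second, $\mathrm{Ext}^1(\uZ,\uZ_-)=0$, since the kernel of $\underline{A}\to\uZ$ lives only at $C_2/C_2$, where $\uZ_-$ is zero.

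The paper records this term as $[\HFII,\Sigma^{1-\sigma}\HZ]^{C_2}\cong\MM_{C_2}(\FII,\uZ_-)$. The degree the sequence actually produces is $2-\sigma$, so the group is an $\mathrm{Ext}^1$ rather than a $\mathrm{Hom}$, but it vanishes as well. With this, the $a_\sigma$-image is zero and restriction is injective.
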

		
		\begin{proof}
			To prove this we use the cofiber sequence ${C_2 / e}_{+} \rightarrow S^0 \rightarrow S^{\sigma}$. This gives rise to the following diagram whose top row is exact. 
			\[\begin{adjustbox}{max width=\linewidth}
\begin{tikzcd}[cramped]
				{...} & {[\Sigma^{\sigma}\HFII,\Sigma^{2+\sigma}\HZ]^{C_2}} & {[\HFII,\Sigma^{2+\sigma}\HZ]^{C_2}} & {[\HFII \wedge {C_2 / e}_{+},\Sigma^{2+\sigma}\HZ]^{C_2}} & {...} \\
				& {[\HFII,\Sigma^{2}\HZ]^{C_2} \cong 0} && {[H\F_2,\Sigma^{3}H\Z]}
				\arrow[from=1-1, to=1-2]
				\arrow[from=1-2, to=1-3]
				\arrow["\cong", from=1-2, to=2-2]
				\arrow[from=1-3, to=1-4]
				\arrow["{res^{C_2}_e}"', from=1-3, to=2-4]
				\arrow[from=1-4, to=1-5]
				\arrow["\cong", shorten <=2pt, shorten >=2pt, from=1-4, to=2-4]
			\end{tikzcd}
\end{adjustbox}\]
			The computation of $[\HFII,\Sigma^{2}\HZ]^{C_2}$ in the above diagram proceeds via the diagram below in which the top row is again exact 		
			\[\begin{tikzcd}[cramped]
				{...} & {[\HFII,\Sigma^{1-\sigma}\HZ]^{C_2}} & {[\HFII,\Sigma^{2}\HZ]^{C_2}} & {[\HFII \wedge {C_2 / e}_{+},\Sigma^{2}\HZ]^{C_2}} & {...} \\
				&&& {[H\F_2,\Sigma^{2}H\Z]\; \cong\; 0}
				\arrow[from=1-1, to=1-2]
				\arrow[from=1-2, to=1-3]
				\arrow[from=1-3, to=1-4]
				\arrow[from=1-4, to=1-5]
				\arrow[equals, from=1-4, to=2-4]
			\end{tikzcd}\]
			and the subsequent set of isomorphisms	
			$$[\HFII,\Sigma^{1-\sigma}\HZ]^{C_2} \; \cong \; [\HFII,\HZ_{-}]^{C_2} \; \cong \;  \MM_{C_2}(\underline{\F}_2,\underline{\Z}_{-}) \; \cong \; 0$$
 where the first isomorphism comes from \ref{Mackey identify}, and the second isomorphism comes from \ref{Mackey map}.
		\end{proof}
		
		\vspace*{0.2cm}
		
\subsection{Equivariant Thom spectra}
We now recall the $C_2$-equivariant Thom spectrum functor and its properties. The details are discussed in \cite{nonabelianpoincare}, \cite{BD18}, \cite{HW20}.
First recall the non-equivariant $R$-module Thom spectrum construction associated to a stable $R$-fibration $E$ over $X$ for an $\mathbb{E}_{\infty}$-ring spectrum $R$ \cite[Definition 2.20]{ABG14}. This is defined as the colimit
$$X \xrightarrow{E} \Pic(R \text{-mod}) \hookrightarrow R \text{-mod}.$$
Here, $\Pic(R \text{-mod})$ is the $\infty$-groupoid of invertible $R$-modules and $X \rightarrow \Pic(R \text{-mod})$ is the classifying map for the $R$-fibration over $X$. This results in functorial construction of the colimit preserving functor, $R$-module Thom spectrum, as a left Kan extension:
\[\begin{tikzcd}[cramped]
	& {\Pic(R \text{-mod})} \\
	{\text{Top} _{ / {\tiny \Pic(R \text{-mod})}}} & {\Psh(\Pic(R \text{-mod}))} && {R \text{-mod}}
	\arrow[hook, from=1-2, to=2-2]
	\arrow[from=1-2, to=2-4]
	\arrow["\simeq", from=2-1, to=2-2]
	\arrow["{{\Th^{R}(-)}}"', curve={height=18pt}, from=2-1, to=2-4]
	\arrow[dashed, from=2-2, to=2-4]
\end{tikzcd}\]
A similar approach is taken to construct equivariant Thom spectrum functor.  Recall some definitions from \cite{shah23}, \cite{nardin17}, \cite{nardin22}, \cite{barwick2016parametrized}, \cite{horev} on parametrized higher category theory.
	A $G$-$\infty$-category is a coCartesian fibration over $\Obcat$, i.e. $\underline{\mathcal{C}} \rightarrow \Obcat$.

\begin{ex}
	These examples are from \cite[Example 1.4, Defn. 2.14]{nardin17}
	\begin{itemize}
		\item The $G$-$\infty$-category $\underline{\text{Top}}^{C_2}$ consists of the following data \\		
		(1) For each coset the corresponding fiber takes the from $\underline{\text{Top}}^{C_2} _{[C_2/H] }\simeq \text{Top}^H$. \\
		(2) The coCartesian lift of $\phi: C_2/C_2 \rightarrow C_2/e$ induces the restriction map:
		$$\res: \text{Top}^{C_2} \simeq \underline{\text{Top}}^{C_2} _{[C_2/C_2]} \rightarrow \underline{\text{Top}}^{C_2} _{[C_2/e]} \simeq \text{Top}$$
		$\underline{Sp}^{C_2}$ has similar description.
		
		\item The (discrete) $C_2$-$\infty$-category of finite $C_2$-sets is denoted as $\finC$, with objects as $C_2$-maps $U \rightarrow O$, where $O$ is a $C_2$-orbit, and morphisms as the triple
		\[\begin{tikzcd}[cramped]
			{U_1} & {\phi ^{*} U_1} & U & {U_2} \\
			{O_1} & {O_2} & {O_2} & {O_2}
			\arrow[from=1-1, to=2-1]
			\arrow[from=1-2, to=1-1]
			\arrow[from=1-2, to=2-2]
			\arrow[hook', from=1-3, to=1-2]
			\arrow["f", from=1-3, to=1-4]
			\arrow[from=1-3, to=2-3]
			\arrow[from=1-4, to=2-4]
			\arrow["\phi", from=2-2, to=2-1]
			\arrow[equals, from=2-2, to=2-3]
			\arrow[equals, from=2-3, to=2-4]
		\end{tikzcd}\]
		
	\end{itemize}
	
\end{ex}

\vspace*{0.2cm}

\begin{defn}
	A $C_2$-symmetric monoidal $C_2$-$\infty$-category is a cocartesian fibration $p: \underline{\mathcal{C}}^{\otimes} \rightarrow \finC$ with \textbf{$G$-Segal condition} of \cite[Defn. 2.2.3, Prop. 2.2.6]{nardin22}. The underlying $C_2$-$\infty$-category is the pullback under the functor $\Obcat \rightarrow \finC$ sending $O$ to $O = O$ and $O_1 \xleftarrow{\phi} O_2$ to 
	\[\begin{tikzcd}[cramped]
		{O_1} & {O_2} & {O_2} & {O_2} \\
		{O_1} & {O_2} & {O_2} & {O_2}
		\arrow[from=1-1, to=2-1]
		\arrow["\phi", from=1-2, to=1-1]
		\arrow["{=}", from=1-2, to=2-2]
		\arrow[equals, from=1-3, to=1-2]
		\arrow[equals, from=1-3, to=1-4]
		\arrow["{=}", from=1-3, to=2-3]
		\arrow["{=}", from=1-4, to=2-4]
		\arrow["\phi", from=2-2, to=2-1]
		\arrow[equals, from=2-2, to=2-3]
		\arrow[equals, from=2-3, to=2-4]
	\end{tikzcd}\]
\end{defn}

\vspace*{0.2cm}

\begin{ex}
	The $G$-$\infty$-category $\underline{\text{Top}}^{C_2}$ and $\underline{Sp}^{C_2}$ admits $C_2$-symmetric monoidal structures ${\underline{\text{Top}}^{C_2}}^{\otimes}$ and ${\underline{Sp}^{C_2}}^{\otimes}$. See \cite[Example 2.1.10]{nonabelianpoincare} for more details. Moreover, there is an essentially unique $C_2$-symmetric monoidal structure on $\underline{Sp}^{C_2}$ with sphere spectrum as unit by \cite[Cor. 3.28]{nardin17}.
\end{ex}

\vspace*{0.2cm}

We consider the case when $R$ is a $C_2$-$\mathbb{E}_{\infty}$-ring spectrum, which makes the (parametrized) module category $\underline{R \text{-mod}}$ a $C_2$-symmetric monoidal $\infty$-category. Denote this symmetric monoidal category as the coCartesian fibration $\underline{R \text{-mod}}^{\otimes} \twoheadrightarrow \underline{\text{Fin}}_{*} ^{C_2}$, with the underlying $C_2$-$\infty$-category as $\underline{R \text{-mod}} \twoheadrightarrow \Obcat$.

\vspace*{0.2cm}

\begin{mysubsection}{The Picard $C_2$-space $\PicR$}
	The Picard $C_2$-space is defined as the maximal $C_2$-$\infty$-groupoid of $\underline{R \text{-mod}}$ spanned by invertible $R$-modules, explicitly $\PicR \subseteq \underline{R \text{-mod}}$ is the $\infty$-subgroupoid spanned by invertible objects and coCartesian morphisms. Note that $\PicR$, being an $\infty$-groupoid, the map $\PicR \rightarrow \Obcat$ is a left fibration. Hence, it  corresponds to a $C_2$-space, $\PicR \in \text{Top}^{C_2}$, with $C_2$-fixed points given as $\PicR_{[C_2 / C_2]} \simeq \text{Pic}(\underline{R \text{-mod}}_{[C_2 / C_2]})$. In \cite{nonabelianpoincare}, authors described $C_2$-symmetric monoidal structure on Picard space.  
\end{mysubsection}

\vspace*{0.2cm}

\begin{mysubsection}{$C_2$-Thom spectrum functor} \label{C2 thom functor}
We now recall the construction and some properties of the equivariant Thom spectrum functor, based on \cite[Thm 5.0.2, Def 5.2.1, Prop 5.2.7]{nonabelianpoincare}.	Given a $C_2$-$\mathbb{E}_{\infty}$-ring spectrum $R$, the $R$-module Thom spectrum $C_2$-functor 
\[\Th^R: \underline{\text{Top}^{C_2}}_{/ {\tiny \PicR}} \simeq \underline{\text{Psh}}_{C_2}(\PicR) \rightarrow \underline{R\text{-mod}}\]
 is given as $C_2$-left Kan extension of the inclusion $\PicR \hookrightarrow \underline{R\mbox{-mod}}$ along the parametrized Yoneda embedding $j: \PicR \rightarrow \underline{\text{Psh}}_{C_2}(\PicR)$. Explicitly
	\[\begin{tikzcd}[cramped]
		& \PicR \\
		{\underline{\text{Top}^{C_2}}_{/{\tiny \PicR}}} & {\underline{\text{Psh}}_{C_2}(\PicR)} && {\underline{R\text{-mod.}}}
		\arrow["j"', hook, from=1-2, to=2-2]
		\arrow[from=1-2, to=2-4]
		\arrow["\simeq", from=2-1, to=2-2]
		\arrow["{{{\Th^R(-)}}}"', curve={height=18pt}, from=2-1, to=2-4]
		\arrow[dashed, from=2-2, to=2-4]
	\end{tikzcd}\]
The Thom spectrum satisfies the following:
	\begin{itemize}
		\item $Th$ \textit{strongly} preserves $C_2$-colimits.
		\item It accepts a $C_2$-symmetric monoidal refinement:
		$\Th^{\otimes} : (\underline{{\text{Top}}}^{C_2}_{/{\tiny \PicR}})^{\otimes} \rightarrow ({\underline{R\text{-mod}}})^{\otimes}$
		\item For an invertible $R$-module $E$ and a $C_2$-map $f: X \rightarrow \PicR$ which is $C_2$-homotopic to a constant map with value $E$, we get $\Th(f) \simeq E \wedge \Sigma ^{\infty} _{+} X$.
	\end{itemize}
\end{mysubsection}

\vspace*{0.2cm}

We now demonstrate that twisted monoids, under Thom spectrum functor, goes to $\mathbb{E}_{\sigma}$-algebras in $R$-modules. 
\begin{prop}\label{twalgth}
	The $C_2$-Thom spectrum functor carries an $\mathbb{E}_{\sigma}$-map $f: X \rightarrow \PicR$ to an $\mathbb{E}_{\sigma}$-$R$-algebra. 
\end{prop}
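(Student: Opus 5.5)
The plan is to derive the statement formally from the $C_2$-symmetric monoidal refinement of the Thom spectrum functor recorded in \ref{C2 thom functor}. The principle is that a $C_2$-symmetric monoidal functor induces a functor on categories of algebras over any $C_2$-$\infty$-operad. Once we identify an $\mathbb{E}_{\sigma}$-map $f: X \to \PicR$ with an $\mathbb{E}_\sigma$-algebra in $\underline{\text{Top}}^{C_2}_{/\PicR}$, applying $\Th^{\otimes}$ produces an $\mathbb{E}_\sigma$-algebra in $\underline{R\text{-mod}}$. That is precisely an $\mathbb{E}_\sigma$-$R$-algebra.

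\textbf{Step 1.} First I make the $C_2$-symmetric monoidal structure on the slice $(\underline{\text{Top}}^{C_2}_{/\PicR})^{\otimes}$ explicit. The Picard $C_2$-space $\PicR$ is a $C_2$-$\mathbb{E}_\infty$-space, being the invertible part of the $C_2$-symmetric monoidal category $\underline{R\text{-mod}}^{\otimes}$. The slice then carries the Day convolution / pointwise structure: $(X\xrightarrow{f}\PicR)\otimes(Y\xrightarrow{g}\PicR)$ is $X\times Y\xrightarrow{f\times g}\PicR\times\PicR\xrightarrow{\wedge_R}\PicR$, with norms defined by the analogous formula. This is the structure for which \cite{nonabelianpoincare} constructs $\Th^{\otimes}$. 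Under the equivalence with $\underline{\text{Psh}}_{C_2}(\PicR)$ it corresponds to the parametrized Day convolution.

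\textbf{Step 2.} Next I identify $\mathbb{E}_\sigma$-algebras in this slice. Here I use the standard fact (parametrized version, cf.\ \cite{nardin22}) that for a $C_2$-$\infty$-operad $\mathcal{O}$ and a $C_2$-$\mathcal{O}$-monoidal base, $\mathcal{O}$-algebras in the slice over an $\mathcal{O}$-algebra $B$ are equivalent to $\mathcal{O}$-algebras $A$ in $\underline{\text{Top}}^{C_2}$ (cartesian structure) together with an $\mathcal{O}$-algebra map $A\to B$. Restricting the $C_2$-$\mathbb{E}_\infty$-structure of $\PicR$ along $\mathbb{E}_\sigma\to\mathbb{E}_\infty$ makes $\PicR$ an $\mathbb{E}_\sigma$-algebra. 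An $\mathbb{E}_\sigma$-map $f$ is therefore exactly an $\mathbb{E}_\sigma$-algebra in $\underline{\text{Top}}^{C_2}_{/\PicR}$. By Remark \ref{E sigma algebra in top}, this covers twisted monoids mapping to $\PicR$ through twisted monoid maps.

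\textbf{Step 3.} Finally I compose the algebra object $\mathbb{E}_\sigma^{\otimes}\to(\underline{\text{Top}}^{C_2}_{/\PicR})^{\otimes}$ with $\Th^{\otimes}$. Because $\Th^{\otimes}$ is a map of cocartesian fibrations over $\finC$ preserving cocartesian edges, the composite is again a map of $C_2$-$\infty$-operads. So $\Th^R(f)$ is an $\mathbb{E}_\sigma$-algebra in $\underline{R\text{-mod}}^{\otimes}$, and $\mathbb{E}_\sigma$-algebras there are by definition $\mathbb{E}_\sigma$-$R$-algebras. Its underlying $R$-module is the Thom spectrum of $f$, and after forgetting the $C_2$-action the multiplication is the usual $\mathbb{E}_1$-structure.

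The main obstacle is Step 2: setting up the slice/algebra correspondence in the parametrized setting. This is where the cartesian structure on $C_2$-spaces and the $C_2$-$\mathbb{E}_\infty$-structure on $\PicR$ must be matched with the monoidal structure for which $\Th^{\otimes}$ is defined. I would try first to cite the operadic slice statement from \cite{nonabelianpoincare} (their treatment of Thom spectra of $\mathcal{O}$-algebra maps), which they establish for general $G$-operads, and specialize it to $\mathcal{O}=\mathbb{E}_\sigma$. Failing that, I would prove the equivalence directly using the universal property of Day convolution on $\underline{\text{Psh}}_{C_2}(\PicR)$.
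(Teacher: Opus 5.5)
Your proposal is correct and follows essentially the same route as the paper: equip the parametrized slice $\underline{\text{Top}}^{C_2}_{/\PicR}$ with its $C_2$-symmetric monoidal structure (the paper cites Appendix A.5 of \cite{nonabelianpoincare}), regard $f$ as an $\mathbb{E}_\sigma$-algebra there, and postcompose with $\Th^{\otimes}$, which sends coCartesian lifts of inert morphisms to coCartesian lifts. Your Step 2 just spells out the slice/algebra identification that the paper takes for granted.
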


\begin{proof}
	The parametrized slice category $\underline{{\text{Top}}}^{C_2}_{/ {\tiny \PicR}}$ admits a $C_2$-symmetric monoidal $\infty$-category structure $(\underline{{\text{Top}}}^{C_2}_{/ {\tiny \PicR}})^{\otimes}$, following \cite[Appendix A.5]{nonabelianpoincare}. An $\mathbb{E}_{\sigma}$-map $f: X \rightarrow \PicR$ gives rise to an $\mathbb{E}_{\sigma}$-algebra:
	\[\begin{tikzcd}[cramped]
		{\mathbb{E}_{\sigma} ^{\otimes}} && {(\underline{{\text{Top}}}^{C_2}_{/ {\tiny \PicR}})^{\otimes}} \\
		& {\underline{\mbox{Fin}}_{*} ^{C_2}}
		\arrow["f", from=1-1, to=1-3]
		\arrow[from=1-1, to=2-2]
		\arrow[from=1-3, to=2-2]
	\end{tikzcd}\]
	Composing with the symmetric monoidal refinement $\Th^{\otimes}$ \ref{C2 thom functor}, we get the following diagram which sends coCartesian lifts of inert morphisms to coCartesian lifts. 
	\[\begin{tikzcd}[cramped]
		{\mathbb{E}_{\sigma} ^{\otimes}} & {(\underline{{\text{Top}}}^{C_2}_{/ {\tiny \PicR}})^{\otimes}} & {({\underline{R\text{-mod}}})^{\otimes}} \\
		& {\underline{\mbox{Fin}}_{*} ^{C_2}}
		\arrow["f", from=1-1, to=1-2]
		\arrow[from=1-1, to=2-2]
		\arrow["{\Th^{\otimes}}", from=1-2, to=1-3]
		\arrow[from=1-2, to=2-2]
		\arrow[from=1-3, to=2-2]
	\end{tikzcd}\]
This gives an $\mathbb{E}_{\sigma}$-algebra structure on $\Th^R(f)$.
\end{proof}

\vspace*{0.2cm}

\begin{mysubsection}{Units of $C_2$-$\mathbb{E}_{\infty}$-ring spectra}
Given a $C_2$-$\mathbb{E}_{\infty}$-ring spectra $R$, the definition of the $C_2$-$\mathbb{E}_{\infty}$-space of units of $R$, $GL_1(R)$, mimics the construction in the non-equivariant case described in \cite{ABG14}, \cite{Sch04}. We follow the definition of the space of units in \cite{Rek11}.  
\begin{defn} \label{grouplike space}
		A $C_2$-$\mathbb{E}_{\infty}$-space $X \in \Alg _{\mathbb{E}_{\infty}}(\underline{\Top}^{C_2})$ is called \textit{group-like} if both $\pi_0(X^{C_2})$ and $\pi_0(X)$ are groups under the multiplicative structure induced from the $\mathbb{E}_{\infty}$-structure of $X$. We denote the full subcategory of group-like $C_2$-$\mathbb{E}_{\infty}$-spaces as $\Alg^{gp} _{\mathbb{E}_{\infty}}(\underline{\Top}^{C_2})$.
\end{defn}

\vspace*{0.2cm}

\begin{prop} \label{adjoint GL1}
	The inclusion functor $\Alg^{gp} _{\mathbb{E}_{\infty}}(\underline{\Top}^{C_2}) \hookrightarrow \Alg _{\mathbb{E}_{\infty}}(\underline{\Top}^{C_2})$ admits a right adjoint, denoted as $gr$.
\end{prop}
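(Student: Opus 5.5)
The plan is to build $gr$ levelwise on the parametrized (fiberwise) data and then check that the levelwise construction is coherent. A $C_2$-$\mathbb{E}_\infty$-space $X$ has an underlying $\mathbb{E}_\infty$-space $X^e$ (the fiber over $C_2/e$) and an $\mathbb{E}_\infty$-space $X^{C_2}$ (the fiber over $C_2/C_2$). These are linked by the restriction $X^{C_2}\to X^e$ and by the norm (multiplicative transfer) $X^e\to X^{C_2}$ coming from the $C_2$-symmetric monoidal structure.

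I would define $gr(X)\subseteq X$ as the parametrized sub-$C_2$-space cut out by the invertible components. Concretely:
\[
gr(X)^{C_2}=\pi_0(X^{C_2})^\times\times_{\pi_0(X^{C_2})}X^{C_2},\qquad gr(X)^{e}=\pi_0(X^{e})^\times\times_{\pi_0(X^{e})}X^{e}.
\]
Each of these is a union of path components. The construction is the equivariant analogue of taking the union of invertible components, as in \cite{ABG14}, \cite{Rek11}.

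The key steps are as follows.

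\textbf{Step 1.} Check that this union of components is closed under all the $C_2$-$\mathbb{E}_\infty$ structure maps. Restriction is a monoid map, so it sends units of $\pi_0(X^{C_2})$ to units of $\pi_0(X^e)$. The norm $N\colon \pi_0(X^e)\to \pi_0(X^{C_2})$ is multiplicative, so it sends units to units, since $N(x)N(x^{-1})=N(1)=1$. The Weyl action preserves units. Consequently the full subcategory of $C_2$-$\mathbb{E}_\infty$-algebras given by the pullback of $X^\otimes\to \finC$ onto these components is again a $C_2$-$\mathbb{E}_\infty$-space. Its inclusion into $X$ is a map of algebras, and its fibers are grouplike in the sense of Definition \ref{grouplike space}.

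\textbf{Step 2.} Verify the universal property. Let $Y$ be grouplike and let $f\colon Y\to X$ be a map of $C_2$-$\mathbb{E}_\infty$-spaces. Then $\pi_0(f^{C_2})$ and $\pi_0(f^e)$ are monoid maps out of groups, so they land in the units. Hence $f$ factors through $gr(X)$ on every fiber. Since $gr(X)\to X$ is a monomorphism of $C_2$-spaces (an inclusion of components levelwise), the space of such factorizations is contractible. From this one gets
\[
\Map(Y,gr(X))\simeq\Map(Y,X),
\]
where the mapping spaces of algebras are computed as limits of mapping spaces over the operadic diagram. A monomorphism stays a monomorphism after applying these limits. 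This yields the adjunction, with $gr$ right adjoint to the inclusion.

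The main obstacle is making Step 1 rigorous in the parametrized $\infty$-operadic language. I need that a sub-$C_2$-space of components closed under restriction, norms and multiplication inherits a $C_2$-$\mathbb{E}_\infty$-structure. I would handle this by working with the cocartesian fibration $X^\otimes\to\finC$ (or the $C_2$-commutative monoid $\finC$-diagram). I would take the full subcategory on objects whose components lie in $gr$, and check that cocartesian lifts of both active and inert morphisms stay inside it; this reduces exactly to the $\pi_0$ closure statements above.

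Alternatively, I could argue abstractly. The inclusion preserves limits, because grouplikeness is detected on $\pi_0$ of the fibers and limits of monoids of units behave well. Both categories are presentable, so the adjoint functor theorem supplies the right adjoint. I would fall back on this if the explicit construction becomes cumbersome.
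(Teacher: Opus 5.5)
Your explicit construction is essentially the paper's proof: the paper takes units levelwise for $C_2$-$\mathbb{E}_\infty$-sets and defines $gr(X)$ as the pullback of $X \to \underline{\pi}_0(X)$ along $gr(\underline{\pi}_0 X)$, reading the universal property off from the fact that maps out of grouplike objects land in the units. Your checks of closure under norms and transfers, and your monomorphism argument, fill in details the paper leaves implicit. One caveat concerns your fallback: limit preservation plus presentability produces a \emph{left} adjoint (group completion), not a right adjoint, so to obtain $gr$ abstractly you would instead need to show that the inclusion preserves colimits.
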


\begin{proof}
	A $\mathbb{E}_{\infty}$-algebra in $C_2$-sets $A \in \Alg _{\mathbb{E}_{\infty}}(\underline{\Set}^{C_2})$ is a functor into commutative monoids, that is,  both $A$ and its fixed points $A^{C_2}$ are commutative monoids, and the map between them is a map of monoids. One can look at the subgroup of all invertible elements $gr(A)$ as the functor:
	$$\begin{aligned}
		\Obcat & \xlongrightarrow{gr(A)} \Set \\
		C_2/e & \mapsto \text{invertible elements of  }A \\
		C_2 / C_2 & \mapsto \text{invertible elements of } A^{C_2} \text{ which restrict to invertible elements in }A
	\end{aligned}$$
	The $\mathbb{E}_{\infty}$-structure restricts to group-like $C_2$-$\mathbb{E}_{\infty}$-structure on $gr(A)$. Moreover, any $\mathbb{E}_{\infty}$-map from a group-like one to $A$ factors through $gr(A)$ by construction. 
	Now, for any $C_2$-$\mathbb{E}_{\infty}$-space $X$, we can construct the space $gr(X) \in \Alg^{gp} _{\mathbb{E}_{\infty}}(\underline{\Top}^{C_2})$ as the following pullback in $\underline{\Top}^{C_2}$:
	\[\begin{tikzcd}[cramped]
		{gr(X)} && X \\
		{gr(\pi_0(X))} && {\pi_0(X)}
		\arrow[from=1-1, to=1-3]
		\arrow[from=1-1, to=2-1]
		\arrow["{\pi_0}", from=1-3, to=2-3]
		\arrow[from=2-1, to=2-3]
	\end{tikzcd}\]
	Note that the functor 
\[\pi_0 : \Alg^{gp} _{\mathbb{E}_{\infty}}(\underline{\Top}^{C_2}) \rightarrow \Alg^{gp} _{\mathbb{E}_{\infty}}(\underline{\Set}^{C_2})\]
is given by $\pi_0(X) _{[C_2/e]} = \pi_0(X)$ and $\pi_0(X) _{[C_2/C_2]} = \pi_0(X^{C_2})$. The fact that this is the right adjoint follows from the construction.
\end{proof}

\vspace*{0.2cm}

\begin{defn}
	For a $C_2$-$\mathbb{E}_{\infty}$-ring spectrum $R$, we define the units $GL_1(R)$ as the group-like $C_2$-$\mathbb{E}_{\infty}$-space given by the composite functor:
	$$GL_1:\Alg _{\mathbb{E}_{\infty}}(\underline{Sp}^{C_2}) \xrightarrow{\Omega^{\infty}} \Alg _{\mathbb{E}_{\infty}}(\underline{\Top}^{C_2}) \xrightarrow{gr} \Alg^{gp} _{\mathbb{E}_{\infty}}(\underline{\Top}^{C_2}).$$
The $\E_\infty$-structure on $\Omega^\infty R$ considered here is the one induced by the $\E_\infty$-ring structure on $R$.	
\end{defn}

\vspace*{0.2cm}

Expanding the definition further, one can write the space of units $GL_1(R)$ as the following pullback in $\underline{\Top}^{C_2}$:
	\begin{myeq} \label{pullback units}
\begin{tikzcd}[cramped]
		{GL_1(R)} && \Omega^{\infty}R \\
		{\underline{\pi}_0(R) ^{\times}} && {\underline{\pi}_0(X)}
		\arrow[from=1-1, to=1-3]
		\arrow[from=1-1, to=2-1]
		\arrow["{\underline{\pi}_0}", from=1-3, to=2-3]
		\arrow[from=2-1, to=2-3]
	\end{tikzcd}
\end{myeq}
	Note that the level-wise commutative monoid structure on $\underline{\pi}_0(R)$, comes from the multiplicative structure of $R$. As $R$ is a $C_2$-$\E_\infty$-ring spectrum, $\upi_0(R)$ is a commutative Green functor. Recall that this carries the following data : 
	\begin{itemize}
		\item A commutative ring structure on $\pi^e _0(R)$ and $\pi^{C_2} _0(R)$.
		\item The restriction map $\Res^{C_2} _e : \pi^{C_2} _0(R) \rightarrow \pi^e _0(R)$ is a ring homomorphism.
		\item The involution $\tau: \pi^e _0(R) \rightarrow \pi^e _0(R)$ is a ring homomorphism.
		\item Frobenius reciprocity relations: $Tr(x Res(y))= Tr(x)y$ and $Tr(Res(y) x)= y Tr(x)$. 
 	\end{itemize}
 	Therefore, we have $\underline{\pi}_0(R) ^{\times} _{[C_2 / C_2]} = (\pi^{C_2} _0(R))^{\times}$, as the units of $\pi^{C_2} _0(R)$ restricts to units in $\pi^e _0(R)$ under the ring map $\Res^{C_2} _e$. In particular, $(GL_1(R))^{C_2}$ is just collection of connected components of $(\Omega^{\infty}R)^{C_2}$ corresponding to $(\pi^{C_2} _0(R))^{\times}$, resulting $\pi^{C_2} _0(GL_1(R)) \cong (\pi^{C_2} _0(R))^{\times}$, and for any $C_2$-representation $V$ with $V^{C_2} \neq 0$, we get $\pi^{C_2} _V(GL_1(R)) \cong \pi^{C_2} _V(R)$.

	The equivariant recognition principle of \cite{GM17}, \cite{CHL24} establishes an equivalence between group-like $C_2$-$\mathbb{E}_{\infty}$-spaces and connective $C_2$-spectra. Thus, we get a connective spectrum $gl_1(R)$ corresponding to the space of units $GL_1(R)$. The following proposition observes the standard adjunction such a construction satisfies. 
	\begin{prop}
		The functor $gl_1(-): \Alg _{\mathbb{E}_{\infty}}(\underline{Sp}^{C_2}) \rightarrow \underline{Sp}^{C_2} _{\geq 0}$ admits a left adjoint given by $\Sigma^{\infty} _{+} \Omega^{\infty}$.
	\end{prop}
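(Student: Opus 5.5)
The plan is to write the adjunction as a composite of adjunctions that are already available. We factor $gl_1$ as
\[
\Alg_{\E_\infty}(\underline{Sp}^{C_2}) \xrightarrow{\Omega^\infty} \Alg_{\E_\infty}(\underline{\Top}^{C_2}) \xrightarrow{gr} \Alg^{gp}_{\E_\infty}(\underline{\Top}^{C_2}) \simeq \underline{Sp}^{C_2}_{\geq 0}.
\]
The last equivalence is the equivariant recognition principle of \cite{GM17}, \cite{CHL24}. Left adjoints compose, and the left adjoint of an equivalence is its inverse. So it suffices to produce a left adjoint for each of the first two functors and then identify the composite with $\Sigma^\infty_+\Omega^\infty$.

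\textbf{Step 1.} By Proposition \ref{adjoint GL1}, $gr$ is right adjoint to the inclusion $\Alg^{gp}_{\E_\infty}(\underline{\Top}^{C_2}) \hookrightarrow \Alg_{\E_\infty}(\underline{\Top}^{C_2})$. Hence the left adjoint of $gr$ is this inclusion.

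\textbf{Step 2.} The second step is to show that $\Omega^\infty$ on commutative algebras has left adjoint $\Sigma^\infty_+$. The functor $\Sigma^\infty_+ : \underline{\Top}^{C_2} \to \underline{Sp}^{C_2}$ is $C_2$-symmetric monoidal, since it sends cartesian product to smash product and norms of spaces (coinductions) to HHR norms. Its right adjoint $\Omega^\infty$ is therefore lax $C_2$-symmetric monoidal. The parametrized analogue of the standard fact \cite[Cor.\ 7.3.2.7]{HA} is that a $C_2$-symmetric monoidal left adjoint induces an adjunction on $C_2$-$\E_\infty$-algebra objects, with the algebra-level right adjoint computed underlying. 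Applying this gives
\[
\Sigma^\infty_+ : \Alg_{\E_\infty}(\underline{\Top}^{C_2}) \rightleftarrows \Alg_{\E_\infty}(\underline{Sp}^{C_2}) : \Omega^\infty .
\]
In the parametrized setting we would justify this using \cite{nardin22} (and \cite{nonabelianpoincare}, Appendix A).

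\textbf{Step 3.} Composing Steps 1 and 2 with the recognition equivalence, for a connective $C_2$-spectrum $Y$ with corresponding grouplike $C_2$-$\E_\infty$-space $\Omega^\infty Y$ we obtain
\[
\Map_{\Alg}(\Sigma^\infty_+\Omega^\infty Y, R) \simeq \Map_{\Alg^{gp}}(\Omega^\infty Y, gr\,\Omega^\infty R) \simeq \Map_{\underline{Sp}^{C_2}_{\geq 0}}(Y, gl_1 R).
\]
These equivalences are natural in $Y$ and $R$, which gives the claimed adjunction.

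The main obstacle is Step 2. We must check that the parametrized monoidal adjunction really lifts to $C_2$-$\E_\infty$-algebras, which involves the norm maps and not only naive $\E_\infty$-structures, and that the $\Omega^\infty$ used in the definition of $GL_1$ agrees with the lax-monoidal right adjoint. A second point to check is that the recognition equivalence intertwines $\Omega^\infty$ of connective spectra with the forgetful functor from grouplike $C_2$-$\E_\infty$-spaces to $C_2$-$\E_\infty$-spaces, so that the composite left adjoint is indeed $\Sigma^\infty_+\Omega^\infty$.
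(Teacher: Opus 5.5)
Your proposal is correct and is essentially the paper's proof. The paper also writes the adjunction as the composite of three pieces: the recognition-principle equivalence $\underline{Sp}^{C_2}_{\geq 0} \simeq \Alg^{gp}_{\E_\infty}(\underline{\Top}^{C_2})$, the inclusion $\dashv gr$ adjunction of Proposition \ref{adjoint GL1}, and $\Sigma^\infty_+ \dashv \Omega^\infty$ on $\E_\infty$-algebras; the only difference is that you justify this last adjunction through the $C_2$-symmetric monoidality of $\Sigma^\infty_+$, whereas the paper simply asserts it.
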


	\begin{proof}
		This is a composite of the following adjunctions : 
		$$\Sigma^{\infty} _{+} \Omega^{\infty} : \underline{Sp}^{C_2} _{\geq 0} \rightleftarrows \Alg^{gp} _{\mathbb{E}_{\infty}}(\underline{\Top}^{C_2}) \rightleftarrows \Alg _{\mathbb{E}_{\infty}}(\underline{\Top}^{C_2}) \rightleftarrows \Alg _{\mathbb{E}_{\infty}}(\underline{Sp}^{C_2}): gl_1$$
		The leftmost adjunction is from \cite{GM17}, \cite{CHL24}, the second adjunction from the left comes from Proposition \ref{adjoint GL1}, and the rightmost one is the adjunction $\Sigma_{+} ^{\infty} \perp \Omega^{\infty}$.
	\end{proof}

\vspace*{0.2cm}

We now note that $BGL_1(R)$ admits an $C_2$-$\mathbb{E}_{\infty}$-map to $\PicR$ \cite{HW20}. In the $C_2$-$\infty$-groupoid $\PicR$, restricting to the $\infty$-subgroupoid of invertible objects which are equivalent to $R$ yields the $C_2$-space $BGL_1(R)$. To see this one may identify the $C_2$-space $\Omega^{\infty}R$ as the space of endomorphisms of $R$ in $R$-mod, i.e. 
$$\Omega^{\infty}R \simeq F_{R} (R,R) \simeq F(S^0 , R).$$
As a consequence, the pullback diagram \eqref{pullback units} exhibits $GL_1(R)$ as the space of $R$-linear automorphisms of $R$. Therefore, the maximal $\infty$-subgroupoid of invertible objects which are equivalent to $R$ in $\underline{R \text{-mod}}$ yields the $C_2$-space $BGL_1(R)$. 
Since this subcategory is closed under norms and smash products, it inherits the structure of grouplike $C_2$-$\mathbb{E}_{\infty}$-space together with a $C_2$-$\mathbb{E}_{\infty}$-map:
	$$BGL_1(R) \hookrightarrow \PicR$$	
	The $C_2$-$\mathbb{E}_{\infty}$-structure of both these spaces induces $C_2$-spectra $\underline{pic}(R)$ and $gl_1(R)$ such that the zeroth spaces corresponding to $\underline{pic}(R)$ and $\Sigma gl_1(R)$ are $\PicR$ and $BGL_1(R)$ respectively. Moreover, the construction of Thom spectra now yields a Thom spectrum construction for a $C_2$-map $X \to BGL_1R$. 
\end{mysubsection}

\vspace*{0.2cm}

\section{Constructing twisted $R$-algebras}\label{twralgconst}

We show that certain quotients give examples of twisted $R$-algebras. Our method involves the identification of these quotients as Thom spectra extending previous constructions in \cite{Bas17,Bas18,BSS20}. More precisely, we show that these quotients are of the form $\Th(\Omega^\sigma f)$, and hence these are twisted $R$-algebras. The underlying equivariant ring spectrum $R$ is a $C_2$-equivariant commutative ring spectrum, and our usual examples are $R=K\R$, the real $K$-theory spectrum, and $R=MU_\R$, the real complex cobordism spectrum. 

\subsection{$\KR/2$ as a twisted $K\R$-algebra}
We first construct $\KR/2$ as the Thom spectrum of a map $S^1 \to BGL_1 \KR$. The argument is analogous to \cite{Bas17}. 
\begin{mysubsection} {Units of $\KR$}
Recall that the homotopy groups of $\KR$ in the degrees $\ast(1+\sigma)$ are non-zero only when $\ast$ is even. That is,	 for each $n,\ \pi_{(n-1)+n\sigma}^{C_2}(\KR) = 0$, and
	$$\pi_{*(1+\sigma)}^{C_2} (\KR) \ \cong  \Z[\bar{\beta}^{\pm}]$$
	where $|\bar{\beta}|$ = $1+\sigma$. Moreover, $\underline{\pi}_0^{C_2} (\KR)$ is the constant Mackey functor $\uZ$. Therefore, $GL_1 (K\mathbb{R})$	can be identified as the pullback
	\begin{myeq} \label{gl1krpb}
\begin{tikzcd}[cramped]
		{GL_1 (\KR)} && {\Omega^{\infty}\KR} \\
		\\
		{\Z^{\times}} && {\pi_0^{C_2} (\KR)\cong \Z.}
		\arrow[from=1-1, to=1-3]
		\arrow[from=1-1, to=3-1]
		\arrow[from=1-3, to=3-3]
		\arrow[from=3-1, to=3-3]
	\end{tikzcd}
\end{myeq}	
\end{mysubsection}
	
	For a based $C_2$-space $X$, a $C_2$ map $S^{1} \wedge X \xrightarrow{f} BGL_1 (\KR)$, corresponds to a unit $\hat{f} \in \KR ^{0} (X)$, via 
\[[S^{1} \wedge X, BGL_1 \KR]^{C_2}  \cong  [X, GL_1 \KR]^{C_2}	 \cong \KR ^{0} (X)^{\times}.\] 
The unit $\hat{f}$ induces a $\KR$-module map $K\R \wedge X_{+} \xrightarrow{u_{f}} \KR$. 	

\begin{prop} \label{Thom colimit form}
		Suppose that $X$ is a $C_2$-space and $f$ be a $C_2$-map $f:S^{1} \wedge X \rightarrow BGL_1 (\KR) \hookrightarrow \underline{Pic}(\KR)$. Then, the $C_2$-Thom spectrum is equivalent to the homotopy pushout of $(\KR \xleftarrow{\pi_1} \KR \wedge X_{+} \xrightarrow{u_{f}} \KR)$ in the category of $\KR$-modules.
	
\end{prop}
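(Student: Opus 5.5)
The plan is to exploit that $\Th^R$ preserves $C_2$-colimits (\S\ref{C2 thom functor}). I will write the source $S^1\wedge X$ as a homotopy pushout of $C_2$-spaces and then compute the Thom spectrum of each piece.

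First, present the unreduced suspension-type decomposition. Since $S^1$ carries the trivial action, $S^1\wedge X$ is the reduced suspension, which is the homotopy pushout of
\[\ast \leftarrow X \rightarrow \ast.\]
To match the stated pushout involving $X_+$, I would instead use the unreduced suspension. Because $X$ is based, the unreduced suspension $\ast\leftarrow X\rightarrow\ast$ is $C_2$-equivalent to $S^1\wedge X$ when the basepoint of $X$ is $C_2$-fixed, which is automatic. Equivalently, I would realize $S^1\wedge X$ as the $C_2$-colimit of $\ast \leftarrow X \rightarrow \ast$.

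Second, restrict $f$ to each vertex. On the two cone points, $f$ restricts to the basepoint of $BGL_1\KR$, so each Thom spectrum is $\KR$ by the third property in \S\ref{C2 thom functor}. On $X$, the composite $X\to S^1\wedge X\to BGL_1\KR$ is null-homotopic, as is each cone inclusion. However, the two nullhomotopies coming from the two cones differ, and their difference is exactly the loop $X\to \Omega BGL_1\KR\simeq GL_1\KR$ adjoint to $f$, namely the unit $\hat f$.

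Third, compute the maps in the colimit. Choose the trivialization of $f|_X$ induced by the first cone. This identifies $\Th(f|_X)\simeq \KR\wedge X_+$, and the map to the first $\KR$ becomes $\Th$ of $X\to\ast$, which is the projection $\pi_1$. The map to the second $\KR$ differs by the automorphism of the trivial $\KR$-line bundle over $X$ given by $\hat f$. On Thom spectra, that automorphism acts as the $\KR$-module map $\KR\wedge X_+\to\KR\wedge X_+$ induced by $\hat f$, and composing with the projection gives $u_f$. Colimit preservation then yields the homotopy pushout $(\KR\xleftarrow{\pi_1}\KR\wedge X_+\xrightarrow{u_f}\KR)$.

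The main obstacle is the third step. It requires making precise, in the parametrized $\infty$-categorical setting, that a map $X\to GL_1\KR=\mathrm{Aut}_{\KR}(\KR)$ acts on the constant Thom spectrum $\KR\wedge X_+$ via $u_f$. I would handle this by noting that $\Th^R$ restricted to maps factoring through $BGL_1R$ is the left Kan extension of $BGL_1R\to R$-mod, which sends $\ast$ to $R$ with its $GL_1R$-action. For a map $\Sigma X\to BGL_1R$, the colimit over $\Sigma X$ then reduces, via the cone decomposition, to the coequalizer-type pushout of the identity action and the $\hat f$ action. It then remains to check that $\hat f$ induces $u_f$ under the identification $[X,GL_1\KR]^{C_2}\cong\KR^0(X)^\times$. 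That check is formal once equivariance is taken into account: all constructions are $C_2$-functors, and the fixed-point level is handled by the same argument applied in $\underline{\KR\text{-mod}}$.
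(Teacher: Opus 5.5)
Your proposal is correct and follows essentially the same route as the paper: write $S^1\wedge X$ as the $C_2$-colimit of $\ast\leftarrow X\rightarrow\ast$, use that $\Th^{\KR}$ strongly preserves $C_2$-colimits, trivialize the restriction of $f$ to $X$ so that its Thom spectrum is $\KR\wedge X_+$, and identify the two legs of the pushout as the projection and the clutching map $u_f$. Your third step spells out in more detail, via the $GL_1\KR$-action on the fibre $\KR$ and the induced automorphism of $\KR\wedge X_+$, what the paper asserts in a single sentence; your detour about reduced versus unreduced suspension is unnecessary, since the factor $X_+$ comes from the trivialization over $X$ and not from the choice of suspension model.
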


\begin{proof}
	As $S^{1} \wedge X$ is $C_2$-colimit \cite[Section 2.3]{horev} of the constant diagram $(* \leftarrow X \rightarrow *)$ in $\underline{\Top}^{C_2}$, and $C_2$-Thom spectrum functor strongly preserves $C_2$-colimits \cite[Theorem 5.0.2]{nonabelianpoincare}, we get a homotopy pushout  of $K\mathbb{R}$-modules	
	\[\begin{tikzcd}[cramped]
		{\Th^{C_2}(\tilde{f})\ \simeq K\mathbb{R} \wedge X_{+}} &&& {K\mathbb{R}} \\
		\\
		{K\mathbb{R}} &&& {\Th^{C_2}(f)}
		\arrow[from=1-1, to=1-4]
		\arrow[from=1-1, to=3-1]
		\arrow[from=1-4, to=3-4]
		\arrow[from=3-1, to=3-4]
	\end{tikzcd}\]
where $\tilde{f}$ is the composition $X \xhookrightarrow{incl} S^{1} \wedge X \xrightarrow{f} \underline{Pic}(K\mathbb{R})$. As the $GL_1 (K\mathbb{R})$-bundle over $\Sigma X$, restricts to trivial bundles over two copies of cones over X, and on the intersection $X$ the bundle is given by $\tilde{f}$; the two maps from $K\mathbb{R} \wedge X_{+}$ to $K\mathbb{R}$ are given by projection and $u_{f}$.	
\end{proof}

\vspace*{0.2cm}

We now put $X=S^1$ and compute the Thom spectrum to obtain $\KR/2$ as a $\KR$-module. 
\begin{prop} \label{kuR/2 as kuR-module}
	Let $f: S^{1} \rightarrow BGL_1 (K\mathbb{R}) \hookrightarrow \underline{Pic}(K\mathbb{R})$ be the map corresponding to the unit $(-1) \in \pi_{0} ^{C_2}(GL_1K\mathbb{R}) = \pi_{1} ^{C_2}(BGL_1 (K\mathbb{R}))$. Then $\Th^{K\R}(f) \simeq K\mathbb{R}/2$ as $K\mathbb{R}$-modules.
\end{prop}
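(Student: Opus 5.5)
We apply Proposition \ref{Thom colimit form} with $X = S^0$, so that $S^1 \wedge S^0 = S^1$ with trivial $C_2$-action. Under the correspondence $[S^1\wedge X, BGL_1\KR]^{C_2} \cong \KR^0(X)^\times$, the map $f$ corresponds to a unit $\hat f \in \KR^0(S^0)^\times$; here $\KR^0(S^0)$ denotes reduced cohomology of the based space $S^0$, so $\KR^0(S^0) \cong \pi_0^{C_2}(\KR) \cong \Z$. By hypothesis $\hat f = -1$. Since $X_+ = S^0_+ = S^0 \vee S^0$, the $\KR$-module $\KR \wedge X_+$ is $\KR \vee \KR$. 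The first wedge summand comes from the basepoint and the second from the non-basepoint.

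Next we identify the two maps $\KR\vee\KR \to \KR$ appearing in the pushout. The projection $\pi_1$ is the fold map $(1,1)$. For $u_f$, the bundle restricted to the basepoint is trivialized, so $u_f$ is the identity on the basepoint summand. On the other summand $u_f$ is multiplication by $\hat f = -1$, so $u_f = (1,-1)$. I would check this against the proof of Proposition \ref{Thom colimit form}: the clutching function over the intersection $X = S^0$ is the constant $1$ at the basepoint and $-1$ at the other point. This bookkeeping of which summand carries the unit is the only step I expect to need care. The point is that the basepoint summand must be glued by the identity on both sides, since a based map $S^1 \to BGL_1\KR$ is trivialized over the basepoint.

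The homotopy pushout of $\KR \xleftarrow{(1,1)} \KR\vee\KR \xrightarrow{(1,-1)} \KR$ is the cofiber of $\KR\vee\KR \xrightarrow{\left(\begin{smallmatrix}1&1\\1&-1\end{smallmatrix}\right)} \KR\vee\KR$, taken up to a sign on one map, which does not affect the cofiber. This matrix is a map of free $\KR$-modules. Performing the elementary column operation of subtracting the first column from the second, which is an automorphism of $\KR\vee\KR$, changes it to $\left(\begin{smallmatrix}1&0\\1&-2\end{smallmatrix}\right)$. Then performing the row operation of subtracting the first row from the second changes it to $\left(\begin{smallmatrix}1&0\\0&-2\end{smallmatrix}\right)$. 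The cofiber is therefore that of $\mathrm{id}\vee(-2)$, namely $0 \vee \KR/2 \simeq \KR/2$.

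Equivalently, the pushout is computed by first collapsing along the basepoint summand, where both maps are the identity. This reduces it to the cofiber of $1-(-1) = 2 : \KR \to \KR$. Hence $\Th^{\KR}(f) \simeq \KR/2$ as $\KR$-modules.
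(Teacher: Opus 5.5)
Your proposal is correct and follows essentially the same route as the paper. Both apply Proposition~\ref{Thom colimit form} to get the cofiber sequence $\KR\vee\KR\to\KR\vee\KR\to\Th^{\KR}(f)$ with matrix $\left(\begin{smallmatrix}1&1\\1&-1\end{smallmatrix}\right)$, and then reduce it to $\KR\xrightarrow{2}\KR$. Your explicit row and column operations, or equivalently the iterated-pushout argument, just spell out the step the paper compresses into ``which can be rewritten as $\KR\xrightarrow{2}\KR$.''
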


\begin{proof}
	The pushout square of $K\mathbb{R}$-module spectra following Proposition \ref{Thom colimit form}, induces a cofiber sequence of $K\mathbb{R}$-modules 	
	$$K\mathbb{R} \vee K\mathbb{R} \xrightarrow
	{\begin{pmatrix}
			1 & 1\\
			1 & -1
	\end{pmatrix}}
	K\mathbb{R} \vee K\mathbb{R} \longrightarrow \Th^{K\R}(f)$$
	which can be rewritten as $K\mathbb{R} \xrightarrow{2} K\mathbb{R} \rightarrow \Th^{K\R}(f)$.
\end{proof}

\vspace*{0.2cm}

We now prove that the map $f$ of Proposition \ref{kuR/2 as kuR-module} is $\simeq \Omega^{\sigma} g$. For this we first identify $S^1$ as the $\sigma$-fold loops on $\C P^\infty$ on which $C_2$ acts by complex conjugation. We call this space $\C P^\infty_\tau$. 

\begin{mysubsection}{Cellular filtration on $\C P^\infty_\tau$} \label{CP_infity cell}
The usual CW complex structure on $\C P^\infty$ induces an equivariant cell-complex structure on $\C P^\infty_\tau$ on which the $(2n)$-cell corresponds to a disk in $\C^n$ on which $C_2$ acts by complex conjugation. Thus we have a filtration	
\begin{myeq} \label{cellcp}
Z_1 = S^{\rho} \rightarrow Z_2 \rightarrow Z_3 \rightarrow \dots
\end{myeq}
	 where $Z_n$ is complex lines through origin in $\mathbb{C}^{n+1} (\tau)$ and the attaching map is given by the pushout square of $C_2$-spaces
	\begin{myeq} \label{attcp}
\begin{tikzcd}[cramped]
		{S(\mathbb{C}^{n} (\tau))} && {D(\mathbb{C}^{n} (\tau))} \\
		{Z_{n-1}} && {Z_n}
		\arrow[hook, from=1-1, to=1-3]
		\arrow[from=1-1, to=2-1]
		\arrow[from=1-3, to=2-3]
		\arrow[from=2-1, to=2-3]
	\end{tikzcd}
\end{myeq}
Identifying $S(\mathbb{C}^{n} (\tau))$ as $S^{n\rho -1}$ such that $(1,0,0,...,0)$ corresponds to the point at infinity, we obtain the cofiber sequence	
	\[\begin{tikzcd}[cramped]
		{S^{n\rho-1}} & {Z_{n-1}} & {Z_n.}
		\arrow[from=1-1, to=1-2]
		\arrow[from=1-2, to=1-3]
	\end{tikzcd}\]
	Now, the fixed point inclusion map 
\[\R P^{\infty} \cong ({\C P^{\infty}_{\tau}})^{C_2} \hookrightarrow {\C P^{\infty}_{\tau}}\]
 is the map $BC_2 \rightarrow BS^1$. As by \ref{fixed pts of loop}
\[(\Omega^{\sigma}{\C P^{\infty}_{\tau}})^{C_2} \simeq \hofib(({\C P^{\infty}_{\tau}})^{C_2} \hookrightarrow {\C P^{\infty}_{\tau}}),\]
 we get 
\[(\Omega^{\sigma}{\C P^{\infty}_{\tau}})^{C_2} \simeq (S^1 / C_2) \cong S^1,\]
 inducing a $C_2$-equivalence $S^1 \xlongrightarrow{\sim} \Omega^{\sigma}{\C P^{\infty}_{\tau}}$ via the adjoint of $Z_1 = S^{\rho} \rightarrow {\C P^{\infty}_{\tau}}$. 	
\end{mysubsection}

\begin{prop} \label{kuR/2 as twisted monoid}
	$K\mathbb{R}/2$ is a twisted algebra in the category of $K\mathbb{R}$-modules, that is, $K\mathbb{R}/2 \in \Alg_{\E_\sigma}(\KRmod)$.
\end{prop}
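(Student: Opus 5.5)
The plan is to show that the map $f\colon S^1 \to BGL_1(\KR)$ of Proposition \ref{kuR/2 as kuR-module} is an $\mathbb{E}_\sigma$-map. Proposition \ref{twalgth} then makes $\Th^{\KR}(f)\simeq \KR/2$ an $\mathbb{E}_\sigma$-$\KR$-algebra. To do this I will write $f \simeq \Omega^\sigma g$ for a $C_2$-map
\[ g\colon \C P^\infty_\tau \longrightarrow B^\sigma BGL_1(\KR) = B^\rho GL_1(\KR), \]
where $B^\rho GL_1(\KR)$ denotes the $C_2$-space $\Omega^\infty \Sigma^{\rho} gl_1(\KR)$. The source is handled by \ref{CP_infity cell}, which gives a $C_2$-equivalence $S^1 \simeq \Omega^\sigma \C P^\infty_\tau$ adjoint to the bottom cell $Z_1=S^\rho\to \C P^\infty_\tau$. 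The target is handled by the identification $\Omega^\sigma \Omega^\infty\Sigma^\rho gl_1\KR \simeq \Omega^\infty\Sigma gl_1\KR = BGL_1\KR$. A $\sigma$-fold loop map is an $\mathbb{E}_\sigma$-map, via the $\mathbb{E}_\sigma$-action on $\Omega^\sigma$ of Remark \ref{E sigma algebra in top} and Theorem \ref{sigma fold deloop}.

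To construct $g$, I first prescribe it on the bottom cell. A class in $[S^\rho, B^\rho GL_1\KR]^{C_2}=\pi^{C_2}_0(gl_1\KR)=(\pi_0^{C_2}\KR)^\times=\Z^\times$ is available by \eqref{gl1krpb}, and I choose $-1$. Its $\sigma$-fold loop, restricted along $S^1\simeq\Omega^\sigma S^\rho$, recovers the class $-1\in\pi_1^{C_2}BGL_1\KR$, which is exactly $f$.

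Next I extend over the cells of \eqref{cellcp}, the filtration with attaching maps $S^{n\rho-1}\to Z_{n-1}$ from \eqref{attcp}. The obstructions to extension lie in
\[ [S^{n\rho-1}, B^\rho GL_1\KR]^{C_2} \cong \pi^{C_2}_{(n-1)\rho-1}(gl_1\KR). \]
For $n\ge 2$ the representation $(n-1)\rho$ has nonzero fixed points, so by the remark after \eqref{pullback units} this group equals $\pi^{C_2}_{(n-1)\rho-1}(\KR)$. It vanishes because $\KR$ is even: $\pi^{C_2}_{(n-1)+n\sigma-\sigma-1}$ is exactly the vanishing group $\pi_{m\rho-1}$ recorded in the paragraph on units of $\KR$. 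Hence $g$ extends over every $Z_n$. Passing to the colimit is harmless, and any $\lim^1$ ambiguity is irrelevant since we only need existence.

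Finally, I check that $\Omega^\sigma g$ agrees with $f$ on all of $S^1$, not just on the bottom cell. This holds because $S^1\simeq \Omega^\sigma Z_1\to\Omega^\sigma\C P^\infty_\tau$ is already an equivalence, and $\Omega^\sigma$ applied to the bottom-cell inclusion is compatible with that equivalence by naturality of the adjunction.

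The main obstacle is the obstruction-theory step. It requires correctly identifying the relevant equivariant homotopy groups of $gl_1\KR$ with those of $\KR$ in degrees $m\rho-1$, and these degrees are not honestly positive: $\pi^{C_2}_{m\rho-1}$ involves both fixed and underlying information. I would first verify the comparison $gl_1$ versus $\KR$ on the fixed-point and underlying levels separately, using the shift of connected components in \eqref{pullback units}. I would then use the evenness of $\KR$ from \cite{Ati66}, i.e. $\upi_{m\rho-1}\KR=0$ as Mackey functors, so that both levels vanish.
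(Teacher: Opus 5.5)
Your proposal is correct and is essentially the paper's own argument: the paper also extends $\chi\circ\Sigma^\sigma f$ from $\Sigma^\sigma S^1=S^\rho=Z_1$ over the cells $\DD(n\rho)$ of $\C P^\infty_\tau$, with obstructions in $\pi^{C_2}_{(n-1)\rho-1}(gl_1\KR)\cong\pi^{C_2}_{(n-1)\rho-1}(\KR)=0$, and then applies Proposition \ref{twalgth}. Two small fixes are needed. First, $S^1$ is not equivalent to $\Omega^\sigma S^\rho$; you want the unit $S^1\to\Omega^\sigma\Sigma^\sigma S^1$, whose composite into $\Omega^\sigma\C P^\infty_\tau$ is the equivalence, after which adjunction gives $\Omega^\sigma g\simeq f$. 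Second, for $n=2$ the degree $\rho-1=\sigma$ has no fixed points, so the remark after \eqref{pullback units} does not apply; nevertheless $\pi^{C_2}_\sigma(gl_1\KR)=0$ follows from the cofiber sequence $C_2/e_+\to S^0\to S^\sigma$, because $\pi^e_1\KR=0$ and restriction on $\upi_0(gl_1\KR)=\Z^\times$ is injective.
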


\begin{proof}
	A map $f: X \longrightarrow BGL_1 (K\mathbb{R})$, for a grouplike $\mathbb{A}_{\infty}$-space with involution (twisted monoid) $X$, can be realised as a twisted monoid map $\Omega^{\sigma}\zeta \simeq f$ for some $\zeta: B^{\sigma}X \longrightarrow  B^{\rho}GL_1 (K\mathbb{R})$, if the map 
	$$\rho \circ \Sigma^{\sigma} f : \Sigma^{\sigma} X \longrightarrow B^{\rho}GL_1(K\mathbb{R})$$
	\noindent extensds over $B^{\sigma}X$
		\[\begin{tikzcd}[cramped]
		{\Sigma^{\sigma}X} &&& {\Sigma^{\sigma}BGL_1(K\mathbb{R})} \\
		\\
		{B^{\sigma}X} &&& {B^{\rho}GL_1(K\mathbb{R})}
		\arrow["{\Sigma^{\sigma}f}", from=1-1, to=1-4]
		\arrow["\chi"', from=1-1, to=3-1]
		\arrow["\chi"', from=1-4, to=3-4]
		\arrow["\zeta", from=3-1, to=3-4]
	\end{tikzcd}\]
	\noindent where $\chi$ is the adjoint to $C_2$-equivalances $X \xrightarrow{\sim} \Omega^{\sigma}B^{\sigma}X$ and $GL_1(K\mathbb{R}) \xrightarrow{\sim} \Omega^{\sigma}B^{\sigma}GL_1(K\mathbb{R})$.
	In that case the resulting Thom spectrum is a twisted $\KR$-algebra by Proposition \ref{twalgth}, therefore  $\Th^{K\R}(f) \simeq \Th^{K\R}(\Omega^{\sigma}\zeta) \in \Alg_{\E_\sigma}(\KRmod)$.
	
	Following Proposition \ref{kuR/2 as kuR-module}, we have the diagram:
	\[\begin{tikzcd}[cramped]
		{\Sigma^{\sigma}S^1} &&& {\Sigma^{\sigma}BGL_1(K\mathbb{R})} \\
		\\
		{B^{\sigma}S^1 \simeq {\C P^{\infty}_{\tau}}} &&& {B^{\rho}GL_1(K\mathbb{R})}
		\arrow["{\Sigma^{\sigma}(-1)}", from=1-1, to=1-4]
		\arrow["\chi"', from=1-1, to=3-1]
		\arrow["\chi"', from=1-4, to=3-4]
		\arrow["{want\ a\ \zeta}", dashed, from=3-1, to=3-4]
	\end{tikzcd}\]
 The $C_2$-cell structure of ${\C P^{\infty}_{\tau}}$ from \eqref{cellcp},\eqref{attcp} implies that the obstructions to extending the map $\Sigma^{\sigma}(-1)$ to ${\C P^{\infty}_{\tau}}$ lies in the groups
	\begin{equation*}
		\begin{aligned}
		 [S^{(n-1)+n\sigma},B^{\rho}GL_1(K\mathbb{R})]^{C_2}
			& \cong \pi_{(n-2)+(n-1)\sigma}^{C_2}(GL_1(K\mathbb{R})) \\
			& \cong \pi_{(n-2)+(n-1)\sigma}^{C_2}(K\mathbb{R}) \\ 
			& \cong 0.
		\end{aligned}
	\end{equation*} 
The proof is now complete by Proposition \ref{twalgth}.
\end{proof}

\vspace*{0.2cm}

\subsection{$H\uZ$ as a twisted $\kR$-algebra}
We now extend the construction of twisted algebras to other examples.   From \cite[Section 6]{Dug05}, we can identify the constant Mackey functor $\HZ$ as a quotient of $\kR$ as follows
     $$\Sigma^\rho \kR \xrightarrow{\beta} \kR \rightarrow \HZ.$$
     \begin{prop} 
     	The $C_2$-Thom spectrum of the map
\[S^{\rho +1} \xrightarrow{\fbeta} BGL_1(\kR) \hookrightarrow \underline{Pic}(K\mathbb{R}),\]
 corresponding to $\beta \in \pi _{\rho} ^{C_2}(\kR)$, is weakly equivalent to $\HZ$ as $\kR$-module.  
     \end{prop}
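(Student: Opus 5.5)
The plan is to follow Propositions \ref{Thom colimit form} and \ref{kuR/2 as kuR-module} closely, with $S^1$ replaced by $S^{\rho}$ and $K\R$ replaced by $\kR$.

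First, write $S^{\rho+1} = S^1 \wedge S^{\rho}$. The map $\fbeta$ then corresponds, under the adjunction
\[[S^1\wedge S^\rho, BGL_1\kR]^{C_2}\cong [S^\rho, GL_1\kR]^{C_2},\]
to a unit $\hat f \in \kR^0(S^\rho)^\times$. Since $\rho^{C_2}\neq 0$, we have $\pi_\rho^{C_2}(GL_1\kR)\cong \pi_\rho^{C_2}(\kR)$ by the discussion after \eqref{pullback units}. Under this identification I would take $\hat f = 1+\beta$, i.e.\ the unit that restricts to $1$ at the base point and whose reduced part is $\beta$. The proof of Proposition \ref{Thom colimit form} is only formal. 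It uses that $\Th$ strongly preserves $C_2$-colimits, together with the decomposition of $S^1\wedge X$ into two cones. So it applies verbatim to $\kR$ and $X=S^\rho$. It exhibits $\Th^{\kR}(\fbeta)$ as the homotopy pushout of
\[\kR \xleftarrow{\pi_1} \kR\wedge S^\rho_+ \xrightarrow{u_{f}} \kR.\]

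Next, split $S^\rho_+ \simeq S^0\vee S^\rho$ stably, so that $\kR\wedge S^\rho_+\simeq \kR\vee\Sigma^\rho\kR$. In these coordinates the projection $\pi_1$ is $(1,0)$. The map $u_f$ is the $\kR$-linear extension of $\hat f$, which is $(1,\beta)$. As in Proposition \ref{kuR/2 as kuR-module}, the pushout is the cofiber of
\[\kR\vee\Sigma^\rho\kR \xrightarrow{\begin{pmatrix}1&0\\ 1&\beta\end{pmatrix}} \kR\vee\kR.\]
Elementary row and column operations, which are equivalences of $\kR$-modules, reduce this matrix to $\mathrm{id}_{\kR}\oplus\beta$. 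The cofiber is therefore the cofiber of $\beta:\Sigma^\rho\kR\to\kR$, and by Dugger's cofiber sequence $\Sigma^\rho\kR\xrightarrow{\beta}\kR\to\HZ$ from \cite[Section 6]{Dug05} this cofiber is $\HZ$.

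The main obstacle is justifying the identification $u_f=(1,\beta)$. This requires care on two points. The first is the correspondence between $[S^\rho, GL_1\kR]^{C_2}$ (unbased maps, which carry a $\pi_0$ component) and the reduced class $\beta\in\pi_\rho^{C_2}(\kR)$. The second is that the \emph{additive} splitting of $\kR\wedge S^\rho_+$ matches the \emph{multiplicative} unit $1+\beta$. I would first check this underlying, and on fixed points via the restriction to $\pi_0$. Then I would use the fact that the transition function on the overlap $S^\rho$ is the unit itself, acting by module multiplication, to read off the second component as multiplication by $\beta$.
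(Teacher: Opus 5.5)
Your proposal is correct and is essentially the paper's proof: apply Proposition \ref{Thom colimit form} with $X=S^\rho$, split $\kR\wedge S^\rho_+\simeq \kR\vee\Sigma^\rho\kR$, and reduce the resulting matrix to the cofiber of $\beta$, which is $\HZ$ by \cite[Section 6]{Dug05}. The paper writes the matrix as $\begin{pmatrix}1&1\\ \beta&0\end{pmatrix}$, which is yours up to transposition and ordering of summands, and it takes for granted the identification $u_f=(1,\beta)$ that you flag as the main obstacle; that identification is immediate, because $u_f$ is the $\kR$-linear extension of the class $1+\beta\in\kR^0(S^\rho)$ and the stable splitting $\Sigma^\infty S^\rho_+\simeq S^0\vee\Sigma^\infty S^\rho$ decomposes this class additively.
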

     
     \begin{proof}
     	By invoking Proposition \ref{Thom colimit form}, we get the cofiber sequence
   	    	$$k\mathbb{R} \vee \Sigma^{\rho} k\mathbb{R} \xrightarrow
     	{\begin{pmatrix}
     			1 & 1 \\
     			\beta & 0
     	\end{pmatrix}}
     	k\mathbb{R} \vee k\mathbb{R} \longrightarrow \Th^{k\R}(\fbeta)$$
     	 which can be rewitten as $\Sigma^\rho \kR \xrightarrow{\beta} \kR \rightarrow \Th^{k\R}(\fbeta)$. 
     \end{proof}
     
\vspace*{0.2cm}
     
\begin{mysubsection}{Cellular filtration on $\HP^\infty$}     
We now write $S^{\rho+1}\simeq \Omega^\sigma \HP^\infty_\tau$, and recall the cell structure of $\HP^\infty_\tau$ from  \cite[Proposition 4.2]{HS20}.
 Let $\HP^\infty _{\tau}$ be  the infinite dimensional quaternionic projective space, with $C_2$-action as conjugation by $i$. Explicitly, 
\[\tau  [z_0 : z_1 : z_2 : ...] = [i z_0 i^{-1} : i z_1 i^{-1} : i z_2 i^{-1} : ...],\]
 where $i (a+ bi + cj + dk) i^{-1}= a +bi -cj -dk$. This constructs a $C_2$-cell structure on $\HP^\infty _{\tau}$ as follows     	
     	\begin{myeq} \label{cellhp} 
Y_1 = S^{2\rho} \rightarrow Y_2 \rightarrow Y_3 \rightarrow \dots
     	\end{myeq}
     	with the following pushout square of $C_2$-spaces
     	\begin{myeq}\label{atthp}
\begin{tikzcd}[cramped]
     		{S^{2n\rho -1}} & {Y_{n-1}} \\
     		{D^{2n\rho}} & {Y_n.}
     		\arrow[from=1-1, to=1-2]
     		\arrow[from=1-1, to=2-1]
     		\arrow[from=1-2, to=2-2]
     		\arrow[from=2-1, to=2-2]
     	\end{tikzcd}
\end{myeq}
\end{mysubsection}

     This allows us to identify $\HZ$ as a twisted monoid.
     
     \begin{prop} \label{hzmodkr}
     	The map $\fbeta: S^{\rho +1} \rightarrow BGL_1(\kR) \hookrightarrow \underline{Pic}(K\mathbb{R})$ can be identified as a $\sigma$-fold loop map of $\HP^\infty _{\tau} \rightarrow B^{\rho}GL_1(\kR)$, establishing $\HZ \in \Alg_{\E_\sigma} (\kR)$.
     \end{prop}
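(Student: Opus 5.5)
The plan mirrors the proof of Proposition \ref{kuR/2 as twisted monoid}. First, identify $S^{\rho+1}$ with $\Omega^\sigma \HP^\infty_\tau$ as twisted monoids. Using \ref{fixed pts of loop}, the fixed points $(\Omega^\sigma\HP^\infty_\tau)^{C_2}$ are the homotopy fiber of the fixed point inclusion $(\HP^\infty_\tau)^{C_2}\hookrightarrow \HP^\infty_\tau$. The fixed points of conjugation by $i$ on $\Hyp$ are $\C$, so $(\HP^\infty_\tau)^{C_2}\cong \C P^\infty$, and the inclusion is $BS^1\to BS^3$. Its homotopy fiber is $S^3/S^1\cong S^2$, which matches $(S^{\rho+1})^{C_2}=S^2$. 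The underlying space is $\Omega \HP^\infty\simeq S^3=S^{\rho+1}$. The bottom cell $Y_1=S^{2\rho}=\Sigma^\sigma S^{\rho+1}\to\HP^\infty_\tau$ then has adjoint $S^{\rho+1}\to\Omega^\sigma\HP^\infty_\tau$. This map is an equivalence on underlying spaces and on fixed points, where one checks it is the fiber inclusion $S^2\to \hofib$. Hence it is a $C_2$-equivalence, and $B^\sigma S^{\rho+1}\simeq \HP^\infty_\tau$ by Theorem \ref{sigma fold deloop}.

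Next, I would extend the adjoint map. As in the proof of Proposition \ref{kuR/2 as twisted monoid}, it suffices to extend
\[\chi\circ\Sigma^\sigma f_\beta: \Sigma^\sigma S^{\rho+1}=S^{2\rho}=Y_1\to B^\rho GL_1(\kR)\]
over $\HP^\infty_\tau$. I would proceed cell by cell along the filtration \eqref{cellhp}, \eqref{atthp}. The obstruction to extending from $Y_{n-1}$ to $Y_n$ lies in
\[[S^{2n\rho-1},B^\rho GL_1(\kR)]^{C_2}\cong \pi^{C_2}_{2n\rho-1-\rho}(GL_1\kR)=\pi^{C_2}_{(2n-1)\rho-1}(GL_1\kR),\]
for $n\ge 2$. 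Since $(2n-1)\rho-1$ has a nonzero fixed part ($(2n-2)+(2n-1)\sigma$ with $2n-2\geq 2$), the remark after \eqref{pullback units} identifies this group with $\pi^{C_2}_{(2n-1)\rho-1}(\kR)$. Here I need a little care: that remark concerns representations $V$ with $V^{C_2}\ne0$. The virtual representation $(2n-2)+(2n-1)\sigma$ contains $\geq 2$ trivial summands, so it is of the form $1+W$ with $W$ an actual representation. The sphere $S^{1+W}$ is a suspension, so the homotopy groups of $GL_1$ and $\Omega^\infty\kR$ agree, since they differ only in $\pi_0$ components.

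Finally, $\kR$ is even in the sense of \cite{hillmeier}: $\pi^{C_2}_{m\rho-1}\kR=0$ for all $m$. This follows from the slice/Dugger computation of $\pi_\bigstar \kR$, or from $\KR$ being even together with $\kR\to\KR$ being an isomorphism on $\pi^{C_2}_{m\rho-1}$ in this range. So all obstructions vanish, the map $\zeta:\HP^\infty_\tau\to B^\rho GL_1(\kR)$ exists, and $f_\beta\simeq\Omega^\sigma\zeta$. Proposition \ref{twalgth} then gives the $\E_\sigma$-$\kR$-algebra structure on $\Th^{\kR}(f_\beta)\simeq\HZ$.

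The main obstacle is the vanishing of $\pi^{C_2}_{m\rho-1}(\kR)$, specifically for the connective $\kR$ rather than $\KR$. I would verify it using the cofiber sequence $\Sigma^\rho\kR\xrightarrow{\beta}\kR\to\HZ$ and induction on slices. This reduces the vanishing to $\pi^{C_2}_{m\rho-1}\Sigma^{k\rho}\HZ=\pi^{C_2}_{(m-k)\rho-1}\HZ=0$, which is a standard fact about $\HZ$ in $\rho$-graded degrees.
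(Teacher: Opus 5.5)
Your proposal is correct and follows the same route as the paper. Both run obstruction theory along the cell filtration \eqref{cellhp}, \eqref{atthp} of $\HP^\infty_\tau$, with the obstructions landing in $\pi^{C_2}_{(2n-1)\rho-1}(GL_1\kR)\cong\pi^{C_2}_{(2n-1)\rho-1}(\kR)=0$. The extra points you justify are asserted without proof in the paper, and your arguments for them are sound: the fixed-point check that $S^{\rho+1}\simeq\Omega^\sigma\HP^\infty_\tau$, the comparison of $GL_1\kR$ with $\Omega^\infty\kR$ in these degrees, and the evenness of $\kR$ via $\Sigma^\rho\kR\xrightarrow{\beta}\kR\to\HZ$.
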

     
     \begin{proof}
     	Similar to Proposition \ref{kuR/2 as twisted monoid}, it is the obstruction problem related to the diagram of $C_2$-spaces     	
     	\[\begin{tikzcd}[cramped]
     		{\Sigma^{\sigma}S^{\rho +1}} &&& {\Sigma^{\sigma}BGL_1(K\mathbb{R})} \\
     		\\
     		{B^{\sigma}S^{\rho+1} \simeq {\HP^\infty_{\tau}}} &&& {B^{\rho}GL_1(K\mathbb{R})}
     		\arrow["{{\Sigma^{\sigma}\fbeta}}", from=1-1, to=1-4]
     		\arrow["\chi"', from=1-1, to=3-1]
     		\arrow["\chi"', from=1-4, to=3-4]
     		\arrow["{{want\ a\ \phi}}", dashed, from=3-1, to=3-4]
     	\end{tikzcd}\]
     	     	As a consequence of \eqref{cellhp}, \eqref{atthp}, we get that the obstructions lie in the groups
     	     	\begin{equation*}
     		\begin{aligned}
     			[S^{2n \rho -1},B^{\rho}GL_1(k\mathbb{R})]^{C_2}
     			& \cong \pi_{(2n-1) \rho -1}^{C_2}(GL_1(k\mathbb{R})) \\
     			& \cong \pi_{(2n-1) \rho -1}^{C_2}(k\mathbb{R}) \\ 
     			& \cong 0.
     		\end{aligned}
     	\end{equation*}
     	This completes the proof.
     \end{proof}

\vspace*{0.2cm}

\subsection{Finite quotients as twisted algebras}
	This section focuses on the case where $R$ is a $C_2$-$\mathbb{E}_{\infty}$-ring spectrum that is $\textit{even}$ in the sense that $\underline{\pi}_{k\rho -1} = 0$ for $k \in \Z$. Our objective is to construct quotients by finitely many homotopy classes as Thom spectra. We shall establish for classes $u_i \in \pi^{C_2}_{i \rho}(R)$ for $0\leq i \leq n-1$, the quotient $R / (u_0, u_1, \dots , u_{n-1})$ has a twisted $R$-algebra structure via the Thom spectrum associated to a twisted monoid map $U(n) \rightarrow BGL_1(R)$.

	\begin{mysubsection}{Unitary groups as twisted monoids} \label{untw}
		The twisted monoid structure on $U(n)$, denoted as $U(n)_{\tau}$, has underlying multiplication as matrix multiplication and $C_2$-action is via transposition of matrices : $\tau (A) = A^{T}$. Let $Gr_{n} (\mathbb{C}^{\infty})_{\tau}$ be the $C_2$-space whose underlying non-equivariant space is $Gr_{n} (\mathbb{C}^{\infty})$ and $C_2$-action is given via complex conjugation on $n$-planes; explicitly $\tau(W) = \bar{W}$ for $W \in Gr_{n} (\mathbb{C}^{\infty})$. We show that $Gr_{n} (\mathbb{C}^{\infty})_{\tau}$ is a $\sigma$-fold delooping of $U(n)_{\tau}$.
	\begin{prop} \label{deloop of unitary}
As $C_2$-spaces, $\Omega^{\sigma} \gran \simeq U(n)_{\tau}$.
	\end{prop}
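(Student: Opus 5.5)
The plan is to deduce the statement from Theorem \ref{sigma fold deloop} rather than write down an explicit loop map. The group $U(n)$ is connected, and with the action $\tau(A)=A^T$ it is a twisted monoid: $(AB)^T=B^TA^T$. Theorem \ref{sigma fold deloop} therefore gives a $C_2$-equivalence $U(n)_\tau \simeq \Omega^\sigma B^\sigma U(n)_\tau$. It then suffices to produce a $C_2$-equivalence $B^\sigma U(n)_\tau \simeq \gran$ and apply $\Omega^\sigma$. We will test $C_2$-equivalences on underlying spaces and on $C_2$-fixed points.

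For the comparison map, a useful observation is that for a unitary matrix $A^T=\bar A^{-1}$. So the anti-involution $A\mapsto A^T$ is complex conjugation composed with inversion. Take the Stiefel model: $V_n(\C^\infty)$ is the space of orthonormal $n$-frames, with free right $U(n)$-action, and $V_n(\C^\infty)\to \gran$ sends a frame to its span. Complex conjugation on frames covers the action $W\mapsto \bar W$ and satisfies $\overline{vA}=\bar v\,\bar A$. We form the one-sided bar construction $B(V_n(\C^\infty),U(n),*)$, where $C_2$ acts on the $k$-simplices by
\[(v;A_1,\dots,A_k)\mapsto(\bar v;\bar A_1,\dots,\bar A_k),\]
so the involution on $U(n)$ here is conjugation. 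The maps
\[B(V_n(\C^\infty),U(n),*)\to \gran,\qquad B(V_n(\C^\infty),U(n),*)\to B(*,U(n),*)\]
are then natural, and we must match the target $B(*,U(n),*)$ with $B^\sigma U(n)_\tau$. On $k$-simplices, the identification $(A_1,\dots,A_k)\mapsto(A_k^{-1},\dots,A_1^{-1})$ turns the conjugation-equivariant bar construction into the real simplicial object of Construction \ref{cons twisted bar}. It intertwines the simplicial reversal with the involution $\omega$, since conjugation followed by inversion and reversal of order is exactly the twisted action $A\mapsto A^T$ in reverse order. Checking that this is a map of real simplicial spaces is routine bookkeeping. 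Non-equivariantly, all three maps are the classical equivalences $BU(n)\simeq \gran$.

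On $C_2$-fixed points we use the description in the proof of Theorem \ref{sigma fold deloop}:
\[(B^\sigma U(n)_\tau)^{C_2}\cong B(*,U(n),U(n)^{C_2}),\]
where $U(n)^{C_2}=\{S: S=S^T\}$ is the space of symmetric unitary matrices and $A$ acts by $S\mapsto ASA^T$. By the classical Takagi/Autonne factorization, every symmetric unitary $S$ can be written as $AA^T$ with $A\in U(n)$. So the action is transitive, and the stabilizer of $I$ is $\{A: AA^T=I\}=O(n)$. Hence
\[(B^\sigma U(n)_\tau)^{C_2}\simeq B(*,U(n),U(n)/O(n))\simeq EU(n)\times_{U(n)}U(n)/O(n)\simeq BO(n).\]
On the other side, $(\gran)^{C_2}$ is the space of conjugation-invariant $n$-planes, that is $Gr_n(\R^\infty)\simeq BO(n)$. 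As a consistency check, by \ref{fixed pts of loop}
\[(\Omega^\sigma\gran)^{C_2}\simeq \hofib(BO(n)\to BU(n))\simeq U(n)/O(n),\]
which matches the symmetric unitaries.

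The main obstacle is showing that the comparison map constructed above actually realizes this abstract equivalence on fixed points, rather than merely having abstractly equivalent source and target. The first thing to try is to restrict the zig-zag to fixed points. Fixed simplices of $B(V_n(\C^\infty),U(n),*)$ under the reversing involution are of the form $(v,A_1,\dots,A_m,S,\dots)$, and one expects the fixed-point space to identify with $B(V_n(\C^\infty),U(n),U(n)^{C_2})$. This space maps to $V_n(\C^\infty)\times_{U(n)}U(n)/O(n)\cong V_n(\C^\infty)/O(n)$, and the latter is identified with $Gr_n(\R^\infty)$: a frame modulo $O(n)$, paired with its symmetric unitary, determines a real form of its span. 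We would then verify that this fixed-point map is compatible with the projections to $B(*,U(n),U(n)^{C_2})$ and to $Gr_n(\R^\infty)$, both of which are equivalences by the computation above. The conclusion is that both legs of the zig-zag are equivalences on fixed points, hence $C_2$-equivalences, and applying $\Omega^\sigma$ together with Theorem \ref{sigma fold deloop} finishes the proof.
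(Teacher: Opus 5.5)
\textbf{The gap.} The step you call routine bookkeeping, identifying $B(*,U(n),*)$ (with levelwise conjugation) with $B^\sigma U(n)_\tau$, is where the argument breaks.

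Levelwise conjugation makes $B_\bullet(V_n(\C^\infty),U(n),*)$ a simplicial $C_2$-space, and its realization carries the involution $[x,t]\mapsto[\bar x,t]$. By contrast, $B^\sigma_\bullet U(n)_\tau$ is a real simplicial space, realized with $[x,t]\mapsto[\tau_k x,rt]$, where $r$ is the flip of \eqref{rmk: cosimplicial structure}.

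Your map $\psi_k(A_1,\dots,A_k)=(A_k^{-1},\dots,A_1^{-1})$ is an isomorphism $B_\bullet U(n)\to B_\bullet U(n)\circ\omega$, and indeed $\tau_k=c\circ\psi_k$ with $c$ entrywise conjugation. But this says exactly that, on the common space $|B_\bullet U(n)|$, the twisted involution is $c\circ\Psi$ with $\Psi[x,t]=[\psi_k x,rt]$, while yours is $c$. Since $\Psi\neq\mathrm{id}$ (it sends $[A;\tfrac12,\tfrac12]$ to $[A^{-1};\tfrac12,\tfrac12]$ for $A\neq A^{-1}$), neither the identity nor $\Psi$ is equivariant. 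Moreover, ``map of real simplicial spaces'' has no meaning here, because your source is not a real simplicial space.

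The two fixed-point spaces come from different recipes: $|B_\bullet O(n)|$ computed levelwise, versus $B(*,U(n),U(n)^{C_2})$ computed from odd simplices. Their abstract agreement with $BO(n)$ is exactly what needs a map.

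Your last paragraph does not supply that map. It takes fixed points of $B(V_n(\C^\infty),U(n),*)$ under a reversing involution you never defined. For the action you did define, the fixed points are $|B(V_n(\R^\infty),O(n),*)|$, not $B(V_n(\C^\infty),U(n),U(n)^{C_2})$. As written, your zig-zag only shows $\gran\simeq B(U(n),A\mapsto\bar A)$. That deloops $U(n)$ in the trivial-representation direction, giving $\Omega\gran\simeq U(n)$ with conjugation action, and never reaches $B^\sigma U(n)_\tau$.

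\textbf{The missing idea.} You need a genuine real structure on the one-sided bar construction. Using the free action on frames, set
\[
\tau_k(v;A_1,\dots,A_k)=\bigl(\overline{vA_1\cdots A_k};\,A_k^T,\dots,A_1^T\bigr).
\]
This satisfies $d_i\tau_k=\tau_{k-1}d_{k-i}$ and $\tau_k^2=\mathrm{id}$, because $\bar A A^T=I$ for unitary $A$. With it:
\begin{itemize}
\item Forgetting $v$ is a map of real simplicial spaces to $B^\sigma_\bullet U(n)_\tau$.
\item The map $[v;A;t]\mapsto \mathrm{span}(v)$ is equivariant into $\gran$.
\item Fixed points come from tuples $(v;A_1,\dots,A_k,S,A_k^T,\dots,A_1^T)$ with $vM=\bar v$, where $M=A_1\cdots A_kSA_k^T\cdots A_1^T$.
\item Writing $M=BB^T$ (Takagi), the condition says $vB$ is a real frame. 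So the projection to $B(*,U(n),U(n)^{C_2})$ has fibres $V_n(\R^\infty)$; this is where the content of Lemma \ref{unfib} reappears.
\item Over a real plane, the fibre of the map to $Gr_n(\R^\infty)$ is a copy of $B(U(n),U(n),*)\simeq *$.
\end{itemize}

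With this repair your route goes through. Combined with Theorem \ref{sigma fold deloop}, it yields an honest equivariant zig-zag. The paper instead compares $U(n)_\tau^{C_2}$ with $\hofib(Gr_n(\R^\infty)\to Gr_n(\C^\infty))$ through the bundle $\phi:\tilde V_n(\C^\infty)\to U(n)_\tau^{C_2}$, and does not write down a global $C_2$-map.
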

	
	\begin{proof}
		Since $\Omega Gr_n(\C^\infty) \simeq U(n)$, the equivalence above holds in the non-equivariant setting, and so, it suffices to establish the equivalence on $C_2$-fixed points. Therefore, it boils down to checking 
\begin{myeq} \label{unfix} 
U(n)_{\tau} ^{C_2} \simeq \mbox{hofib} \,( \gran ^{C_2} \hookrightarrow \gran).
\end{myeq} 
 We observe that the $C_2$-fixed points of $U(n)_{\tau}$ is the collection of $\textit{unitary symmetric matrices}$, explicitly $\{ A \in GL_n(\mathbb{C}) | \, A^{T}=A, A^* A=A A^*=I\}$. On the other hand, the homotopy fiber is homotopic to the following pullback of complex Stiefel manifold via the inclusion of $C_2$-fixed points		
		\[\begin{tikzcd}[cramped]
			{\tilde{V}_n (\mathbb{C}^{\infty})} && {V_n (\mathbb{C}^{\infty})} \\
			{Gr_n (\mathbb{C} ^{\infty})_{\tau} ^{C_2}} && {Gr_n (\mathbb{C}^{\infty}).}
			\arrow[from=1-1, to=1-3]
			\arrow[from=1-1, to=2-1]
			\arrow[two heads, from=1-3, to=2-3]
			\arrow[hook, from=2-1, to=2-3]
		\end{tikzcd}\]
		More explicitly, 
\[\tilde{V}_n (\mathbb{C}^{\infty}) \cong \{ (v_1, v_2, ..., v_n) \in (\mathbb{C}^{\infty} )^{n} \, | \, \langle v_i,v_j \rangle = \delta_{ij}, \,  span(v_1, v_2, ..., v_n) = span(\bar{v_1}, \bar{v_2}, ..., \bar{v_n}) \}.\]
 Taking into account that 
\[\exists \, ! \, A \in U(n)_{\tau} ^{C_2} \mbox{ such that } A \, [v_1\, v_2\, ...\, v_n] = [\bar{v_1} \, \bar{v_2} \, ...\, \bar{v_n}] \mbox{ for } (v_1, v_2, ..., v_n) \in \tilde{V}_n (\mathbb{C}^{\infty}),\]
 we obtain a map $\phi: \, \tilde{V}_n (\mathbb{C}^{\infty}) \rightarrow U(n)_{\tau} ^{C_2}$ via $(v_1, v_2, ..., v_n) \mapsto A$. Now, the proof of \eqref{unfix} is complete by Lemma \ref{unfib}.
			\end{proof}
	
\vspace*{0.2cm}
	
	\begin{lem}\label{unfib}
	The map	$\phi: \, \tilde{V}_n (\mathbb{C}^{\infty}) \rightarrow U(n)_{\tau} ^{C_2}$ is a $\textit{fiber bundle}$ with fiber $V_n (\mathbb{R}^{\infty})$.
	\end{lem}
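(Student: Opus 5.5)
The strategy is to exhibit $\phi$ as an equivariant-free homogeneous construction: a transitive group action on the base with the fiber over one point identified with the real Stiefel manifold. Throughout I work with the finite-dimensional stages $\tilde V_n(\C^m)$ and pass to the colimit at the end.

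First I check that $\phi$ is well defined and continuous. Write $V=[v_1\cdots v_n]$, an $m\times n$ matrix with $V^*V=I$. The condition $\mathrm{span}(v_i)=\mathrm{span}(\bar v_i)$ says $\bar V=VA$ for a unique $A\in U(n)$, namely $A=V^*\bar V$. This is a continuous formula, so $\phi$ is continuous. Symmetry follows from $A^T=\bar V^T\bar{V^*}=V^T\bar V$. Moreover $\bar V=VA$ gives $V^T=A^T V^*$, hence $V^T\bar V=A^TV^*VA=A^TA$. One checks directly that $A^TA=A$: conjugating $\bar V=VA$ gives $V=\bar V\bar A=VA\bar A$, so $A\bar A=I$, and for unitary $A$ this means $A^T=A$. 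So the image lies in $U(n)^{C_2}_\tau$, as the statement requires.

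Next I identify the fiber and the symmetry group. The orthogonal group $O(n)$, or more usefully $U(n)$, acts on $\tilde V_n$ by $V\mapsto VB$. Under this action $\phi(VB)=B^*V^*\bar V\bar B=B^*A\bar B$. On the base, $U(n)$ acts on symmetric unitaries by $A\mapsto B^*A\bar B=B^*A(B^*)^T$. This action is transitive: by Autonne–Takagi every symmetric unitary can be written $A=WW^T$ with $W$ unitary. Indeed, $\phi^{-1}(I)$ consists of the frames with $\bar V=V$, i.e. real orthonormal frames $V_n(\R^m)$. The fiber over $A=WW^T$ is then $\phi^{-1}(I)\cdot \bar W$, or the analogous translate, which is homeomorphic to $V_n(\R^m)$.

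The local triviality is the main obstacle. I would handle it by finding a local section of the orbit map $U(n)\to U(n)^{C_2}_\tau$, $B\mapsto B^*\bar B$, or a similar form. This map identifies the base with the homogeneous space $U(n)/O(n)$, since the stabilizer of $I$ is $\{B: B^*\bar B=I\}=O(n)$. The quotient $U(n)\to U(n)/O(n)$ is a principal bundle with closed subgroup, so local sections $s$ exist. Over a neighbourhood $N$ of the base, the map $N\times V_n(\R^m)\to\phi^{-1}(N)$, $(A,V_0)\mapsto V_0\,s(A)$ (adjusted so that $\phi$ matches), is then a homeomorphism, with inverse $V\mapsto (\phi(V),V s(\phi(V))^{-1})$. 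This gives a fiber bundle at each finite stage $m$. The stages are compatible with the inclusions $\C^m\subset\C^{m+1}$, and since the base is compact, the colimit is still a bundle with fiber $V_n(\R^\infty)$.

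Finally, for the use in \eqref{unfix}, I note that $V_n(\R^\infty)$ is contractible. The bundle therefore makes $\phi$ a weak equivalence, which identifies the homotopy fiber with $U(n)^{C_2}_\tau$.
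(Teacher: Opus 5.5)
Your proof is correct in its main line, but it takes a genuinely different route from the paper.

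\textbf{The paper's route.} The paper argues pointwise with the real spectral decomposition $A=ODO^T$, where $O\in O(n)$ and $D$ is diagonal unitary. It identifies the fiber over $A$ with $V_n(\R^\infty)$ by multiplying the vectors $w_j$ of the frame $O^T[v_1\cdots v_n]$ by $e^{i\theta_j/2}$. For local triviality it tries to choose the $\theta_j$, $D$ and $O$ continuously near each point. For this it uses the orbit space $U(n)^{C_2}_\tau/O(n)\cong SP^n(S^1)$ and a stratification by the isotropy groups $F_i$.

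\textbf{Your route.} You let $U(n)$ act on both the total space and the base, and use Autonne--Takagi to show the base is the single orbit $O(n)\backslash U(n)$ under $B\mapsto B^*\bar B$. In effect this proves $\tilde V_n(\C^m)\cong V_n(\R^m)\times_{O(n)}U(n)$. Local triviality then reduces to the standard existence of local sections of $U(n)\to O(n)\backslash U(n)$.

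\textbf{Comparison.} The two are related: the paper's $OD^{1/2}$ is a unitary $W$ with $WW^T=A$, which is exactly a value of your section, and only this product enters the trivialization. Your formulation gives a short, complete proof of local triviality and asks for exactly the right data. A continuous choice of $O$ by itself does not exist on any neighbourhood of $A=I$, already for $n=2$. Writing nearby matrices as $\exp(iH)$ with $H$ real symmetric, the eigenline of the larger eigenvalue winds once around $\R P^1$ along a small loop linking the scalar matrices. By contrast, a continuous $W$ with $WW^T=A$ always exists locally. You also handle the finite stages and the passage to the colimit explicitly.

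\textbf{Computations to repair.}
\begin{itemize}
\item In the symmetry check, the chain $A^T=\bar V^T\overline{V^*}=V^T\bar V$, the formula $V^T=A^TV^*$, and the claim $A^TA=A$ are all wrong. In fact $V^T\bar V=\overline{V^*V}=I$, transposing $\bar V=VA$ gives $V^*=A^TV^T$, and $A^TA=A$ would force $A=I$. Delete them. Your final sentence ($A\bar A=I$ together with unitarity gives $A^T=A$) is already a complete argument. More directly, $A=V^*\bar V=\overline{V^TV}$ is visibly symmetric.
\item Since $\phi(VB)=B^*A\bar B$, the fiber over $A=WW^T$ is $\phi^{-1}(I)\cdot W^*$. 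The translate $\phi^{-1}(I)\cdot\bar W$ lies over $W^TW$ instead.
\item Compactness of the base is not what makes the colimit work. The point is that your section $s$ does not depend on $m$, so the trivializations $N\times V_n(\R^m)\to\phi^{-1}(N)\cap\tilde V_n(\C^m)$ are compatible with the inclusions. Also, $N\times(-)$ commutes with the sequential colimit because $N$ is locally compact Hausdorff.
\end{itemize}
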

	
	\begin{proof}
		Since unitary symmetric matrices have eigenspaces spanned by real eigenvectors, by the spectral theorem for normal matrices, they can be diagonalised by real orthogonal matrices.  Explicitly for $A \in U(n)_{\tau} ^{C_2}$, there exists $O \in O(n)$ and $D = \text{diag}(\lambda_1, \lambda_2, ..., \lambda_n)$ such that $A= ODO^{T}$ with $\lambda_i \in S^1$ being the eigenvalues. Noting that, if $(v_1, v_2, ..., v_n) \in \tilde{V}_n (\mathbb{C}^{\infty})$ is in the inverse image of $A \in U(n)_{\tau} ^{C_2}$ under $\phi$, then by construction 
		\begin{align*}
			& A \, [v_1\, v_2\, ...\, v_n] = [\bar{v_1} \, \bar{v_2} \, ...\, \bar{v_n}] \\
			\implies & DO^{T}\, [v_1\, v_2\, ...\, v_n] = O^{T}\, [\bar{v_1} \, \bar{v_2} \, ...\, \bar{v_n}] \; \; \; (\text{for a fixed choice of } D \text{ and }  O) \\
			\implies & D \, [w_1\, w_2\, ...\, w_n] = [\bar{w_1} \, \bar{w_2} \, ...\, \bar{w_n}] \; \text{; where } [w_1\, w_2\, ...\, w_n] = O^{T}\, [v_1\, v_2\, ...\, v_n]\\
			\implies & e^{i \theta_j /2} w_j = \overline{e^{i \theta_j /2} w_j} \; \text{; where } \lambda_j= e^{i \theta_j} \text{ for } \theta_j \in [0,2\pi) \\
			\implies & (e^{i \theta_j /2} w_j)_{j=1} ^{n} \in V_n (\mathbb{R}^{\infty})
		\end{align*}
				Therefore, each of the fiber is homeomorphic to $V_n (\mathbb{R}^{\infty})$. As a result, obtaining continuous assignment of such a $D$ and $O$ on small enough open sets establishes the lemma. One makes the following observations that the orbit space under conjugation action of $O(n)$ on $\textit{unitary symmetric matrices}$ $U(n)^{C_2} _{\tau}$ is homeomorphic to the $\textit{n-th symmetric power of } S^1$, $SP^n(S^1)$. Moreover, $SP^n(S^1)$ fits into the fiber bundle
 $$\Delta ^{n-1} \rightarrow SP^n (S^1) \rightarrow S^1.$$ 
				Thus, if we write $\D^1_+=S^1\setminus \{1\}$ and $\D^1_{-}= S^1 \setminus \{-1\}$, 
 $$ U(n)^{C_2} _{\tau} \xlongrightarrow{q} U(n)^{C_2} _{\tau} / O(n) \cong SP^n (S^1) \cong \Delta^{n-1} \times \mathbb{D}^1_+ \cup \Delta^{n-1} \times \mathbb{D}^1_-.$$
Note that $q^{-1}(\Delta^{n-1}\times \D^1_{\pm})$ are open. Therefore, given any $A \in  U(n)^{C_2} _{\tau} $, we can choose an open neighbourhood $U_A$ of $A$ for which the eigenvalues $\lambda_j = e^{i\theta_j}$ where $\theta_j : U_A \to \R$ are continuous for $1\leq j \leq n$. This is because the choice of the $\theta_j$ can be made over $q^{-1}(\Delta^{n-1}\times \D^1_{\pm})$.
Concentrating on the open set $X := q^{-1} (\Delta^{n-1} \times \mathbb{D}^1)$, we get a finite stratification 
$$\phi = Z_0 \subseteq Z_1 \subseteq ... \subseteq Z_n = X$$ 
induced under $q^{-1}$ from the skeletal stratification of 
$$\Delta^{n-1} \cong \{(t_0, t_1, ..., t_{n-1}) \, |  \, 0 \leq t_0 \leq t_1 \leq ... \leq t_{n-1} \leq 1\, , \Sigma t_i = 1 \}.$$
Explicitly this may be described as 
$$\phi = \Delta^{n-1}_{(0)} \subseteq \Delta^{n-1}_{(1)} \subseteq ... \subseteq \Delta^{n-1}_{(n)} = \Delta^{n-1};$$
 where
				\begin{align*}
			\Delta^{n-1}_{(1)} = & {(1/n, 1/n, ..., 1/n)} \\
			\Delta^{n-1}_{(2)} = & \{(t_0, t_1, ..., t_{n-1}) \, |  \, 0 \leq t_0 \lneqq  t_1 = ... = t_{n-1} \leq 1\, , \Sigma t_i = 1 \} \\
			& \cup \{(t_0, t_1, ..., t_{n-1}) \, |  \, 0 \leq t_0 = t_1 \lneqq t_2 =... = t_{n-1} \leq 1\, , \Sigma t_i = 1 \} \\ 
			& \, \cup \\ & \;... \\ 
			& \cup \{(t_0, t_1, ..., t_{n-1}) \, |  \, 0 \leq t_0 = t_1 = ...= t_{n-2} \lneqq t_{n-1} \leq 1\, , \Sigma t_i = 1 \} \\
			 \text{and so on.}
		\end{align*}
		The attaching of stratum are done following the pushout square
		\[\begin{tikzcd}[cramped]
			{\Delta^{n-1}_{(i)} \times \mathbb{D}^1 \times ( O(n) / F_{i+1})} && {\Delta^{n-1}_{(i+1)} \times \mathbb{D}^1 \times ( O(n) / F_i)} \\
			{\Delta^{n-1}_{(i)} \times \mathbb{D}^1 \times ( O(n) / F_i)} \\
			{Z_i} && {Z_{i+1}}
			\arrow[hook, from=1-1, to=1-3]
			\arrow["{\alpha_{i+1,i}}"', two heads, from=1-1, to=2-1]
			\arrow["q", from=1-3, to=3-3]
			\arrow["q"', from=2-1, to=3-1]
			\arrow[from=3-1, to=3-3]
		\end{tikzcd}\]
		where $F_i\subset O(n)$ are isotropy groups of the matrices in $Z_i - Z_{i-1}$ under conjugation, resulting in the sequence of subgroups 
$$O(n) = F_1 > F_2 > ... > F_n = \{diag(\pm 1, \pm 1, ..., \pm 1)\}.$$
 Now, the fact that all the $\alpha_{i,j}: O(n)/F_{i+j} \to O(n)/F_i$ are all fiber bundles; one can ensure (possibly by taking finite intersections) that $A \in Z_i - Z_{i-1}$ has an open neighbourhood $V_A$ in $Z_n$ such that $\pi_3\big(q^{-1}(V_A \cap Z_{m})\big) \subset O(n)/F(m)$ is a trivializing open set for every fiber bundle $\alpha_{m,j}$, $m\geq i$. Going to a smaller $V_A$ if necessary, we may assume that $V_A \cap Z_i$ is homeomorphic to an Euclidean disk (as a subset of the manifold with corners $\big(\Delta^{n-1}_{(i)}\setminus \Delta^{n-1}_{(i-1)}\big) \times \D^1 \times O(n)/F_i $). The choice of $O$, can now be made over $V_A$ by starting with a section on $V_A\cap Z_i \to O(n)$ of $O(n) \to O(n)/F_i$, and then extending it to all of $V_A=V_A\cap Z_n$ using the local triviality of each $\alpha_{m,j}$ over $\pi_3 \big(q^{-1}(V_A \cap Z_m)\big)$ for $m\geq i$. This gives a continuous choice of $O$ on $V_A$.  Therefore, by taking the intersection of $V_A$ and $U_A$, we get a continuous choice of $O$ and $D$ on an open set containing $A$, completing the proof.
	\end{proof}

\vspace*{0.2cm}
	\end{mysubsection}

\begin{mysubsection}{Assumptions on $R$}
We construct quotients of the $\E_\infty$-ring spectrum $R$ as twisted $R$-algebras under the two hypotheses given below. \\
(1) The spectrum $R$ is \textit{even} \cite[Definition 3.1]{hillmeier}. This means that $\upi_{n\rho-1}(R)=0$ for all integers $n$. \\
(2) The spectrum $R$ is \textit{cofree}\cite[Definition 17.3.67]{Hil20}. This means that the map $R \to F({EC_2}_+,R)$ is an equivalence in $Sp^{C_2}$. 

We shall use the following property of modules over cofree ring spectra. 
\begin{prop}\label{modcofree}
Suppose that $R$ is a cofree ring spectrum. Then, any compact $R$-module $M$ is also cofree, that is, $M \to F({EC_2}_+,M)$ is an equivalence. 
\end{prop}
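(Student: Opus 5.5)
We want to show that for a compact $R$-module $M$, the natural map $M \to F({EC_2}_+, M)$ is an equivalence in $Sp^{C_2}$, given that $R \to F({EC_2}_+,R)$ is one. The plan is a thick-subcategory argument. Let $\mathcal{C} \subseteq R\text{-mod}$ be the full subcategory of $R$-modules $N$ for which $\eta_N : N \to F({EC_2}_+, N)$ is an equivalence. Since compact $R$-modules form the thick subcategory generated by the free modules $R \wedge G/H_+$ (here $G/H = C_2/C_2$ or $C_2/e$) and their suspensions, it suffices to prove three things: $\mathcal{C}$ is thick, $\mathcal{C}$ is closed under representation suspensions as needed, and the generators lie in $\mathcal{C}$.

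\textbf{Thickness.} The functor $F({EC_2}_+,-)$ is exact, and $\eta$ is a natural transformation between exact functors. So for a cofiber sequence $N' \to N \to N''$ in which two of the three maps $\eta$ are equivalences, the five lemma applied to $\underline{\pi}_*$ shows the third is as well. Closure under retracts is immediate, since a retract of an equivalence is an equivalence. Finally, $F({EC_2}_+,-)$ commutes with $\Sigma^{V}$ for any representation $V$, because it commutes with smashing with the dualizable object $S^V$; so $\mathcal{C}$ is closed under all $\Sigma^{\pm V}$.

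\textbf{Generators.} For $R$ itself, $\eta_R$ is an equivalence by hypothesis. For $R \wedge {C_2}_+$, the object $C_2{}_+$ is dualizable with dual $C_2{}_+$, so
\[F({EC_2}_+, R\wedge {C_2}_+) \simeq F({EC_2}_+, R) \wedge {C_2}_+,\]
and under this identification $\eta_{R\wedge C_2{}_+} = \eta_R \wedge {C_2}_+$, which is an equivalence. One can also see this directly: any induced spectrum is cofree, because ${EC_2}_+ \wedge {C_2}_+ \simeq {C_2}_+$.

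The same dualizability argument shows $\mathcal{C}$ is closed under smashing with any finite $C_2$-CW spectrum, since such objects are dualizable. Hence $\mathcal{C}$ contains every cell $R$-module built from $G$-cells, together with their retracts, which is exactly the compact $R$-modules.

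The only point needing care is the identification of compact $R$-modules with the thick subcategory generated by $R \wedge G/H_+$ in $Sp^{C_2}$-based $R$-modules. This is standard, since these objects form a set of compact generators of $R\text{-mod}$. Even without it, the proof applies verbatim to any $R$-module that is finitely built from the $R\wedge G/H_+$, which is all that is used later for quotients $R/(u_0,\dots,u_{n-1})$ built as iterated cofibers.
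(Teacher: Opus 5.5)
Your proof is correct, but it is organized differently from the paper's.

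The paper does not induct over cells. It first uses the Tate square to reduce cofreeness of $E$ to the map $\widetilde{EC_2}\wedge E\to\widetilde{EC_2}\wedge F({EC_2}_+,E)$ being an equivalence. It then uses two facts: $\widetilde{EC_2}\wedge(-)$ is smashing, and a compact $M$ can be pulled out of an $R$-linear function spectrum, $F_R({EC_2}_+\wedge R, M)\simeq F_R({EC_2}_+\wedge R,R)\wedge_R M$. Together these identify the map for $M$ with the map for $R$ base-changed along $-\wedge_R M$.

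That is a one-step base-change argument. The Tate-square detour is not actually needed: the same identity gives $F({EC_2}_+,M)\simeq F({EC_2}_+,R)\wedge_R M$ directly. But the argument implicitly relies on compact $R$-modules being dualizable in $R$-modules, and the standard proof of that fact is essentially your thick-subcategory argument.

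Your route checks the generators $R$ and $R\wedge {C_2}_+$ using only dualizability of orbits in $Sp^{C_2}$, then closes up under cofibers, retracts and suspensions. It is more elementary and self-contained. It also makes explicit that the conclusion holds for any $R$-module finitely built from free cells, which is exactly what is needed for the Thom spectra over the finite complexes $U(m)_\tau$ in Proposition \ref{cofiberofthom}.
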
 

\begin{proof}
We apply the Tate diagram \cite[Diagram (C)]{GM95a}
\[\xymatrix{ {EC_2}_+ \wedge E \ar[r] \ar[d]^{\simeq} & E \ar[r] \ar[d] & \widetilde{EC_2}\wedge E \ar[d]\\ 
{EC_2}_+\wedge F({EC_2}_+,E) \ar[r] & F({EC_2}_+,E) \ar[r] & \widetilde{EC_2} \wedge F({EC_2}_+,E),}\]
for $E=R$ and $E=M$. As the rows are cofiber sequences and the left vertical arrow is an equivalence, it follows that $E$ is cofree if and only if $\widetilde{EC_2}\wedge E \to \widetilde{EC_2} \wedge F({EC_2}_+, E)$ is an equivalence. Assuming this is true for $R$, we may write 
\[ \widetilde{EC_2} \wedge M \simeq \big(\widetilde{EC_2}\wedge R\big) \wedge_R M, \]
for an $R$-module $M$, and 
\begin{align*}
\widetilde{EC_2} \wedge F({EC_2}_+, M) &\simeq \widetilde{EC_2} \wedge F_{R\mbox{\tiny -mod}}({EC_2}_+\wedge R, M) \\
&\simeq \widetilde{EC_2}\wedge \big(F_{R\mbox{\tiny -mod}}({EC_2}_+\wedge R)\wedge_R M\big) \\
&\simeq \big(\widetilde{EC_2} \wedge F({EC_2}_+,R)\big)\wedge_R M, 
\end{align*}
for a compact $R$-module $M$. Therefore, it follows that $M$ is cofree.
\end{proof}

\vspace*{0.2cm}
\end{mysubsection}
	
		As $R$ is an $\textit{even} \; C_2$-ring spectrum, by \cite[Lemma 3.3]{hillmeier} $R$ is real orientable. Therefore, \cite[Theorem 2.10]{hukriz} implies that 
\[R^{\bigstar} _{C_2} (\C P^{n-1}) \cong \pi^{C_2} _{-\bigstar} (R) [x] / (x^n)\]
 as $\pi^{C_2} _{\bigstar} (R)$-modules for a suitable generator $x \in R^{\rho} _{C_2} (\C P^{n-1})$. We also have a $C_2$-map (with the $C_2$-action  on $\C P^n$ by complex conjugation, which is denoted by $\C P^n_\tau$)
$$\Sigma \C P^{n-1}_\tau\longrightarrow U(n)_{\tau}$$ 
given by 
\begin{myeq}\label{cpformun}
(z, \begin{bmatrix}
			x \\
			\zeta
		\end{bmatrix})  \mapsto 
		\begin{bmatrix}
			(z-1)x \bar{x} +1  &  (z-1)x[\zeta]^{*}  \\
			(z-1)\bar{x} [\zeta]  &  I + (z-1) [\zeta] [\zeta]^{*}
		\end{bmatrix}
\end{myeq} 
		where $x \in \C$ and $[\zeta] \in \C ^{n-1}$ such that $|x|^2 + |\zeta|^2 =1$.
	Now, composing with an equivariant map $U(n)_{\tau} \rightarrow BGL_1 (R)$, we obtain a homotopy class
		$$\Sigma \C P^{n-1}_\tau \rightarrow U(n)_{\tau} \rightarrow BGL_1 (R)$$
		whose adjoint determines an element in cohomology:
		$u \in [\C P^{n-1}_\tau, \Omega BGL_1(R)] \cong R^0 _{C_2} (\C P^{n-1}_\tau) ^{\times}$. Under the above mentioned $\pi^{C_2} _{\star} (R)$-module isomorphism this cohomology class take the following form:
		\begin{myeq} \label{identifyelement}
	u(\C P^{n-1}) = 1+u_0 + u_1 x + u_2 x^2 + ... + u_{n-1} x^{n-1} \mbox{ where } u_i \in \pi_{i \rho} (R).
\end{myeq}
 The following proposition helps us understand $C_2$-Thom spectra for maps out of $U(n)_{\tau}$.
		
\begin{prop} \label{cofiberofthom}
		There is a cofiber sequence of $R$-modules 
		$$\Sigma ^{m\rho} \Th^{R} (U(m)_{\tau}) \xrightarrow{u_m}  \Th^{R} (U(m)_{\tau}) \rightarrow  \Th^{R} (U(m+1)_{\tau})$$
for $1 \leq m < n$.
	\end{prop}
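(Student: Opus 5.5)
We follow the non-equivariant strategy of \cite{BSS20}, now carried out with the equivariant cell structures. The starting point is the standard filtration $U(1)\subset U(2)\subset\cdots\subset U(n)$ by block inclusion $A\mapsto \mathrm{diag}(A,1)$. Since $\mathrm{diag}(A,1)^T=\mathrm{diag}(A^T,1)$, this filtration is compatible with transposition, so each $U(m)_\tau\subset U(m+1)_\tau$ is a $C_2$-inclusion. Throughout, $\Th^R(U(m)_\tau)$ denotes the Thom spectrum of the restriction of the fixed map $U(n)_\tau\to BGL_1R$.

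\textbf{Step 1: the relative cell.} The map \eqref{cpformun}, restricted to the first $m+1$ coordinates, gives a $C_2$-map $\Sigma\C P^m_\tau\to U(m+1)_\tau$. Projecting to the first column descends this to a relative homeomorphism
\[
\big(\Sigma\C P^m_\tau,\ \Sigma\C P^{m-1}_\tau\big)\ \longrightarrow\ \big(U(m+1)_\tau,\ U(m)_\tau\big)\ \text{ over the top cell.}
\]
Concretely, the fibration $U(m)\to U(m+1)\to S^{2m+1}$ sends a matrix to its first column, and this column map is equivariant for the action $v\mapsto\bar v$ composed with transpose. Using the cell structure \eqref{attcp}, the quotient $\Sigma\C P^m_\tau/\Sigma\C P^{m-1}_\tau\simeq S^{1+m\rho}$ is a single cell $\DD(1+m\rho)$. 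We therefore expect a $C_2$-pushout
\[
U(m+1)_\tau \;\simeq\; U(m)_\tau\cup_{\,S^{m\rho}\times U(m)_\tau}\big(\DD(1+m\rho)\times U(m)_\tau\big),
\]
obtained by spreading the single cell using the multiplication, in the same way as the classical decomposition $U(m+1)\simeq U(m)\cup (\text{cell})\times U(m)$ coming from the column-space $S^{2m+1}=\Sigma \C P^m/\Sigma\C P^{m-1}$ up to the $U(m)$-torsor structure.

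\textbf{Step 2: apply the Thom spectrum functor.} The Thom spectrum functor $\Th^R$ strongly preserves $C_2$-colimits (\ref{C2 thom functor}), and the equivariant multiplication is available since $U(m)_\tau$ is a twisted monoid. Applying $\Th^R$ to the pushout, as in the proof of Proposition \ref{Thom colimit form}, produces a cofiber sequence
\[
\Sigma^{m\rho}\Th^R(U(m)_\tau)\longrightarrow \Th^R(U(m)_\tau)\longrightarrow \Th^R(U(m+1)_\tau).
\]
Its first map is the difference between the projection and the clutching map of the bundle over the cell. Using the splitting of the cell's boundary sphere $S^{m\rho}$ as suspension coordinate times $S^{m\rho-1}$, this map is $R$-linear multiplication by the unit restricted to the top cell. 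In the expansion \eqref{identifyelement}, this is exactly the coefficient $u_m$ of $x^m$: it is the top-cell contribution of $u-1$ restricted from $\C P^m$ to $\C P^m/\C P^{m-1}=S^{m\rho}$, and it is well defined by the real orientation of \cite{hukriz}.

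\textbf{Main obstacle.} The delicate points are Step 1 and the identification of the connecting map. Equivariantly, the fixed points $U(m+1)^{C_2}_\tau$ must match the fixed points of the cell decomposition. We would verify this on fixed points using Lemma \ref{unfib} (symmetric unitary matrices) together with the known underlying equivalence.

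A second issue is that the map need not be literally $u_m$: it is $u_m$ plus terms coming from the lower cells. These extra terms are handled by the evenness of $R$. Obstructions and ambiguities lie in $\upi_{k\rho-1}(R)=0$, so any map $\Sigma^{m\rho}\Th^R(U(m)_\tau)\to\Th^R(U(m)_\tau)$ is determined by its value on the bottom cell, which is $u_m$. Where a Borel reduction is needed, cofreeness via Proposition \ref{modcofree} lets us compare with the underlying non-equivariant computation of \cite{BSS20}.
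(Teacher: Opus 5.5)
There is a genuine gap: your Step 1 is false as stated, and the rest of the argument rests on it.

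Transposition is an anti-involution, $\tau(AB)=\tau(B)\tau(A)$. So the multiplication of $U(m+1)_\tau$ is equivariant only after twisting by the swap. In particular, the coset map $A\mapsto Ae_1$ is not a $C_2$-map for any action on $S^{2m+1}$. Indeed, $A^T=\overline{A^{-1}}$ has first column $\overline{A^{-1}e_1}$, which is not determined by $Ae_1$: compare $A$ and $AB$ with $B\in U(m)$. Likewise, spreading the cell by multiplication, $(x,A)\mapsto\phi(x)A$ with $\phi$ as in \eqref{cpformun}, is not equivariant for the product action on $\DD(1+m\rho)\times U(m)_\tau$. Equivariance of \eqref{cpformun} gives $(\phi(x)A)^T=A^T\phi(\tau x)$, whereas the product action would need $\phi(\tau x)A^T$. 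So the $C_2$-pushout you ``expect'' is never constructed. Checking fixed points via Lemma \ref{unfib} cannot help until an equivariant comparison map exists.

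Step 2 has an independent gap. You claim evenness forces every $R$-module map $\Sigma^{m\rho}\Th^R(U(m)_\tau)\to\Th^R(U(m)_\tau)$ to be determined by its bottom cell; this is false. $\Th^R(U(m)_\tau)$ has $R$-cells of dimension $1+i\rho$ (and sums of these). The indeterminacy therefore lies in groups like $\pi^{C_2}_{k\rho+1}$ of the module, which evenness does not control; for instance $\pi^{C_2}_{k\rho+1}\KR\cong\pi_1 KO\cong\Z/2$. So even granting the pushout, you have not identified the connecting map with $u_m$.

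The paper never decomposes $U(m+1)_\tau$ equivariantly.
\begin{itemize}
\item It uses only the commutative square of $C_2$-spaces over $BGL_1R$ formed by $\Sigma\C P^{m-1}_\tau\to\Sigma\C P^m_\tau$ and $U(m)_\tau\to U(m+1)_\tau$.
\item On Thom spectra, the cofiber sequence $R\wedge S^{m\rho}\to\Th^R(\Sigma\C P^{m-1}_\tau)\to\Th^R(\Sigma\C P^m_\tau)$ has first map $u_m$ by \eqref{identifyelement}. This is where your top-cell observation belongs.
\item Hence $u_m$ dies in $\Th^R(U(m+1)_\tau)$, and the composite $\Sigma^{m\rho}\Th^R(U(m)_\tau)\xrightarrow{u_m}\Th^R(U(m)_\tau)\to\Th^R(U(m+1)_\tau)$ is null. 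Here $u_m$ is defined a priori via the $R$-module structure.
\item This gives an equivariant map $\mathrm{cof}(u_m)\to\Th^R(U(m+1)_\tau)$, which is an underlying equivalence by \cite[Proposition 5.2]{BSS20}.
\item Since $U(m)_\tau$ is a finite complex, these Thom spectra are compact $R$-modules, hence cofree by Proposition \ref{modcofree}. Then \cite[Proposition 1.1]{GM95a} upgrades the underlying equivalence to a $C_2$-equivalence.
\end{itemize}
Cofreeness of $R$ is the essential input, and your proposal treats it only as a fallback. Making it the main engine removes the need for both the equivariant cell decomposition and the rigidity claim about self-maps.
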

	
	\begin{proof}
		From the construction of the map $\Sigma \C P^{n-1} _{\tau} \rightarrow U(n)_{\tau}$ \eqref{cpformun}, we have the following commutative square of $C_2$-spaces over $BGL_1(R)$
\[\xymatrix{ \Sigma \C P^{m-1} _{\tau} \ar[d] \ar[r] & \Sigma \C P^m _{\tau} \ar[d] \\
U(m)_\tau \ar[r] & U(m+1)_\tau \ar[r] & BGL_1 R}\]
and hence a commutative square on Thom spectra 
\begin{myeq}\label{commthum}
\xymatrix{ \Th^{R}(\Sigma \C P^{m-1} _{\tau}) \ar[d] \ar[r] & \Th^{R}(\Sigma \C P^m _{\tau}) \ar[d] \\
\Th^{R}(U(m)_\tau) \ar[r] & \Th^{R}(U(m+1)_\tau). }
\end{myeq}
The Thom spectrum $ \Th^{R}(\Sigma \C P^m _{\tau})$ fits into a cofiber
\begin{myeq}\label{thcpm}
R \wedge \C P^m_\tau \xrightarrow{u(\C P^m)-1} R \wedge \C P^m_\tau \to  \Th^{R}(\Sigma \C P^m _{\tau}) 
\end{myeq}
as in Proposition \ref{Thom colimit form}. Now we have the homotopy pushout 
\[\xymatrix{\Sigma S^{m\rho -1} \ar[r] \ar[d] & \ast \ar[d] \\ 
\Sigma \C P^{m-1} _{\tau}  \ar[r] & \Sigma \C P^m _{\tau}  }\]
which implies the cofiber 
\[R\wedge S^{m\rho} \to  \Th^{R}(\Sigma \C P^{m-1} _{\tau}) \to \Th^{R}(\Sigma \C P^m _{\tau}).\]		
The formula \eqref{thcpm} and the formula of $u(\C P^m)$ in \eqref{identifyelement} directly implies that the left arrow above equals $u_m$ under the isomorphism 
\[ [R\wedge S^{m\rho}, \Th^{R}(\Sigma \C P^{m-1} _{\tau})]_{R\mbox{\tiny -mod}} \cong  [ S^{m\rho}, \Th^{R}(\Sigma \C P^{m-1} _{\tau})] = \pi_{m\rho}^{C_2} \Big(\Th^{R}(\Sigma \C P^{m-1} _{\tau})\Big).\]		
Now we return to the diagram \eqref{commthum}, where we now deduce from the above cofiber sequence that under the map 
\[\Th^{R}(U(m)_\tau) \to \Th^{R}(U(m+1)_\tau),\]
the class $u_m$ goes to $0$. 
		Thus, we arrive at the sequence 
		$$\Sigma ^{m\rho} \Th^{R} (U(m)_{\tau}) \xrightarrow{u_m}  \Th^{R} (U(m)_{\tau}) \rightarrow  \Th^{R} (U(m+1)_{\tau})$$
		where the first map is defined using the $R$-module structure on the Thom spectra, and the composite is null. From \cite[Proposition 5.2]{BSS20}, we have that it is a cofiber sequence on the non-equivariant spectra. We claim that it follows that this induces an equivalence $\mbox{cof}(u_m) \simeq \Th^{R} (U(m+1)_{\tau})$ of $R$-modules.

As the unitary groups $U(m)_\tau$ are finite complexes, it follows that $\Th^{R}(U(m)_\tau)$ are compact $R$-modules, which are cofree by Proposition \ref{modcofree}. Hence, it follows from \cite[Proposition 1.1]{GM95a} that  $\mbox{cof}(u_m) \simeq \Th^{R} (U(m+1)_{\tau})$ is an equivalence. 
	\end{proof}
	
	\vspace*{0.2cm}

In the $m=0$ case, one may also compute $\Th^R(f)$ for $f:S^1 \to BGL_1R$ directly as $R/(u_0)$ by using an argument analogous to the proof of Proposition \ref{Thom colimit form}. 

\vspace*{0.2cm}

	\begin{mysubsection}{Slices of even ring spectra} \label{sliceofspectra}
		The slices of an $\textit{even}$ ring spectrum $R$ are described in \cite[Proposition 4.20, Lemma 4.23]{HHR21}, which  states that the odd slices of such spectrum vanish. If $\underline{M}$ is a $C_2$-Mackey functor then, we denote $P^0 \underline{M}$ as the maximal quotient Mackey functor of $\underline{M}$ having injective restriction map. From \cite[Proposition 2.13]{hillmeier}, we have that the even slices are determined as $P^{2n} _{2n} (R) \simeq \Sigma ^{n\rho} HP^{0} \underline{\pi} _{n\rho} (R)$.
	\end{mysubsection}

\vspace*{0.2cm}

	\begin{thm} \label{quottwralg}
		Given homotopy classes $u_i \in \pi _{i\rho} ^{C_2} (R)$ for $i = 0,\dots ,n-1$ such that $1+u_0\in \pi_0^{C_2}(R)$ is a unit, there exists a $\Omega ^{\sigma}$-fold twisted monoid map $U(n)_{\tau} \xrightarrow{f} BGL_1 (R)$ such that $\Th^{R} (f) \simeq  R/ (u_0, u_1, ..., u_{n-1})$ as $R$-modules.
	\end{thm}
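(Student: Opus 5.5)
The plan is to realize the map as $f=\Omega^\sigma\zeta$ for a $C_2$-map $\zeta:\gran\to B^\rho GL_1(R)$, using Proposition \ref{deloop of unitary} to identify $U(n)_\tau\simeq\Omega^\sigma\gran$. Then $f$ is automatically an $\Omega^\sigma$-map. We will construct $\zeta$ so that the associated unit on $\C P^{n-1}_\tau$, obtained by restricting along \eqref{cpformun}, is exactly $1+u_0+u_1x+\dots+u_{n-1}x^{n-1}$. Once this is done, Proposition \ref{cofiberofthom} applies inductively in $m$: $\Th^R(U(m+1)_\tau)$ is the cofiber of $u_m$ on $\Th^R(U(m)_\tau)$. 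The base case $m=1$ is the direct computation $\Th^R(U(1)_\tau)=\Th^R(S^1)\simeq R/(u_0)$, analogous to Proposition \ref{kuR/2 as kuR-module}; here the unit $1+u_0$ gives the cofiber of $(1+u_0)-1=u_0$. Iterating yields $R/(u_0,\dots,u_{n-1})$.

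To construct $\zeta$, I will use the composite $\Sigma^\sigma\Sigma\C P^{n-1}_\tau\to\Sigma^\sigma U(n)_\tau\to\gran$, the adjoint of \eqref{cpformun} followed by the counit $\chi$. Non-equivariantly this is the familiar map $\Sigma^2\C P^{n-1}\to BU(n)$, which is $2n$-connected on cohomology. I would first define a map $\zeta_0:\Sigma^\rho\C P^{n-1}_\tau\to B^\rho GL_1(R)$; note $\Sigma^{1+\sigma}=\Sigma^\rho$. Such a map corresponds to a class in $[\C P^{n-1}_\tau,GL_1R]^{C_2}$. I choose the unit
\[
u=1+u_0+\textstyle\sum_{i\ge1} u_ix^i\in R^0(\C P^{n-1}_\tau)^\times,
\]
which is invertible because $1+u_0$ is a unit and $x$ is nilpotent; the expansion uses the real orientation of \cite{hukriz}. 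Next I extend $\zeta_0$ along $\Sigma^\rho\C P^{n-1}_\tau\to\gran$ by equivariant obstruction theory. The relative cells of $\gran$ relative to the image of $\Sigma^\rho\C P^{n-1}_\tau$ come from the Schubert cell structure. With complex conjugation, every cell is of the form $\DD(k\rho)$, since Schubert cells are complex affine spaces with conjugation. So the obstructions lie in $[S^{k\rho-1},B^\rho GL_1R]^{C_2}\cong\pi^{C_2}_{(k-1)\rho-1}(GL_1R)\cong\pi^{C_2}_{(k-1)\rho-1}(R)$, which vanishes by evenness. This parallels Propositions \ref{kuR/2 as twisted monoid} and \ref{hzmodkr}. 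The subtlety is that the cells of $\Sigma^\rho\C P^{n-1}_\tau$ must map to a subcomplex. I would handle this by replacing $\gran$ with the mapping cylinder, so the relative cells are still of type $\DD(k\rho)$, up to an equivariant cellular approximation.

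Finally, I must check that $\Omega^\sigma\zeta$ restricted along \eqref{cpformun} recovers $u$. This holds by adjunction, since $\zeta$ extends $\zeta_0$ and $\zeta_0$ is adjoint to $u$. The cofreeness hypothesis enters only through Proposition \ref{cofiberofthom}. I expect the main obstacle to be the cellular step: confirming that $\gran$ relative to $\Sigma^\rho\C P^{n-1}_\tau$ admits a $C_2$-cell structure built only from representation cells $\DD(k\rho)$, including a possible $\DD(n\rho)$-type $2n$-cell, with compatible attaching maps. I would first try the Schubert cells directly, checking that conjugation acts on each cell $\C^k$ coordinatewise. If that fails, I would instead use the fixed-point criterion: compare $Gr_n(\R^\infty)\to Gr_n(\C^\infty)$ connectivity against the underlying connectivity, so that isotropy-separated obstruction groups are the relevant $RO(C_2)$-graded groups $\pi_{m\rho-1}$.
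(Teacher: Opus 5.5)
\textbf{There is a genuine gap in the extension step, which is where the real content of the theorem lies.} The rest of your architecture matches the paper's proof:
\begin{itemize}
\item You reduce to extending the map $\Sigma^\rho(\C P^{n-1}_\tau)_+\to B^\rho GL_1(R)$, determined by the unit $1+u_0+u_1x+\dots+u_{n-1}x^{n-1}$, over $\gran$.
\item You take $f=\Omega^\sigma\zeta$.
\item You identify the Thom spectrum by iterating Proposition \ref{cofiberofthom}, doing the $m=1$ case directly.
\end{itemize}

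\emph{Why the cell-by-cell argument does not transfer.} Cell-by-cell obstruction theory with groups $[S^{k\rho-1},B^\rho GL_1R]^{C_2}\cong\pi^{C_2}_{(k-1)\rho-1}(R)$ works in Propositions \ref{kuR/2 as twisted monoid} and \ref{hzmodkr}. There the domain $S^\rho$ is the bottom cell of $\C P^\infty_\tau$ (resp.\ $\HP^\infty_\tau$), so the pair is a relative complex with only $\DD(k\rho)$-cells. For $n\geq 2$ that fails. Non-equivariantly, $\Sigma^2\C P^{n-1}_+\to BU(n)$ is only surjective on cohomology: $c_1^2$ is already in the kernel in degree $4$, so the map is not $2n$-connected. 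Its image is not a subcomplex of the Schubert structure.

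\emph{Why the mapping cylinder does not fix it.} Each cell $\DD(k\rho)$ of the domain ($1\le k\le n$) contributes a relative cell $\DD(k\rho)\times(0,1)$. Since the cylinder coordinate has trivial action, this is a $\DD(k\rho+1)$-cell. Its obstruction group is $[S^{k\rho},B^\rho GL_1R]^{C_2}\cong\pi^{C_2}_{(k-1)\rho}(GL_1R)$. These groups are nonzero in general; they are exactly the groups containing the classes $1+u_0,u_1,\dots,u_{n-1}$ that define your map. Also, there is no general equivariant cellular approximation theorem for representation-cell complexes to invoke. Checking that Schubert cells are of type $\DD(k\rho)$ only gives an \emph{absolute} cell structure on $\gran$. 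What you actually need is that the homotopy cofiber of $\Sigma^\rho(\C P^{n-1}_\tau)_+\to\gran$ is built from $\DD(k\rho)$-cells. Nothing in the proposal establishes this, and your ``isotropy-separated'' fallback is not specific enough to do so.

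\emph{How the paper avoids this.} It never uses a geometric relative cell structure. Instead it runs obstruction theory along the slice tower of $\Sigma^\rho gl_1R$:
\begin{itemize}
\item Because $R$ is even, the odd slices vanish. The even layers are $\Sigma^{k\rho}H\underline M$ with $\underline M=P^0\underline{\pi}_{(k-1)\rho}(R)$, and these Mackey functors have injective restriction.
\item The obstructions therefore lie in $H^{k\rho+1}_{C_2}(\gran,\Sigma^\rho\C P^{n-1}_\tau;\underline M)$. These groups only see the cofiber after smashing with $H\underline M$.
\item Lemma \ref{cohzero} shows they vanish. After smashing with $H\underline M$, both spaces split as wedges of $S^{m\rho}\wedge H\underline M$, because the attaching maps die when $\pi^{C_2}_{m\rho-1}H\underline M=0$. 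The map is a summand inclusion, and $\pi_{(m-k)\rho-1}H\underline M=0$ finishes the argument.
\end{itemize}
This homological substitute for the relative cell structure, which is where injectivity of restriction enters, is the idea your proposal is missing.
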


	\begin{proof}
		As in \eqref{identifyelement} for given homotopy classes $u_i$, we get a cohomology class in $R^0 _{C_2} (\C P^{n-1}_\tau) ^{\times}$ and corresponding to that class a map: $\Sigma \C P^{n-1}_\tau \rightarrow BGL_1(R)$. Now, having a twisted monoid map essentially boils down to the extension problem		
		\[\begin{tikzcd}[cramped]
			{\Sigma^{\sigma}(\Sigma \C P^{n-1}_\tau )} && {\Sigma^{\sigma} BGL_1(R)} && {B^{\rho}GL_1(R)} \\
			{\Sigma^{\sigma} U(n)_{\tau}} \\
			{Gr_n(\mathbb{C}^{\infty})_{\tau}}
			\arrow["u", from=1-1, to=1-3]
			\arrow[from=1-1, to=2-1]
			\arrow["\chi", from=1-3, to=1-5]
			\arrow["\chi"', from=2-1, to=3-1]
			\arrow[dashed, from=3-1, to=1-5]
		\end{tikzcd}\]
			Using the slice tower for $\Sigma^{\rho} gl_1(R)$ and the description of slices from \S \ref{sliceofspectra}, we see the obstructions lie in the cohomology groups $H^{k\rho +1} _{C_2} (Gr_n (\C^\infty)_{\tau}, \Sigma^{\rho}\C P^{n-1}_\tau; P^0 \underline{\pi}_{(k-1)\rho} R)$ (as the odd slices are already zero); which vanish by Lemma \ref{cohzero}. Now Proposition \ref{cofiberofthom} completes the proof.
	\end{proof}

\vspace*{0.2cm}
	
	\begin{lem}\label{cohzero}
		Let $\underline{M}$ be a $C_2$-Mackey functor with injective restriction map. Then 
$$H^{k\rho +1} _{C_2} (Gr_n (\C^\infty)_{\tau}, \Sigma^{\rho}\C P^{n-1}_\tau; \underline{M}) \cong 0.$$
	\end{lem}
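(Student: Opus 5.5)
Since $\underline{M}$ has injective restriction, we can use the $C_2$-cell structures on both spaces and reduce the relative cohomology to a cellular cochain computation. The key point is that $Gr_n(\C^\infty)_\tau$ has a Schubert cell decomposition into cells of type $\DD(m\rho)$, the complexified Schubert cells with conjugation. The same holds for $\C P^{n-1}_\tau$, by \eqref{cellcp}. The map $\Sigma^\sigma\Sigma\C P^{n-1}_\tau \to Gr_n(\C^\infty)_\tau$ is the $\sigma$-suspension of \eqref{cpformun} composed with $\chi$. Up to homotopy, I will model it so that it identifies $\Sigma^\rho \C P^{n-1}_\tau$ (cells of dimension $(j+1)\rho$, $0\le j\le n-1$) with a subcomplex of $Gr_n(\C^\infty)_\tau$: non-equivariantly, $\Sigma^2\C P^{n-1}_+$ maps onto the $2n$-skeleton pieces of $BU(n)$ hitting the cells corresponding to $c_1,\dots,c_n$-type Schubert classes. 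Then the relative complex $(Gr_n(\C^\infty)_\tau, \Sigma^\rho\C P^{n-1}_\tau)$ has a filtration whose subquotients are wedges of representation spheres $S^{m\rho}$, with $m\ge 2$ except for those cells already removed.

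Given this, I will compute $H^{k\rho+1}_{C_2}$ via the Atiyah--Hirzebruch/cellular spectral sequence. The $E_1$-term consists of groups $\tilde H^{k\rho+1}_{C_2}(S^{m\rho};\underline{M}) \cong \tilde H^{(k-m)\rho+1}_{C_2}(S^0;\underline{M}) = \pi^{C_2}_{(m-k)\rho-1}H\underline{M}$. So the main lemma to prove is that $\pi^{C_2}_{j\rho-1}H\underline{M}=0$ for all integers $j$ whenever $\underline{M}$ has injective restriction; the vanishing of the $E_1$-page then forces the relative group to be zero, even with infinitely many cells, by a Milnor $\lim^1$ argument over skeleta, since the groups vanish at every finite stage. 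For $j=0$ this is immediate. For $j\ge1$, I will use the cofiber sequence $C_{2+}\to S^0\to S^\sigma$ iterated, or the standard computation of $\pi_\bigstar H\underline{M}$: the groups $\pi^{C_2}_{j+j\sigma-1}$ lie in the "positive cone", where they are built from $\underline{M}(C_2/e)$ modulo the $C_2$-action and images of transfers, and they vanish in degrees $j\rho-1$ for all $\underline{M}$. Equivalently, $\Sigma^{j\rho}H\underline{M}$ is a $2j$-slice, so it is slice $\ge 2j$ and hence $\underline{\pi}_{j\rho-1}$ vanishes by the standard connectivity of slice cells. For $j\le -1$ the groups lie in the "negative cone"; there, $\pi^{C_2}_{-|j|\rho-1}H\underline{M}$ is computed by the chain complex of $S^{|j|\rho}$ with cells $C_2\times D^i$. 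The degree $(-1)$ homology in the top degree is the kernel of the restriction-type map $\underline{M}(C_2/C_2)\to \underline{M}(C_2/e)$, which vanishes exactly because restriction is injective. This is precisely why the hypothesis (and the $P^0$ in \S\ref{sliceofspectra}) appears: $\Sigma^{m\rho}H\underline M$ is then an honest slice in negative degrees too, by \cite{hillmeier}.

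The main obstacle I expect is the geometric step: verifying that the $C_2$-Schubert cell structure on $Gr_n(\C^\infty)_\tau$ is really by cells $\DD(m\rho)$, and that the map from $\Sigma^\rho\C P^{n-1}_\tau$ can be taken cellular with the relative pair having only $\rho$-multiple cells. If the map is only a cellular inclusion up to homotopy, I will instead work with the long exact sequence of the pair. Both absolute cohomologies are then computed by spectral sequences concentrated in degrees $a\rho$ (no odd terms, since all the $E_1$ entries in degrees $k\rho+1$ and $k\rho$ fit into the pattern above). I then only need surjectivity of $H^{k\rho}_{C_2}(Gr_n;\underline{M})\to H^{k\rho}_{C_2}(\Sigma^\rho\C P^{n-1};\underline{M})$. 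I will check this by restricting to underlying spaces, using injectivity of restriction for $\underline M$: non-equivariantly, $H^{2k}(BU(n))\to H^{2k}(\Sigma^2\C P^{n-1})$ is onto, since the $c_i$ suspend to the generators.
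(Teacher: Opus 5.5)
Your overall architecture matches the paper's: $\rho$-cell structures on both spaces, the vanishing $\pi^{C_2}_{j\rho-1}H\uM=0$ for all $j$, and the fact that $\Sigma^\rho\C P^{n-1}_\tau\to Gr_n(\C^\infty)_\tau$ is a ``summand'' in $\uM$-cohomology. Your fallback (long exact sequence of the pair plus surjectivity in degree $k\rho$) is the cohomological form of the paper's assertion that $\Sigma^\rho\C P^{n-1}_\tau\wedge H\uM\to Gr_n(\C^\infty)_\tau\wedge H\uM$ is the inclusion of a summand.

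\textbf{The key vanishing lemma.} Your proof of it has its cases swapped, so the hypothesis is used in the wrong place.
\begin{itemize}
\item For $j=1$, $\pi^{C_2}_{\rho-1}H\uM=\pi^{C_2}_\sigma H\uM=[S^\sigma,H\uM]^{C_2}$. The cofiber sequence ${C_2/e}_+\to S^0\to S^\sigma$ identifies this with $\ker\big(\res^{C_2}_e\colon\uM(C_2/C_2)\to\uM(C_2/e)\big)$. It is \emph{not} zero for all $\uM$ (take $\uM(C_2/e)=0$), and this is the only place injectivity of restriction enters.
\item For $j\ge2$ the group vanishes for every $\uM$, since $S^{(j-1)+j\sigma}$ has no cells below dimension $j-1\ge1$.
\item For $j\le-1$ it equals $\pi^{C_2}_{-1}(S^{|j|\rho}\wedge H\uM)=0$ by connectivity, again for every $\uM$. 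There is no kernel of restriction on that side.
\end{itemize}
Likewise, ``$\Sigma^{j\rho}H\uM$ is slice $\ge 2j$'' controls $\underline{\pi}_{-1}(\Sigma^{j\rho}H\uM)=\underline{\pi}_{-j\rho-1}H\uM$, not $\underline{\pi}_{j\rho-1}H\uM$. The statement you need is true, but your argument for it is not.

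\textbf{The surjectivity step.} This is under-justified. Restricting to underlying spaces can show two equivariant classes agree, but cannot by itself produce an equivariant preimage. The image of $H^{k\rho}_{C_2}(Gr_n(\C^\infty)_\tau;\uM)\to H^{2k}(BU(n);\uM(C_2/e))$ consists only of cochains with values in $\res(\uM(C_2/C_2))$. That can be a proper subgroup (e.g.\ when $\res$ is multiplication by $2$ on $\Z$), so a non-equivariant preimage need not be equivariant.

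What closes this is the cellular computation. Using $\pi^{C_2}_{j\rho-1}H\uM=0$ for all $j$ and $\pi^{C_2}_{j\rho}H\uM=0$ for $j\ne0$, each complex satisfies $H^{k\rho}_{C_2}(X;\uM)\cong\prod_{k\rho\text{-cells}}\uM(C_2/C_2)$, embedded by $\res$ into the underlying cellular cochains. Since $\res\circ f^*=f_e^*\circ\res$ with $\res$ injective, the equivariant $f^*$ is given by the same integer matrix as the underlying one. Equivalently, $\tr\circ\res=2$ on $\uM(C_2/C_2)$, so the Burnside ring acts through underlying degrees. Integral surjectivity ($c_k$ restricts to a generator of $H^{2k}(\Sigma^2\C P^{n-1}_+;\Z)$) then finishes. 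This is exactly the content of the paper's ``inclusion of a summand''.

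\textbf{Two smaller points.}
\begin{itemize}
\item Your subcomplex model is not available as stated. Already non-equivariantly, the $4$-dimensional class of $\Sigma^2\C P^1_+$ pairs to $0$ with $c_1^2$ and to $\pm1$ with $c_2$, so it maps to $\pm$ the difference of the two Schubert $4$-cells of $BU(2)$. A mapping cylinder would add relative cells $\DD(m\rho)\times I$, whose $E_1$-terms in degree $k\rho+1$ are $\uM(C_2/C_2)\neq0$. So you must use the fallback.
\item In the Milnor sequence, the $\lim^1$ term is built from the degree-$k\rho$ groups of the finite stages, which do not vanish. You need Mittag--Leffler, i.e.\ surjectivity of those maps, which again follows from $\pi^{C_2}_{j\rho-1}H\uM=0$. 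The paper sidesteps this by using the wedge splitting.
\end{itemize}
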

	
	\begin{proof}
		The usual cell structure on $\C P^{n-1}$ and the Schubert cell structure on $Gr_n (\C ^{\infty})$ refines to a $C_2$-cell structure (analogous to \S \ref{CP_infity cell}). For a Mackey functor $\underline{M}$ with injective restriction maps, we observe that the cellular attaching maps are $0$ when we smash with $H\uM$. Consequently, we have a splitting of $\C P^{n-1}_\tau \wedge H\underline{M}$ and $Gr_n (\C ^{\infty})_\tau \wedge H\underline{M}$ as $H\underline{M}$-modules:
		\begin{equation*}
			\C P^{n-1}_\tau \wedge H\underline{M} \simeq \bigvee _{i=1} ^{n-1} S^{i\rho} \wedge H\underline{M},
		\end{equation*}
		\begin{equation*}
			Gr_n (\C ^{\infty})_\tau \wedge H\underline{M} \simeq \bigvee _{\text{Schubert cells } \alpha} S^{|\alpha| \rho} \wedge H\underline{M},
		\end{equation*}
and the Lemma follows from \cite[Theorem 6.1]{ROC2EMspectra}
\[\pi_{-k\rho - 1}(S^{m\rho} \wedge H\uM) \cong \pi_{(m-k)\rho -1}H\uM \cong 0.\]
 For the attaching maps, using induction in case of $\C P^{n-1}_\tau$, assuming that the splitting above holds for $\C P^m_\tau$ for $m<n-1$, the cellular attachment takes the form 		
		$$S^{(n-1)\rho -1} \wedge H\underline{M} \longrightarrow \bigvee _{i=0} ^{n-2} S^{i\rho} \wedge H\underline{M} \; (\simeq \C P^{n-2}_\tau \wedge H\underline{M}),$$
		which up to homotopy lies in the group $\bigoplus _{i=1} ^{n-1} \pi ^{C_2} _{i\rho -1} (H\underline{M} )=0$ by  \cite[Theorem 6.1]{ROC2EMspectra}. The same proof also works for the Grassmannian. Finally as $\Sigma^\rho \C P^{n-1}_\tau \wedge H\uM \to Gr_n(\C^\infty)_\tau \wedge H\uM$ is the inclusion of a summand, we see that $\frac{Gr_n(\C^\infty)_\tau}{\Sigma^\rho \C P^{n-1}_\tau} \wedge H\uM$ is also a wedge of copies of $S^{m\rho}\wedge H\uM$. Now the result follows as described above.
	\end{proof}

\vspace*{0.2cm}

\section{Real Topological Hochschild Homology of $C_2$-Thom Spectra} \label{thrthL}

We now switch our focus to the real topological Hochschild homology of Thom spectra. Conceptually, the existence of such a formula follows from \cite{nonabelianpoincare}. We provide an explicit expression which is useful for our later computations. 

\subsection{Real Topological Hochschild Homology} We recall the details of the construction of real topological Hochschild homology providing the background for the arguments for Thom spectra. 

\begin{mysubsection}{Description of $\Esgmop$ operad}
	Consider the (discrete) category $\Esgmop$ as follows :\\ 
	\noindent The objects are same as that of $\finC$, represented as $ (U \rightarrow O)$, where $U$ is a finite $C_2$-set and $O$ is a $C_2$-orbit. Recall that a morphism in $\finC$ is given by $(\phi, f, U)$ as depicted below:
	\[\begin{tikzcd}[cramped]
		{U_1} & {\phi ^{*} U_1} & U & {U_2} \\
		{O_1} & {O_2} & {O_2} & {O_2}
		\arrow[from=1-1, to=2-1]
		\arrow[from=1-2, to=1-1]
		\arrow[from=1-2, to=2-2]
		\arrow[hook', from=1-3, to=1-2]
		\arrow["f", from=1-3, to=1-4]
		\arrow[from=1-3, to=2-3]
		\arrow[from=1-4, to=2-4]
		\arrow["\phi", from=2-2, to=2-1]
		\arrow[equals, from=2-2, to=2-3]
		\arrow[equals, from=2-3, to=2-4]
	\end{tikzcd}\]
	A morphism in $\Esgmop$ between $U_1 \to O_1$ and $U_2 \to O_2$ is a morphism $(\phi,f,U)$ in $\finC$ together with the data  given as follows for every $x \in U_2$:
	$$ \begin{cases}
		\text{ if } \mbox{Stab}(x)= \{e\}: & \text{a morphism of } \finC \text{ along with a linear ordering on } f^{-1}(x)  \\
		\text{ if } \mbox{Stab}(x)=C_2 : & \text{a morphism of } \finC \text{ along with a linear ordering on } J \\
									& \text{ for } f^{-1}(x) = C_2 / C_2 \coprod (\amalg _{J} C_2/e ) \text{ or } f^{-1}(x) =\coprod_{J} C_2/e
	\end{cases} $$
Under the evident functor $\Esgmop \twoheadrightarrow \underline{\mbox{Fin}}_{*} ^{C_2}$, it gets the structure of a $C_2$-$\infty$-operad. Note that, it is the genuine operadic nerve \cite{Bon19} of the little $\sigma$-disks operad as the mapping spaces of the $\sigma$-framed representations are homotopically discrete.
\end{mysubsection}

\vspace*{0.2cm}

\begin{defn}
	A \textit{twisted monoid} $A$ in a $C_2$-symmetric monoidal $C_2 \text{-} \infty$-category $\underline{\mathscr{C}}$ is an $\Esgm$-algebra in $\underline{\mathscr{C}}$, that is, a $C_2$-$\infty$-operad map $A^{\otimes}: \Esgmop \rightarrow \underline{\mathscr{C}}^{\otimes}$.
\end{defn}

\vspace*{0.2cm}

\begin{ex}
	The topological twisted monoids of Definition \ref{twistedmonoid} is a strict model of twisted monoids in the $C_2$-$\infty$-category $\Topc$, where the underlying $C_2$-space is the image of $C_2 / C_2$; using the  equivariant recognition principle of \cite{GM17}, \cite{CHL24} under the condition of being grouplike. We shall focus on the twisted monoids in the category $\Spc$. In \cite{DMP21}, these were called spectra with anti-involution.  
\end{ex}

\vspace*{0.2cm}

We now  refine the notion of $C_2$-diagrams from \cite{DMP21}, \cite{DM16} by constructing a $C_2$-$\infty$-category $\tilde{\Delta}$ such that $C_2$-functors from $\tilde{\Delta}^{op}$ to another $C_2$-category are \textit{real simplicial objects}.
\begin{cons}
Consider $\tilde{\Delta} ^{op}$ as the coCartesian fibration $\tilde{\Delta} ^{op} \twoheadrightarrow \Obcat$ obtained by unstraightening the functor
	$$\begin{aligned}
		&\Obcat  \longrightarrow Cat \xrightarrow{\text{Nerve}} Cat_{\infty}\\
		&C_2/C_2   \mapsto  (\Delta^{op}) ^{C_2} \text{ defined as the fixed points of the involution }  \eqref{RSRC} \\
		&C_2 / e  \mapsto  \Delta^{op}
	\end{aligned}$$
	The ordinary category $\tilde{\Delta} ^{op}$ is described as:
	\begin{align*}
		\mbox{Objects}: & \; (C_2/C_2, [m]) \text{ and } (C_2/e, [n]) \text{ for } [m], [n] \in \Delta\\
		\mbox{Morphisms}: & \; \resizebox{13.5cm}{!}{$\text{Hom}((C_2/e, [m]),(C_2/e, [n])) = \{(id, \alpha): \alpha \in \Delta([n],[m])\} \cup \{(\omega, \beta) : \beta \in \Delta([n],[m])\}$} \\
		& \; \text{Hom}((C_2/C_2, [m]),(C_2/e, [n]))= \text{Hom}_{\Delta}([n],[m]) \\
		& \; \text{Hom}((C_2/C_2, [m]),(C_2/C_2, [n]))= \text{Hom}_{\Delta^{C_2}}([n],[m])\\
		& \text{ along with the relation } (id, \omega(\alpha))= (\tau, id) \circ (id, \alpha) \circ (\tau, id)
	\end{align*}
	
\end{cons}

\vspace*{0.2cm}

\begin{defn}
	Given a $C_2$-$\infty$-category $\underline{\mathscr{C}}$, a Real simplicial object $X$ is defined to be a $C_2$-functor $X: \widetilde{\Delta} ^{op} \rightarrow \underline{\mathscr{C}} $.
\end{defn}

\vspace*{0.2cm}

For the category of $C_2$-spaces $\underline{\Top}^{C_2}$, such a $C_2$-functor amounts to the data of a sequence of $C_2$-spaces $X_n = X_{[C_2/C_2]} ([n])$ with the relation $\alpha^* \circ \tau_k \simeq \tau_n \circ (\omega(\alpha))^*$ of Definition \ref{ex: Real simplicial object}.
Next, one defines a $C_2$-functor $\Cut^{C_2}: \tilde{\Delta} ^{op} \rightarrow \Esgmop $, which is an extension of the functor $\Cut: \Delta ^{op} \rightarrow \Ass^{\otimes}$ of \cite{NS18}. It suffices to define it on the $1$-category level as both sides are discrete. For that, we need a closer look at the morphisms of the $C_2$-fixed category $\Delta ^{C_2}$, which the next proposition sheds light into. Recall from \eqref{RSRC}, a morphism $\alpha: [n] \rightarrow [m]$ of $\Delta ^{C_2}$ has to satisfy $\alpha = \omega(\alpha)$, i.e.
\begin{myeq} \label{fixedrelation}
	\alpha(i)= \omega(\alpha) (i)= m- \alpha(n-i)  \text{ for all } i \in [n].
\end{myeq}

\begin{notation}
	For two morphisms $\alpha_i: [n_i] \rightarrow [m_i], \, i=1,2$, their concatenation is denoted as $\alpha_1 * \alpha_2$ which is a morphism from $[n_1 + n_2 + 1] $ to $[m_1+m_2 +1]$ given as $\alpha_1$ from $\{0,\dots, n_1\}$ to $\{0,\dots, m_1\}$ and $\alpha_2$ from $\{ n_1+1,\dots, n_1+n_2+1\}$ to $\{m_1+1,\dots, m_1+m_2+1\}$. 
\end{notation}

\vspace*{0.2cm}

In terms of this notation the following proposition follows readily. 
\begin{prop} \label{identifymaps}
	A morphism in $\Delta^{C_2}$ can be one of the following:
		\begin{align*}
			(1) \;&\alpha: [2n+1] \rightarrow [2m+1] \text{ is of the form } \alpha= \beta * \omega(\beta) \text{ where } \beta = \alpha _{|\{0,1,..,n\}}\\
			(2)\; &\text{Hom}_{\Delta^{C_2}} ([2n],[2m+1])= \phi \\
			(3)\; &\alpha: [2n] \rightarrow [2m] \text{ is of the form } \alpha= \beta * ([2k] \rightarrow [0]) * \omega(\beta) \text{ where } \beta = \alpha _{|\{0,1,..,i\}}, i \leq n-1\\
			(4)\; &\alpha: [2n+1] \rightarrow [2m] \text{ is of the form } \alpha= \beta * ([2k+1] \rightarrow [0]) * \omega(\beta) \\
           & \text{ where } \beta = \alpha _{|\{0,1,..,i\}}, i \leq n-1,  \text{ or } \alpha= \delta_m \circ (\beta * \omega(\beta)) \text{ where } \beta = \alpha _{|\{0,1,..,n\}}
		\end{align*}
\end{prop}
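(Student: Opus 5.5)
The plan is a direct combinatorial analysis of the fixed-point condition \eqref{fixedrelation}, $\alpha(i)=m'-\alpha(n'-i)$, where $\alpha:[n']\to[m']$ is order preserving. First I will record what this symmetry forces. The map $\alpha$ commutes with the two reflections $i\mapsto n'-i$ on the source and $j\mapsto m'-j$ on the target. It therefore preserves their fixed points whenever these exist: $[k]$ has a fixed point under reflection exactly when $k$ is even, namely the midpoint $k/2$. If $n'=2n$ is even, then $\alpha(n)=m'-\alpha(n)$, so $m'$ must be even and $\alpha(n)=m'/2$. This gives (2) immediately: $\mathrm{Hom}_{\Delta^{C_2}}([2n],[2m+1])$ is empty.

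Next I will treat $n'=2n+1$ and $m'=2m+1$. Both reflections are free. The symmetry relation shows that $\alpha$ is determined by its restriction $\beta=\alpha|_{\{0,\dots,n\}}$. I then claim that $\beta$ lands in $\{0,\dots,m\}$. Indeed $\alpha(n)\le\alpha(n+1)=2m+1-\alpha(n)$, so $\alpha(n)\le m$. By monotonicity the whole lower half maps into the lower half, and the upper half is forced to be $\omega(\beta)$. Hence $\alpha=\beta*\omega(\beta)$, which is (1).

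The two even-target cases, (3) and (4), are where I expect the only real bookkeeping. Let $m'=2m$ and let $c=m$ be the midpoint. The preimage $\alpha^{-1}(c)$ is an interval that is stable under the source reflection, so it is a symmetric block in the middle. Call it $\{i+1,\dots,n'-i-1\}$, which may be empty, and write $\beta=\alpha|_{\{0,\dots,i\}}$. Outside this block, $\beta$ lands in $\{0,\dots,c-1\}$ and the other side is $\omega(\beta)$.

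For $n'=2n$ the middle block always contains $n$, so it is nonempty. It has odd cardinality $2k+1$, i.e.\ it is of the form $[2k]$, and this gives $\alpha=\beta*([2k]\to[0])*\omega(\beta)$, which is (3).

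For $n'=2n+1$ there are two possibilities. If the block is nonempty, it has even cardinality $2k+2$, i.e.\ it is of the form $[2k+1]$, and the same concatenation shape gives the first alternative of (4). If the block is empty, the midpoint is skipped. Then $\alpha$ factors through the symmetric coface $\delta_m:[2m-1]\to[2m]$ missing $m$, as $\alpha=\delta_m\circ(\beta*\omega(\beta))$ with $\beta*\omega(\beta)$ a map of type (1). This is the second alternative of (4).

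The main care needed is in the index conventions of the concatenation notation, namely matching the ranges $\{0,\dots,m_1\}$ and the length $[m_1+m_2+1]$ so that $m_1=m_2$ and the middle $[0]$ contributes the single point $c$. Conversely, I will note that every map of each listed shape satisfies \eqref{fixedrelation}, because $\omega$ reverses concatenations and fixes $[k]\to[0]$ and $\delta_m$. So the list is exactly the set of morphisms of $\Delta^{C_2}$.
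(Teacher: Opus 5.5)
Your argument is correct, and it is the direct case analysis of the fixed-point condition \eqref{fixedrelation} that the paper leaves implicit when it says the proposition ``follows readily'' from the concatenation notation (no further proof is given there). The one point worth making explicit is the degenerate case where the middle block is the whole source, i.e.\ the constant map onto the midpoint (for example any map to $[0]$): your indexing, like the statement itself, covers this only if you allow $i=-1$ with $\beta$ the empty map.
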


\vspace*{0.2cm}
	
Now we define the functor $\Cut^{C_2}: \tilde{\Delta} ^{op} \rightarrow \Esgmop $, which gives us the dihedral bar construction of \ref{cons dihedral bar} in this parametrized higher category setup. We first recall that the functor $\Cut: \Delta ^{op} \rightarrow \Ass^{\otimes}$ sends any finite ordered set to the set of cuts with the two trivial cuts identified.
\begin{cons} \label{DBCpara}
	We define the functor $\Cut^{C_2}: \tilde{\Delta} ^{op} \rightarrow \Esgmop$ to be the set of cuts of the underlying set, with $C_2$-action given on the cuts. On objects this is given by 
	\begin{align*}
	&	(C_2/e, [n]) \longmapsto  \;(C_2/e \times [n]) \rightarrow C_2/e, \\
	&	(C_2/C_2, [2m]) \longmapsto  \; (C_2/C_2 \amalg (C_2/e \times [m-1])) \rightarrow C_2/C_2, \\
	&	(C_2/C_2, [2m+1]) \longmapsto  \; (C_2/C_2 \amalg C_2/C_2 \amalg (C_2/e \times [m-1])) \rightarrow C_2/C_2,	
	\end{align*}
	where the first copy of $C_2/C_2$ in the second and the third assignment are the trivial cuts. The second $C_2/C_2$ summand of the third one is the cut which is fixed under $C_2$-action.
	On morphisms the functor is given by the following formulas. 
$$(C_2/e, [m]) \xrightarrow{(id,\alpha)} (C_2/e, [n])  \longmapsto 
		\begin{tikzcd}[cramped]
			{C_2/e \times [m]} & {C_2/e \times [m]} && {C_2/e \times [n]} \\
			{C_2/e} & {C_2/e} && {C_2/e}
			\arrow[from=1-1, to=2-1]
			\arrow["id"', from=1-2, to=1-1]
			\arrow["{{id \times {\tiny \Cut}(\alpha)}}", from=1-2, to=1-4]
			\arrow[from=1-2, to=2-2]
			\arrow[from=1-4, to=2-4]
			\arrow["id", from=2-2, to=2-1]
			\arrow[between={0.1}{0.9}, equals, from=2-2, to=2-4]
		\end{tikzcd}$$
		$$(C_2/e, [m]) \xrightarrow{(\tau,\alpha)} (C_2/e, [n])  \longmapsto
		\begin{tikzcd}[cramped]
			{C_2/e \times [m]} & {C_2/e \times [m]^{op}} && {C_2/e \times [n]} \\
			{C_2/e} & {C_2/e} && {C_2/e}
			\arrow[from=1-1, to=2-1]
			\arrow["\tau", from=1-2, to=1-1]
			\arrow["{id \times \widetilde{{\tiny \Cut}(\alpha)}}", from=1-2, to=1-4]
			\arrow[from=1-2, to=2-2]
			\arrow[from=1-4, to=2-4]
			\arrow["\tau", from=2-2, to=2-1]
			\arrow[between={0.1}{0.9}, equals, from=2-2, to=2-4]
		\end{tikzcd}$$
The ordered set $[m]^{op}$ is the same set as $[m]$ with the reverse order, and $\widetilde{\Cut(\alpha)}$ is the same map as $\Cut(\alpha)$ but with reverse order on each inverse images.
		$$(C_2/C_2, [2m]) \xrightarrow{\alpha} (C_2/e, [n]) \longmapsto \begin{adjustbox}{max width=10cm}
		\begin{tikzcd}[cramped]
			{C_2/C_2 \amalg C_2/e \times [m-1]} & {C_2/e \times [2m]} && {C_2/e \times [n]} \\
			{C_2/C_2} & {C_2/e} && {C_2/e}
			\arrow[from=1-1, to=2-1]
			\arrow[from=1-2, to=1-1]
			\arrow["{{id \times {\tiny \Cut}(\alpha)}}", from=1-2, to=1-4]
			\arrow[from=1-2, to=2-2]
			\arrow[from=1-4, to=2-4]
			\arrow[from=2-2, to=2-1]
			\arrow[between={0.1}{0.9}, equals, from=2-2, to=2-4]
		\end{tikzcd}
\end{adjustbox}$$
	The order in the middle term of the top sequence is induced from the leftmost term (can be thought of as dictionary order). This is also the order in the next case.
		$$ \begin{adjustbox}{max width=\linewidth}	\begin{tikzcd}[cramped]
&  (C_2/C_2, [2m+1]) \xrightarrow{\alpha} (C_2/e, [n]) \\
\\
				{C_2/C_2 \amalg C_2/C_2 \amalg (C_2/e \times [m-1])} & {C_2/e \times [2m+1]} && {C_2/e \times [n]} \\
			{C_2/C_2} & {C_2/e} && {C_2/e}
\arrow[between={0.1}{0.9}, maps to, from=1-2, to=3-2]
			\arrow[from=3-1, to=4-1]
			\arrow[from=3-2, to=3-1]
			\arrow["{{id \times {\tiny \Cut}(\alpha)}}", from=3-2, to=3-4]
			\arrow[from=3-2, to=4-2]
			\arrow[from=3-4, to=4-4]
			\arrow[from=4-2, to=4-1]
			\arrow[between={0.1}{0.9}, equals, from=4-2, to=4-4]
		\end{tikzcd} 
\end{adjustbox}$$
		There are three types of morphisms remaining, by Proposition \ref{identifymaps}. The functor $\Cut^{C_2}$ operates on these in an analogous manner. We describe one of them as below.
		\[\begin{adjustbox}{max width=\linewidth}
\begin{tikzcd}[cramped]
			& {(C_2/C_2, [2m+1]) \xrightarrow{\alpha} (C_2/C_2, [2n+1])} && \\
			\\
			\begin{array}{c} {\begin{pmatrix} C_2/C_2 \amalg C_2/C_2 \\ \amalg (C_2/e \times [m-1])	\end{pmatrix}} \end{array} & \begin{array}{c} {\begin{pmatrix} C_2/C_2 \amalg C_2/C_2 \\ \amalg (C_2/e \times [m-1])	\end{pmatrix}} \end{array} && \begin{array}{c} {\begin{pmatrix} C_2/C_2 \amalg C_2/C_2 \\ \amalg (C_2/e \times [n-1])	\end{pmatrix}} \end{array} \\
			{C_2/C_2} & {C_2/e} && {C_2/e}
			\arrow[between={0.1}{0.9}, maps to, from=1-2, to=3-2]
			\arrow[from=3-1, to=4-1]
			\arrow["id"', from=3-2, to=3-1]
			\arrow["{{\tiny \Cut}^{C_2}(\alpha)}", from=3-2, to=3-4]
			\arrow[from=3-2, to=4-2]
			\arrow[from=3-4, to=4-4]
			\arrow[between={0.1}{0.9}, equals, from=4-2, to=4-1]
			\arrow[between={0.1}{0.9}, equals, from=4-2, to=4-4]
		\end{tikzcd}
\end{adjustbox}\]
		As $\alpha= \beta * \omega(\beta)$, here $\Cut^{C_2}(\alpha)$ is defined to be $\underline{\Cut}(\beta)$, where $\underline{\Cut}$ is defined to be all cuts (trivial or not), with two trivial cuts not identified; i.e. $\Cut = \underline{\Cut}_{ / \{\text{two trivial cuts identified}\} }$. In $\underline{\Cut}(\beta)$ the first trivial cut corresponds to the trivial cut of $[2n+1]$ and the second trivial cut corresponds to the $C_2$-fixed cut of $[2n+1]$. Explicitly,
		\begin{itemize}
			\item for the trivial cut $\Cut^{C_2}(\alpha)^{-1}(C_2/C_2) = \underline{\Cut}(\beta)^{-1} (\phi \amalg [n]) = C_2/C_2 \coprod (C_2/e \times T)$, where $C_2/C_2$ corresponds to the trivial cut and $T \subset [m-1]$,
			\item for the fixed cut $\Cut^{C_2}(\alpha)^{-1}(C_2/C_2) = \underline{\Cut}(\beta)^{-1} ([n] \amalg \phi) = (C_2/e \times S) \coprod  C_2/C_2$, where $C_2/C_2$ corresponds to the $C_2$-fixed cut and $S \subset [m-1]$,
			\item for any other cut the inverse image is just inverse image under $\underline{\Cut}(\beta)$.
		\end{itemize} 
	
	Noting that under the composition with the coCartesian fibration $\tilde{\Delta} ^{op} \xrightarrow{{\tiny\Cut}^{C_2}} \Esgmop \twoheadrightarrow \underline{\mbox{Fin}}_{*} ^{C_2}$, the image of the morphisms are all active morphisms in $\underline{\mbox{Fin}}_{*} ^{C_2}$. Therefore, we get a functor to the active part of the operad $\tilde{\Delta} ^{op} \xrightarrow{{\tiny\Cut}^{C_2}} (\Esgmop)_{\text{act}}$.
\end{cons} 
	
\vspace*{0.2cm}

	In the construction above, the $C_2$-sets $C_2/e \times [m]$ and $C_2/C_2 \times [n]$ are just $C_2/e \times \{0,1,..., m\}$ and $C_2/C_2 \times \{0,1,..., n\}$ respectively. The reason for writing it in the above way, is to indicate the order of the underlying sets and make it easier to demonstrate the order on inverse images.
\begin{defn}
	Given a $\Esgm$-algebra or twisted monoid $A$ in $\underline{Sp}^{C_2}$ (or in \underline{\text{R-Mod}} for a $C_2$-$\mathbb{E}_{\infty}$-ring spectrum $R$), we define the real topological Hochschild homology as the following $C_2$-colimits respectively:
	$$\THR(A) := C_2 \text{-colimit} \; \left( \tilde{\Delta} ^{op} \xrightarrow{{\tiny \Cut}^{C_2}} (\Esgmop)_{\text{act}} \xrightarrow{A^{\otimes}} ({\underline{Sp}^{C_2}}^{\otimes})_{\text{act}} \xrightarrow{\otimes} \underline{Sp}^{C_2} \right)$$
	$$\THR^{R} (A) := C_2 \text{-colimit} \; \left( \tilde{\Delta} ^{op} \xrightarrow{{\tiny \Cut}^{C_2}} (\Esgmop)_{\text{act}} \xrightarrow{A^{\otimes}} ({\underline{\text{R-Mod}}}^{\otimes})_{\text{act}} \xrightarrow{\otimes} \underline{\text{R-Mod}} \right)$$
	Note that such a $C_2$-colimit is a coCartesian section $s$ of $\underline{Sp}^{C_2} \twoheadrightarrow \Obcat$. We denote $\THR(A)$ as the genuine $C_2$-spectrum $s([C_2/C_2])$. 
\end{defn}
	
\vspace*{0.2cm}

	\begin{rmk} \label{rmk: Dihedral}
		We remark that the $C_2$-space $S^{\sigma}$ is the $C_2$-colimit of the following functor 
	$$ \tilde{\Delta} ^{op} \xrightarrow{{\tiny \Cut}^{C_2}} (\Esgmop)_{\text{act}} \twoheadrightarrow (\finC)_{\text{act}} \hookrightarrow {\underline{\Top}^{C_2}},$$
	where the last arrow is the inclusion of $C_2$-sets as discrete $C_2$-spaces. One way to realize this is to use the Elmendorf's theorem for G-diagrams \cite[Theorem 2.28]{DM16} and Example \ref{Lem: S-sigma as cyclic}. Thus, one can inspect that the following composition gives the dihedral bar construction of \ref{cons dihedral bar}
	$$ \tilde{\Delta} ^{op} \xrightarrow{{\tiny \Cut}^{C_2}} (\Esgmop)_{\text{act}} \xrightarrow{A^{\otimes}} ({\underline{\Top}^{C_2}}^{\otimes})_{\text{act}} \xrightarrow{\otimes} \underline{\Top}^{C_2}.$$
	\end{rmk}

\vspace*{0.2cm}
	
\subsection{Real Topological Hochschild Homology of Thom spectra}  
A formula for real topological Hochschild homology may be deduced from the general context of equivariant factorization homology of equivariant Thom spectra \cite{nonabelianpoincare}. We adapt a similar method to the specific case of $C_2$-Thom spectra which is applicable to real topological Hochschild homology, and obtain a simple formula for the resulting Thom spectrum that is used for computations in \S \ref{thrcalc}. 
We use the ideas from  $\Gamma$-$G$-spaces from \cite{Shi89}, \cite{Shi91}, \cite{Rek11},  adopted in this parametrized higher category setup. The following are the relevant bits in this context : 
	
	\begin{itemize}
		\item A group-like $\mathbb{E}_{\infty}$-$C_2$-space $\mathcal{G}$ gives rise to an endofunctor of $\underline{\Top}_{*} ^{C_2}$ by means of the  parametrized left Kan extension of \cite[Def 2.12]{nardin17}, \cite[10.4]{shah23}. 
			\[\begin{tikzcd}[cramped]
				\finC && {(\underline{\Top}_{*} ^{C_2})^{\otimes}} \\
				{(\underline{\Top}_{*} ^{C_2})^{\otimes}}
				\arrow["{\mathcal{G}^{\otimes}}", from=1-1, to=1-3]
				\arrow["{\Omega^{\infty}\mathbb{S}}"', from=1-1, to=2-1]
				\arrow[""{name=0, anchor=center, inner sep=0}, "{\widetilde{\mathcal{G}^{\otimes}} = Lan_{\Omega^{\infty}\mathbb{S}} \GG^{\otimes}}"', from=2-1, to=1-3]
				\arrow[between={0.3}{0.7}, Rightarrow, from=1-1, to=0]
			\end{tikzcd}\]
The underlying endofunctor $ \underline{\Top}_{*} ^{C_2} \rightarrow \underline{\Top}_{*} ^{C_2}$ is denoted by $\tilde{\GG}$.
		
		\item As the parametrized left Kan extension commutes with $C_2$-colimits, we may realize $\tilde{\GG} (S^{\sigma} _+)$ as a $C_2$-colimit of $\tilde{\Delta}^{op}$-indexed diagram, given by the dihedral bar construction of the underlying $\mathbb{E}_{\sigma}$-space of $\mathcal{G}$. (see Remark \ref{rmk: Dihedral})
			\begin{align*}
				\tilde{\GG} (S^{\sigma} _+)  & \simeq \tilde{\GG} \left( C_2\text{-colim} \left( \tilde{\Delta} ^{op} \xrightarrow{{\tiny \Cut}^{C_2}} (\Esgmop)_{\text{act}} \twoheadrightarrow (\finC)_{\text{act}} \hookrightarrow {\underline{\Top}^{C_2}} \xrightarrow{(-)_+} \underline{\Top}_* ^{C_2} \right) \right) \\
				& \simeq C_2\text{-colim} \left(  \tilde{\Delta} ^{op} \xrightarrow{{\tiny \Cut}^{C_2}} (\Esgmop)_{\text{act}} \twoheadrightarrow (\finC)_{\text{act}} \hookrightarrow {\underline{\Top}^{C_2}} \xrightarrow{(-)_+} \underline{\Top}_* ^{C_2} \xrightarrow{\widetilde{\mathcal{G}} } \underline{\Top}_* ^{C_2} \right) \\
				& \simeq C_2\text{-colim} \left( \tilde{\Delta} ^{op} \xrightarrow{{\tiny\Cut}^{C_2}} \Esgmop \xrightarrow{\mathcal{G}}  \underline{\Top}^{C_2} \right) \\
				& \simeq B^{di}(\mathcal{G})
			\end{align*}	
	The third equivalence comes from the formulation of the left Kan extension as a colimit over slice category. Moreover, one can also realize the $\sigma$-fold delooping of $B^{\sigma} \mathcal{G}$ of $\GG$, as $	\tilde{\mathcal{G}} (S^{\sigma}) $.  
			
			\item The induced endofunctor on $\underline{\Top}_{*} ^{C_2}$ also induces a genuine $C_2$-spectrum $g$ such that $\Omega^{\infty} (g \wedge X) \simeq \tilde{\mathcal{G}} (X) $ for any $X \in \Top_{*} ^{C_2}$.
	\end{itemize}
	
	\vspace*{0.2cm}
	
	We now state the main theorem of this section which establishes that the real topological Hochschild homology of a Thom spectrum, which comes from a twisted monoid map, is again a Thom spectrum.	
	\begin{thm}\label{thrthom}
		Given a twisted monoid map $f: X \rightarrow BGL_1(R)$, with its $\sigma$-fold delooping given by $B^{\sigma}f: B^{\sigma}X \rightarrow B^{\rho}GL_1(R)$, the real topological Hochschild homology of the Thom spectrum $\Th (f)$ admits the following equivalence as $R$-module
		$$\THR^{R} (\Th (f)) \simeq \Th (L^{\tilde{\eta}}f: L^{\sigma}B^{\sigma}(X) \rightarrow BGL_1 (R) ),$$
		where the map $L^{\tilde{\eta}}f$ is the following composition
		$$\resizebox{\linewidth}{!}{$L^{\sigma}B^{\sigma}X \xrightarrow{L^{\sigma}B^{\sigma}f} L^{\sigma}B^{\rho}GL_1(R) \xleftarrow{\simeq} BGL_1(R) \times B^{\rho}GL_1(R) \xrightarrow{id \times (-\tilde{\eta})^*} BGL_1(R) \times BGL_1(R) \xrightarrow{m} BGL_1(R)$}$$
	\end{thm}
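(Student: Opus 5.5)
Since $\Th^{R}$ is $C_2$-symmetric monoidal and strongly preserves $C_2$-colimits (\S\ref{C2 thom functor}), and $\THR^R(\Th(f))$ is by definition the $C_2$-colimit of the $\tilde{\Delta}^{op}$-diagram obtained by composing $\Cut^{C_2}$ with $\Th(f)^{\otimes}$ and $\otimes$, I would move this colimit inside the Thom spectrum functor. Concretely, by Proposition \ref{twalgth} we have $\Th(f)^{\otimes}\simeq \Th^{\otimes}\circ f^{\otimes}$, where $f^{\otimes}:\Esgmop\to(\underline{\Top}^{C_2}_{/BGL_1R})^{\otimes}$ is the $\E_\sigma$-map. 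Applying $\otimes$ levelwise gives the Thom spectrum of the dihedral bar construction levelwise: in simplicial degree $n$ it is $\Th$ of $X^{\times(n+1)}\to BGL_1(R)^{\times (n+1)}\xrightarrow{m} BGL_1R$. Commuting the $C_2$-colimit with $\Th$ then yields $\THR^R(\Th f)\simeq \Th\big(B^{di}(f)\big)$, with $B^{di}(f): B^{di}X\to B^{di}BGL_1R\to BGL_1R$, the last map induced by the $\E_\infty$-multiplication (equivalently the $S^\sigma$-colimit of the constant $\E_\infty$-object, i.e. $\widetilde{\GG}(S^\sigma_+)\to \widetilde{\GG}(S^0)$ for $\GG=BGL_1R$).

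Next I would identify the source and the map. By Theorem \ref{dihedral free loop}, $B^{di}X\simeq L^\sigma B^\sigma X$ and $B^{di}BGL_1R\simeq L^\sigma B^\rho GL_1R$, naturally in the twisted monoid, so the first arrow is $L^\sigma B^\sigma f$. It remains to identify the map $\mu: L^\sigma B^\rho GL_1 R\simeq \widetilde{\GG}(S^\sigma_+)\to BGL_1R$. Here I use the spectrum-level description: $\widetilde{\GG}(Y)\simeq\Omega^\infty(\Sigma gl_1R\wedge Y)$, and $\mu$ is induced by the collapse $S^\sigma_+\to S^0$ of pointed $C_2$-spaces. Stably $\Sigma^\infty S^\sigma_+\simeq S^0\vee S^\sigma$ splits (the basepoint is fixed), giving $L^\sigma B^\rho GL_1R\simeq BGL_1R\times B^\rho GL_1R$. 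The collapse map restricted to the $S^0$ summand is the identity, while on the $S^\sigma$ summand it is a stable class in $\pi_{\sigma}^{C_2}$-twisted form, namely a map $\Sigma^\infty S^\sigma\to S^0$, i.e. an element of $\pi^{C_2}_\sigma(S^0)\cong\Z\{\tilde\eta\}$. The splitting is not unique, and the choice matching the section used for $BGL_1R\times B^\rho GL_1R\xrightarrow{\simeq}L^\sigma B^\rho GL_1R$ determines the coefficient; I expect it to be $-\tilde\eta$ (the nonequivariant analogue being the classical $\eta$ appearing in $\THH$ of Thom spectra, cf. \cite{BCS10}). Adding the two summands via the infinite loop structure $m$ gives the composite in the statement.

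The main obstacle is this last step: showing rigorously that the collapse $S^\sigma_+\to S^0$, in the chosen splitting, is $\mathrm{id}\vee(-\tilde\eta)$, including the sign. I would first check it after restricting to the underlying spectra, where the collapse $S^1_+\to S^0$ under the splitting coming from the $S^1$-action is $1\vee\eta$ up to sign, and then use that $\pi^{C_2}_\sigma(S^0)\cong\Z\{\tilde\eta\}$ with restriction $\tilde\eta\mapsto\eta$ only determines things mod 2, so I would additionally compute on geometric fixed points (the fixed points $S^0$ of $S^\sigma$ split off) to pin down the integer, fixing the sign by comparing with the orientation conventions of the cyclic structure in Example \ref{Lem: S-sigma as cyclic}. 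The colimit interchange in the first step is formal given \cite{nonabelianpoincare}, but one must check the $\Cut^{C_2}$ diagram for $\Th(f)^{\otimes}$ is the image of that for $f^{\otimes}$, which follows from monoidality of $\Th^{\otimes}$ on active morphisms.
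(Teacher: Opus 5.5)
Your reduction matches the paper's: $\Th^R$ commutes with the dihedral bar construction, and Theorem \ref{dihedral free loop} identifies $B^{di}$ with $L^\sigma B^\sigma$. Because $BGL_1R$ is $C_2$-$\E_\infty$, the augmentation is induced by the collapse $r\colon S^\sigma_+\to S^0$ on $bgl_1(R)\wedge(-)$. So everything comes down to one class $r_*\circ\gamma^{-1}\circ r^*\colon S^\sigma\to S^0$ in $\pi^{C_2}_\sigma(S^0)\cong\Z\{\tilde\eta\}$, where $r^*$ is the constant-loop section.

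The gap is that this class is the only non-formal content of the statement, and you leave it as an expectation. The missing idea is that $\gamma$ is not diagonal with respect to the obvious splittings. Split $\Sigma^\infty S^\sigma_+\simeq S^0\vee S^\sigma$ by the unit $\iota$ and a stable section $s$ of $p\colon S^\sigma_+\to S^\sigma$ with $rs\simeq 0$. Split $F(S^\sigma_+,S^\sigma)\simeq F(S^0,S^\sigma)\vee F(S^\sigma,S^\sigma)$ by $\iota^*,s^*$. The adjoint of $\gamma$ is $p\circ m$, and the unit of $S^\sigma$ is its basepoint, so one finds
\[
\gamma=\begin{pmatrix}0&1\\ 1&h\end{pmatrix},\qquad r^*=\begin{pmatrix}1\\ 0\end{pmatrix},\qquad r_*=\begin{pmatrix}1&0\end{pmatrix}.
\]
Here $h$ is the class of $S^\sigma\wedge S^\sigma\xrightarrow{s\wedge s}S^\sigma_+\wedge S^\sigma_+\xrightarrow{m}S^\sigma_+\xrightarrow{p}S^\sigma$. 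This is the desuspended Hopf construction on the multiplication of $S^\sigma$, which is $\tilde\eta$. Inverting the matrix gives $r_*\gamma^{-1}r^*=-h=-\tilde\eta$.

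This is exactly what the paper does: it evaluates $s^*\gamma s$ via the Hopf construction and the equivariant join $S^\sigma*S^\sigma\cong S^{2\sigma+1}$. The minus sign is forced by the inversion. Only the identification $h=\tilde\eta$ depends on a convention for $\tilde\eta$; the sign has nothing to do with orientation conventions of the cyclic structure.

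Your fallback plan could be made to work. Restriction shows the coefficient is odd. Geometric fixed points are injective on $\pi^{C_2}_\sigma(S^0)$, since on fixed points the Hopf map is the double cover $S(\R^2)\to\R P^1$, so $\Phi^{C_2}(\tilde\eta)=\pm 2$.

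But to evaluate $\Phi^{C_2}(r_*\gamma^{-1}r^*)$ you still have to invert $\gamma$. Since $S^\sigma_+$ is dualizable and $\Phi^{C_2}$ is symmetric monoidal, $\Phi^{C_2}(\gamma)$ is the same $2\times 2$ matrix for the discrete group $(S^\sigma)^{C_2}=\{\pm 1\}$. So the fixed-point route only replaces the identification $h=\tilde\eta$ by a degree count. It does not bypass the matrix computation, which is precisely the step your proposal omits.
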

	
	\begin{proof}
		As the $C_2$-Thom spectrum functor $\Th: \underline{\text{Top}^{C_2}}_{/ {\tiny BGL_1(R)}} \rightarrow \underline{R\text{-mod}}$ strongly preserves colimits and symmetric monoidal, it commutes with dihedral bar construction. Therefore, 
		$$\THR^{R}(\Th (f)) \simeq \Th \left(L^{\tilde{\eta}}f: B^{di}(X) \rightarrow B^{di} (BGL_1 (R)) \rightarrow BGL_1 (R) \right)$$
		where the last arrow is the augmentation map, induced by levelwise multiplication map of $BGL_1(R)$ in the dihedral bar construction.
	The dihedral bar constructions are weakly equivalent to $\sigma$-fold free loop space, by \ref{dihedral free loop}. Thus, we need to figure out the augmentation map 
\[L^{\sigma} B^{\rho}GL_1(R) \xleftarrow{\simeq} B^{di} (BGL_1 (R)) \rightarrow BGL_1 (R).\]
 As $BGL_1(R)$ is a $C_2$-$\mathbb{E}_{\infty}$-space, both the horizontal fiber sequences of \ref{diagram free loop} 
\[\begin{tikzcd}[cramped]
				BGL_1 R && {B^{di}BGL_1 R} && {B^{\rho} GL_1 R} \\
				{\Omega^{\sigma} B^{\sigma} BGL_1 R} && {L^{\sigma} B^{\sigma} BGL_1 R} && {B^{\rho}GL_1 R}
				\arrow[from=1-1, to=1-3]
				\arrow["\simeq"', from=1-1, to=2-1]
				\arrow[from=1-3, to=1-5]
				\arrow["\gamma"', from=1-3, to=2-3]
				\arrow[equals, from=1-5, to=2-5]
				\arrow[from=2-1, to=2-3]
				\arrow["ev", from=2-3, to=2-5]
			\end{tikzcd}\]
are split, the upper one by the map $r: S^{\sigma} _+ \rightarrow S^0$ under the identifications mentioned above, and the lower one by the inclusion of constant loops $B^{\rho} GL_1 (R) \rightarrow L^{\sigma} B^{\rho}GL_1(R)$. This splitting isn't compatible with the map of the fiber sequences; more explicitly, we claim the homotopy class of the composition 
\[B^{\rho} GL_1 (R) \rightarrow L^{\sigma} B^{\rho}GL_1(R) \xleftarrow{\simeq} B^{di} (BGL_1 (R)) \xrightarrow{r_*} BGL_1 (R),\] 
is equivalent to 
\[(-\tilde{\eta})^* : B^{\rho}GL_1(R) \simeq Map_{*} (S^{\rho}, B^{2\rho}GL_1(R)) \xrightarrow{(-\tilde{\eta})^*} Map_{*} (S^{2\rho -1}, B^{2\rho}GL_1(R)) \simeq BGL_1(R).\]
		
 Redraw the diagram \ref{diagram free loop} as follows
			\[\begin{adjustbox}{max width=\linewidth}
\begin{tikzcd}[cramped]
				{BGL_1(R)  \simeq \widetilde{BGL_1(R)} (S^{0})} & {B^{di}(BGL_1(R)) \simeq \widetilde{BGL_1(R)}(S^{\sigma} _+)} & {B^{\sigma}(BGL_1(R))  \simeq \widetilde{BGL_1(R)} (S^{\sigma})} \\
				{\Omega^{\sigma} B^{\rho}GL_1(R)} & {L^{\sigma} B^{\rho}GL_1(R)} & {B^{\rho}GL_1(R)}.
				\arrow[from=1-1, to=1-2]
				\arrow["\simeq"', from=1-1, to=2-1]
				\arrow["{r_*}"', shift left, curve={height=18pt}, from=1-2, to=1-1]
				\arrow[from=1-2, to=1-3]
				\arrow["\gamma", from=1-2, to=2-2]
				\arrow[equals, from=1-3, to=2-3]
				\arrow[from=2-1, to=2-2]
				\arrow["ev", from=2-2, to=2-3]
				\arrow["{\text{constant loops}}", shift right, curve={height=-18pt}, from=2-3, to=2-2]
			\end{tikzcd}
\end{adjustbox}\]
		As all the maps are $C_2$-$\mathbb{E}_{\infty}$, the above diagram comes from the spectrum level diagram below.
		\[\begin{tikzcd}[cramped]
			{bgl_1(R)} && {bgl_1(R) \wedge S^{\sigma}_+} && {bgl_1(R) \wedge S^{\sigma}} \\
			{F(S^{\sigma}, bgl_1(R) \wedge S^{\sigma})} && {F(S^{\sigma}_+, bgl_1(R) \wedge S^{\sigma})} && {bgl_1(R) \wedge S^{\sigma}}
			\arrow[from=1-1, to=1-3]
			\arrow["\simeq"', from=1-1, to=2-1]
			\arrow["{r_*}"', shift left, curve={height=18pt}, from=1-3, to=1-1]
			\arrow[from=1-3, to=1-5]
			\arrow["\gamma", from=1-3, to=2-3]
			\arrow[equals, from=1-5, to=2-5]
			\arrow[from=2-1, to=2-3]
			\arrow[from=2-3, to=2-5]
			\arrow["{r^*}", shift right, curve={height=-18pt}, from=2-5, to=2-3]
		\end{tikzcd}\]
		This is equivalent to the following simplified diagram.
		\[\begin{tikzcd}[cramped]
			{S^0} && {S^{\sigma}_+} && {S^{\sigma}} \\
			{F(S^{\sigma},S^{\sigma})} && {F(S^{\sigma}_+, S^{\sigma})} && {S^{\sigma}}
			\arrow[from=1-1, to=1-3]
			\arrow["\simeq"', from=1-1, to=2-1]
			\arrow["{r_*}"', shift left, curve={height=18pt}, from=1-3, to=1-1]
			\arrow[from=1-3, to=1-5]
			\arrow["\gamma", from=1-3, to=2-3]
			\arrow[equals, from=1-5, to=2-5]
			\arrow[from=2-1, to=2-3]
			\arrow[from=2-3, to=2-5]
			\arrow["{r^*}", shift right, curve={height=-18pt}, from=2-5, to=2-3]
		\end{tikzcd}\]
		Therefore, the map of interest is the following composition 
\[\Phi: S^{\sigma} \xrightarrow{r^*} F(S^{\sigma}_+, S^{\sigma}) \xrightarrow{\gamma ^{-1}} S^{\sigma}_+ \xrightarrow{r_*} S^0,\]
 whose homotopy class is $a. \tilde{\eta} \in \pi_{\sigma} ^{C_2} (S^0)$ for some $a \in \Z$. To prove our claim, it is enough to show $a=(-1)$. 
		
		Let $s: S^{\sigma} \rightarrow S^{\sigma}_+$ be the associated stable section. We can, instead, inspect the composition
\[S^{\sigma} \xrightarrow{s} S^{\sigma}_+ \xrightarrow{\gamma} F(S^{\sigma}_+, S^{\sigma}) \xrightarrow{s^*} F(S^{\sigma}, S^{\sigma}) ,\]
 whose adjoint is given as
		$$S^{\sigma} \wedge S^{\sigma} \xrightarrow{s \wedge s} S^{\sigma}_+ \wedge S^{\sigma}_+ \xrightarrow{m} S^{\sigma}_+ \rightarrow S^{\sigma}$$
		where $m$ is the multiplication of $S^{\sigma}$. The homotopy class of this composition is that of $\tilde{\eta}  \in \pi_{\sigma} ^{C_2} (S^0)$. This can be seen using the fact that the Hopf construction on the multiplication map $m: S^{\sigma} \times S^{\sigma} \rightarrow S^{\sigma}$ gives rise to $\tilde{\eta} : S^{2\sigma +1} \rightarrow \Sigma S^{\sigma} \simeq S^{\sigma +1}$. The join construction between two $C_2$-spaces naturally admits a $C_2$-action. Moreover, 
		\begin{align*}
			S^{\sigma} *  S^{\sigma} & \cong (C_2/C_2 \amalg C_2/C_2) * C_2/e * (C_2/C_2 \amalg C_2/C_2) * C_2/e \\
			& \cong (C_2/C_2 \amalg C_2/C_2) * (C_2/C_2 \amalg C_2/C_2) * C_2/e * C_2/e\\
			& \cong S^1 * C_2/e * C_2/e  \cong Z * C_2/e \cong S^{2\sigma +1},
		\end{align*}
		where $Z$ is the $C_2$-space with underlying space as $S^2$ with $C_2$-action flipping the hemispheres fixing the equator. This results in $\Phi \simeq -\tilde{\eta}$.
		Therefore, we have the description
		$$L^{\tilde{\eta}}f :{\resizebox{.95\hsize}{!}{$ L^{\sigma}B^{\sigma}X \xrightarrow{L^{\sigma}B^{\sigma}f} L^{\sigma}B^{\rho}GL_1(R) \xleftarrow{\simeq} BGL_1(R) \times B^{\rho}GL_1(R) \xrightarrow{id \times (-\tilde{\eta})^*} BGL_1(R) \times BGL_1(R) \xrightarrow{m} BGL_1(R),$}}$$
which completes the proof of the theorem.	
\end{proof}

\vspace*{0.2cm}

\section{A splitting of $gl_1K\R$} \label{splitting of units}
 	
		As the definition of the units of the ring spectrum $\KR$ depends solely on its connective cover, we work with $\kR$, the connective cover of $\KR$. Given that the spectrum $\kR$ is a $C_2$-$\E_{\infty}$-ring spectrum, it follows from \cite[Proposition 11.1.57]{HHR21} that the map from $\kR$ to its $2^{nd}$-slice section $\kR \rightarrow P^2(\kR)$ is a $C_2$-$\E_{\infty}$-ring map. Consequently, this induces a map on the units $gl_1(\kR) \to gl_1(P^2(\kR))$. In this section we demonstrate that units of $\kR$ are split by $gl_1(P^{2}(\kR))$, that is, $gl_1 \kR \simeq gl_1(P^2\kR)\vee \hat{K}$. Further $gl_1(P^2\kR)$ is determined by a single $k$-invariant as follows. 
		
	\begin{prop} \label{ses of gl1P2kR} 
We have an equivalence
\[gl_1(P^{2}(\kR)) \simeq \Fib\ (H\underline{\F_2} \xrightarrow{\beta_{C_2}\ \circ\ Sq^2_{C_2}} \Sigma^{2+\sigma}H\underline{\Z}),\]
 where $\beta_{C_2} : H\underline{\F_2} \to \Sigma H\underline{\Z}$ is the $C_2$-equivariant B\"{o}ckstein and $Sq^2_{C_2} : H\underline{\F_2} \to \Sigma^{1+\sigma}H\underline{\F_2}$ is the $C_2$-equivariant Steenrod square.
	\end{prop}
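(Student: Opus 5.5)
We first identify the slices of $gl_1(P^2\kR)$. Since $\kR$ is even with $\upi_0\kR=\uZ$ and $\pi_{\rho}^{C_2}\kR=\Z\{\bar\beta\}$, \S\ref{sliceofspectra} gives $P^0_0\kR\simeq H\uZ$ and $P^2_2\kR\simeq \Sigma^\rho H\uZ$, with odd slices zero. Hence $P^2\kR$ sits in a cofiber sequence $\Sigma^\rho H\uZ\to P^2\kR\to H\uZ$. For the units we use the pullback \eqref{pullback units}: $\upi_0(P^2\kR)^\times$ is the Mackey functor $\Z^\times=\{\pm1\}$ at both levels with identity restriction, i.e.\ $\FII$ written multiplicatively. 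Since $\pi_V^{C_2}GL_1\simeq\pi_V^{C_2}$ for $V^{C_2}\neq 0$, and the analogous statement holds for the underlying spectrum, the higher part of $gl_1(P^2\kR)$ agrees with that of $P^2\kR$. We therefore get a fiber sequence
\[\Sigma^\rho H\uZ\to gl_1(P^2\kR)\to H\FII,\]
which exhibits $gl_1(P^2\kR)$ as the fiber of a $k$-invariant $k\in[H\FII,\Sigma^{\rho+1}H\uZ]^{C_2}=[H\FII,\Sigma^{2+\sigma}H\uZ]^{C_2}$. To justify identifying the top layer as $\Sigma^\rho H\uZ$, we will check that the slice tower of $gl_1$ agrees with that of the ring above slice $0$ on homotopy Mackey functors. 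This uses that the slice-$2$ layer of a connective $C_2$-spectrum is detected by $\upi_\rho$ via \cite{hillmeier}.

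Next we identify $k$. By Proposition \ref{beta sq2}, restriction $\Res^{C_2}_e:[H\FII,\Sigma^{1+\sigma}H\uZ]^{C_2}\to[H\F_2,\Sigma^3H\Z]$ is injective. Here $\Sigma^{1+\sigma}$ appears where we need $\Sigma^{2+\sigma}$, so we must check the indexing. Since $\beta_{C_2}\circ Sq^2_{C_2}$ has degree $1+(1+\sigma)=2+\sigma$, we redo the argument of Proposition \ref{beta sq2} in degree $2+\sigma$, using the cofiber sequence ${C_2/e}_+\to S^0\to S^\sigma$. The obstruction term is $[H\FII,\Sigma^{2}H\uZ]^{C_2}$, after rewriting $[\Sigma^\sigma H\FII,\Sigma^{2+\sigma}H\uZ]$, and this group vanishes by the computation already given there via $H\uZ_-$ (Proposition \ref{Mackey identify}). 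So $\Res^{C_2}_e$ is injective into $[H\F_2,\Sigma^3H\Z]\cong\F_2\{\beta Sq^2\}$. It therefore suffices to show two things: $\Res^{C_2}_e(k)$ is the nonequivariant $k$-invariant of $\PP^2gl_1ku$, and $\Res^{C_2}_e(\beta_{C_2}Sq^2_{C_2})=\beta Sq^2$.

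The second fact holds because restriction carries $Sq^2_{C_2}$ to $Sq^2$ and $\beta_{C_2}$ to $\beta$, as noted before Proposition \ref{beta sq2}. For the first, the underlying spectrum of $P^2\kR$ is $\PP^2 ku$, since slices restrict to Postnikov sections for underlying spectra. The units construction commutes with restriction, so $\Res(k)$ is the $k$-invariant of $\PP^2gl_1ku$. This $k$-invariant is classically $\beta\circ Sq^2\neq0$, as recalled in the introduction (see \cite{BLM23}). One way to see nontriviality directly is that $\CP^\infty\to BU(1)\to GL_1K$ is not split through $\pm1\times K(\Z,2)$ as an infinite loop map. Injectivity then forces $k=\beta_{C_2}\circ Sq^2_{C_2}$, and we conclude $gl_1(P^2\kR)\simeq\Fib(k)$.

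The main obstacle is the first step: showing rigorously that $gl_1(P^2\kR)$ has exactly two nonzero layers, $H\FII$ at the bottom and $\Sigma^\rho H\uZ$ on top, with no extra $\upi_1$ contribution, so that it is a fiber of a single map as stated. We would handle this by computing $\upi_*$ of $gl_1$ Mackey-functorially from \eqref{pullback units}. The expected answer is $\upi_0=\FII$ and $\upi_V=\upi_V(P^2\kR)$ in positive degrees. We then compare with the Postnikov/slice description, using that $H\FII$ is a $0$-slice and $\Sigma^\rho H\uZ$ is a $2$-slice.
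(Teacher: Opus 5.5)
Your proposal is correct and follows essentially the paper's argument. The slice sequence $\Sigma^{\rho}H\uZ\to P^2\kR\to H\uZ$ gives $\Sigma^{\rho}H\uZ\to gl_1(P^2\kR)\to H\FII$. The $k$-invariant in $[H\FII,\Sigma^{2+\sigma}H\uZ]^{C_2}$ is then pinned down by injectivity of $\Res^{C_2}_e$ (Proposition~\ref{beta sq2}, whose statement should indeed read $2+\sigma$) together with the nonequivariant value $\beta\circ Sq^2$ from \cite{BLM23}.

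Two small remarks:
\begin{itemize}
\item Your extra care on the first step is sound once you note that $gl_1(P^2\kR)$ is itself even, so that \cite{hillmeier} applies to it. Matching homotopy groups alone does not suffice, because $\Sigma^{\rho}H\uZ$ has two nonzero integer-graded homotopy Mackey functors, $\upi_1$ and $\upi_2$.
\item When you reuse $[H\FII,\Sigma^{2}H\uZ]^{C_2}=0$, the group entering from ${C_2/e}_+\to S^0\to S^\sigma$ is really $[H\FII,\Sigma^{2-\sigma}H\uZ]^{C_2}\cong\mathrm{Ext}^1(\FII,\uZ_-)$, not the $\mathrm{Hom}$ computed in the paper. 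This $\mathrm{Ext}^1$ group also vanishes, so your conclusion stands.
\end{itemize}
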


	\begin{proof}
		From the slice tower computations for $\kR$  described in \cite[Theorem 1.1]{Dug05}, there is a cofiber sequence 
\begin{myeq} \label{cofslgl1} 
\Sigma^{1+\sigma}\HZ \to P^{2}(\kR) \to  \HZ .
\end{myeq}
This means that $\Sigma^{1+\sigma}\HZ$ is the $0$-connected cover of $P^2(\kR)$. Accordingly, we arrive at the cofiber sequence
		$$\Sigma^{1+\sigma}\HZ \rightarrow gl_1(P^{2}(\kR)) \rightarrow \HFII.$$
		It remains to identify the connecting map $\HFII \xrightarrow{\phi} \Sigma^{2+\sigma}\HZ$ as $\beta_{C_2}\circ Sq^2_{C_2}$. We know from \cite[Theorem 3.32 and Corollary 3.34]{BLM23} that $\res^{\CII}_e (\phi) \simeq \beta \circ Sq^2=\res^{C_2}_e(\beta_{C_2}\circ Sq^2_{C_2})$. Hence, $\phi$ can be identified as $\beta_{C_2} \circ Sq^2_{C_2}$, using the proposition \ref{beta sq2}.		
	\end{proof}
	
\vspace*{0.2cm}

	For an $\E_{\infty}$-ring spectrum with $C_2$-action by $\mathbb{E}_{\infty}$-maps, the pullback definition of the space of units $GL_1(R)$ decends to a functor
	$$gl_1: \;(\E_{\infty} \text{-ring spectra})^{BC_2} \xlongrightarrow{GL_1} (\text{group-like } \E_{\infty} \text{-spaces})^{BC_2} \longrightarrow Sp^{BC_2}.$$
Here the functor $B_{C_2}: Sp^{BC_2} \rightarrow Sp^{C_2}$ is the right adjoint to the forgetful functor $i: Sp^{C_2} \rightarrow Sp^{BC_2}$ \cite[Theorem II.2.7] {NS18}, which is described as the cofree spectrum. The following Proposition relates this functor to the units of genuine equivariant ring spectra using the fact that the functor $B_{C_2}$ is lax monoidal \cite[Corollary II.2.8]{NS18}.	
	\begin{prop} \label{naive and genuine units}
		For $R \in (\E_{\infty} \text{-ring spectra})^{BC_2}$, we get a map of genuine $C_2$-spectra $B_{C_2}(gl_1(R)) \rightarrow gl_1(B_{C_2}(R))$.
	\end{prop}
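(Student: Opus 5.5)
The natural route is through adjunctions. We want a map $B_{C_2}(gl_1 R) \to gl_1(B_{C_2}R)$ of genuine $C_2$-spectra. By the adjunction $\Sigma^\infty_+\Omega^\infty \dashv gl_1$ (the proposition identifying the left adjoint of $gl_1$), together with the equivariant recognition principle, such a map is the same as a map of grouplike $C_2$-$\E_\infty$-spaces from $\Omega^\infty B_{C_2}(gl_1R)$ into $GL_1(B_{C_2}R)$. By the right adjoint $gr$ of Proposition \ref{adjoint GL1}, this is in turn the same as a $C_2$-$\E_\infty$-map
\[\Omega^\infty B_{C_2}(gl_1 R) \longrightarrow \Omega^\infty B_{C_2}(R),\]
where the target carries the multiplicative $\E_\infty$-structure. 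Here we use that $B_{C_2}R$ is a $C_2$-$\E_\infty$-ring spectrum, because $B_{C_2}$ is lax symmetric monoidal \cite[Corollary II.2.8]{NS18}.

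To produce this map, I will start from the naive map $GL_1(R) \to \Omega^\infty R$ of $\E_\infty$-spaces with $C_2$-action, given by the pullback \eqref{pullback units}. On spectra, the adjunction $\Sigma^\infty_+\Omega^\infty \dashv gl_1$ in $Sp^{BC_2}$ gives a counit map $\Sigma^\infty_+ \Omega^\infty gl_1 R \simeq \Sigma^\infty_+ GL_1 R \to R$ of $\E_\infty$-rings with $C_2$-action. Applying the lax monoidal right adjoint $B_{C_2}$ yields a map of $C_2$-$\E_\infty$-rings
\[B_{C_2}(\Sigma^\infty_+ GL_1 R) \to B_{C_2}R.\]
Precomposing with the $C_2$-$\E_\infty$ ring map $\Sigma^\infty_+\Omega^\infty B_{C_2}(gl_1 R) \to B_{C_2}(\Sigma^\infty_+\Omega^\infty gl_1R)$ then gives the desired ring map. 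This precomposition map is the lax structure of $B_{C_2}$ together with the compatibility of $\Omega^\infty$: concretely, $\Omega^\infty B_{C_2} E = \Map(EC_{2+}, \Omega^\infty E)$, and $\Sigma^\infty_+$ of a cofree space maps to the cofree spectrum of $\Sigma^\infty_+$. Passing through $gl_1 \dashv \Sigma^\infty_+\Omega^\infty$ then produces $B_{C_2}(gl_1R)\to gl_1(B_{C_2}R)$. Since $gl_1$ lands in connective spectra, I will take the connective cover of $B_{C_2}(gl_1R)$ where needed, or note that the adjunction is stated on connective objects.

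The main obstacle is making the comparison map $\Sigma^\infty_+\Omega^\infty B_{C_2}(-) \to B_{C_2}\Sigma^\infty_+\Omega^\infty(-)$ a genuinely $C_2$-$\E_\infty$ (normed) map, not merely a naive $\E_\infty$ one. For this I will use that $B_{C_2}$ is the parametrized right adjoint of the $C_2$-symmetric monoidal forgetful functor, hence lax $C_2$-symmetric monoidal, including norms. Its mate with $\Sigma^\infty_+\Omega^\infty$, which is also $C_2$-symmetric monoidal, is then automatically compatible. A simpler fallback is available as well. On underlying spaces the map is the identity of $GL_1R$. On fixed points, $\Omega^\infty B_{C_2}(gl_1R)^{C_2} = GL_1(R)^{hC_2}$ sits inside $(\Omega^\infty R)^{hC_2} = \Omega^\infty (B_{C_2}R)^{C_2}$ as a union of invertible components, so it factors through $GL_1(B_{C_2}R)$ by the pullback description.
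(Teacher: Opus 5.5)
Your proposal is correct and follows essentially the paper's route. The paper likewise reduces, via $\Sigma^\infty_+\Omega^\infty \dashv gl_1$, to a $C_2$-$\E_\infty$-ring map $\Sigma^\infty_+\Omega^\infty B_{C_2}(gl_1R)\to B_{C_2}R$, and transposes it along $i\dashv B_{C_2}$ to the map $\Sigma^\infty_+\Omega^\infty\, iB_{C_2}(gl_1R)\to R$ induced by the counit $iB_{C_2}(gl_1R)\to gl_1R$; this is exactly the adjoint of your composite of $B_{C_2}(\Sigma^\infty_+GL_1R\to R)$ with the mate $\Sigma^\infty_+\Omega^\infty B_{C_2}\to B_{C_2}\Sigma^\infty_+\Omega^\infty$. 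Your caveat that the adjunction only yields a map out of the connective cover of $B_{C_2}(gl_1R)$, which need not be connective, is a point the paper's proof leaves implicit; it does not affect the paper's later applications, whose sources are connective.
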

	
	\begin{proof}
		It is enough to get a genuine $C_2$-spectrum map 
\[\Sigma_{+}^{\infty} \Omega^{\infty} B_{C_2}(gl_1(R)) \rightarrow B_{C_2}R\] of $\E_\infty$-ring spectra. Following \cite[II.2.7]{NS18}, we need to arrive at the adjoint map 
\[\Sigma_{+}^{\infty} \Omega^{\infty} B_{C_2}(gl_1(R)) \rightarrow R\]
in $(\E_{\infty} \text{-ring spectra})^{BC_2}$. Starting from the counit
\[i \big(B_{C_2}gl_1 R\big) \to gl_1R,\]
and then applying the usual adjunction for the unit spectrum, we obtain a map 
\[\Sigma_{+}^{\infty} \Omega^{\infty} B_{C_2}(gl_1(R)) \rightarrow R\]
in $(\E_{\infty} \text{-ring spectra})^{BC_2}$.
	\end{proof}
	
\vspace*{0.2cm}
	
		The $C_2$-action on complex vector bundles by complex conjugation leads to the spectrum with $C_2$ action $KU_{C_2} \in Sp^{BC_2}$. In light of the fact that $B_{C_2}(KU_{C_2}) \simeq \KR$ \cite[Theorem 5.2]{Faj95} and Proposition \ref{naive and genuine units}, we are going to leverage the non-equivariant splitting of units of $ku$ \cite{BLM23} in $Sp^{BC_2}$ to yield a splitting of  $gl_1(\KR)$ in $Sp^{C_2}$.

	\begin{defn} \label{groupoid of line bundle}
		Let $\z$ be the groupoid of pairs $(V,n)$ where $V$ is a one dimensional complex vector space and $n \in \{0,1\}$. The space of morphisms from $(V,n)$ to $(V',n')$ will be 
		\[\Map_{\z} ((V,n),(V',n')) =
		\begin{cases}
			S^1\; (\simeq \; \mathbb{C}\text{-linear isomorphisms from }V \text{ to }V') &\text{when } n = n' \\
			\phi &\text{otherwise}
					\end{cases}\]
Up to isomorphism, there are two objects in $\z$, namely, $(\C,0)$ and $(\C,1)$ and each of them has automorphisms $=S^1$. A symmetric monoidal structure on $\z$ can be defined by $(V,n) \otimes (V',n') := (V\otimes V' ,n+n')$, where the symmetry isomorphism $(V,n) \otimes (V',n') \rightarrow (V',n') \otimes (V,n)$ is given by $(z,w) \mapsto (w,(-1)^{nn'} z)$. 		
	\end{defn}

\vspace*{0.2cm}

 It is clear that $\z$ is  a group-like symmetric monoidal category.		
		Moreover, it has a monoidal action of $C_2$  which is identity on objects and acts as complex conjugation on morphism spaces. 	

Analogous to \cite{BLM23}, we construct a model of $KU_{C_2}$ via $\Z / 2\Z$-graded chain complexes of complex vector spaces, with $C_2$-action via complex conjugation.  
	\begin{cons}
		Consider $\dsv$ to be the groupoid 
		\[ \dsv :=
		\begin{cases}
			\text{obj($\dsv$)} : &\text{homotopy classes of $\Z / 2\Z$-graded chain complexes} \\ & \text{of complex vector spaces with $C_2$-action} \\
			& \text {via complex conjugation}  \\
			\text{Morphisms} : &\text{chain homotopy classes of homotopy equivalences} \\
			& \text{between them}	
		\end{cases}\]
The $C_2$-action on morphisms is induced by complex conjugation. The direct sum and tensor product of complexes makes $\dsv$ a $\textit{Ring Groupoid}$ \cite{Dri21, Lap72} with $C_2$-action. 
		 Let $gl_1(\dsv)$ be the corresponding Picard groupoid (under $\otimes$) with $C_2$-action. Replicating the arguments of \cite{BLM23}, we obtain that  the spectra with $C_2$-action corresponding to $\dsv$ and $gl_1(\dsv)$, are $KU_{C_2}$ and $gl_1(KU_{C_2})$ respectively.
	\end{cons}

\vspace*{0.2cm}

We now note that there is an evident map of symmetric monoidal categories $\z \to gl_1(\dsv)$ which commutes with the $C_2$-action on morphism spaces. This  provides a lift of the splitting map of $gl_1ku$ proved in \cite{BLM23} to $Sp^{BC_2}$. 
	\begin{rmk} \label{sptz}
		By taking nerves there is an inclusion of the $\infty$-category of groupoids into the $\infty$-category of spaces, $ Gpd \subset \mathcal{S}$, under which the group-like symmetric monoidal groupoids land in group-like $\mathbb{E}_{\infty}$-spaces. Therefore, a group-like symmetric monoidal groupoid corresponds to a connective spectrum. Additionally, a $C_2$-action via lax monoidal functors descends to the functor 
\[Spt:\; (\mbox{Group-like symmetric  monoidal groupoids})^{BC_2} \rightarrow Sp^{BC_2}.\] 
 Furthermore, the non-equivariant splitting map of \cite[Proposition 3.31, Corollary 3.34]{BLM23} $Spt(\z) \rightarrow gl_1(KU)$ results in a map of spectra $Spt(\z) \rightarrow gl_1(KU_{C_2})$ in $Sp^{BC_2}$.
	\end{rmk}

\vspace*{0.2cm}

Let $P^2k\R$ denote the second slice section of $k\R$.	The next couple of results demonstrates the identification of $Spt(\z)$ as  $i(gl_1(P^2 \kR))\in Sp^{BC_2}$ and provides a splitting map $gl_1(P^2 \kR) \rightarrow gl_1(\kR)$.
	
	\begin{prop} \label{sptz and gl1 equivalance}
		$Spt(\z)$ is equivalent to $i\big(gl_1(P^2 \kR)\big)$ in $Sp^{BC_2}$.
	\end{prop}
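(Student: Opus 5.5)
We need to show that $Spt(\z)\simeq i(gl_1(P^2\kR))$ in $Sp^{BC_2}$, i.e. as spectra with $C_2$-action. The plan is to compare homotopy groups with their $C_2$-actions and then match the single $k$-invariant.

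First, compute $Spt(\z)$. It is a connective spectrum with $\pi_0=\pi_0(\z)=\Z/2$ (the classes of $(\C,0)$ and $(\C,1)$) and $\pi_1=\mathrm{Aut}(\C,0)=S^1$, so as a spectrum it is a two-stage object with homotopy $\Z/2$ in degree $0$ and $\Z$ in degree $2$. Here $\pi_1$ of the classifying space is $S^1$, whose $B$ contributes $\Z$ in degree $2$; equivalently, the nerve has $\pi_2=\pi_1(S^1)=\Z$. The $C_2$-action is trivial on $\pi_0$. On the $\Z$ in degree $2$ it is complex conjugation on $S^1$, hence $-1$.

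Second, compute $i(gl_1(P^2\kR))$. From Proposition \ref{ses of gl1P2kR} and the cofiber sequence \eqref{cofslgl1}, its underlying spectrum has $\pi_0=\F_2$ and $\pi_2=\pi^e_2(\Sigma^{1+\sigma}\HZ)=\Z$. The $C_2$-action on this $\Z$ is the action on the underlying $S^{1+\sigma}$, namely $-1$, matching the conjugation action on $\pi_2 Spt(\z)$. So both objects lie in $Sp^{BC_2}$ as extensions $\Sigma^2 H\Z_{-}\to E\to H\F_2$, where $\Z_-$ denotes the sign module.

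Third, construct an actual map and check it is an equivalence. I would use the map from Remark \ref{sptz}, $Spt(\z)\to gl_1(KU_{C_2})$, and compose with $i$ of the map $gl_1(\kR)\to gl_1(P^2\kR)$. This requires identifying $i(gl_1\kR)$ with the connective cover of $gl_1(KU_{C_2})$, which uses $B_{C_2}KU_{C_2}\simeq\KR$ and the fact that units depend only on the connective cover. The resulting map of spectra with $C_2$-action is then an underlying map of spectra. Non-equivariantly it is the map $Spt(\z)\to gl_1 ku\to \PP^2 gl_1 ku$ of \cite{BLM23}, which is an equivalence by \cite[Corollary 3.34]{BLM23}. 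Since equivalences in $Sp^{BC_2}$ are detected on underlying spectra, this proves the proposition.

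The main obstacle is the equivariance of the composite: ensuring that the \cite{BLM23} map $\z\to gl_1(\dsv)$ and the identification of $gl_1(\dsv)$ with $gl_1(KU_{C_2})$ are genuinely compatible with complex conjugation, as homotopy-coherent $C_2$-actions rather than merely on homotopy. I would handle this by noting that the functor $\z\to gl_1(\dsv)$ is strictly $C_2$-equivariant, since conjugation acts compatibly on morphism spaces, and that $Spt$ is functorial on $(\text{groupoids})^{BC_2}$. Equivariance of the composite then follows formally. The comparison with $P^2$ only needs the underlying identification, because $P^2$ restricts to the Postnikov section $\PP^2$ on underlying spectra.
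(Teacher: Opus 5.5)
Your proposal is correct and takes essentially the same route as the paper. The paper also builds the composite $Spt(\z)\to gl_1(KU_{C_2})\to i(gl_1\KR)\to i(gl_1 P^2\kR)$, factoring it through $\PP^2 gl_1(KU_{C_2})$ and obtaining the middle map from $\KR\simeq B_{C_2}KU_{C_2}$ together with Proposition \ref{naive and genuine units}, and concludes because the composite is an isomorphism on underlying homotopy groups; your computation of the homotopy groups, including the $-1$ action on $\pi_2$, matches the paper's and is, as you implicitly observe, only a consistency check once the underlying map is the \cite{BLM23} equivalence.
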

	
	\begin{proof}
	For the proof of the result, we require a map $Spt(\z)\to i\big(gl_1(P^2 \kR)\big)$ in $Sp^{BC_2}$ which is an isomorphism on $\pi_n^e$ for all $n\geq 0$. Let $X \mapsto \PP^2 X$ denote the functor which takes a spectrum to its $2^{nd}$ Postnikov section. We know from \cite[Corollary 3.34]{BLM23} that $Spt(\z) \simeq gl_1(KU_{C_2})$, and thus the homotopy groups of $Spt(\z)$ are given by 
 \[\pi_{n}^{e}Spt(\z) \cong
		\begin{cases}
			\F_2 & \mathrm{when }\; n=0 \\
			\begin{tikzcd}[cramped]
				\Z
				\arrow["{\times (\text{-}1)}", from=1-1, to=1-1, loop, in=240, out=300, distance=5mm]
			\end{tikzcd} & \mathrm{when}~ n=2 \\
			~ \ \; 0 & \mathrm{otherwise.} 
		\end{cases}\]
 Next we compute $\pi_n^e(gl_1P^2(\kR)$ via the following cofiber sequence of Proposition \ref{ses of gl1P2kR}
		$$\Sigma^{1+\sigma}\HZ \rightarrow gl_1(P^{2}(k\mathbb{R})) \rightarrow \HFII,$$
and the following  induced long exact sequence 		
		$$...\; \rightarrow \pi_{n-1-\sigma}^{e}(\HZ) \rightarrow \pi_{n}^{e}(gl_1(P^{2}(k\mathbb{R})) \rightarrow \pi_{n}^{e}(\HFII) \rightarrow \; ....$$
		Identification of the groups $\pi_{n-\sigma}^{e}(\HZ)$ is a consequence of the long exact sequence
 $$... \; \rightarrow \underline{\pi}_{n}(C_2/e _{+} \wedge \HZ) \rightarrow \underline{\pi}_{n}(\HZ) \rightarrow  \underline{\pi}_{n-\sigma}(\HZ) \rightarrow \; ...$$
		\begin{tabular}{p{0.05\textwidth} p{0.95\textwidth}}
			
		 \vspace*{0.25cm} \framebox[1.2cm]{$\mathbf{n=0}$}
		
		& \[\begin{adjustbox}{max width=\linewidth} \begin{tikzcd}[cramped]
			& {\underline{\pi}_{0}(C_2/e _{+} \wedge \HZ)} && {\underline{\pi}_{0}(\HZ)} && {\underline{\pi}_{-\sigma}(\HZ) } \\
			{C_2/C_2} & \Z && \Z && {\Z/2\Z} \\
			{C_2/e} & {\Z \oplus \Z} && \Z && 0
			\arrow[from=1-2, to=1-4]
			\arrow[from=1-4, to=1-6]
			\arrow["{res^{C_2}_e}"', shift right, from=2-1, to=3-1]
			\arrow["{\times 2}", from=2-2, to=2-4]
			\arrow["\Delta"', shift right, from=2-2, to=3-2]
			\arrow[from=2-4, to=2-6]
			\arrow["id"', shift right, from=2-4, to=3-4]
			\arrow[shift right, from=2-6, to=3-6]
			\arrow["{tr^{C_2}_e}"', shift right, from=3-1, to=2-1]
			\arrow["\nabla"', shift right, from=3-2, to=2-2]
			\arrow["\nabla", from=3-2, to=3-4]
			\arrow["{\times 2}"', shift right, from=3-4, to=2-4]
			\arrow[from=3-4, to=3-6]
			\arrow[shift right, from=3-6, to=2-6]
		\end{tikzcd}
\end{adjustbox}\]
		\end{tabular}
		\begin{tabular}{p{0.05\textwidth} p{0.95\textwidth}}
		
		\vspace*{0.25cm} \framebox[1.2cm]{$\mathbf{n=1}$}
		
		& \[\begin{adjustbox}{max width=13cm}
\begin{tikzcd}[cramped]
			&& {\underline{\pi}_{1-\sigma}(\HZ) } && {\underline{\pi}_{0}(C_2/e _{+} \wedge \HZ)} && {\underline{\pi}_{0}(\HZ)} \\
			{C_2/C_2} && 0 && \Z && \Z \\
			{C_2/e} && \Z && {\Z \oplus \Z} && \Z
			\arrow[from=1-3, to=1-5]
			\arrow[from=1-5, to=1-7]
			\arrow["{res^{C_2}_e}"', shift right, from=2-1, to=3-1]
			\arrow[from=2-3, to=2-5]
			\arrow[shift right, from=2-3, to=3-3]
			\arrow["{\times 2}", from=2-5, to=2-7]
			\arrow["\Delta"', shift right, from=2-5, to=3-5]
			\arrow["id"', shift right, from=2-7, to=3-7]
			\arrow["{tr^{C_2}_e}"', shift right, from=3-1, to=2-1]
			\arrow[from=3-1, to=3-1, loop, in=240, out=300, distance=5mm]
			\arrow[shift right, from=3-3, to=2-3]
			\arrow["{\times (\text{-}1)}", from=3-3, to=3-3, loop, in=240, out=300, distance=5mm]
			\arrow[from=3-3, to=3-5]
			\arrow["\nabla"', shift right, from=3-5, to=2-5]
			\arrow["{\text{switch}}", from=3-5, to=3-5, loop, in=240, out=300, distance=5mm]
			\arrow["\nabla", from=3-5, to=3-7]
			\arrow["{\times 2}"', shift right, from=3-7, to=2-7]
			\arrow[from=3-7, to=3-7, loop, in=240, out=300, distance=5mm]
		\end{tikzcd}
\end{adjustbox}\]

		\end{tabular}

\mbox{  }\\
		This results in 
\[\pi_{n}^{e}(gl_1(P^{2}(k\mathbb{R}))\; \cong
		\begin{cases}
			~ \ \mathbb{F}_2 & \mathrm{when }\; n=0 \\
			\begin{tikzcd}[cramped]
				\Z
				\arrow["{\times (\text{-}1)}", from=1-1, to=1-1, loop, in=240, out=300, distance=5mm]
			\end{tikzcd} & \mathrm{when}~ n=2 \\
			~ \ \; 0 & \mathrm{otherwise,} 
		\end{cases}\]
		which is the same in value to $\pi_{n}^{e}Spt(\z)$. Thus, it suffices to construct a map $Spt(\z) \to i\big(gl_1(P^2 \kR)\big)$ in $Sp^{BC_2}$ which is a lift of the non-equivariant equivalence. 
		This is given by the following composition of maps in $Sp^{BC_2}$
		$$Spt(\z) \xrightarrow{(\alpha)} gl_1(KU_{C_2}) \xrightarrow{(\beta)} \PP^2gl_1(KU_{C_2}) \xrightarrow{(\gamma)} i\big(gl_1(P^2 \kR)\big).$$
		The map $\alpha$ is constructed in Remark \ref{sptz}. The map $\beta$ is the extension of the $2^{nd}$ Postnikov section $\PP^2$ to $Sp^{BC_2}$, constructed by taking the homotopy limit of $\infty$-categories
		\[\begin{tikzcd}[cramped]
			Sp && {Sp_{\leq 2}} & {~} & {~} & {~} & {Sp^{BC_2}} && {Sp_{\leq 2}^{BC_2}.}
			\arrow["\begin{array}{c} \text{trivial} \\ C_2 \text{-action} \end{array}", from=1-1, to=1-1, loop, in=60, out=120, distance=5mm]
			\arrow["{\PP^2}", from=1-1, to=1-3]
			\arrow["\begin{array}{c} \text{trivial} \\ C_2 \text{-action} \end{array}", from=1-3, to=1-3, loop, in=60, out=120, distance=5mm]
			\arrow["holim", squiggly, from=1-4, to=1-6]
			\arrow["{\PP^2}", from=1-7, to=1-9]
		\end{tikzcd}\]
It is clear that $(\beta)\circ (\alpha)$ is an equivalence. 

We note that $\KR\simeq B_{C_2}KU_{C_2}$ implies  \cite[Theorem II.2.7]{NS18} that $i(\KR) \simeq KU_{C_2}$. From Proposition \ref{naive and genuine units}, we have 
\[ B_{C_2} (gl_1 KU_{C_2}) \to gl_1 (B_{C_2} KU_{C_2}) \simeq gl_1\KR, \]
which yields the map 
\[ gl_1KU_{C_2} \simeq i B_{C_2} (gl_1 KU_{C_2}) \to i(gl_1\KR) .\]
The composite is directly seen to be an isomorphism on $\pi_n^e$ by the computation of non-equivariant homotopy groups. We thus have the factorization 
\[ \xymatrix{ gl_1KU_{C_2} \ar[r]^{\simeq} \ar[dr] &   igl_1\KR \ar[r] & i(gl_1P^2\KR) \\
 & \PP^2(gl_1KU_{C_2}), \ar@{-->}[ru]^{(\gamma)}} \] 
which defines the $C_2$-map $(\gamma)$ via the equivalence $gl_1 \KR \simeq gl_1 \kR$. 
	\end{proof}
	
\vspace*{0.2cm}
	
We now put all the ingredients together to show that the $2^{nd}$-slice section of $gl_1\KR$ splits off as a wedge summand. 
	\begin{thm}\label{splitglkr}
		There is a map $gl_1(P^2 \kR) \rightarrow gl_1(\KR)$ such that the composite 
\[gl_1(P^2\kR) \to gl_1(\KR)\simeq gl_1(\kR) \to gl_1(P^2 \kR)\]
 is equivalent to the identity.
	\end{thm}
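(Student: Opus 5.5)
The splitting map will be built from the Borel-equivariant lift of the classical splitting. We then transfer it to genuine spectra using the adjunction $i \dashv B_{C_2}$ and Proposition \ref{naive and genuine units}. Finally we check that the composite is an equivalence, first on underlying spectra and then via a cofreeness or connectivity argument.

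\textbf{Step 1: the map.} By Proposition \ref{sptz and gl1 equivalance} we have an equivalence $Spt(\z)\simeq i(gl_1 P^2\kR)$ in $Sp^{BC_2}$. By Remark \ref{sptz} there is a map $Spt(\z)\to gl_1(KU_{C_2})$. Combining them gives a map $i(gl_1P^2\kR)\to gl_1 KU_{C_2}$ in $Sp^{BC_2}$. Its adjoint is
\[gl_1(P^2\kR)\to B_{C_2}\,gl_1(KU_{C_2}).\]
Composing with Proposition \ref{naive and genuine units} and $B_{C_2}KU_{C_2}\simeq\KR$ yields $s\colon gl_1(P^2\kR)\to gl_1(\KR)$. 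Call the composite in the statement $e$, so that
\[e\colon gl_1(P^2\kR)\xrightarrow{s} gl_1\KR\simeq gl_1\kR\to gl_1(P^2\kR).\]

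\textbf{Step 2: underlying equivalence.} Apply $i$ to $e$. Unwinding the factorization through $(\gamma)$ in the proof of Proposition \ref{sptz and gl1 equivalance}, $i(e)$ is identified with the composite
\[Spt(\z)\xrightarrow{\alpha} gl_1KU_{C_2}\xrightarrow{\beta}\PP^2 gl_1 KU_{C_2}\xrightarrow{\gamma} i(gl_1P^2\kR).\]
That proof already shows this composite is an equivalence. Hence $i(e)$ is an isomorphism on $\pi^e_*$.

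\textbf{Step 3: fixed points.} The main obstacle is upgrading to a genuine equivalence, because $gl_1(P^2\kR)$ need not be cofree. I would use the cofiber sequence
\[\Sigma^{1+\sigma}\HZ\to gl_1(P^2\kR)\to\HFII\]
of Proposition \ref{ses of gl1P2kR}. The homotopy Mackey functors are then concentrated in $\underline{\pi}_0=\underline{\F_2}$ and $\underline{\pi}_1,\underline{\pi}_2$ coming from $\Sigma^{1+\sigma}\HZ$.

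The plan is to show $\underline{\pi}_*(e)$ is an isomorphism Mackey-functor-wise, in two parts.
\begin{itemize}
\item On $\underline{\pi}_0$, $e$ is the identity of $\underline{\F_2}$: on $C_2/C_2$ it is the unit map $\pm1\mapsto\pm1$, since $s$ sends the unit $-1$ to $-1$.
\item For the higher homotopy, both $\Sigma^{1+\sigma}\HZ$ and the $C_2/C_2$-level groups are controlled by the restriction maps, which are injective for $\underline{\Z}$ and for $\underline{\pi}_1\Sigma^{1+\sigma}\HZ=0$ at the fixed level. The induced self-map of $\Sigma^{1+\sigma}\HZ$ restricts to an equivalence, so it is a unit multiple of the identity, and a $\pm1$ there is invertible.
\end{itemize}
Equivalently, write $e$ as a map of extensions. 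The self-map of $\Sigma^{1+\sigma}\HZ$ lies in $[\HZ,\HZ]^{C_2}=\Z$, which injects under restriction, so it is $\pm1$. The self-map of $\HFII$ lies in $\MM_{C_2}(\underline{\F_2},\underline{\F_2})=\F_2$, which also injects under restriction, so it is the identity. The five lemma then shows $e$ is an equivalence.

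Finally, replace $s$ by $s\circ e^{-1}$. The composite in the statement is then the identity.
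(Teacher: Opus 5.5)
Your proposal is correct and follows essentially the paper's route. You build the splitting from the Borel lift $Spt(\z)\to gl_1KU_{C_2}$ via $i\dashv B_{C_2}$ and Proposition~\ref{naive and genuine units}, check that the composite is an underlying equivalence, and then use the cofiber sequence of Proposition~\ref{ses of gl1P2kR} (with $[\Sigma^{1+\sigma}\HZ,\HFII]^{C_2}=0$ giving the map of extensions), the fact that self-maps of $\HZ$ and $\HFII$ are detected by restriction, and the five lemma. Routing through $B_{C_2}gl_1KU_{C_2}$ instead of $\Sigma^\infty_+\Omega^\infty Spt(\z)$, and precomposing with $e^{-1}$ to get the literal identity, are only cosmetic differences.

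One correction to your second bullet: $\underline{\pi}_1\Sigma^{1+\sigma}\HZ=\underline{\pi}_{-\sigma}\HZ$ is $\Z/2\{a_\sigma\}$ at $C_2/C_2$ and $0$ underlying. So this group is \emph{not} detected by restriction, and a levelwise Mackey-functor argument would fail there. Your ``equivalently'' argument via $[\HZ,\HZ]^{C_2}\cong\Z$ does not need that claim, and it is exactly what the paper does.
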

	
	\begin{proof}
		The splitting map is obtained as composition of the following maps:
		
	\[\begin{tikzcd}[cramped]
		{B_{C_2}\big(Spt(\z)\big)} & {B_{C_2}\Big(gl_1\big(\Sigma^{\infty}_{+} \Omega^{\infty}Spt(\z)\big)\Big)} && {gl_1(B_{C_2}\big(\Sigma^{\infty}_{+} \Omega^{\infty}Spt(\z))\big)} \\
		{B_{C_2}\Big(i\big(gl_1(P^2 \kR)\big)\Big)} &&& \begin{array}{c} gl_1(B_{C_2}(KU_{C_2})) \\ \simeq gl_1(\KR) \end{array} \\
		\\
		{gl_1(P^2 \kR)}
		\arrow["{(\alpha)}", from=1-1, to=1-2]
		\arrow["\simeq"', from=1-1, to=2-1]
		\arrow["{(\beta)}", from=1-2, to=1-4]
		\arrow["{(\gamma)}", from=1-4, to=2-4]
		\arrow["\begin{array}{c} \text{unit of}\\ \text{adjunction}\\ i \dashv B_{C_2} \end{array}", from=4-1, to=2-1]
		\arrow["\begin{array}{c} \text{splitting}\\ \text{map} \end{array}", curve={height=18pt}, dashed, from=4-1, to=2-4]
	\end{tikzcd}\]
		 where the top left vertical equivalence in $Sp^{C_2}$ is from Proposition \ref{sptz and gl1 equivalance}. The map $(\beta)$ is induced by Proposition \ref{naive and genuine units}. The map $(\alpha)$ is obtained applying the functor $B_{C_2}$ to 
\[Spt(\z) \to gl_1 \Sigma^{\infty}_{+} \Omega^{\infty}Spt(\z),\]
which is the adjoint of the identity map 
\[\Sigma^{\infty}_{+} \Omega^{\infty}Spt(\z) \xrightarrow{id} \Sigma^{\infty}_{+} \Omega^{\infty}Spt(\z).\]
The map  $Spt(\z) \rightarrow gl_1(KU_{C_2}) \in Sp^{BC_2}$ of Remark \ref{sptz}, provides an $\E_{\infty}$-map
\[\Sigma^{\infty}_{+} \Omega^{\infty} Spt(\z) \rightarrow KU_{C_2} \in (\E_{\infty}\text{-spectra})^{BC_2}.\]
 The map $(\gamma)$ arises from applying $gl_1 \circ B_{C_2}$ to this map. 
Denote the splitting map by $\tau$, and by $\phi$ the composite 
\[  {gl_1(P^2 \kR)} \xrightarrow{\tau} {gl_1\kR} \to {gl_1(P^2 \kR)}.\]
We know that $\phi$ is a non-equivariant equivalence, and using this fact we prove that $\phi$ is an equivalence in $Sp^{C_2}$, thus completing the proof of the theorem. Using the cofiber \eqref{cofslgl1}, we have a homotopy commutative diagram 
\[\xymatrix{\Sigma^{1+\sigma} H\uZ \ar[r] \ar[d]^{a} & {gl_1(P^2 \kR)} \ar[d]^{\phi} \ar[r] & H\underline{\F_2} \ar[d]^{b}\\
\Sigma^{1+\sigma} H\uZ \ar[r] & {gl_1(P^2 \kR)}  \ar[r] & H\underline{\F_2}.} \]
The reason $\Sigma^{1+\sigma} H\uZ \to {gl_1(P^2 \kR)}  \xrightarrow{\phi} {gl_1(P^2 \kR)} $ factors through $\Sigma^{1+\sigma} H\uZ$ is that 
\[ [\Sigma^{1+\sigma} H\uZ, H\underline{\F_2}]^{C_2} = 0 \; \mbox{ as } \Sigma^{1+\sigma} H\uZ \; \mbox{ is } 0\mbox{-connected}.\]
The fact that $\phi$ is a non-equivariant equivalence, implies that so are $a$ and $b$. However, for these Mackey functors, this implies $a$ and $b$ are equivariant equivalences, and thus, by $5$-lemma, $\phi$ is an equivariant equivalence. 
	\end{proof}

\vspace*{0.2cm}

\section{$\THR^{K\mathbb{R}}(K\mathbb{R}/2)$} \label{thrcalc}

We have constructed $\KR/2$ as a twisted $\KR$-algebra in Proposition \ref{kuR/2 as twisted monoid}. This involved \\
1) The identification of $\KR/2$ as a Thom spectrum of a map $f: S^1 \to BGL_1\KR$.\\
2) The identification of $f$ as $\Omega^\sigma \zeta$ for a map $\zeta : \C P^\infty_\tau \to B^\rho GL_1\KR$. \\
We call these twisted algebra structures as {\it Thomified} algebra structures, that is those algebras which arise as the Thom spectrum of a loop map. We now use the result that the real topological Hochschild homology is again a Thom spectrum from \S \ref{thrthL} to make explicit computations.

\subsection{$\THR^{\KR}(\KR/2)$ as a $C_2$-Thom spectrum}\label{thrthomkr}
We reduce the $C_2$-Thom spectrum 
\[\THR^{\KR}(\KR/2) \simeq \Th^{C_2}(L^\sigma \C P^\infty_\tau \to BGL_1\KR),\]
to a more computable cofiber sequence that takes the form 
\[ \KR \wedge \C P^\infty_\tau \to \KR \wedge \C P^\infty_\tau \to \THR^{\KR}(\KR/2).\] 
The evenness of $\KR$ implies the following proposition. 
\begin{prop} \label{Identify as null}
Any map from $\C P^\infty_\tau$ to $BGL_1\KR$ is null-homotopic, that is, 
$$[\C P^{\infty}_{\tau} , BGL_1(\KR)]^{C_2} \cong 0.$$ 
\end{prop}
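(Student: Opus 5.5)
We use the $C_2$-cell structure \eqref{cellcp}, \eqref{attcp} of $\C P^\infty_\tau$, whose cells are of type $\DD(n\rho)$, $n\geq 0$. The plan is an obstruction-theoretic (equivalently, Atiyah–Hirzebruch/Milnor) argument: show that for each $n$ the contribution of the cell $S^{n\rho}$ to $[\C P^\infty_\tau, BGL_1\KR]^{C_2}$ vanishes, and that no $\lim^1$ term appears.

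First, compute the relevant groups. Since $BGL_1\KR \simeq \Omega^\infty \Sigma gl_1\KR$, we have
\[ [S^{n\rho}, BGL_1\KR]^{C_2} \cong \pi^{C_2}_{n\rho-1}(gl_1\KR) \cong \pi^{C_2}_{n\rho-1}(GL_1\KR). \]
For $n\geq 1$ the representation $n\rho-1$ has nontrivial fixed part as long as $n\geq 2$, and then, by the remark after \eqref{pullback units}, this group agrees with $\pi^{C_2}_{n\rho-1}(\KR)$, which vanishes by evenness of $\KR$ (equivalently the vanishing $\pi^{C_2}_{(n-1)+n\sigma}\KR=0$ recalled before \eqref{gl1krpb}). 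For $n=1$, $\rho-1=\sigma$ has no trivial summand, so we argue directly: $\pi^{C_2}_\sigma(gl_1\KR)$ is computed from $\Omega^\sigma GL_1\KR$, whose fixed points are $\hofib((GL_1\KR)^{C_2}\to GL_1\KR)$ by \S\ref{fixed pts of loop}. On $\pi_0$ this gives the kernel of $\Z^\times\to\Z^\times$ (an isomorphism), modulo the image of $\pi_1$ of the underlying $GL_1$, which is $\pi_1 KU=0$. Hence $\pi^{C_2}_\sigma(gl_1\KR)=0$. The basepoint cell $n=0$ contributes nothing for based maps.

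Next, run the induction over the skeleta $Z_n$. Using the cofiber sequences $S^{n\rho-1}\to Z_{n-1}\to Z_n$, the exact sequence
\[ [S^{n\rho},BGL_1\KR]^{C_2}\to [Z_n,BGL_1\KR]^{C_2}\to [Z_{n-1},BGL_1\KR]^{C_2} \]
together with $[Z_1,BGL_1\KR]^{C_2}=[S^\rho,BGL_1\KR]^{C_2}=0$ gives $[Z_n,BGL_1\KR]^{C_2}=0$ for all $n$. Finally we pass to the colimit via the Milnor sequence
\[ 0\to \lim{}^1_n [\Sigma Z_n,BGL_1\KR]^{C_2}\to [\C P^\infty_\tau,BGL_1\KR]^{C_2}\to \lim_n [Z_n,BGL_1\KR]^{C_2}\to 0. \]
The right-hand term is zero. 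For the $\lim^1$ term, the same cellular induction, applied to the groups $[S^{n\rho+1},BGL_1\KR]^{C_2}\cong \pi^{C_2}_{n\rho}(\KR)\cong\Z$ (for $n\ge1$), shows that $[\Sigma Z_n,BGL_1\KR]^{C_2}\cong \widetilde{\KR}^{-1}$-type groups that form a tower of finitely generated free abelian groups with surjective transition maps. This surjectivity follows because the next obstruction groups, in degree $n\rho-1$, vanish. So the Mittag-Leffler condition holds and $\lim^1=0$.

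The main obstacle is this last step, together with the careful treatment of the low cell $n=1$, where the identification of $\pi_V(GL_1)$ with $\pi_V(\KR)$ is not available. If the surjectivity argument is awkward to set up, an alternative is to use that $\widetilde{\KR}^{\star}(\C P^\infty_\tau)$ is a power series ring on a class in degree $\rho$ (by evenness, as recalled in the introduction). Combined with the splitting of Theorem \ref{splitglkr}, this reduces the question to $\widetilde{gl_1\KR}^1(\C P^\infty_\tau)$, which vanishes in odd $\rho$-shifted degrees.
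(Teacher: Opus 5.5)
Your proposal is correct and follows essentially the same route as the paper: the cellular filtration $Z_n$, the Milnor sequence, an induction using $\pi^{C_2}_{n\rho-1}(gl_1\KR)=0$ to kill the $\lim$ term, and surjectivity of the maps $[Z_n, GL_1\KR]^{C_2}\to[Z_{n-1},GL_1\KR]^{C_2}$ (from the same vanishing) to get Mittag--Leffler and kill $\lim^1$. Your direct treatment of the bottom cell via $\hofib$, where $\sigma$ has no trivial summand, is more careful than the paper's; one slip is that $[\Sigma Z_n, BGL_1\KR]^{C_2}\cong[\Sigma^\infty Z_n, gl_1\KR]^{C_2}$ is a degree-zero group, in bijection with $\widetilde{\KR}^0(Z_n)$ rather than $\widetilde{\KR}^{-1}$-type, and its freeness is not needed.
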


\begin{proof}
	From the \S \ref{CP_infity cell}, we have $\C P^{\infty}_{\tau} \simeq \mbox{hocolim }  Z_n$, with a cofiber sequence 
\begin{myeq} \label{cofcp}
\dots \rightarrow S^{n\rho -1} \rightarrow Z_{n-1} \rightarrow Z_n \rightarrow S^{n\rho} \rightarrow \cdots .
\end{myeq}
	Thus, the desired group fits into the short exact sequence
	$$0 \rightarrow {\varprojlim}^{1}[\Sigma Z_{n},\Sigma gl_1 (\KR)]^{C_2} \rightarrow [\C P^{\infty}_{\tau} , BGL_1(\KR)]_\ast^{C_2} \rightarrow \varprojlim [Z_n, \Sigma gl_1 (\KR)]^{C_2} \rightarrow 0.$$
We check that for each $n$, $[Z_n,\Sigma gl_1\KR]^{C_2} = 0$ and 
$$[Z_n, gl_1\KR]^{C_2} \to [Z_{n-1},gl_1\KR]^{C_2}$$
 is surjective. It follows that  $\varprojlim^{1}$-term vanishes by the Mittag-Lefﬂer condition, and thus the proof is complete. The evenness of $\KR$ and the pullback description of $GL_1\KR \simeq \Omega^\infty gl_1\KR$ \eqref{gl1krpb} implies that 
\[ \pi_{n\rho - 1}^{C_2} (gl_1\KR)=0.\]
Now using the cofiber \eqref{cofcp} and taking maps into $gl_1 \KR$ we get the exact sequence 
\[  \pi_{n\rho}^{C_2} gl_1\KR \to [Z_n, gl_1\KR]^{C_2} \to [Z_{n-1}, gl_1\KR]^{C_2} \to 0,\]
which implies the surjectivity above.  
	We show, by induction on $n$, each of the terms of the limiting system is trivial. For $n=1$, 
$$[S^{\rho}, \Sigma gl_1 (\KR)]^{C_2} \cong [S^{\sigma}, GL_1 (\KR)]^{C_2}_\ast \cong \pi _{\sigma} ^{C_2} (\KR) \cong 0.$$
	 In the induction step, we turn to the long exact sequence, induced by \eqref{cofcp}	 
	 $$\dots \rightarrow [S^{n\rho}, \Sigma gl_1 (\KR)]^{C_2} \rightarrow [Z_n, \Sigma gl_1 (\KR)]^{C_2} \rightarrow [Z_{n-1}, \Sigma gl_1 (\KR)]^{C_2} \rightarrow \cdots .$$
	 The left group is $\pi _{n\rho-1} ^{C_2} (\KR) \cong 0$, and the right group is trivial by the induction hypothesis.		
\end{proof}

\vspace*{0.2cm}

The $\THR$ of $\Th^{R}(\Omega^\sigma \zeta)$ been identified as $\Th^{R}(L^{\tilde{\eta}} \zeta)$ in Theorem \ref{thrthom}. Proposition \ref{Identify as null} allows us to make a reduction which does not involve the Hopf map. 
\begin{prop} \label{THR from loop space}
The $\THR^{\KR}(\KR/2)$ for Thomified algebra structures as $\Th^{C_2}(\Omega^\sigma \zeta)$ may be computed using the formula	
	$$\THR^{\KR}(\KR/2)\ \simeq \Th^{K\R}(\hat{\zeta})$$
		where $\hat{\zeta}$ is the composition:
	$$L^{\sigma}{\C P^{\infty}_{\tau}} \xrightarrow{L^{\sigma}\zeta} L^{\sigma}B^{\rho}GL_1(\KR) \simeq BGL_1(\KR) \times B^{\rho}GL_1(\KR) \xrightarrow{\pi_{1}} BGL_1(\KR).$$
\end{prop}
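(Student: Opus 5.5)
By Theorem \ref{thrthom} applied to $f=\Omega^\sigma\zeta$ (with $X=S^1$, $B^\sigma X\simeq \C P^\infty_\tau$), we already have
\[\THR^{\KR}(\KR/2)\simeq \Th^{\KR}(L^{\tilde\eta}\zeta),\]
where $L^{\tilde\eta}\zeta$ is the composite of $L^\sigma\zeta$, the inverse of the splitting equivalence $BGL_1\KR\times B^\rho GL_1\KR\simeq L^\sigma B^\rho GL_1\KR$, the map $id\times(-\tilde\eta)^*$, and $m$. The plan is to show that $L^{\tilde\eta}\zeta$ and $\hat\zeta$ are $C_2$-homotopic as maps $L^\sigma\C P^\infty_\tau\to BGL_1\KR$. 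The Thom spectrum depends only on the homotopy class of the map to $BGL_1\KR\hookrightarrow\PicR$, so this gives the required equivalence of $\KR$-modules.

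Write $L^\sigma\zeta$, after the splitting, as a pair $(\zeta_1,\zeta_2)$ with $\zeta_1\colon L^\sigma\C P^\infty_\tau\to BGL_1\KR$ and $\zeta_2\colon L^\sigma\C P^\infty_\tau\to B^\rho GL_1\KR$. Then $\hat\zeta=\zeta_1$ and $L^{\tilde\eta}\zeta=m\circ(\zeta_1,(-\tilde\eta)^*\zeta_2)$. It therefore suffices to show that $(-\tilde\eta)^*\circ\zeta_2$ is null, because $m(\,\cdot\,,\ast)\simeq id$. Under the lower splitting in diagram \eqref{diagram free loop}, $\zeta_2$ is $\mathrm{ev}\circ L^\sigma\zeta$, and by naturality of evaluation this equals $\zeta\circ \mathrm{ev}$, where $\mathrm{ev}\colon L^\sigma\C P^\infty_\tau\to\C P^\infty_\tau$.

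Since $(-\tilde\eta)^*$ is induced by a map of spectra $\Sigma^\rho gl_1\KR\to\Sigma gl_1\KR$, it is an infinite loop map. Hence the composite factors as
\[L^\sigma\C P^\infty_\tau\xrightarrow{\mathrm{ev}}\C P^\infty_\tau\xrightarrow{\zeta}B^\rho GL_1\KR\xrightarrow{(-\tilde\eta)^*}BGL_1\KR,\]
and the middle-to-right part is an element of $[\C P^\infty_\tau,BGL_1\KR]^{C_2}$. By Proposition \ref{Identify as null} this group vanishes, so $(-\tilde\eta)^*\circ\zeta\simeq\ast$, and the whole composite is null.

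The main point to check is the identification of the second coordinate of the splitting $L^\sigma B^\rho GL_1\KR\simeq BGL_1\KR\times B^\rho GL_1\KR$ with evaluation at the basepoint. This holds because the section is given by constant loops, so the projection to $B^\rho GL_1\KR$ is $\mathrm{ev}$. The resulting homotopy is equivariant because all maps involved are $C_2$-maps and the vanishing in Proposition \ref{Identify as null} is a vanishing of equivariant homotopy classes. Finally, there is a basepoint issue: Proposition \ref{Identify as null} concerns based classes, but $\C P^\infty_\tau$ is connected with a fixed basepoint and $BGL_1\KR$ is an H-space, so free and based classes agree.
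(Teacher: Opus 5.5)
Your proposal is correct and is essentially the paper's own argument: you identify the $B^\rho GL_1(\KR)$-coordinate of the splitting of $L^\sigma B^\rho GL_1(\KR)$ with evaluation at the basepoint, so the correction term factors as $(-\tilde\eta)^*\circ\zeta\circ\mathrm{ev}$ through $\C P^\infty_\tau$, and you kill it with Proposition \ref{Identify as null}. Your remark that $(-\tilde\eta)^*$ is an infinite loop map is not needed for this factorization, which is just naturality of evaluation.
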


\begin{proof}
From Theorem \ref{thrthom}, $\THR^{K\mathbb{R}}(K\mathbb{R}/2)\ \simeq \Th^{K\R}(L^{\tilde{\eta}}{\zeta})$, where $L^{\tilde{\eta}}{\zeta}$ is the composite:
	
	\[\begin{tikzcd}[cramped]
	{L^{\sigma} {\C P^{\infty}_{\tau}} } & {L^{\sigma}B^{\rho}GL_1(K\mathbb{R}) } & {BGL_1(K\mathbb{R}) \times B^{\rho}GL_1(K\mathbb{R}) } \\
	&& {BGL_1(K\mathbb{R}) \times BGL_1(K\mathbb{R}) } \\
	&& {BGL_1(K\mathbb{R}).}
	\arrow["{L^{\sigma}\zeta}", from=1-1, to=1-2]
	\arrow["{L^{\tilde{\eta}}{\zeta}}"', curve={height=18pt}, dashed, from=1-1, to=3-3]
	\arrow["\simeq"', from=1-3, to=1-2]
	\arrow["{id \times \tilde{\eta} ^{*}}", from=1-3, to=2-3]
	\arrow["m", from=2-3, to=3-3]
	\end{tikzcd}\]
	Observing that in the equivalence, the map 
$$L^{\sigma}B^{\rho}GL_1(K\mathbb{R}) \simeq BGL_1(K\mathbb{R}) \times B^{\rho}GL_1(K\mathbb{R}) \xrightarrow{\pi_2}  B^{\rho}GL_1(K\mathbb{R})$$
 is evaluation at the base point of $S^{\sigma}$, we evidently get a factorization of $L^{\sigma}\zeta \circ \pi_2$ through ${\C P^{\infty}_{\tau}}$. Hence, following Proposition \ref{Identify as null} we arrive at the following diagram: 
	\[\begin{tikzcd}[cramped]
		{L^{\sigma} {\C P^{\infty}_{\tau}} } && {L^{\sigma}B^{\rho}GL_1(K\mathbb{R}) } & {BGL_1(K\mathbb{R}) \times B^{\rho}GL_1(K\mathbb{R}) } \\
		{{\C P^{\infty}_{\tau}}} &&& {B^{\rho}GL_1(K\mathbb{R}) } \\
		&&& {BGL_1(K\mathbb{R}) }
		\arrow["{L^{\sigma}\zeta}", from=1-1, to=1-3]
		\arrow["ev"', from=1-1, to=2-1]
		\arrow["ev"', from=1-3, to=2-4]
		\arrow["\simeq"', from=1-4, to=1-3]
		\arrow["{\pi_2}", from=1-4, to=2-4]
		\arrow["\zeta", from=2-1, to=2-4]
		\arrow["{\simeq  ~ *}", curve={height=12pt}, from=2-1, to=3-4]
		\arrow["{\tilde{\eta} ^{*}}", from=2-4, to=3-4]
	\end{tikzcd}\]
	 Therefore, the equivalance $L^{\tilde{\eta}}{\zeta} \simeq \hat{\zeta}$ completes the proof.
\end{proof}

\vspace*{0.2cm}

We now proceed as in \cite[\S 3.2]{Bas17}, and obtain a cofiber sequence for $\THR$ using the form described in Proposition \ref{THR from loop space}.	Under the map $\hat{\zeta}$, the constant loops in $L^{\sigma} {\C P^{\infty}_{\tau}}$ goes to base point in $BGL_1(\KR)$ as described below	
	\[\begin{adjustbox}{max width=\linewidth}\begin{tikzcd}[cramped]
		{\C P^{\infty}_{\tau}} && {B^{\rho}GL_1(K\mathbb{R})} \\
		{L^{\sigma}{\C P}^{\infty}_{\tau}} && {L^{\sigma}B^{\rho}GL_1(K\mathbb{R})} & {BGL_1(K\mathbb{R}) \times B^{\rho}GL_1(K\mathbb{R})} & {BGL_1(K\mathbb{R}). }
		\arrow["\zeta", from=1-1, to=1-3]
		\arrow["\begin{array}{c} \mathrm{constant} \\ \mathrm{loops} \end{array}"', from=1-1, to=2-1]
		\arrow[from=1-3, to=2-3]
		\arrow["{*}", curve={height=-12pt}, from=1-3, to=2-5]
		\arrow["{L^{\sigma}\zeta}"', from=2-1, to=2-3]
		\arrow["\simeq", from=2-4, to=2-3]
		\arrow["{\pi_1}"', from=2-4, to=2-5]
	\end{tikzcd}
\end{adjustbox}\]
	Consequently, we get a unit $u\ \in (K\mathbb{R}^{0}({\C P^{\infty}_{\tau}}))^{\times}$ corresponding to the map:
	\begin{myeq} \label{unit identify}
\begin{tikzcd}[cramped]
		{\C P^{\infty}_{\tau}} && {L^{\sigma}{\C P}^{\infty}_{\tau} \simeq S^1 \times {\C P}^{\infty}_{\tau}} && {\Sigma{{\C P}^{\infty}_{\tau}}_{+}} \\
		&& {BGL_1{\KR}}
		\arrow["\begin{array}{c} \text{constant} \\ \text{loops} \end{array}", from=1-1, to=1-3]
		\arrow["\mbox{\tiny cofiber}", from=1-3, to=1-5]
		\arrow["{\hat{\zeta}}", from=1-3, to=2-3]
		\arrow["u", dashed, from=1-5, to=2-3]
	\end{tikzcd}
\end{myeq}
	
The following proposition constructs $\THR$ as a cofiber using the unit $u$. 
\begin{prop} \label{LES of THR}
There is a cofiber sequence
\[ \KR \wedge \C P^\infty_\tau \stackrel{u-1}{\to} \KR \wedge \C P^\infty_\tau \to \THR^{\KR}(\KR/2)\]
and thus on homotopy groups, we get a long exact sequence
	$$\dots \rightarrow K\mathbb{R}_{\star}({\C P^{\infty}_{\tau}}) \xrightarrow{u - 1} K\mathbb{R}_{\star}({\C P^{\infty}_{\tau}}) \rightarrow \pi^{C_2}_{\star}(\THR^{K\mathbb{R}}(K\mathbb{R}/2)) \rightarrow \cdots$$
	for the unit $u\ \in (K\mathbb{R}^{0}({\C P^{\infty}_{\tau}}))^{\times}$ as defined in \eqref{unit identify}.
\end{prop}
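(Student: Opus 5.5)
Following the approach of \cite[\S 3.2]{Bas17}, we will realize $L^{\sigma}\C P^\infty_\tau$ as a $C_2$-homotopy pushout and use that $\Th^{\KR}$ strongly preserves $C_2$-colimits, exactly as in Proposition \ref{Thom colimit form}. Since $\C P^\infty_\tau$ is a $C_2$-infinite loop space, the free loop space splits: $L^{\sigma}\C P^\infty_\tau \simeq \Omega^\sigma \C P^\infty_\tau \times \C P^\infty_\tau \simeq S^1 \times \C P^\infty_\tau$, where $S^1$ has trivial action (\S\ref{CP_infity cell}). Writing $S^1$ as the $C_2$-colimit of $\ast \leftarrow S^0 \rightarrow \ast$ (trivial action) and crossing with $\C P^\infty_\tau$, we obtain $L^\sigma \C P^\infty_\tau$ as the homotopy pushout of $\C P^\infty_\tau \leftarrow S^0\times \C P^\infty_\tau \rightarrow \C P^\infty_\tau$, with both maps the fold/projection.

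Applying $\Th^{\KR}$ to this pushout over $BGL_1\KR$ via $\hat{\zeta}$ from Proposition \ref{THR from loop space}, we restrict $\hat\zeta$ to each piece. On the two copies of $\C P^\infty_\tau$ (i.e. the cones, or equivalently constant loops at the two hemispheres), $\hat\zeta$ restricts to the constant-loop map, which is null by the diagram preceding \eqref{unit identify} (using also Proposition \ref{Identify as null}); hence each contributes $\KR\wedge {\C P^\infty_\tau}_+$, by the trivialization property of \S\ref{C2 thom functor}. On $S^0\times\C P^\infty_\tau$ the bundle is also trivial on each copy, but the two gluing identifications differ by the clutching unit. Exactly as in Proposition \ref{Thom colimit form}, the two maps $\KR\wedge{\C P^\infty_\tau}_+ \to \KR\wedge{\C P^\infty_\tau}_+$ are the identity and multiplication by the unit $u$ of \eqref{unit identify}, since $u$ is by definition the map $\Sigma{\C P^\infty_\tau}_+\to BGL_1\KR$ through which $\hat\zeta$ factors on the cofiber of the constant loops. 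So $\THR^{\KR}(\KR/2)$ is the pushout of $\KR\wedge{\C P^\infty_\tau}_+ \xleftarrow{\nabla} (\KR\wedge{\C P^\infty_\tau}_+)^{\vee 2}\xrightarrow{(1,u)}\KR\wedge{\C P^\infty_\tau}_+$, which, by the same matrix manipulation as in Proposition \ref{kuR/2 as kuR-module}, yields a cofiber sequence with map $u-1$ on $\KR\wedge{\C P^\infty_\tau}_+$.

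Finally, since $u$ is a unit restricting to $1$ at the basepoint (its restriction to a point is trivial because $\hat\zeta$ on the base loop is $f$ restricted to a point), $u-1$ is zero on the summand $\KR\wedge S^0$ of $\KR\wedge {\C P^\infty_\tau}_+ \simeq \KR \vee \KR\wedge\C P^\infty_\tau$. One must then check whether the reduced statement is intended. Indeed, the unreduced cofiber contains an extra $\KR\vee\Sigma\KR$ contribution, whereas the stated sequence uses the reduced $\C P^\infty_\tau$. The main obstacle is therefore bookkeeping: identifying precisely which basepoint and factor conventions make the unit $u$ act on the reduced part, and correctly tracking the splitting $L^\sigma\C P^\infty_\tau\simeq S^1\times\C P^\infty_\tau$ through $\hat\zeta$. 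I would handle this by working with the cofiber $\Sigma{\C P^\infty_\tau}_+$ as in \eqref{unit identify} and reading off the long exact sequence in $\pi^{C_2}_\star$ directly from the resulting cofiber sequence.
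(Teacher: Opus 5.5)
Your first two paragraphs follow the paper's argument: the splitting $L^\sigma\C P^\infty_\tau\simeq S^1\times\C P^\infty_\tau$, the $C_2$-pushout over $S^0\times\C P^\infty_\tau$ along constant loops, strong preservation of $C_2$-colimits by $\Th^{\KR}$, triviality of the pieces because $\hat\zeta$ is null on constant loops, and clutching by $1$ and $u$. Row reduction as in Proposition \ref{kuR/2 as kuR-module} then gives $\mathrm{cof}(u-1)$ on $\KR\wedge{\C P^\infty_\tau}_+$. That already proves the proposition. The statement's $\KR\wedge\C P^\infty_\tau$ and $\KR_\star(\C P^\infty_\tau)$ are meant unreduced. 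The paper's proof works with ${\C P^\infty_\tau}_+$ throughout, and Theorem \ref{pi* calculation} uses $\KR\wedge{\C P^\infty_\tau}_+\simeq\bigvee_{n\geq 0}\Sigma^{n\rho}\KR$, with a class $\beta_0$ in degree $0$.

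Your final paragraph, however, contains a genuine error: $u$ does not restrict to $1$ at the basepoint. Restricting $u\in\KR^0(\C P^\infty_\tau)^\times$ to the basepoint $\ast\in\C P^\infty_\tau$ means restricting its adjoint $\Sigma{\C P^\infty_\tau}_+\to BGL_1\KR$ along $\Sigma\{\ast\}_+=S^1$. Equivalently, you restrict $\hat\zeta$ to $S^1\times\{\ast\}\simeq\Omega^\sigma\C P^\infty_\tau$, the whole based-loop circle, not a point of $S^1$. There $L^\sigma\zeta$ lands in the $BGL_1\KR$ factor and equals $\Omega^\sigma\zeta\simeq f$, which classifies the unit $-1$ (Proposition \ref{kuR/2 as kuR-module}). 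So $u|_\ast=-1$, in agreement with \eqref{unitform}, $u=-1+(\text{odd})\,t+\cdots$.

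Consequently $u-1$ is multiplication by $-2$ on the bottom summand, not zero. The cofiber has no extra $\KR\vee\Sigma\KR$; instead the bottom cell contributes $\Th(f)\simeq\KR/2$. This is the relation $\beta_0\mapsto-2\beta_0$ in \eqref{unitcap}, which together with the odd subdiagonal coefficients produces the $\Z/(2^\infty)$. The ``bookkeeping obstacle'' you identify is an artifact of this mistake. Resolving it by discarding the basepoint summand would be unjustified, since it throws away exactly this contribution. Drop that paragraph and conclude directly from the unreduced cofiber sequence you already obtained.
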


\begin{proof}
	As ${\C P^{\infty}_{\tau}} \simeq K(\underline{\mathbb{Z}},\rho)$ is a group-like $C_2$-$\mathbb{E}_{\infty}$-space, we've a splitting 
$$L^{\sigma}{\C P^{\infty}_{\tau}} \simeq \Omega^{\sigma}{\C P^{\infty}_{\tau}} \times {\C P^{\infty}_{\tau}}\  \simeq\  S^{1} \times {\C P^{\infty}_{\tau}}.$$
 Therefore, we get a $C_2$-(homotopy)-pushout square
	\[\begin{tikzcd}[cramped]
		{{\C P^{\infty}_{\tau}} \amalg {\C P^{\infty}_{\tau}}} &&& {{\C P^{\infty}_{\tau}}} \\
		\\
		{{\C P^{\infty}_{\tau}}} &&& {L^{\sigma}{\C P^{\infty}_{\tau}}},
		\arrow[from=1-1, to=1-4]
		\arrow[from=1-1, to=3-1]
		\arrow[from=1-4, to=3-4]
		\arrow[from=3-1, to=3-4]
	\end{tikzcd}\]
		where the maps ${\C P^{\infty}_{\tau}} \rightarrow L^{\sigma}{\C P^{\infty}_{\tau}}$ are inclusions of constant loops.
	As the $C_2$-Thom spectrum functor strongly preserves $C_2$-colimits, we also get a $C_2$-pushout square on the level of Thom spectra, (the notation $(-)^g$ denotes $\Th^{C_2}(g)$)	
	\[\begin{tikzcd}[cramped]
	{  K\mathbb{R} \wedge {\C P^{\infty}_{\tau}}_{+} \vee K\mathbb{R} \wedge {\C P^{\infty}_{\tau}}_{+}  \simeq ({\C P^{\infty}_{\tau}} \amalg {\C P^{\infty}_{\tau}})^{\hat{\zeta}}} && {({\C P^{\infty}_{\tau}})^{\hat{\zeta}} \simeq K\mathbb{R} \wedge {\C P^{\infty}_{\tau}}_{+}} \\
	\\
	{K\mathbb{R} \wedge {\C P^{\infty}_{\tau}}_{+} \simeq ({\C P^{\infty}_{\tau}})^{\hat{\zeta}}} && {\Th^{C_2}(\hat{\zeta}) \simeq \THR^{K\mathbb{R}}(K\mathbb{R}/2)}.
	\arrow[from=1-1, to=1-3]
	\arrow[from=1-1, to=3-1]
	\arrow[from=1-3, to=3-3]
	\arrow[from=3-1, to=3-3]
	\end{tikzcd}\]
		 Hence, we get the following long exact sequence on homotopy groups:
	$$\dots \rightarrow K\mathbb{R}_*({\C P^{\infty}_{\tau}}) \oplus K\mathbb{R}_*({\C P^{\infty}_{\tau}}) \rightarrow K\mathbb{R}_*({\C P^{\infty}_{\tau}}) \oplus K\mathbb{R}_*({\C P^{\infty}_{\tau}}) \rightarrow \pi^{C_2}_{*}(THR^{K\mathbb{R}}(K\mathbb{R}/2)) \rightarrow \cdots .$$
	
	Note that $\Th^{C_2}(\hat{\zeta})$ is obtained from identifying two trivial $GL_{1}(K\mathbb{R})$-bundles over ${\C P^{\infty}_{\tau}}$, where on the intersection ${\C P^{\infty}_{\tau}} \amalg {\C P^{\infty}_{\tau}}$ it is given by a map $\tilde{u}:\  {\C P^{\infty}_{\tau}} \amalg {\C P^{\infty}_{\tau}} \rightarrow GL_{1}(K\mathbb{R})$; whose adjoint is given by :	
	\[\begin{adjustbox}{max width=\linewidth}
\begin{tikzcd}[cramped]
		{{\C P^{\infty}_{\tau}} \amalg {\C P^{\infty}_{\tau}}} & {{\C P^{\infty}_{\tau}} \vee {\C P^{\infty}_{\tau}}} & {S^{1} \times {\C P^{\infty}_{\tau}} \simeq L^{\sigma}{\C P^{\infty}_{\tau}}} & {\Sigma{\C P^{\infty}_{\tau}}_{+} \vee \Sigma{\C P^{\infty}_{\tau}}_{+}} & {\cdots} \\
		&& {BGL_{1}(K\mathbb{R})}
		\arrow[from=1-1, to=1-2]
		\arrow[from=1-2, to=1-3]
		\arrow["0"', from=1-2, to=2-3]
		\arrow[from=1-3, to=1-4]
		\arrow[from=1-3, to=2-3]
		\arrow[from=1-4, to=1-5]
		\arrow["\tilde{u}", from=1-4, to=2-3]
	\end{tikzcd}
\end{adjustbox}\]
	 Therefore, the induced cofiber sequence from the pushout square takes the form:
	  $$ K\mathbb{R} \wedge {\C P^{\infty}_{\tau}}_{+}\ \vee\  K\mathbb{R} \wedge {\C P^{\infty}_{\tau}}_{+} \xrightarrow{\begin{pmatrix}
	 		1 & u\\
	 		u & 1
	 \end{pmatrix}}  K\mathbb{R} \wedge {\C P^{\infty}_{\tau}}_{+}\ \vee\  K\mathbb{R} \wedge {\C P^{\infty}_{\tau}}_{+} \rightarrow Th^{C_2}(\hat{\zeta})$$
	 Hence we get the desired long exact sequence on homotopy groups.
\end{proof}

\vspace*{0.2cm}

The following proposition further helps us understand the map $u$, and makes it efficient to compute it.

\begin{prop} \label{identify u}
	As a stable map $u:\Sigma{\C P^{\infty}_{\tau}}_{+} \rightarrow BGL_1(K\mathbb{R})$ lies in $[\C P^\infty_\tau, gl_1 \KR]^{C_2}$, and is homotopic to the composite 
$$\Sigma^{\rho}{\C P^{\infty}_{\tau}}_{+} \xrightarrow{\mu} {\C P^{\infty}_{\tau}}  \xrightarrow{\zeta} B^{\rho}GL_1(K\mathbb{R}),$$ 
where $\mu$ is induced by the $C_2$-equivariant multiplication on ${\C P^{\infty}_{\tau}}$ as the composition
	$$\Sigma^{\rho}{\C P^{\infty}_{\tau}}_{+} \simeq S^{\rho} \wedge {\C P^{\infty}_{\tau}}_{+} \xrightarrow{\text{(inclusion)}\wedge id} {\C P^{\infty}_{\tau}} \wedge {\C P^{\infty}_{\tau}}_{+} \xrightarrow{\text{multiplication}} {\C P^{\infty}_{\tau}}.$$
\end{prop}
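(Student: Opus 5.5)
We first unwind the definition of $u$ in \eqref{unit identify}. The free loop space $L^{\sigma}\C P^\infty_\tau=\Map(S^\sigma,\C P^\infty_\tau)$ is identified with $S^1\times \C P^\infty_\tau$ using the $C_2$-$\E_\infty$ (abelian group) structure of $\C P^\infty_\tau\simeq K(\uZ,\rho)$. Concretely, a pair $(\gamma,x)$ with $\gamma\in \Omega^\sigma\C P^\infty_\tau\simeq S^1$ goes to the loop $t\mapsto \gamma(t)\cdot x$.

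Under this identification, $L^\sigma\zeta$ composed with the splitting $L^\sigma B^\rho GL_1\KR\simeq BGL_1\KR\times B^\rho GL_1\KR$ and $\pi_1$ has a simple description. It sends $(\gamma,x)$ to the $\Omega^\sigma$-component of the loop $t\mapsto\zeta(\gamma(t)x)\cdot\zeta(x)^{-1}$, where we use the infinite loop structure on $B^\rho GL_1\KR$ and subtract off the constant loop at $\zeta(x)$. Restricted to $x$ in the constant loops, this is trivial, which is why the map factors through the cofiber $\Sigma\C P^\infty_{\tau+}$. Since $BGL_1\KR$ is an infinite loop space, the resulting $u$ is a stable class in $[\Sigma\C P^\infty_{\tau+},\Sigma gl_1\KR]^{C_2}\cong[\C P^\infty_{\tau+},gl_1\KR]^{C_2}$. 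This gives the first assertion.

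For the second assertion we pass to adjoints in the $\sigma$-direction. The map $S^1\times\C P^\infty_\tau\to BGL_1\KR=\Omega^\sigma B^\rho GL_1\KR$ is adjoint to the map
\[S^\sigma\wedge(S^1\times \C P^\infty_\tau)_+\to B^\rho GL_1\KR,\qquad (t,\gamma,x)\mapsto \zeta(\gamma(t)x)-\zeta(x).\]
Precomposing with the evaluation $S^\sigma\wedge S^1_+\to S^\sigma\wedge\Omega^\sigma\C P^\infty_\tau\to\C P^\infty_\tau$ turns this into a map out of $(S^\sigma\wedge S^1)\wedge \C P^\infty_{\tau+}$. Here $S^\sigma\wedge S^1\simeq S^\rho$, and the evaluation becomes the bottom cell inclusion $S^\rho=Z_1\hookrightarrow\C P^\infty_\tau$; this is the adjoint description of the equivalence $S^1\simeq\Omega^\sigma\C P^\infty_\tau$ from \S\ref{CP_infity cell}. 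So $u$ corresponds to $\zeta\circ m\circ(\mathrm{incl}\wedge\mathrm{id})-\zeta\circ\mathrm{pr}_2$ on $S^\rho\times\C P^\infty_\tau$. The second term vanishes on the quotient $S^\rho\wedge\C P^\infty_{\tau+}$, since it factors through the projection to $\C P^\infty_\tau$ which is collapsed. Because $\zeta$ is additive up to homotopy (it is a $\rho$-fold loop-compatible map into an infinite loop space), we may rewrite $\zeta(\gamma(t)x)-\zeta(x)$ as $\zeta(m(\gamma(t),x))$ restricted to the relative part. This yields exactly $\zeta\circ\mu$ with $\mu$ as in the statement. Stably, $[\Sigma^\rho\C P^\infty_{\tau+},B^\rho GL_1\KR]=[\Sigma\C P^\infty_{\tau+},BGL_1\KR]$, so the two descriptions live in the same group.

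The main obstacle is the additivity step. It requires knowing that $\zeta:\C P^\infty_\tau\to B^\rho GL_1\KR$ is compatible with the multiplication on $\C P^\infty_\tau$ up to a term that dies on the relevant cofiber. The plan is to avoid needing $\zeta$ to be an $H$-map by never using additivity of $\zeta$ itself. Instead, we use only that the splitting of $L^\sigma B^\rho GL_1\KR$ is given by subtracting the basepoint value via the group structure of the target, and that the splitting of $L^\sigma\C P^\infty_\tau$ is given by the action $(\gamma,x)\mapsto\gamma\cdot x$. With these two facts, the composite is literally $(t,\gamma,x)\mapsto\zeta(\gamma(t)x)-\zeta(x)$, and the correction term $-\zeta(x)$ dies on the quotient. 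A secondary check is that the $C_2$-equivariant identification $S^\sigma\wedge S^1\simeq S^\rho$ matches the cell $Z_1$. This follows from the proof in \S\ref{CP_infity cell}, where the equivalence was defined via the adjoint of $Z_1\to\C P^\infty_\tau$.
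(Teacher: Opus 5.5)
Your argument is essentially the paper's: both pass to the $\sigma$-adjoint, use that the splitting $L^\sigma B^\rho GL_1\KR\simeq BGL_1\KR\times B^\rho GL_1\KR$ subtracts the base-point value, so that the $\zeta(x)$ term (i.e.\ $\zeta\circ\mathrm{pr}_2$) dies on the summand $S^\rho\wedge{\C P^\infty_\tau}_+$ of the stable splitting of $S^\sigma\wedge(S^1\times\C P^\infty_\tau)$, and identify the remaining evaluation with $\mu$ through the bottom cell $Z_1=S^\rho$. The paper packages exactly this in its diagram involving $S^\sigma\wedge i$, $\chi$ and the evaluation maps.

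Do delete the sentence asserting that $\zeta$ is additive up to homotopy, because it is false here. Additivity would give $\zeta(s\cdot x)-\zeta(x)\simeq\zeta(s)$ for $s\in Z_1$, making $u$ the constant unit $-1$, which contradicts the odd coefficient of $t$ in $u$ established later. As your final paragraph correctly says, the step does not need it: you only need that $\zeta\circ\mathrm{pr}_2$ restricts to zero on that stable summand, and that differences of maps into the infinite loop space $B^\rho GL_1\KR$ correspond to differences of the adjoint stable maps.
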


\begin{proof}
Consider the following $C_2$-equivariant commutative diagram:
	\[\begin{tikzcd}[cramped]
		{S^{\sigma}\times L^{\sigma}\C P^{\infty}_{\tau}} &&& {S^{\sigma} \times L^{\sigma}B^{\rho}GL_1(K\mathbb{R})} \\
		{\C P^{\infty}_{\tau}} &&& {B^{\rho}GL_1(K\mathbb{R})}.
		\arrow["{S^{\sigma} \times L^{\sigma}\zeta}", from=1-1, to=1-4]
		\arrow["ev"', from=1-1, to=2-1]
		\arrow["ev", from=1-4, to=2-4]
		\arrow["\zeta"', from=2-1, to=2-4]
	\end{tikzcd}\]
	Under the identifications 
$$L^{\sigma}\C P^{\infty}_{\tau} \simeq S^1 \times \C P^{\infty}_{\tau}, \mbox{ and } L^{\sigma}B^{\rho}GL_1(K\mathbb{R}) \simeq BGL_1(K\mathbb{R}) \times B^{\rho}GL_1(K\mathbb{R}),$$
 we get the $\sigma$-fold suspension of the unit $\Sigma^{\sigma}u$ of Proposition \ref{LES of THR} fits into the following diagram. 	
	\[\begin{tikzcd}[cramped]
		{(S^{\sigma }\times  1 \times  \C P^{\infty}_{\tau}) \,  \vee \, (* \times S^1 \times  \C P^{\infty}_{\tau})}\\
		{S^{\sigma}\times S^1 \times \C P^{\infty}_{\tau}} &&& {S^{\sigma} \times BGL_1(K\mathbb{R}) \times B^{\rho}GL_1(K\mathbb{R})} \\
		{S^\rho \wedge {\C P^\infty_\tau}_+ \simeq S^{\sigma} \wedge \Sigma {\C P^{\infty}_{\tau} }_{+}} &&& {(S^{\sigma} \wedge BGL_1(K\mathbb{R}) )\wedge (B^{\rho}GL_1(K\mathbb{R}))_{+}} \\
		&&& {S^{\sigma} \wedge BGL_1(K\mathbb{R})}
		\arrow[hook, from=1-1, to=2-1]
		\arrow["{{S^{\sigma} \times L^{\sigma}\zeta}}", from=2-1, to=2-4]
		\arrow[from=2-1, to=3-1]
		\arrow["{\bar{\zeta}}", from=2-1, to=4-4]
		\arrow[from=2-4, to=3-4]
		\arrow["{{\Sigma^{\sigma}u}}"', from=3-1, to=4-4]
		\arrow["{{\pi_1}}", from=3-4, to=4-4]
	\end{tikzcd}\]
The map $\bar{\zeta}$ is clearly the pull back of $S^\sigma \wedge \hat{\zeta}$ under the quotient 
\[S^{\sigma }\times  S^1 \times  \C P^{\infty}_{\tau}  \to S^\sigma \wedge( S^1 \times  \C P^{\infty}_{\tau}).\]
The left vertical column above is a homotopy cofibration sequence, in which the cofiber is viewed as the strict quotient
\[	 S^{\sigma} \wedge \Sigma {\C P^{\infty}_{\tau} }_{+} \cong \frac{S^{\sigma}\times S^1 \times \C P^{\infty}_{\tau}}{(S^{\sigma }\times  1 \times  \C P^{\infty}_{\tau}  \vee \, * \times S^1 \times  \C P^{\infty}_{\tau})}.\]
The restriction of $\bar{\zeta}$ to $	S^{\sigma }\times  1 \times  \C P^{\infty}_{\tau}  \vee \, * \times S^1 \times  \C P^{\infty}_{\tau}$ is the constant map to the base-point, which identifies the map $\Sigma^\sigma u$ on the quotient space.

We now have the following commutative diagram, where all except the map $(1)$ are maps of spaces. The map (1) is the inclusion of the factor $S^{\rho} \wedge {\C P^{\infty}_{\tau}}_{+}$ in the stable splitting 
$$S^{\sigma} \wedge (S^1 \times {\C P^{\infty}_{\tau}}) \simeq S^\rho \wedge \C P^\infty_\tau \vee S^\sigma \wedge ( 1 \times \C P^\infty_\tau). $$
We denote the evaluation map by $\mbox{ ev}$.		
\[\begin{adjustbox}{max width=\linewidth}
\begin{tikzcd}[cramped]
		& {S^{\rho} \wedge {\C P^{\infty}_{\tau}}_+} &&&& {S^{\sigma} \wedge BGL_1(K\mathbb{R})} \\
		\\
		{S^{\sigma} \wedge (S^1 \times {\C P^{\infty}_{\tau}})} & {S^{\sigma} \wedge L^{\sigma}{\C P^{\infty}_{\tau}}} &&& {S^{\sigma} \wedge L^{\sigma}B^{\rho}GL_1(K\mathbb{R})} \\
		\\
		& {{\C P^{\infty}_{\tau}}} &&&& {B^{\rho}GL_1(K\mathbb{R})}
		\arrow["{\Sigma^{\sigma}u}", from=1-2, to=1-6]
		\arrow["{(1)}"', from=1-2, to=3-1]
		\arrow["{S^{\sigma} \wedge i}"', from=1-6, to=3-5]
		\arrow["\chi", from=1-6, to=5-6]
		\arrow["\simeq", from=3-1, to=3-2]
		\arrow[from=3-2, to=1-2]
		\arrow["{S^{\sigma} \wedge L^{\sigma}\zeta}", from=3-2, to=3-5]
		\arrow["{\mbox{\tiny ev}}", from=3-2, to=5-2]
		\arrow["{\mbox{\tiny ev}}", from=3-5, to=5-6]
		\arrow["\zeta", from=5-2, to=5-6]
	\end{tikzcd}
\end{adjustbox}\]
	In the diagram above $i: BGL_1(K\mathbb{R}) \rightarrow L^{\sigma}B^{\sigma}GL_1{K\mathbb{R}})$ is the inclusion of based loops and $\chi$ is the adjoint to the $C_2$-equivalence $BGL_1(K\mathbb{R}) \simeq \Omega^{\sigma}B^{\rho}GL_1(K\mathbb{R})$. 
	Note that the adjoint of $u$ is $(\chi \circ \Sigma^{\sigma}u)$, where 
$$\chi: \Sigma^{\sigma}BGL_1(K\mathbb{R}) \xrightarrow{\simeq} B^{\rho}GL_1(K\mathbb{R})$$ 
is essentially the evaluation map. Therefore, the result follows from the observation that under stable splitting, the composition 
$$S^{\sigma} \wedge \Sigma{ \C P^{\infty}_{\tau} }_{+} \hookrightarrow S^{\sigma}\times S^1 \times \C P^{\infty}_{\tau} \xleftarrow{\simeq} S^{\sigma}\times L^{\sigma} \C P^{\infty}_{\tau} \xrightarrow{ev} \C P^{\infty}_{\tau}$$ 
is homotopic to $\mu$. 	
\end{proof}

\vspace*{0.2cm}

\subsection{The homotopy type of $\THR^{\KR}(\KR/2)$}
We now compute $\THR^{\KR}(\KR/2)$ following the reductions in \S \ref{thrthomkr}. Recall some well-known computations for $C_2$-Eilenberg MacLane spectra. The notation $a_\sigma \in \pi^{C_2}_{-\sigma}(S^0)$ denotes the Euler class \cite[Definition 3.11]{HHR16}, and $u_{2\sigma} \in \pi^{C_2}_{2-2\sigma}(H\uZ)$ denotes the orientation class \cite[Definition 3.12]{HHR16}.  
\begin{myeq}\label{F2comp}
			\pi_\bigstar^{C_2} H\underline{\mathbb{F}_2} \cong \mathbb{F}_2[a_{\sigma}, u_{\sigma}] \oplus \Sigma^{-1} \mathbb{F}_2 \{a_{\sigma}^{-k} u_{\sigma}^{-l}\} \text{ for } k,l > 0 \mbox{ \cite[Proposition 3.5]{BG21}}.
\end{myeq}
\begin{myeq}\label{HZcomp}
\pi_\bigstar^{C_2} H\uZ \cong  \Z[a_{\sigma}, u_{2\sigma}]/(2a_\sigma) \oplus 2\Z[u_{2\sigma}^{-1}] \oplus \Sigma^{-1} \F_2 \{a_{\sigma}^{-k} u_{2\sigma}^{-l}\}  \text{ for } k,l > 0  \mbox{ \cite[Proposition 6.5]{Zen18}}.
\end{myeq}

The cohomology of $\C P^n_\tau$ with $\uZ$ and $\underline{\F_2}$ coefficients are readily computed from these formulas.
\begin{prop} \label{groups}
	For constant Mackey functors $\underline{\mathbb{Z}}$ and $\underline{\mathbb{F}_2}$, 
	$$H\uZ^{\bigstar}(\C P^n_{\tau}) \cong \pi_{-\bigstar}H\uZ[t] / (t^{n+1}),\ H\uZ^{\bigstar}(\C P^{\infty}_{\tau})  \cong \pi_{-\bigstar}H\uZ[[t]], $$
$$H\underline{\mathbb{F}_2}^{\bigstar}(\C P^n_{\tau}) \cong \pi_{-\bigstar}H\underline{\mathbb{F}_2}[t] / (t^{n+1}), \ H\underline{\mathbb{F}_2}^{\bigstar}(\C P^\infty_\tau) \cong \pi_{-\bigstar}H\underline{\mathbb{F}_2}[[t]],$$
	where $|t| = (1+\sigma)$.
	In particular, for a real oriented $C_2$-spectrum $E\mathbb{R}$, we get $E\mathbb{R}^{\star}(\C P^n) \cong E\mathbb{R}^{\star}[t] / (t^{n+1})$ and $E\mathbb{R}^{\star}({\C P^{\infty}_{\tau}}) \cong E\mathbb{R}^{\star}[[t]]$.
\end{prop}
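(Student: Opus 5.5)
We will prove the statement by induction up the cellular filtration \eqref{cellcp}, \eqref{attcp} of $\C P^\infty_\tau$, where $Z_n=\C P^n_\tau$ and the cofiber sequences $Z_{n-1}\to Z_n\to S^{n\rho}$ are as in \eqref{cofcp}. Let $E$ denote any of $H\uZ$, $H\underline{\F_2}$, or a real oriented $E\R$.

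\textbf{Step 1: the long exact sequences.} Each cofiber sequence gives a long exact sequence
\[\cdots\to \tilde E^{\bigstar}(S^{n\rho})\to E^{\bigstar}(Z_n)\to E^{\bigstar}(Z_{n-1})\to \tilde E^{\bigstar+1}(S^{n\rho})\to\cdots,\]
with $\tilde E^{\bigstar}(S^{n\rho})\cong \pi_{n\rho-\bigstar}E$, a free rank one $\pi_{-\bigstar}E$-module on a class in degree $n\rho$.

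\textbf{Step 2: the generator $t$.}
\begin{itemize}
\item For the Eilenberg--MacLane cases, $\C P^\infty_\tau\simeq K(\uZ,\rho)$ carries a fundamental class $t\in H\uZ^{\rho}(\C P^\infty_\tau)$. Its restriction to $Z_1=S^{\rho}$ is the generator, and its mod $2$ reduction is the $\underline{\F_2}$ class.
\item For real oriented $E\R$, $t$ is the real orientation class $x$ from \cite{hukriz}, which by definition restricts to the unit on $S^\rho$.
\end{itemize}
Then $t^n$ restricted to $Z_n$ maps to the generator of $\tilde E^{n\rho}(S^{n\rho})$ under the collapse map. This follows from the non-equivariant statement plus the fact that the Mackey-functor-valued groups involved, $\pi_0$ of $H\underline M$ in the relevant degree, are detected by restriction since $\uZ$ and $\underline{\F_2}$ have injective restriction.

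\textbf{Step 3: the induction.} Suppose $E^\bigstar(Z_{n-1})$ is free over $\pi_{-\bigstar}E$ on $1,t,\dots,t^{n-1}$. These classes lift to $E^\bigstar(Z_n)$ as the restrictions of the global powers $t^i$. Hence $E^\bigstar(Z_n)\to E^\bigstar(Z_{n-1})$ is surjective, so the connecting map vanishes and the sequence splits into short exact sequences. Since $\tilde E^\bigstar(S^{n\rho})$ is free on the image of $t^n$, we get
\[E^\bigstar(Z_n)\cong \pi_{-\bigstar}E\{1,\dots,t^n\}.\]
Moreover $t^{n+1}$ restricts to zero on $Z_n$, because $Z_n$ is $n\rho$-dimensional and the class lives in a summand indexed by a cell $Z_n$ lacks. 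This yields the ring structure $\pi_{-\bigstar}E[t]/(t^{n+1})$.

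\textbf{Step 4: passage to the colimit.} For $\C P^\infty_\tau$ we use the Milnor sequence. The inverse systems $E^{\bigstar}(Z_n)$ are surjective, hence Mittag-Leffler, so $\varprojlim^1=0$ and
\[E^\bigstar(\C P^\infty_\tau)\cong\varprojlim\, \pi_{-\bigstar}E[t]/(t^{n+1})=\pi_{-\bigstar}E[[t]].\]

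\textbf{Main obstacle.} The delicate point is Step 2 in the Eilenberg--MacLane case: showing that the restriction of $t^n$ to $Z_n$ really hits the $RO(C_2)$-graded generator of the top cell, and not something differing by a non-unit such as an $a_\sigma$- or $u_{2\sigma}$-multiple. My first approach would be to use the multiplicative Thom isomorphism for the tautological real line bundle. Failing that, I would fall back on the splitting argument of Lemma \ref{cohzero}: for Mackey functors with injective restriction, $Z_n\wedge H\underline M$ is a wedge $\bigvee S^{i\rho}\wedge H\underline M$. This gives freeness directly, and then identifying the generators with powers of $t$ reduces to the underlying non-equivariant check.
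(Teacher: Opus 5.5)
\textbf{The Eilenberg--MacLane cases.} For $H\uZ$ and $H\underline{\F_2}$ your cellular induction is a valid proof, and it takes a different route from the paper. The paper does no cellular computation. It notes that these spectra are even by \eqref{HZcomp} and \eqref{F2comp}, hence real orientable by Hill--Meier, and then quotes Araki's theorem for all real oriented theories at once.

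Two corrections apply even in this case.
\begin{enumerate}
\item The vanishing $t^n|_{Z_{n-1}}=0$ is an input to Step~2, not a byproduct of Step~3. You need it before you can say that $t^n|_{Z_n}$ comes from the top cell. Its justification is a degree count: $H\underline{M}^{n\rho}(Z_{n-1})\cong\bigoplus_{j=1}^{n}\pi^{C_2}_{-j\rho}H\underline{M}=0$, which you can read off from \eqref{HZcomp} and \eqref{F2comp}.
\item Your ``main obstacle'' is not one. The top-cell lift lies in $\tilde E^{n\rho}(S^{n\rho})\cong\pi^{C_2}_0E$, so no $a_\sigma$- or $u_{2\sigma}$-multiple can occur. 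The fact that $\res^{C_2}_e$ is the identity on $\uZ(C_2/C_2)$ and $\underline{\F_2}(C_2/C_2)$ settles the generator question.
\end{enumerate}

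\textbf{The gap: arbitrary real oriented $E\R$.} You claim the same argument covers an arbitrary real oriented $E\R$. That is the part of the statement the paper actually uses later, for $\KR$. Both inputs above are special to Eilenberg--MacLane coefficients.
\begin{enumerate}
\item The relation. For $\KR$ one has $\pi^{C_2}_{-j\rho}\KR\cong\Z\{\bar\beta^{-j}\}\neq 0$, so $\KR^{n\rho}(\C P^{n-1}_\tau)\cong\Z^n$ and $t^n|_{Z_{n-1}}=0$ is not forced by degree. Your ``dimension'' argument for $t^{n+1}|_{Z_n}=0$ is then circular, because it presupposes that $t^{n+1}|_{Z_{n+1}}$ already comes from the top cell.
\item The top-cell unit. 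For a general real oriented theory, $\res^{C_2}_e$ need not be injective on $\pi^{C_2}_0$. For instance, $\widetilde{EC_2}\wedge M\R$ is real oriented but has contractible underlying spectrum. So Step~2 gives no reason for the top-cell coefficient $c\in\pi^{C_2}_0E$ to be a unit.
\end{enumerate}

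\textbf{How to repair it.} You have three options.
\begin{enumerate}
\item Argue directly. Prove $t^{n+1}|_{\C P^n_\tau}=0$ by the equivariant cup-length argument. The $n+1$ charts $\{z_i\neq 0\}$ are $C_2$-invariant and equivariantly contractible onto fixed points in the connected fixed set $\R P^n$. So the reduced class $t$ vanishes on each chart, and $t^{n+1}$ lifts to $E^{\bigstar}(\C P^n_\tau,\C P^n_\tau)=0$. Then show $c$ is a unit by checking its images under $\res^{C_2}_e$ and under $\Phi^{C_2}$. These reduce to the classical computations for the complex oriented underlying theory on $\C P^n$ and for the $MO$-oriented theory $\Phi^{C_2}E$ on $\R P^n$.
\item Restrict to specific theories. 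For $\KR$, $M\R$ and $E\R_n$ you can run your restriction argument in all degrees $k\rho$, since there $\res^{C_2}_e$ is injective on $\pi^{C_2}_{k\rho}$.
\item Cite Araki, as the paper does.
\end{enumerate}
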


\begin{proof}
	As $\underline{\pi}_{k\rho - 1}(H\underline{\mathbb{Z}}) = 0$ \eqref{HZcomp} and $\underline{\pi}_{k\rho - 1}(H\underline{\mathbb{F}_2}) = 0$ \eqref{F2comp}, by \cite[Definition 3.1]{hillmeier}, $H\underline{\mathbb{Z}}$ and $H\underline{\mathbb{F}_2}$ are even $C_2$-spectra. They are real orientable by \cite[Lemma 3.3]{hillmeier}. Now, the proposition follows from \cite[Proposition 4.2]{Ara79}. 
\end{proof}
	
\vspace*{0.2cm}
		
As $\KR$ is real oriented \cite[Theorem 2.8]{hukriz}, we have the formulas	
	\begin{gather*}
		K\mathbb{R}^{\star}(\C P^n) \cong K\mathbb{R}^{\star}[t] / (t^{n+1}),\  \text{and}\ K\mathbb{R}^{\star}({\C P^{\infty}_{\tau}}) \cong K\mathbb{R}^{\star}[[t]].
	\end{gather*}
	Following Proposition \ref{identify u}, the unit $u \in (K\mathbb{R}^0({\C P^{\infty}_{\tau}}))^{\times}$ takes the form 
\begin{myeq}\label{unitform}
u\ =\ (-1)+\nu^{*}(\zeta)t + \text{(terms involving higher order of } t \mbox{)} 
\end{myeq}
where restricting to 2-skeleton we get the following maps:
		\begin{myeq}\label{cohomo groups}
\begin{tikzcd}[cramped]
		&& {S^{\rho}} \\
		{S^{\rho} \times S^{\rho}} && {\C P^{2}_\tau} && {\Sigma^{\rho} gl_1\KR} \\
		{S^{\rho} \vee S^{\rho} \vee S^{2\rho}} \\
		{S^{2\rho}}
		\arrow["{i}",from=1-3, to=2-3]
		\arrow["{(-1)}", from=1-3, to=2-5]
		\arrow["{\text{multiplication}}", from=2-1, to=2-3]
		\arrow["\zeta_{\C P^2}", from=2-3, to=2-5]
		\arrow["{\text{stable splitting}}", equals, from=3-1, to=2-1]
		\arrow["\nu"', from=4-1, to=2-5]
                \arrow["\chi"', from=4-1, to=2-3]
		\arrow[hook, from=4-1, to=3-1]
	\end{tikzcd}
\end{myeq}
We write $gl_1\KR \simeq E \vee \hat{K}$  by 	Theorem \ref{splitglkr}, where $E \simeq gl_1P^2\KR$. Then 
$$[\C P^2_\tau, \Sigma^\rho gl_1\KR]^{C_2} \cong  [\C P^2_\tau, \Sigma^\rho E]^{C_2},$$
which implies that in order to compute $\nu^\ast(\zeta_{\C P^2})$ it suffices to only consider $E$. 	
	The	next set of results are aimed at understanding $\nu^{*}(\zeta_{\C P^2})$.
	\begin{lem} \label{identify group 1}
		We have the following isomorphisms
		\[[{\C P^{\infty}_{\tau}},\Sigma^{\rho}E]^{C_2} \cong [\C P^2_\tau, \Sigma^\rho E]^{C_2}\cong \mathbb{Z}.\]
 Moreover, the image of $\zeta \in [{\C P^{\infty}_{\tau}},\Sigma^{\rho}E]^{C_2}$  via these isomorphisms is an odd integer.
	\end{lem}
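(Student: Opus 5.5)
We use the cofiber sequence $\Sigma^{1+\sigma}\HZ \to E \to \HFII$ of Proposition \ref{ses of gl1P2kR}; suspending by $\rho$ gives
\[\Sigma^{2\rho}\HZ \to \Sigma^\rho E \to \Sigma^\rho\HFII,\]
and we compute $[\C P^n_\tau,-]^{C_2}$ of each term with the Eilenberg--MacLane cohomology of Proposition \ref{groups}. In that proposition, $H\uZ^{a+b\sigma}(\C P^\infty_\tau)$ is a sum of $\pi^{C_2}_{-(a+b\sigma)+k\rho}H\uZ$ over $k\geq 0$, and similarly for $\FII$. We need the degrees $2\rho$ and $\rho$ and the adjacent ones $2\rho\pm1$, $\rho\pm 1$. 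For $\HZ$ in degree $2\rho$, the relevant summands are $\pi_{(k-2)\rho}^{C_2}H\uZ$. By \eqref{HZcomp} this group is $\Z$ for $k=2$ and vanishes for $k\neq 2$ in the range that occurs: for $k<2$ only the classes $2\Z[u_{2\sigma}^{-1}]$ and the $\Sigma^{-1}$-part could contribute, and they sit in degrees that are not multiples of $\rho$. We check this carefully. The $\HFII$ term in degree $\rho$ gives $\pi_{(k-1)\rho}\HFII$, which is $\F_2$ at $k=1$ and possibly $a_\sigma u_\sigma$-classes for $k>1$. The two-step filtration then gives an exact sequence
\[H\uZ^{2\rho}(\C P^\infty_\tau)\to[\C P^\infty_\tau,\Sigma^\rho E]\to H\FII^{\rho}(\C P^\infty_\tau)\xrightarrow{\beta Sq^2}H\uZ^{2\rho+1}(\C P^\infty_\tau).\]
The map on the right is the evaluation of the $k$-invariant $\beta_{C_2}Sq^2_{C_2}$ on $t$. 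Since $Sq^2_{C_2}t=t^2$ and $\beta_{C_2}$ of the reduction of an integral class is zero, this $k$-invariant kills $t$. We must therefore identify the image, and then the extension.

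The key point is the extension: the group should be $\Z$, with the class $2t^2$ coming from $\HZ$ and a lift of $t$ from $\HFII$ squaring to it, so that the extension $0\to\Z\to ?\to\F_2\to0$ is nonsplit. To get this we restrict to underlying spectra. Non-equivariantly $[\C P^2,\Sigma^2\PP^2 gl_1KU]\cong\Z$, generated by a class whose image in $H^2(\C P^2;\F_2)$ is $t$, because the nontrivial $k$-invariant $\beta Sq^2$ glues $H^4(\C P^2;\Z)$ and $H^2(\C P^2;\F_2)$ into $\Z$ (as in \cite{Bas17}/\cite{BLM23}). The restriction map on the $\HZ$-part, $\pi^{C_2}_{0}$ of $\Sigma^{2\rho}$-shifted $H\uZ\to H\Z$, is an isomorphism. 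The $\F_2$-part also restricts isomorphically, by the constant Mackey functor structure. The five lemma then shows that the equivariant group maps isomorphically to the nonequivariant $\Z$, provided the extra $a_\sigma$-torsion summands vanish. Exactly this vanishing must be verified from \eqref{HZcomp} and \eqref{F2comp} and the Bockstein. I expect this bookkeeping, in particular ruling out $a_\sigma$-multiples in $H\FII^{\rho}(\C P^\infty_\tau)$ and controlling the image of the connecting map, to be the main obstacle. Comparing $\C P^2_\tau$ with $\C P^\infty_\tau$ works the same way: the cells $S^{k\rho}$ with $k\geq3$ contribute $\pi_{(2-k)\rho}$ and $\pi_{(2-k)\rho\pm1}$ of $\HZ$ and $\pi_{(1-k)\rho}$ of $\HFII$, which vanish by evenness, or rather by the explicit formulas in these negative $\rho$-multiple degrees. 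This also gives vanishing of $\lim^1$, as in Proposition \ref{Identify as null}.

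For oddness, the restriction of $\zeta$ to the bottom cell $S^\rho$ is the class $(-1)$, i.e. $\Sigma^\sigma$ of the unit $-1$ (Proposition \ref{kuR/2 as twisted monoid}). Under $E\to\HFII$ this class is the nonzero element of $\pi_0\HFII=\F_2$, so the image of $\zeta$ in $H\FII^\rho(\C P^2_\tau)$ is $t\neq0$. In $\Z$, the subgroup coming from $\HZ$ is the index-two subgroup $2\Z$, the kernel of the map to $\F_2$. Hence $\zeta$ is not in $2\Z$, so it corresponds to an odd integer.
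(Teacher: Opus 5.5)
Your proof is correct, but you settle the extension problem by a different route from the paper.

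\textbf{The paper's route.} It stays equivariant throughout. It uses that the $k$-invariant of $E$ factors as $\beta_{C_2}\circ Sq^2_{C_2}$ (Proposition \ref{ses of gl1P2kR}, which rests on Proposition \ref{beta sq2}). This gives a map from the cofiber sequence of $E$ to the integral Bockstein sequence $\Sigma^\rho H\uZ\xrightarrow{2}\Sigma^\rho H\uZ\to\Sigma^\rho\HFII$. It then checks, by restriction to $\C P^2$, that $Sq^2_{C_2}\colon H\uF^{\rho}(\C P^2_\tau)\to H\uF^{2\rho}(\C P^2_\tau)$ is an isomorphism. The five lemma then gives $[\C P^2_\tau,\Sigma^\rho E]^{C_2}\cong H\uZ^{2\rho}(\C P^2_\tau)\cong\Z$, with $(\alpha)=\times 2$.

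\textbf{Your route.} You map the equivariant long exact sequence to the underlying one via $\res^{C_2}_e$, and import $[\C P^2,\Sigma^2\PP^2 gl_1KU]\cong\Z$. This has two advantages:
\begin{itemize}
\item It avoids the equivariant identification of the $k$-invariant altogether. You only need the cofiber sequence and the underlying $k$-invariant $\beta Sq^2$ from \cite{BLM23}.
\item It additionally shows that restriction is an isomorphism on this group.
\end{itemize}
The cost is that the nonequivariant input is most naturally proved by exactly the paper's Bockstein comparison, run for $\C P^2$. You should include that short argument rather than saying that the $k$-invariant ``glues'' the two groups. Your oddness argument is the same as the paper's.

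\textbf{The bookkeeping is immediate.} The step you expect to be the main obstacle follows directly from \eqref{HZcomp} and \eqref{F2comp}. Both $\pi^{C_2}_{m\rho}H\uZ$ and $\pi^{C_2}_{m\rho}\HFII$ vanish for $m\neq 0$, and $\pi^{C_2}_{m\rho-1}$ vanishes for all $m$. Hence:
\begin{itemize}
\item $H\uZ^{2\rho}(\C P^2_\tau)=\Z\{t^2\}$ and $H\uF^{\rho}(\C P^2_\tau)=\F_2\{t\}$, and both restriction maps are isomorphisms.
\item $H\uZ^{2\rho+1}(\C P^2_\tau)=0$, so you never need to evaluate $\beta_{C_2}Sq^2_{C_2}(t)$.
\item The left end $H\uF^{\rho-1}(\C P^2_\tau)$ vanishes in reduced cohomology, and in any case maps onto $H^1(\C P^2;\F_2)=0$. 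So no $a_\sigma$-classes interfere with the five lemma.
\end{itemize}

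\textbf{Indexing slip.} In the comparison of $\C P^2_\tau$ with $\C P^\infty_\tau$, the cells $S^{k\rho}$ with $k\geq 3$ contribute $\pi^{C_2}_{(k-2)\rho}$ and $\pi^{C_2}_{(k-2)\rho-1}$ of $H\uZ$, and $\pi^{C_2}_{(k-1)\rho}$ and $\pi^{C_2}_{(k-1)\rho-1}$ of $\HFII$. All of these vanish. With your sign, the group $\pi^{C_2}_{(2-k)\rho+1}H\uZ$ at $k=3$ is $\pi^{C_2}_{-\sigma}H\uZ\cong\Z/2\{a_\sigma\}$, which is nonzero.
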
	
	
	\begin{proof}
	Note that $\C P^\infty_\tau/\C P^2_\tau$ has a $C_2$-cell complex structure from \eqref{cofcp} with cells of the form $\DD(n\rho)$ for $n\geq 3$. Then, these $S^{n\rho} $ are at least $2$-connected for $n\geq 3$, so that by applying the cofiber description of $E$ in Proposition \ref{ses of gl1P2kR} we observe 
\[ [\C P^\infty_\tau/\C P^2_\tau, E]^{C_2}=0, \mbox{ and } [\Sigma^{-1} \C P^\infty_\tau/\C P^2_\tau, \Sigma E]^{C_2}=0,\]
as 
\[\pi_{n\rho}^{C_2} H\uZ = 0 , \mbox{ for } n\neq 0, \mbox{ and } \pi_{n\rho -1}^{C_2} H\uZ = 0 \ \forall n \mbox{ \eqref{HZcomp}},\]
and 
\[\pi_{n\rho}^{C_2} H\uF = 0 , \mbox{ for } n\neq 0, \mbox{ and } \pi_{n\rho -1}^{C_2} H\uF = 0 \ \forall n \mbox{ \eqref{F2comp}}.\]
This proves the first isomorphism in the statement. We may construct the following diagram of cofiber sequences using Proposition \ref{ses of gl1P2kR}		
		\begin{myeq}\label{cofdiagE}
\begin{tikzcd}[cramped]
			{\Sigma^{\rho}H\underline{\mathbb{Z}}} && E && {H\underline{\mathbb{F}_2}} && {\Sigma^{\rho+1}H\underline{\mathbb{Z}}} \\
			{\Sigma^{\rho}H\underline{\mathbb{Z}}} && {\Sigma^{\rho}H\underline{\mathbb{Z}}} && {\Sigma^{\rho}H\underline{\mathbb{F}_2}} && {\Sigma^{\rho+1}H\underline{\mathbb{Z}}.}
			\arrow[from=1-1, to=1-3]
			\arrow[equals, from=1-1, to=2-1]
			\arrow[from=1-3, to=1-5]
			\arrow[from=1-3, to=2-3]
			\arrow["{\beta \circ Sq^2_{C_2}}", from=1-5, to=1-7]
			\arrow["{Sq^2_{C_2}}"', from=1-5, to=2-5]
			\arrow[equals, from=1-7, to=2-7]
			\arrow["{\times 2}", from=2-1, to=2-3]
			\arrow[from=2-3, to=2-5]
			\arrow["\beta", from=2-5, to=2-7]
		\end{tikzcd}
\end{myeq}
From Proposition \ref{groups}, and the formulas \eqref{HZcomp} and \eqref{F2comp}, we have 
\[ H\uZ^{2\rho}(\C P^2_\tau) \cong \Z,\ H\uZ^{2\rho+1}(\C P^2_\tau) =0,\  H\uF^\rho(\C P^2_\tau)\cong \F_2,\ H\uF^{2\rho}(\C P^2_\tau) \cong \F_2.\]		
Now we apply these calculations to the following diagram of exact sequences arising from \eqref{cofdiagE}		
		\[\begin{adjustbox}{max width=\linewidth}
\begin{tikzcd}[cramped]
			 & {H\underline{\mathbb{Z}}^{2\rho}(\C P^2_\tau) \cong \mathbb{Z}} & {[\C P^{2}_\tau,\Sigma^{\rho}E]^{C_2}} & {H\underline{\mathbb{F}_2}^{\rho}(\C P^2_\tau) \cong \mathbb{F}_2} & {H\underline{\mathbb{Z}}^{2\rho+1}(\C P^2_\tau) \cong 0} \\
			 & {H\underline{\mathbb{Z}}^{2\rho}(\C P^2_\tau) \cong \mathbb{Z}} & {H\underline{\mathbb{Z}}^{2\rho}(\C P^2_\tau) \cong \mathbb{Z}} & {H\underline{\mathbb{F}_2}^{2\rho}(\C P^2_\tau) \cong \mathbb{F}_2} & {H\underline{\mathbb{Z}}^{2\rho+1}(\C P^2_\tau) \cong 0}
			\arrow["{(\alpha)}", from=1-2, to=1-3]
			\arrow[equals, from=1-2, to=2-2]
			\arrow[from=1-3, to=1-4]
			\arrow[from=1-3, to=2-3]
			\arrow[from=1-4, to=1-5]
			\arrow["Sq^2_{C_2}"', from=1-4, to=2-4]
			\arrow[equals, from=1-5, to=2-5]
			\arrow["{\times 2}", from=2-2, to=2-3]
			\arrow[from=2-3, to=2-4]
			\arrow[from=2-4, to=2-5]
		\end{tikzcd}
\end{adjustbox}\]
We note that 
\[ Sq^2_{C_2} : H\uF^\rho(\C P^2_\tau) \to H\uF^{2\rho}(\C P^2_\tau)\]
is an isomorphism as 
\[ \res^{C_2}_e : H\uF^\rho(\C P^2_\tau) \stackrel{\cong}{\to} H^2(\C P^2;\F_2), \ 	\res^{C_2}_e : H\uF^{2\rho}(\C P^2_\tau) \stackrel{\cong}{\to} H^4(\C P^2;\F_2),\]
and 
\[ \res^{C_2}_e(Sq^2_{C_2}) = Sq^2 :  H^2(\C P^2; \F_2) \stackrel{\cong}{\to} H^4(\C P^2;\F_2).\]	
We also observe from Proposition \ref{groups} and \eqref{F2comp} that 
\[H\uF^{\rho-1}(\C P^2_\tau) \cong \F_2, \mbox{ and } H\uF^{2\rho-1}(\C P^2_\tau) \cong \F_2.\]
This implies that $(\alpha)$ is injective. Hence $[{\C P^2_{\tau}},\Sigma^{\rho}E]^{C_2} \cong \mathbb{Z}$ and the map $(\alpha)$ is $\mathbb{Z} \xrightarrow{\times 2} \mathbb{Z}$. Additionally, following  \eqref{cohomo groups}, 
\[i^{*}:\  \mathbb{Z} \cong [{\C P^2_{\tau}},\Sigma^{\rho}E]^{C_2} \rightarrow [S^{\rho}, \Sigma^{\rho}E]^{C_2} \cong (\mathbb{Z})^{\times}\] sends $\zeta_{\C P^2} \mapsto (-1)$. Therefore, the image of $\zeta \in [{\C P^{\infty}_{\tau}},\Sigma^{\rho}E]^{C_2} \cong \mathbb{Z}$ is an odd element.
	\end{proof}
	
\vspace*{0.2cm}

	\begin{lem} \label{identify group 2}
		Let $q:\C P^2_\tau \rightarrow S^{2\rho}$ be the quotient by the lower skeleton as in \eqref{cellcp}. Then, all the groups in the equation below are $\Z$, and both the maps are isomorphisms. 
\[[S^{2\rho},\Sigma^{2\rho}H\underline{\mathbb{Z}}]^{C_2} \xrightarrow{q^{*}} [\C P^2_\tau,\Sigma^{2\rho}H\underline{\mathbb{Z}}]^{C_2}, \ [S^{2\rho},\Sigma^{2\rho}H\underline{\mathbb{Z}}]^{C_2} \rightarrow [S^{2\rho},\Sigma^{\rho}E]^{C_2} \cong \mathbb{Z}.\]
	\end{lem}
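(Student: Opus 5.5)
The plan is to deduce both isomorphisms from long exact sequences of cofiber sequences, after checking that the relevant flanking $RO(C_2)$-graded groups vanish or are torsion. Those groups will be read off from \eqref{HZcomp} and \eqref{F2comp}. To begin, the group $[S^{2\rho},\Sigma^{2\rho}H\uZ]^{C_2}\cong \pi_0^{C_2}H\uZ\cong \Z$ is immediate.

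\textbf{The map $q^*$.} By \eqref{cellcp} and \eqref{attcp}, $\C P^2_\tau$ sits in a cofiber sequence
\[S^{2\rho-1}\to S^\rho \to \C P^2_\tau \xrightarrow{q} S^{2\rho}\to S^{\rho+1}.\]
Mapping into $\Sigma^{2\rho}H\uZ$ gives the exact sequence
\[[S^{\rho+1},\Sigma^{2\rho}H\uZ]^{C_2}\to [S^{2\rho},\Sigma^{2\rho}H\uZ]^{C_2}\xrightarrow{q^*}[\C P^2_\tau,\Sigma^{2\rho}H\uZ]^{C_2}\to [S^{\rho},\Sigma^{2\rho}H\uZ]^{C_2}.\]
\begin{itemize}
\item The rightmost group is $\pi^{C_2}_{-1-\sigma}H\uZ$. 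A degree check shows no monomial $a_\sigma^ku_{2\sigma}^l$ has degree $-1-\sigma$ (this would force $2l=-1$). The summands $2\Z[u_{2\sigma}^{-1}]$ and $\Sigma^{-1}\F_2\{a_\sigma^{-k}u_{2\sigma}^{-l}\}$ do not hit this degree either. Hence this group is $0$ and $q^*$ is surjective.
\item The leftmost group is $\pi^{C_2}_{1-\rho}H\uZ=\pi^{C_2}_{-\sigma}H\uZ\cong\Z/2\{a_\sigma\}$. This is torsion, while the target $\Z$ is torsion-free, so the connecting map is zero and $q^*$ is injective.
\end{itemize}
So $q^*$ is an isomorphism $\Z\cong\Z$. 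This is consistent with Proposition \ref{groups}, where $t^2$ generates $H\uZ^{2\rho}(\C P^2_\tau)$.

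\textbf{The second map.} Apply $\Sigma^\rho$ to the cofiber sequence $\Sigma^\rho H\uZ\to E\to H\uF$ of Proposition \ref{ses of gl1P2kR} (the top row of \eqref{cofdiagE}). Mapping $S^{2\rho}$ into the result gives
\[\pi^{C_2}_{\rho+1}H\uF \to [S^{2\rho},\Sigma^{2\rho}H\uZ]^{C_2}\to [S^{2\rho},\Sigma^\rho E]^{C_2}\to \pi^{C_2}_{\rho}H\uF.\]
It remains to check from \eqref{F2comp} that $\pi^{C_2}_{1+\sigma}H\uF=\pi^{C_2}_{2+\sigma}H\uF=0$.
\begin{itemize}
\item The monomials $a_\sigma^ku_\sigma^l$ lie in degrees $l-(k+l)\sigma$. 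These have nonpositive $\sigma$-coefficient, so they cannot equal $1+\sigma$ or $2+\sigma$.
\item The classes $\Sigma^{-1}a_\sigma^{-k}u_\sigma^{-l}$ lie in degrees $(-1-l)+(k+l)\sigma$. These have negative integer part, so they also miss both degrees.
\end{itemize}
Hence the middle map is an isomorphism, and $[S^{2\rho},\Sigma^\rho E]^{C_2}\cong\Z$.

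The only real point of care is getting the $RO(C_2)$-gradings right when matching homotopy gradings against \eqref{HZcomp} and \eqref{F2comp}. The nonvanishing group $\pi^{C_2}_{-\sigma}H\uZ\cong\Z/2$ is the one place where an argument beyond degree reasons is needed, and the torsion-freeness of $\Z$ handles it.
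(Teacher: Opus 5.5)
Your proof is correct and follows the paper's argument: you use the same two long exact sequences, one from the cofiber $S^\rho \to \C P^2_\tau \xrightarrow{q} S^{2\rho}$ and one from $\Sigma^\rho$ of $\Sigma^\rho H\uZ \to E \to H\uF$, and read the flanking groups off \eqref{HZcomp} and \eqref{F2comp}. The only differences are cosmetic: you make explicit the step the paper leaves implicit (the map $\pi^{C_2}_{-\sigma}H\uZ\cong\Z/2 \to \Z$ vanishes because $\Z$ is torsion-free), and you check $\pi^{C_2}_{\rho}H\uF=0$ by degrees rather than by the paper's remark that $S^\rho$ is $0$-connected.
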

	
	\begin{proof}
		From \eqref{attcp}, we get the cofiber 
		$$S^{\rho} \rightarrow \C P^2_\tau \xrightarrow{q} S^{2\rho}$$
		which gives the following long exact sequence on homotopy groups.
		\[\xymatrix{
			\cdots & [S^{\rho+1},\Sigma^{2\rho}H\uZ]^{C_2} \ar@{=}[d] \ar[r] & [S^{2\rho},\Sigma^{2\rho}H\uZ]^{C_2} \ar@{=}[d] \ar[r]  & [\C P^2_\tau,\Sigma^{2\rho}H\uZ]^{C_2} \ar[r] & [S^{\rho},\Sigma^{2\rho}H\uZ]^{C_2} \ar@{=}[d] \\
& \Z/2  & \Z & & 0. } \]			
The groups are identified by \eqref{HZcomp}. This proves the first statement. As $E\simeq gl_1P^2(\KR)$ we apply Proposition \ref{ses of gl1P2kR} to get the following exact sequence on homotopy groups		
		\[\begin{tikzcd}[cramped]
			{[S^{2\rho},\Sigma^{\rho -1}H\underline{\mathbb{F}_2}]^{C_2} } & {[S^{2\rho},\Sigma^{2\rho}H\underline{\mathbb{Z}}]^{C_2} } & {[S^{2\rho},\Sigma^{\rho}E]^{C_2} } & {[S^{2\rho},\Sigma^{\rho}H\underline{\mathbb{F}_2}]^{C_2} }
			\arrow[from=1-1, to=1-2]
			\arrow[from=1-2, to=1-3]
			\arrow[from=1-3, to=1-4]
		\end{tikzcd}\]
Now we have 
\[ {[S^{2\rho},\Sigma^{2\rho}H\underline{\mathbb{Z}}]^{C_2} } \cong \pi_0^{C_2} H\uZ \cong \Z,\]
\[	[S^{2\rho},\Sigma^{\sigma}H\underline{\mathbb{F}_2}]^{C_2} \cong \pi_{\rho+1}^{C_2} H \underline{\F_2} \cong 0 \mbox{ \eqref{F2comp}},\]	
and 
\[[S^{2\rho},\Sigma^{\rho}H\underline{\mathbb{F}_2}]^{C_2} \cong \pi_\rho^{C_2} H\underline{\F_2} \cong 0 \mbox{ as } \rho \mbox{ is } 0 \mbox{-connected}.\]
		Thus, we get the desired result.		
	\end{proof}

\vspace*{0.2cm}

The following proposition provides the final piece towards the calculation of $\nu^\ast(\zeta_{\C P^2})$.	
	\begin{prop} \label{identify tau*}
		The map $\chi^{*}: [\C P^2_\tau,\Sigma^{\rho}E]^{C_2} \cong \mathbb{Z} \longrightarrow [S^{2\rho},\Sigma^{\rho}E] \cong \mathbb{Z}$ is an isomorphism.
	\end{prop}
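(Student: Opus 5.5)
The plan is to compare $\chi^*$ on $\Sigma^\rho E$ with $\chi^*$ on the Eilenberg--MacLane piece $\Sigma^{2\rho}H\uZ$, using the map $\Sigma^{2\rho}H\uZ \to \Sigma^\rho E$ coming from the cofiber sequence of Proposition \ref{ses of gl1P2kR}. Naturality of this map in the source gives a commutative square
\[\begin{tikzcd}[cramped]
{[\C P^2_\tau,\Sigma^{2\rho}H\uZ]^{C_2}} & {[\C P^2_\tau,\Sigma^{\rho}E]^{C_2}} \\
{[S^{2\rho},\Sigma^{2\rho}H\uZ]^{C_2}} & {[S^{2\rho},\Sigma^{\rho}E]^{C_2}}
\arrow["{(\alpha)}", from=1-1, to=1-2]
\arrow["{\chi^*}"', from=1-1, to=2-1]
\arrow["{\chi^*}", from=1-2, to=2-2]
\arrow["\cong", from=2-1, to=2-2]
\end{tikzcd}\]
in which every group is $\Z$. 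By the proof of Lemma \ref{identify group 1}, the top map $(\alpha)$ is $\times 2$. By Lemma \ref{identify group 2}, the bottom map is an isomorphism. It therefore suffices to show that the left vertical map is $\times(\pm 2)$. Then $\chi^*\circ(\times 2) = \pm 2$ on $\Z$, which forces $\chi^* = \pm 1$ on $[\C P^2_\tau,\Sigma^\rho E]^{C_2}$, i.e. an isomorphism.

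To compute the left vertical map, use Lemma \ref{identify group 2} again. There $q^*:[S^{2\rho},\Sigma^{2\rho}H\uZ]^{C_2}\to[\C P^2_\tau,\Sigma^{2\rho}H\uZ]^{C_2}$ is an isomorphism. Hence $\chi^*$ on the $H\uZ$-term is identified with $(q\circ\chi)^*$, where $q\circ\chi:S^{2\rho}\to S^{2\rho}$ is a stable self-map. This map is an element $d\in\pi_0^{C_2}(S^0)\cong A(C_2)$, acting on $\pi_0^{C_2}H\uZ=\Z$. For the constant Mackey functor, the free orbit $[C_2/e]$ acts by $tr\circ\res=2$. So the Burnside ring acts on $\uZ(C_2/C_2)$ through the underlying cardinality, i.e. through the non-equivariant degree of $q\circ\chi$.

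It remains to compute that underlying degree. Non-equivariantly, $\chi$ is the top-cell component of the multiplication $\C P^1\times\C P^1\to\C P^2$ under the stable splitting $S^2\times S^2\simeq S^2\vee S^2\vee S^4$. This map is the quotient by the swap (equivalently, the map classifying $\mathcal O(1)\boxtimes\mathcal O(1)$). On $H^4$ it sends $x^2\mapsto (x_1+x_2)^2=2x_1x_2$, so it has degree $2$ on the top cell. Thus $q\circ\chi$ has underlying degree $\pm 2$, and the left vertical map is $\times(\pm2)$, which finishes the argument.

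I expect the main obstacle to be the careful identification of $\chi$ as the top-cell summand of the multiplication after the stable splitting, together with checking that only the underlying degree matters. The point to verify is that the $S^\rho\vee S^\rho$ summands do not contribute to $(q\circ\chi)$. They do not, because $q$ kills the bottom cell and $[S^\rho,\Sigma^{2\rho}H\uZ]^{C_2}=0$. The sign ambiguity is harmless for the isomorphism claim.
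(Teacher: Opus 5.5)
Your proof is correct and is essentially the paper's argument: the same commutative square with $(\alpha)=\times 2$ from Lemma \ref{identify group 1} and the isomorphisms from Lemma \ref{identify group 2}, reducing everything to the fact that $q\circ\chi$ has underlying degree $2$, which forces $\chi^*=\pm 1$. The one difference is how you see that $(q\circ\chi)^*$ is $\times 2$ on the fixed level: you let the Burnside ring act on $\uZ(C_2/C_2)$ through cardinality, while the paper reads it off the induced map of constant Mackey functors, using that restriction is the identity.
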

	
	\begin{proof}
		Consider the diagram (in stable category)
		\[\begin{tikzcd}[cramped]
			& {S^{\rho} \times S^{\rho}} \\
			{S^{2\rho}} && {\C P^2_\tau} && {S^{2\rho}.}
			\arrow["m", from=1-2, to=2-3]
			\arrow[hook, from=2-1, to=1-2]
			\arrow["\chi"', from=2-1, to=2-3]
			\arrow["{q \text{ (quotient)}}"', from=2-3, to=2-5]
		\end{tikzcd}\]
		 As non-equivariantly the map $q \circ \chi$ is of degree $2$, the induced map on Mackey functors take the form		 
		 \[\begin{tikzcd}[cramped]
		 	{C_2 / C_2} && {[S^{2\rho},\Sigma^{2\rho}H\underline{\mathbb{Z}}]^{C_2} \cong \mathbb{Z}} &&& {[S^{2\rho},\Sigma^{2\rho}H\underline{\mathbb{Z}}]^{C_2} \cong \mathbb{Z}} \\
		 	& {:} \\
		 	{C_2/e} && {[S^{2\rho},\Sigma^{2\rho}H\underline{\mathbb{Z}}]^{e} \cong \mathbb{Z}} &&& {[S^{2\rho},\Sigma^{2\rho}H\underline{\mathbb{Z}}]^{e} \cong \mathbb{Z}}.
		 	\arrow["{\tiny \res^{C_2}_e}"', shift right=2, from=1-1, to=3-1]
		 	\arrow["{\text{hence } \times 2}", from=1-3, to=1-6]
		 	\arrow["id"', shift right=2, from=1-3, to=3-3]
		 	\arrow["id"', shift right=2, from=1-6, to=3-6]
		 	\arrow["{\tiny \tr^{C_2}_e}"', shift right=2, from=3-1, to=1-1]
		 	\arrow["{C_2}", from=3-1, to=3-1, loop, in=235, out=305, distance=10mm]
		 	\arrow["{\times 2}"', shift right=2, from=3-3, to=1-3]
		 	\arrow[from=3-3, to=3-3, loop, in=235, out=305, distance=10mm]
		 	\arrow["\times2"', from=3-3, to=3-6]
		 	\arrow["{\times 2}"', shift right=2, from=3-6, to=1-6]
		 	\arrow[from=3-6, to=3-6, loop, in=235, out=305, distance=10mm]
		 \end{tikzcd}\]
		 
		 Therefore, we arrive at the following diagram, using Lemma \ref{identify group 1} where the map $\alpha$ is calculated along with isomorphisms along the middle column, and Lemma \ref{identify group 2}:
		 \[\begin{tikzcd}[cramped]
		 	{[S^{2\rho},\Sigma^{2\rho}H\mathbb{\underline{Z}}]^{C_2} \cong \mathbb{Z}} && {[\C P^2_\tau,\Sigma^{2\rho}H\mathbb{\underline{Z}}]^{C_2} \cong \mathbb{Z}} &&& {[S^{2\rho},\Sigma^{2\rho}H\mathbb{\underline{Z}}]^{C_2} \cong \mathbb{Z}} \\
		 	\\
		 	&& {[\C P^2_\tau,\Sigma^{\rho}E]^{C_2}  \cong \mathbb{Z}} &&& {[S^{2\rho},\Sigma^{\rho}E]^{C_2} \cong \mathbb{Z}}.
		 	\arrow["\cong", from=1-1, to=1-3]
		 	\arrow["{\times 2}", curve={height=-24pt}, from=1-1, to=1-6]
		 	\arrow[from=1-3, to=1-6]
		 	\arrow["{\alpha \equiv (\times 2)}", from=1-3, to=3-3]
		 	\arrow["\cong", from=1-6, to=3-6]
		 	\arrow["{\chi^*}", from=3-3, to=3-6]
		 \end{tikzcd}\]
		 		 Thus, $\chi^*$ is an isomorphism.		 
	\end{proof}
	
\vspace*{0.2cm}

We finally compute the homotopy type of $\THR^{\KR}(\KR/2)$ for Thomified algebra structures in the theorem below.
	\begin{thm} \label{pi* calculation}
The $RO(C_2)$-graded homotopy groups $\THR^{\KR}(\KR/2)$ are given by		
$$\pi _{k\rho +l} ^{C_2} (\THR^{\KR} (\KR /2)) \cong 
		\begin{cases}
			\Z /(2 ^{\infty}) \; & \text{ when } l \equiv 0,4 \, (\text{mod }8)\\
			\Z / {2} \;  \; & \text{ when } l \equiv 2,3 \, (\text{mod }8)\\
			0 \;  \; & \text{ when } l \equiv 1,5,6,7 \,(\text{mod }8).
		\end{cases}$$
			In fact, as a $\KR$-module  $\THR^{\KR} (\KR /2) \simeq \KR / (2^{\infty})$, where $\KR / (2^{\infty})$ is defined by the cofiber sequence
		$$\KR \rightarrow \KR [2^{-1}] \rightarrow \KR / (2^{\infty})$$
		with $\KR [2^{-1}] \simeq \text{colim}(\KR \xrightarrow{2} \KR \xrightarrow{2} \cdots)$.
	\end{thm}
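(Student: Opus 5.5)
The plan is to use the cofiber sequence of Proposition \ref{LES of THR},
\[\KR \wedge {\C P^\infty_\tau}_+ \xrightarrow{u-1} \KR \wedge {\C P^\infty_\tau}_+ \to \THR^{\KR}(\KR/2),\]
together with the explicit form \eqref{unitform} of the unit,
\[u = -1 + c\,t + (\text{higher order terms in } t).\]
By Lemma \ref{identify group 1} and Proposition \ref{identify tau*}, the coefficient $c=\nu^*(\zeta_{\C P^2})$ is an odd integer; here we use the splitting $gl_1\KR \simeq E\vee\hat K$ of Theorem \ref{splitglkr} to reduce to $E$. So
\[u-1 = -2 + c\,t + \cdots, \qquad c \text{ odd}.\]

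We work with $\KR$-homology, and use the real orientation to dualize. There is an equivalence of $\KR$-modules
\[\KR\wedge {\C P^\infty_\tau}_+ \simeq \bigvee_{n\ge 0} \Sigma^{n\rho}\KR,\]
and $\KR_\bigstar(\C P^\infty_\tau)$ is the free $\KR_\bigstar$-module on classes $\beta_n$ dual to $t^n$. Multiplication by $u-1$ is the action of the cohomology class $u-1 \in \KR^0(\C P^\infty_\tau)$ by cap product, so
\[(u-1)\cap \beta_n = -2\beta_n + c\,\beta_{n-1} + \sum_{j\ge 2} a_j\,\beta_{n-j}.\]
This is an $\KR$-module self-map of the wedge which is triangular with respect to the skeletal filtration.

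The key step is to identify the cofiber of this map with the colimit $\KR[2^{-1}]$ modulo $\KR$. On the bottom cell $\beta_0$ the map is multiplication by $-2$. Since $c$ is odd, hence a unit in $\Z_{(2)}$, we can change basis to a new basis $\beta_n'$ with
\[(u-1)\,\beta_n' = \beta_{n-1}' - 2\,\beta_n' \quad\text{(up to units)}.\]
Concretely, one can factor $u-1=(-2+c't)\cdot v$ in $\KR^0[[t]]$ with $v$ a unit. This reduces the problem to the cofiber of $\cap(t-2\epsilon)$ for a unit $\epsilon$. Capping with $t$ is the shift $\beta_n\mapsto\beta_{n-1}$, so this map is injective, and its cofiber is computed by the telescope relations $\beta_{n-1} \sim 2\epsilon\,\beta_n$. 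The cofiber is therefore
\[\operatorname{colim}\big(\KR\xrightarrow{2}\KR\xrightarrow{2}\cdots\big)\big/\KR = \KR/(2^\infty).\]
We should ignore the $\rho$-grading shifts here, since $\Sigma^{n\rho}$ is absorbed by the periodicity class $\bar\beta$ of degree $\rho$. At the spectrum level this is the same as saying that the cofiber of $t-2$ on $\bigoplus_n \Sigma^{n\rho}\KR$ is the mapping telescope.

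This is the main obstacle: making the telescope identification an equivalence of $\KR$-modules rather than just an isomorphism of homotopy groups. It needs care about the factorization in the completed ring $\KR^0[[t]]$, and about $\KR\wedge\C P^\infty_\tau$ being a direct sum, so that cap product with a power series is well defined and the colimit makes sense. I would handle it by filtering by skeleta $\C P^n_\tau$ and passing to the colimit.

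Finally, the homotopy groups follow from the long exact sequence of $\KR\to\KR[2^{-1}]\to\KR/(2^\infty)$ together with the known groups
\[\pi^{C_2}_{k\rho+l}\KR \cong \pi_l KO.\]
This uses $\rho$-periodicity and $\KR^{C_2}\simeq KO$. The groups $\pi_l KO$ are $\Z$ for $l\equiv0,4$, $\Z/2$ for $l\equiv1,2$, and $0$ otherwise, mod $8$. Inverting $2$ kills the $\Z/2$'s, and the connecting map shifts each $\Z/2$ up by one. This gives $\Z/2^\infty$ for $l\equiv 0,4$, $\Z/2$ for $l\equiv 2,3$, and $0$ otherwise, as stated.
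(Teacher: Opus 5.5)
Your strategy is sound, and it is genuinely different from the paper's.

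\textbf{Comparison with the paper.} The paper first computes $\pi^{C_2}_{k\rho+l}$ directly from the long exact sequence of $u-1$, case by case in $l$ mod $8$. Oddness of $c$ is what makes $u-1$ act as a shift on the $\bigoplus\Z/2$ groups. Only afterwards does it write down a map of cofiber sequences from $\KR\wedge{\C P^\infty_\tau}_+\xrightarrow{u-1}\KR\wedge{\C P^\infty_\tau}_+$ to $\KR\to\KR[2^{-1}]$. Its components $(\gamma)\in\prod_n\pi_{n\rho}\KR$ and $(\delta)\in\prod_n\pi_{n\rho}\KR[2^{-1}]$ are forced by commutativity, and the paper checks that the induced map is an isomorphism on $\pi^{C_2}_\star$.

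You instead put $u-1$ into a normal form. The $\KR$-module equivalence then comes first, and the groups follow formally from $\pi_*KO$ and $\KR\to\KR[2^{-1}]\to\KR/(2^\infty)$. Your final computation is correct. This is cleaner, and your ``main obstacle'' is not really one. A change of basis of the free module $\bigvee_n\Sigma^{n\rho}\KR$ is realized by a $\KR$-module self-equivalence, since module maps out of it are classified by $\prod_n\pi_{n\rho}$. So the identification of cofibers is automatically an equivalence of modules, and no skeletal filtration is needed. The price is that you must work $2$-locally, whereas the paper's comparison map is defined over $\KR$ itself.

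\textbf{A step that needs correcting.} The factorization $u-1=(-2+c't)v$ with $v$ a unit in $\KR^0[[t]]$ is not available over $\Z$, nor over $\Z_{(2)}$. It would give $u-1$ the root $t=2/c'$, but the unique root of $u-1$ in $2\Z_2$ need not be rational; for example $-2+3t+t^2$ is not excluded by anything known about $u$. Such a factorization is Weierstrass preparation and needs $\Z_2$.

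It is also unnecessary. Absorb $\bar\beta$ so that $t$ has degree $0$. Then $u+1=ct+O(t^2)$ with $c$ odd, so $t':=u+1$ is a coordinate on $\KR_{(2)}^0(\C P^\infty_\tau)\cong\Z_{(2)}[[t]]$. Let $\beta'_n$ be the basis dual to $t'^n$. It equals $c^{-n}\beta_n$ plus a combination of $\beta_0,\dots,\beta_{n-1}$. Then $t'\cap\beta'_n=\beta'_{n-1}$, so exactly
\[
(u-1)\cap\beta'_n=\beta'_{n-1}-2\beta'_n .
\]
The cofiber of this map on $\bigvee_{n\ge0}\KR_{(2)}$ is $\KR_{(2)}[2^{-1}]/\KR_{(2)}\simeq\KR/(2^\infty)$. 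Note this is the quotient of the telescope by $\KR$, not the telescope itself.

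Localizing at $2$ costs nothing. The class $u-1$ has constant term $-2$, so it is invertible in $\KR[2^{-1}]^0(\C P^\infty_\tau)$. Hence the cofiber of $u-1$ is already $2$-local, as is $\KR/(2^\infty)$. With this replacement your argument goes through.
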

	
	\begin{proof}
	Using \eqref{unitform} and \eqref{cohomo groups}, the unit $u \in (K\mathbb{R}^0({\C P^{\infty}_{\tau}}))^{\times}$ takes the form 
		$$u\ =\ (-1)+(\text{an odd integer})\times t + \text{(terms involving higher orders of } t \mbox{)}$$
		as by Lemma \ref{identify group 1} and Proposition \ref{identify tau*}, $\nu^* (\zeta_{\C P^2})$ is an odd integer. Now, we use Proposition \ref{groups}, and calculate the map  
$$u-1:K\mathbb{R}_*({\C P^{\infty}_{\tau}}) \rightarrow K\mathbb{R}_*({\C P^{\infty}_{\tau}})$$ 
 of Proposition \ref{LES of THR} using cap products. This takes the from
		\begin{myeq} \label{unitcap}
		\begin{aligned}
			&K\mathbb{R}_{\star} \{\beta_0, \beta_1, \beta_2,...\} \xrightarrow{u-1} K\mathbb{R}_* \{\beta_0, \beta_1, \beta_2,...\}\;  \text{ where } |\beta_i| = i\rho \\
			&\beta_0 \mapsto -2\beta_0\\
			&\beta_1 \mapsto -2\beta_1 + t _{(0)} ^{(1)} \times \beta_0 \; \text{    ,where } t _{(0)} ^{(1)} \text{ is odd integer}\\
			&\beta_2 \mapsto -2\beta_2 + t_{(1)} ^{(2)} \times \beta_1\  + t_{(0)} ^{(2)} \times \beta_0  \; \text{    ,where } t_{(1)} ^{(2)}  \text{ is odd integer}\\
			&\beta_k \mapsto -2\beta_k + t_{(k-1)} ^{(k)}  \times \beta_{k-1}\  + (\text{terms involving }\beta_0, \beta_1, ..., \beta_{k-2})  \; \text{    ,where } t_{(k-1)} ^{(k)}  \text{ is odd integer}
		\end{aligned}
		\end{myeq}
	Therefore, using periodicity for $\pi _{\bigstar} ^{C_2} (\KR)$, the long exact sequence of Proposition \ref{LES of THR} breaks into parts for calculation of $\THR^{\KR} _{k\rho + l'} (\KR /2)$ as follows: (for $l \equiv l' \, (\text{mod} 8)$, with $0 \leq l \leq 7$)
		\begin{align*}
			l=0: \; \; &\KR _{k\rho} (\C P^\infty _\tau) \rightarrow \KR _{k\rho} (\C P^\infty _\tau) \rightarrow \THR^{\KR} _{k\rho} (\KR /2) \rightarrow 0 \\
			&\oplus\Z\hookrightarrow \oplus\Z \rightarrow \Z / ({2^{\infty}}) \rightarrow 0 \\
			&\beta_0 \mapsto -2\beta_0\\
			&\beta_1 \mapsto -2\beta_1 + (\text{odd integer}) \times \beta_0 \\
			&\beta_2 \mapsto -2\beta_2 + (\text{odd integer}) \times \beta_1\  +\ \text{term involving }\beta_0 \\
			&\beta_k \mapsto -2\beta_k + (\text{odd integer}) \times \beta_{k-1}\  +\ \text{terms involving }\beta_0, \beta_1, ..., \beta_{k-2}  \eqref{unitcap}\\
                        & \implies \THR^{\KR} _{k\rho} (\KR /2) \cong \Z / ({2^{\infty}}).
		\end{align*}		
\begin{align*}
			l=1: \; \; &\resizebox{14cm}{!}{$\KR _{k\rho +1} (\C P^\infty _\tau) \rightarrow \KR _{k\rho+1} (\C P^\infty _\tau) \rightarrow \THR^{\KR} _{k\rho+1} (\KR /2) \rightarrow \text{ker} \Bigl( \KR _{k\rho} (\C P^\infty _\tau) \rightarrow \KR _{k\rho} (\C P^\infty _\tau) \Bigr) $}\\
			& \mbox{ker}=\Z/2 \subset \oplus\Z/2 \twoheadrightarrow \oplus\Z/2 \rightarrow  0  \rightarrow 0 \\
			&\;\beta_0 \mapsto 0 \\
			&\;\beta_1 \mapsto \beta_0 \\
			&\;\beta_2 \mapsto \beta_1 + \text { terms involving } \beta_0 \\
			&\; ... \eqref{unitcap} \\ 
                       & \implies \THR^{\KR} _{k\rho+1} (\KR /2) \cong 0.
		\end{align*}		
\begin{align*}
			l =2: \; \; &\resizebox{14cm}{!}{$\KR _{k\rho +2} (\C P^\infty _\tau) \rightarrow \KR _{k\rho+2} (\C P^\infty _\tau) \rightarrow \THR^{\KR} _{k\rho+2} (\KR /2) \rightarrow \text{ker} \Bigl( \KR _{k\rho +1} (\C P^\infty _\tau) \rightarrow \KR _{k\rho+1} (\C P^\infty _\tau) \Bigr)$} \\
			&\mbox{ker}=\Z/2 \subset \oplus\Z/2 \twoheadrightarrow \oplus\Z/2 \rightarrow  \Z/2 \rightarrow \Z / 2 \\
			&\;\beta_0 \mapsto 0 \\
			&\;\beta_1 \mapsto \beta_0 \\
			&\;\beta_2 \mapsto \beta_1 + \text { terms involving } \beta_0 \\
			&\; ... \eqref{unitcap} \\
                        & \implies  \THR^{\KR} _{k\rho+2} (\KR /2) \cong \Z / 2.
		\end{align*}		

\begin{align*}
			l =3: \; \; &\resizebox{14cm}{!}{$\KR _{k\rho +3} (\C P^\infty _\tau) \rightarrow \KR _{k\rho+3} (\C P^\infty _\tau) \rightarrow \THR^{\KR} _{k\rho+3} (\KR /2) \rightarrow \text{ker} \Bigl( \KR _{k\rho +3} (\C P^\infty _\tau) \rightarrow \KR _{k\rho+2} (\C P^\infty _\tau) \Bigr)$} \\
			&0\rightarrow 0 \rightarrow  \Z/2 \rightarrow \Z / 2  \\
                        & \implies \THR^{\KR} _{k\rho+3} (\KR /2) \cong \Z / 2.
		\end{align*}		
\begin{align*}
			l=4: \; \; &\KR _{k\rho+4} (\C P^\infty _\tau) \rightarrow \KR _{k\rho+4} (\C P^\infty _\tau) \rightarrow \THR^{\KR} _{k\rho+4} (\KR /2) \rightarrow 0 \\
			&\oplus\Z\hookrightarrow \oplus\Z \rightarrow \Z /(2^{\infty}) \rightarrow 0 \\
			&\beta_0 \mapsto -2\beta_0\\
			&\beta_1 \mapsto -2\beta_1 + (\text{odd integer}) \times \beta_0 \\
			&\beta_2 \mapsto -2\beta_2 + (\text{odd integer}) \times \beta_1\  +\ \text{term involving }\beta_0 \\
			&\beta_k \mapsto -2\beta_k + (\text{odd integer}) \times \beta_{k-1}\  +\ \text{terms involving }\beta_0, \beta_1, ..., \beta_{k-2} \eqref{unitcap} \\
&\implies \THR^{\KR} _{k\rho+4} (\KR /2) \cong \Z /( 2^\infty).\\
			l= 5,6,7: \; \; & \THR^{\KR} _{k\rho + l'} (\KR /2) \cong 0 .
		\end{align*}
		
		A similar calculation of the $C_2$-equivariant homotopy groups of $\KR / (2^{\infty})$ shows they are abstractly isomorphic to that of $\THR^{\KR} (\KR /2)$. Now, we arrive at the following map of cofiber sequences of $\KR$-modules:
		\[\begin{adjustbox}{max width=\linewidth}
\begin{tikzcd}[cramped]
			{\KR \wedge {\C P^\infty_{\tau} }_{+} \simeq \bigvee _{n \geq 0} \Sigma^{n\rho} \KR } && {\KR \wedge {\C P^\infty_{\tau} }_{+} \simeq \bigvee _{n \geq 0} \Sigma^{n\rho} \KR } && {\THR^{\KR} (\KR /2)} \\
			\KR && {\KR [2^{-1}]} && {\KR / 2^{\infty}}
			\arrow["{u - 1}", from=1-1, to=1-3]
			\arrow["{(\gamma)}"', from=1-1, to=2-1]
			\arrow[from=1-3, to=1-5]
			\arrow["{(\delta)}"', from=1-3, to=2-3]
			\arrow["{(\eta)}"', from=1-5, to=2-5]
			\arrow[from=2-1, to=2-3]
			\arrow[from=2-3, to=2-5]
		\end{tikzcd}
\end{adjustbox}\]
		with the top cofiber sequence coming from Proposition \ref{LES of THR}. The maps $(\gamma)$ and $(\delta)$ comes from the consequent set of isomorphisms
		$$\left[ \bigvee _{n \geq 0} \Sigma^{n\rho} \KR, \KR \right]^{\KRmod} \cong \prod _{n \geq 0} \pi ^{C_2} _{n\rho} (\KR) \cong \prod_{n \geq 0} \Z,$$
		$$\left[ \bigvee _{n \geq 0} \Sigma^{n\rho} \KR, \KR [2^{-1}] \right] ^{\KRmod} \cong \prod _{n \geq 0} \pi ^{C_2} _{n\rho} (\KR [2^{-1}]) \cong \prod_{n \geq 0} \Z [2^{-1}],$$
		where we have the following correspondence
		\begin{align*}
			(\gamma) \longleftrightarrow & \left( 1,1,1, \dots \right) \in \Pi_{n \geq 0} \Z \\
			(\delta) \longleftrightarrow & \left( -1,\, -1 - t_{(0)} ^{(1)} /2, \, -1 -t_{(1)} ^{(2)} /2  -t_{(0)} ^{(2)} /2 , \,  \dots \right) \in \prod_{n \geq 0} \Z [2^{-1}]
		\end{align*}
		The calculations above show that $(\eta)$ induces an isomorphism on $\pi ^{C_2} _{\star}$. 
	\end{proof}
	
	\vspace*{0.2cm}

\bibliographystyle{siam}
\bibliography{ETRA.bib}

\end{document}